\def\gcd{\mathrm{gcd}}
\def\deg{\mathrm{deg}}
\def\int{\mathrm{Int}}
\def\dim{\mathrm{dim}}
\newtheorem{theorem}{Theorem}[section]
\newtheorem{lemma}[theorem]{Lemma}
\newtheorem{corollary}[theorem]{Corollary}
\newtheorem{proposition}[theorem]{Proposition}
\theoremstyle{definition}
\newtheorem{definition}[theorem]{Definition}
\theoremstyle{remark}
\newtheorem{remark}[theorem]{Remark}
\newcommand{\subalign}[1]{%
  \vcenter{%
    \Let@ \restore@math@cr \default@tag
    \baselineskip\fontdimen10 \scriptfont\tw@
    \advance\baselineskip\fontdimen12 \scriptfont\tw@
    \lineskip\thr@@\fontdimen8 \scriptfont\thr@@
    \lineskiplimit\lineskip
    \ialign{\hfil$\m@th\scriptstyle##$&$\m@th\scriptstyle{}##$\hfil\crcr
      #1\crcr
    }%
  }%
}
\numberwithin{equation}{section}
\title{Hausdorff Dimension of Singular Vectors in Function Fields}
\author{Noy Soffer Aranov}
\address{Noy ~Soffer ~Aranov. Department of Mathematics, Technion, Haifa, Israel}
\email{noyso@campus.technion.ac.il}
\author{Taehyeong Kim}
\address{Taehyeong ~Kim. The Einstein Institute of Mathematics, Edmond J. Safra Campus, Givat Ram\\
The Hebrew University of Jerusalem, Jerusalem, 91904, Israel}
\email{taehyeong.kim@mail.huji.ac.il}
\begin{document}

\def\thefootnote{}
\footnote{2020 {\it Mathematics Subject Classification}: Primary 11J13, 11K55; Secondary 37A17}
\footnote{The first named author is supported by the ERC grant "Dynamics on Homogeneous Spaces" (no. 754475).} 
\footnote{The second named author is supported by the ERC grant HomDyn, ID 833423.}

\begin{abstract}
    We compute the Hausdorff dimension of the set of singular vectors in function fields and bound the Hausdorff dimension of the set of $\varepsilon$-Dirichlet improvable vectors in this setting. This is a function field analogue of the results of Cheung and Chevallier [Duke Math. J. \textbf{165} (2016), 2273--2329]. 
\end{abstract}
\maketitle

\section{Introduction}
A fundamental theorem in Diophantine approximation is Dirichlet's Theorem which says that for all $\boldsymbol{\theta}=(\theta_1,\dots,\theta_d)\in\mathbb{R}^d$ and $T>1$ there exists $(\mathbf{a},b)=(a_1,\dots,a_d,b)\in \mathbb{Z}^d\times\mathbb{Z}$ such that 
\[
\max_{1\leq i\leq d}\vert b\theta_i-a_i\vert<\frac{1}{T^{\frac{1}{d}}}\qquad\text{and}\qquad 0<\vert b\vert < T.
\]
It is natural to ask whether Dirichlet's Theorem can be improved, and this question leads to the following definition. Given $\varepsilon>0$, a vector $\boldsymbol{\theta}\in\mathbb{R}^d$ is said to be \textit{$\varepsilon$-Dirichlet improvable} if there is $T_0>1$ such that for all $T>T_0$ there exists $(\mathbf{a},b)\in\mathbb{Z}^d\times\mathbb{Z}$ such that 
\[
\max_{1\leq i\leq d}\vert b\theta_i-a_i\vert<\frac{\varepsilon}{T^{\frac{1}{d}}}\qquad\text{and}\qquad 0<\vert b\vert < T.
\]
We say that a vector $\boldsymbol{\theta}\in\mathbb{R}^d$ is \textit{singular} if it is $\varepsilon$-Dirichlet improvable for every $\varepsilon>0$. Denote by $\mathbf{DI}_d(\varepsilon)$ and $\mathbf{Sing}_d$ the set of $\varepsilon$-Dirichlet improvable vectors and singular vectors in $\mathbb{R}^d$, respectively. 

Khintchine \cite{K} originally introduced singular vectors and showed that $\mathbf{Sing}_d$ has Lebesgue measure zero. Davenport and Schmidt \cite{DS} proved that $\mathbf{DI}_d(\varepsilon)$ has Lebesgue measure zero for any $0<\varepsilon<1$. It is worth noting that $\mathbf{Sing}_1 = \mathbb{Q}$, but if $d\geq 2$, the singular set is nontrivial. The computation of the Hausdorff dimension of $\mathbf{Sing}_d$ with $d\geq 2$ has been a challenge until the first breakthrough by Cheung \cite{C11}, who proved that the Hausdorff dimension of $\mathbf{Sing}_2$ is $4/3$ and obtained a certain bound for the Hausdorff dimension of $\mathbf{DI}_2(\varepsilon)$. This was extended by Cheung and Chevallier \cite{CC16} to higher dimensions as follows:
\begin{theorem}\cite{CC16}\label{Thm_CC}
Let $d\geq 2$. Then
\begin{enumerate}
    \item the Hausdorff dimension of $\mathbf{Sing}_d$ is $\frac{d^2}{d+1}$;
    \item\label{CCdim_HDI} for any $t>d$, there is a constant $C>0$ such that for all small enough $\varepsilon>0$,
    \[
     \frac{d^2}{d+1}+\varepsilon^t\leq \dim_H\mathbf{DI}_d(\varepsilon)\leq \frac{d^2}{d+1}+C\varepsilon^{\frac{d}{2}}.
    \]
\end{enumerate}
\end{theorem}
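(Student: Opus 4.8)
The plan is to pass, via the theory of best approximations (equivalently, via the Dani correspondence), from the Diophantine definitions to a self-affine Cantor structure inside $\mathbf{Sing}_d$ and inside $\mathbf{DI}_d(\varepsilon)$, and then carry out matching lower and upper bounds on the dimension. Attach to $\boldsymbol{\theta}$ its increasing sequence of best approximation denominators $q_1<q_2<\cdots$ with remainders $\boldsymbol{\theta}^{(n)}=q_n\boldsymbol{\theta}-\mathbf{p}_n$, $\lVert\boldsymbol{\theta}^{(n)}\rVert=\lVert q_n\boldsymbol{\theta}\rVert$, and set $\psi_n(\boldsymbol{\theta})=q_{n+1}\lVert q_n\boldsymbol{\theta}\rVert^{d}$. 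Testing Dirichlet's inequality at $T\uparrow q_{n+1}$ shows that, off the countable set $\Q^d$, the condition $\boldsymbol{\theta}\in\mathbf{DI}_d(\varepsilon)$ amounts to $\limsup_n\psi_n(\boldsymbol{\theta})\leq\varepsilon^{d}$ and $\boldsymbol{\theta}\in\mathbf{Sing}_d$ to $\psi_n(\boldsymbol{\theta})\to 0$ (in homogeneous-dynamics language: the trajectory of $u_{\boldsymbol{\theta}}\Z^{d+1}$ under the diagonal flow diverges, resp. escapes every fixed compact set at a rate governed by $\varepsilon$). The transition $(q_n,\boldsymbol{\theta}^{(n)})\mapsto(q_{n+1},\boldsymbol{\theta}^{(n+1)})$ is governed by an integer matrix of restricted shape, so admissible finite data form a tree whose infinite branches correspond to the $\boldsymbol{\theta}$'s; the cylinder fixing the first $n$ symbols is a box of diameter $\asymp\lVert\boldsymbol{\theta}^{(n)}\rVert/q_n$ whose subdivision at the next step is parametrized by the magnitude and the (roughly $(d-1)$-dimensional worth of) direction of $\boldsymbol{\theta}^{(n+1)}$. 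In the function field setting one does all of this with the completed space $\F_q((t^{-1}))^d$ in place of $\mathbb{R}^d$ and $\deg$ in place of $\log$.

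\textbf{Lower bound.} Choose $\delta_n\downarrow 0$ (for $\mathbf{Sing}_d$) or $\delta_n\equiv\varepsilon^{d}$ (for $\mathbf{DI}_d(\varepsilon)$) and build a Cantor set $K$ by keeping, at level $n$, all admissible continuations with $\psi_n\leq\delta_n$ whose ratio $q_{n+1}/q_n$ lies in a geometric window tuned to the critical exponent. The number of retained continuations of a cylinder and the ratio of new box size to old admit sharp estimates — the count is a power of $q_{n+1}/q_n$ coming from the freedom in the size and direction of $\boldsymbol{\theta}^{(n+1)}$ — and feeding these into a mass distribution (Frostman) argument adapted to boxes, then optimizing the window, yields $\dim_H K\geq d^2/(d+1)$. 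Letting $\delta_n$ go to $0$ slowly enough this persists, while the fixed choice $\delta_n\equiv\varepsilon^d$ gives the extra $+\varepsilon^{t}$ (any $t>d$) in the lower bound for $\mathbf{DI}_d(\varepsilon)$.

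\textbf{Upper bound.} Write $\mathbf{Sing}_d\subseteq\bigcup_N S_N(\delta)$ (and $\mathbf{DI}_d(\varepsilon)\subseteq\bigcup_N S_N(\varepsilon^d)$), where $S_N(\delta)=\{\boldsymbol{\theta}:\psi_n(\boldsymbol{\theta})\leq\delta\ \forall n\geq N\}$, cover $S_N(\delta)$ by level-$n$ cylinders, and bound, for each cylinder, the number of level-$(n+1)$ cylinders it can contain subject to $\psi_n\leq\delta$. This yields a recursion for the covering numbers whose associated series at exponent $s$ converges once $s>d^2/(d+1)+C\delta^{1/2}$, the exponent $1/2$ (hence $\varepsilon^{d/2}$ after $\delta=\varepsilon^d$) arising from the optimal split of the constraint $\psi_n\leq\delta$ between the number of admissible directions and the range of admissible magnitudes; letting $\delta\to 0$ gives $\dim_H\mathbf{Sing}_d\leq d^2/(d+1)$.

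\textbf{Main obstacle.} The crux is the lower bound: the construction must keep enough continuations to support a measure of the right dimension while still respecting $\psi_n\leq\delta_n$ with $\delta_n\to 0$ along every branch, and one must then compute the dimension of the resulting self-affine set. Over $\mathbb{R}$ this self-affinity is the real source of difficulty and forces the full self-similar-covering machinery of Cheung--Chevallier; over $\F_q((t^{-1}))$, by contrast, the ultrametric structure makes all cylinders balls that are pairwise nested or disjoint, which reduces the dimension computation to a clean combinatorial optimization and makes both the exact value $d^2/(d+1)$ and the power-saving bounds for $\mathbf{DI}_d(\varepsilon)$ directly accessible — this is precisely the simplification the function field analogue exploits.
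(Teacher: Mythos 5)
You should first note that Theorem \ref{Thm_CC} is the real-case result of Cheung and Chevallier: the paper does not prove it but cites \cite{CC16}, and its own contribution (Theorem \ref{Thm_Main}) is the function field analogue proved by adapting that machinery. Your outline does identify the correct overall strategy — pass to best approximation denominators, characterize $\mathbf{DI}_d(\varepsilon)$ by $q_{n+1}\lVert q_n\boldsymbol{\theta}\rVert^d\leq\varepsilon^d$ eventually, build a tree of admissible data, count continuations, and run a covering argument for the upper bound and a Frostman-type argument for the lower bound — and this is indeed the template of \cite{CC16} and of the present paper. But as a proof it has a genuine gap: every quantitative step is asserted rather than established. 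The statement ``the number of retained continuations of a cylinder and the ratio of new box size to old admit sharp estimates'' is precisely the hard content. In \cite{CC16} (and in the function field version here) this requires introducing the Farey lattice $\Lambda_{\mathbf{u}}$, relating $r(\mathbf{u})$ to $\lambda_1(\Lambda_{\mathbf{u}})$ (Lemma \ref{Lemma_requalLam}, Corollary \ref{Cor_SingChar}), invoking Minkowski's second theorem, and proving lattice-point counts from above (the analogues of Propositions \ref{D(u)Cnt} and \ref{E(u,v,epsilon)Cnt}, which are where the exponent $\varepsilon^{d/2}$ actually comes from) and from below (the analogues of Propositions \ref{prop_XnCount} and \ref{Lambda_v(epsilon)capC_n'(v)Bnd}, via sums of the Euler function and the divisor function, which are where the $+\varepsilon^t$, $t>d$, comes from). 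None of these computations appears in your proposal; the exponents $d/2$ and $t>d$ are announced as outcomes of an unspecified ``optimal split'' and ``optimization of the window.''

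The lower bound is the more serious omission, and you concede it yourself: over $\mathbb{R}$ the cylinders are genuinely self-affine, the children of a cylinder are parametrized by directions in a hyperplane whose geometry is controlled by the successive minima of $\Lambda_{\mathbf{u}}$, and one must prove a separation estimate between children (the analogue of Lemma \ref{lem:B(v),B(w)Dist}) and verify the hypotheses of weighted dimension criteria such as Theorems \ref{dim_HlowBnd} and \ref{thm:CCLow3.6} before any mass-distribution argument yields $\dim_H\geq d^2/(d+1)$. Your ``Main obstacle'' paragraph explicitly defers this to ``the full self-similar-covering machinery of Cheung--Chevallier,'' i.e., the step on which the theorem rests is not carried out. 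Moreover, the closing appeal to the ultrametric inequality is off target for this statement: Theorem \ref{Thm_CC} concerns $\mathbb{R}^d$, where balls are not pairwise nested or disjoint and the exact equivalences available here (e.g.\ Corollary \ref{Cor_SingChar} with no multiplicative constants) fail by factors of $2$; that simplification is exactly what distinguishes the function field Theorem \ref{Thm_Main}, not a tool for proving the real-case theorem. So the proposal is a correct high-level roadmap of the known proof, but not a proof.
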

Here and hereafter, $\dim_H$ stands for the Hausdorff dimension.
We remark that this result improves the bounds in \cite{B1,B2,R} and refer to \cite{KKLM,DFSU} for the Hausdorff dimension of singular matrices.

In this paper, we shall prove a function field analogue of Theorem \ref{Thm_CC}. We first introduce the function field setting. Let $p$ be a prime number, $q$ be a power of $p$, and let $\mathcal{R}=\mathbb{F}_q[x]$ be the ring of polynomials over $\mathbb{F}_q$. Let $\mathcal{K}=\mathbb{F}_q(x)$ be the field of rational functions over $\mathbb{F}_q$. We define an absolute value on $\mathcal{R}$ by $\vert f\vert=q^{\deg(f)}$ and extend it to an absolute value on $\mathcal{K}$ by $\left|\frac{f}{g}\right|=q^{\deg(f)-\deg(g)}$. Then the topological completion of $\mathcal{K}$ with respect to the metric $d(f,g)=\vert f-g\vert$ is the field of Laurent series $\widetilde{\mathcal{K}}$ defined by
$$\widetilde{\mathcal{K}}\:= \mathbb{F}_q\left(\left(x^{-1}\right)\right)=\Bigg\{\sum_{n=-N}^{\infty} a_n x^{-n}:a_n\in \mathbb{F}_q,N\in \mathbb{Z}\Bigg\}.$$
We equip $\widetilde{\mathcal{K}}^d$ with the supremum norm 
$\Vert \mathbf{v}\Vert=\max_{i=1,\dots, d}\vert v_i\vert$ for $\mathbf{v}=(v_1,\dots,v_d)\in \widetilde{\mathcal{K}}$ and consider the Hausdorff dimension of subsets of $\widetilde{\mathcal{K}}^d$ with respect to this supremum norm.

We remark that the ring of polynomials $\mathcal{R}$ is discrete in $\widetilde{\mathcal{K}}$ and plays a similar role to the ring of integers $\mathbb{Z}$ in $\mathbb{R}$.
In this point of view, there have been numerous developments
on Diophantine approximation in the function field setting, see for instance \cite{dM,Las}.
In particular, Dirichlet's Theorem also holds (see e.g. \cite[Chapter IV]{dM}). 
Hence, it makes sense to consider the following definition as in the real case: 

Given $\varepsilon>0$, we say that $\boldsymbol{\theta}\in \widetilde{\mathcal{K}}^d$ is \textit{$\varepsilon$-Dirichlet improvable} if there is $T_0>1$ such that for all $T>T_0$ there exists $(\mathbf{a},b)\in\mathcal{R}^d\times\mathcal{R}$ such that 
\[
\| b\boldsymbol{\theta}-\mathbf{a}\|<\frac{\varepsilon}{T^{\frac{1}{d}}}\qquad\text{and}\qquad 0<|b|< T.
\]
We say that a vector $\boldsymbol{\theta}\in\widetilde{\mathcal{K}}^d$ is \textit{singular} if it is $\varepsilon$-Dirichlet improvable for every $\varepsilon>0$. Denote by $\mathbf{DI}_d(\varepsilon)$ and $\mathbf{Sing}_d$ the set of $\varepsilon$-Dirichlet improvable vectors and singular vectors in $\widetilde{\mathcal{K}}^d$, respectively.

Compared to intensive studies in the real setting as mentioned above, there has been less attention to singular sets over function fields. In particular, it was proved in \cite[Theorem 3.6]{GaGh17} that $\mathbf{DI}_d(\varepsilon)$ has zero Haar measure on $\widetilde{\mathcal{K}}^d$ for every $\varepsilon$ small enough. Hence, $\mathbf{Sing}_d$ has also zero Haar measure.
Recently, it has been proved by Bang, the second named author, and Lim \cite{BKL} that the Hausdorff dimension of $\mathbf{Sing}_d$ is at most $\frac{d^2}{d+1}$. We prove that their upper bound is sharp, and moreover we bound the Hausdorff dimension of $\mathbf{DI}_d(\varepsilon)$.
\begin{theorem}[Corollaries \ref{lem:DIUpp}+\ref{Cor_lowerbound}+\ref{Cor_singlowerbound}]\label{Thm_Main} 
Let $d\geq 2$. Then
\begin{enumerate}
    \item the Hausdorff dimension of $\mathbf{Sing}_d$ is $\frac{d^2}{d+1}$;
    \item for any $\varepsilon>0$, we have
    \[\begin{split}
     &\frac{d^2}{d+1}+\frac{d}{d+1}\log_q \left(1+\frac{(q-1)^2}{q^{2d+\frac{d^2}{d-1}}}\left(\frac{q-1}{q}-\varepsilon^{d-1}\right)\varepsilon^{d}\right)\\ &\leq \dim_H \mathbf{DI}_d(\varepsilon) 
      \leq \frac{d^2}{d+1}+\frac{d}{d+1}\log_q\left(1+\sqrt{(q-1)^2(q+1)^dq^{2d} \varepsilon^d}\right).
    \end{split}\]
\end{enumerate}
\end{theorem}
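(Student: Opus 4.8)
\smallskip
\noindent\textbf{Plan of proof.} I would prove the three constituent bounds separately, all through the function-field Dani correspondence: $\boldsymbol\theta$ is singular (resp.\ $\varepsilon$-Dirichlet improvable) exactly when the orbit $(g_n\Lambda_{\boldsymbol\theta})_{n\ge 0}$ — with $g_n=\mathrm{diag}(x^n,\dots,x^n,x^{-dn})$ and $\Lambda_{\boldsymbol\theta}=u(\boldsymbol\theta)\mathcal R^{d+1}$, $u(\boldsymbol\theta)=\left(\begin{smallmatrix}I_d&\boldsymbol\theta\\0&1\end{smallmatrix}\right)$ — is divergent in the space of unimodular $\mathcal R$-lattices (resp.\ eventually leaves the compact set where the shortest nonzero vector has norm $\ge\varepsilon^{d/(d+1)}$). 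First I would translate this, via the short vectors $(b_n\boldsymbol\theta-\mathbf p_n,b_n)$, into a clean statement about the best-approximation sequence $\mathbf p_n/b_n$: writing $k_n=\deg b_n$ and $l_n=-\log_q\|b_n\boldsymbol\theta-\mathbf p_n\|$, the ``excursion intervals'' $[\,k_n,\ dl_n+d\log_q\varepsilon\,)$ in the time parameter must cover a tail of $\mathbb Z_{\ge 0}$, so that $\boldsymbol\theta\in\mathbf{Sing}_d$ iff $k_{n+1}-dl_n\to-\infty$ and $\boldsymbol\theta\in\mathbf{DI}_d(\varepsilon)$ iff $k_{n+1}-dl_n\le d\log_q\varepsilon$ eventually. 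Everything downstream is then bookkeeping with $(k_n,l_n)$, and the non-archimedean feature that balls in $\widetilde{\mathcal K}^d$ are cosets of $\mathcal R$-submodules — two balls being either disjoint or nested, with exact point counts — is what lets the estimates be sharp rather than up to constants.

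\smallskip
\noindent\textbf{Upper bound (Corollary~\ref{lem:DIUpp}).} To bound $\dim_H\mathbf{DI}_d(\varepsilon)$ from above I would, for each large time $s$, cover the set by the balls $B(\mathbf a/b,\ \varepsilon q^{-s/d}/|b|)$ over reduced fractions with $0<|b|\le q^s$; a single scale only gives $\dim_H\le d$, so instead I would follow the chain of dominant approximations and count its branchings at the ``switch'' scales. The number of admissible continuations is governed by the exact count of degree-$k$ Farey fractions in a ball of radius $\rho$, namely $(1-q^{-d})\rho^{d}q^{(d+1)k}$ once $\rho q^{2k}\ge 1$, together with the $\varepsilon$-slack permitted at a switch; this yields, per unit increase of degree, a branching of $q^{d}$ times a factor $1+O(\varepsilon^{d/2})$, while one unit of degree costs $\tfrac{d+1}{d}$ in $-\log_q(\text{radius})$. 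Optimising the resulting tree gives $\dim_H\mathbf{DI}_d(\varepsilon)\le\tfrac{d^2}{d+1}+\tfrac{d}{d+1}\log_q\big(1+\sqrt{(q-1)^2(q+1)^d q^{2d}\varepsilon^d}\,\big)$ with the stated constant, and letting $\varepsilon\to 0$ (or invoking \cite{BKL}) gives $\dim_H\mathbf{Sing}_d\le\tfrac{d^2}{d+1}$.

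\smallskip
\noindent\textbf{Lower bound.} For the lower bounds I would construct, for well-chosen parameter sequences, a Cantor set every point of which has the prescribed approximation chain. A stage-$n$ box $B_n=B(\mathbf p_n/b_n,\rho_n)$, where $\rho_n=\varepsilon q^{-k_n-k_{n+1}/d}$ is the largest radius keeping $(\mathbf p_n,b_n)$ $\varepsilon$-good until $(\mathbf p_{n+1},b_{n+1})$ takes over, is subdivided into the $\asymp(1-q^{-d})\rho_n^{d}q^{(d+1)k_{n+1}}$ balls centred at the reduced degree-$k_{n+1}$ fractions in $B_n$; ultrametricity makes these children disjoint, and equidistribution of Farey fractions down to scale $q^{-2k_{n+1}}$ makes them spread out at every intermediate scale, so the mass-distribution principle gives the dimension of the limit set as $\liminf_n\log_q(\#\{\text{stage-}n\text{ boxes}\})/(-\log_q\rho_n)$. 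For $\mathbf{Sing}_d$ one takes the degrees $k_n$ growing super-linearly but sub-geometrically (so $k_{n+1}/k_n\to 1$) and lets the margin $dl_n-k_{n+1}$ tend to $+\infty$ slowly: the branching is $q^d$ per unit of degree, one unit of degree costs $\tfrac{d+1}{d}$ in $-\log_q\rho_n$, and the ratio converges to $\tfrac{d}{d+1}\cdot d=\tfrac{d^2}{d+1}$, giving Corollary~\ref{Cor_singlowerbound}. For $\mathbf{DI}_d(\varepsilon)$ (Corollary~\ref{Cor_lowerbound}) one keeps the margin at its minimum $d\log_q(1/\varepsilon)$ and \emph{enriches} the tree with the children produced wherever two distinct degree-$k_{n+1}$ approximations are simultaneously $\varepsilon$-valid — a configuration of relative density $\asymp\tfrac{(q-1)^2}{q^{2d+d^2/(d-1)}}\big(\tfrac{q-1}{q}-\varepsilon^{d-1}\big)\varepsilon^{d}$, positive precisely for $\varepsilon<(\tfrac{q-1}{q})^{1/(d-1)}$ — so that the per-unit-degree branching becomes $q^{d}(1+X)$ with $X$ exactly the quantity in the theorem, hence $\dim_H\mathbf{DI}_d(\varepsilon)\ge\tfrac{d^2}{d+1}+\tfrac{d}{d+1}\log_q(1+X)$.

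\smallskip
\noindent\textbf{Main obstacle.} The hard part, exactly as in \cite{C11,CC16}, will be verifying that these Cantor sets really do consist of singular resp.\ $\varepsilon$-improvable vectors: one must show the prescribed chain $(\mathbf p_n,b_n)$ is a genuine subsequence of best approximations for \emph{every} point constructed, so that no unaccounted-for intermediate approximation shrinks an excursion interval and opens a gap in the covering of the time axis. This is what pins down the admissible parameter region — the $k_{n+1}\gtrsim\tfrac{d}{d-1}k_n$ type inequality (up to $\varepsilon$-corrections, and responsible for the exponent $2d+\tfrac{d^2}{d-1}$), the coprimality of $\mathbf p_{n+1}$ with $b_{n+1}$ under $b_n\mid b_{n+1}$, and the gaps ensuring disjointness and equidistribution of the relevant Farey fractions. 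The remaining work, needed for the explicit $\varepsilon$-dependent constants rather than merely the value $\tfrac{d^2}{d+1}$, is to carry the exact counts (monic polynomials of a given degree, Farey fractions in a ball, ``doubly-valid'' configurations) through the optimisation, turning every $\asymp$ above into an equality with a controlled error.
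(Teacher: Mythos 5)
Your plan correctly identifies the general framework the paper uses (a self-similar covering built on best-approximation vectors for the upper bound, a Cantor/self-similar construction on Farey fractions for the lower bounds, with the ultrametric giving exact rather than up-to-constant estimates), and your ``main obstacle'' paragraph honestly flags what is left open. But as it stands the argument has a genuine gap at exactly the point where the stated constants come from. For the upper bound, your counting mechanism is a one-step volume count: degree-$k$ Farey fractions in a ball of radius $\rho$ number about $\rho^d q^{(d+1)k}$ ``once $\rho q^{2k}\ge 1$''. In the regime relevant to $\mathbf{DI}_d(\varepsilon)$ this count is not available: the Farey lattices $\Lambda_{\mathbf{u}}$ attached to the chain are precisely those with $\hat{\lambda}_1(\Lambda_{\mathbf{u}})<\varepsilon$, i.e.\ very skewed, and the number of candidate continuations in a thin ball is governed by $\prod_i\lceil q\rho/\lambda_i(\Lambda_{\mathbf{u}})\rceil$ (Proposition \ref{Prop_BK}), not by the volume. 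A naive one-step count would in fact suggest a per-switch branching factor $1+O(\varepsilon^{d})$, which is stronger than the true bound and cannot be justified; your asserted $1+O(\varepsilon^{d/2})$ is the right answer but has no mechanism behind it. The missing idea is the decomposition of the best-approximation chain into maximal blocks lying in the hyperplane $H_{\mathbf{u}}$ spanned by the minimal-covolume $(d-1)$-dimensional sublattice of $\Lambda_{\mathbf{u}}$: in-hyperplane moves (the sets $D(\mathbf{u})$, Proposition \ref{D(u)Cnt}) carry no $\varepsilon$-saving, only the exit moves (the sets $E(\mathbf{u},\mathbf{v},\varepsilon)$, Proposition \ref{E(u,v,epsilon)Cnt}) carry $\varepsilon^{d}$, and multiplying the two geometric series produces the square $\bigl(q^{\frac{d+1}{d}s-d}-1\bigr)^2$ and hence the $\sqrt{(q-1)^2(q+1)^dq^{2d}\varepsilon^d}$ in Corollary \ref{lem:DIUpp}. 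Without this two-scale structure the exponent $d/2$ cannot be obtained.

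On the lower-bound side the same issue reappears in quantitative form: the constant $\frac{(q-1)^2}{q^{2d+\frac{d^2}{d-1}}}\bigl(\frac{q-1}{q}-\varepsilon^{d-1}\bigr)\varepsilon^{d}$ is not the density of ``doubly-valid'' configurations asserted by fiat; in the paper it arises from counting primitive $\alpha\in\Lambda_{\mathbf{u}}$ in the boxes $C_N(\mathbf{u})$ subject to the crucial extra condition $\hat{\lambda}_1(\Lambda_\alpha)>\varepsilon$ (which guarantees, via Lemma \ref{lem:lambda_1(w))}, that the next Farey lattice again has $\hat{\lambda}_1$ of size exactly $\asymp\varepsilon$ and that the child balls are well separated), and this count is carried out through exact function-field sums of the totient $\phi(n)$ and the divisor function $D_1(n)$ (Lemmas \ref{lem:phiEval}, \ref{lem:D1Eval}, Propositions \ref{prop_XnCount} and \ref{Lambda_v(epsilon)capC_n'(v)Bnd}); the term $-\varepsilon^{d-1}$ and the exponent $2d+\frac{d^2}{d-1}$ (via the bound $\hat{\lambda}_d(\mathbf{u})\le q^{d-1}\varepsilon^{-(d-1)}$ and the threshold $T_0$) come out of this computation, not out of a separability heuristic. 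Likewise the verification that every point of the Cantor set has the prescribed vectors as genuine best approximations — which you correctly single out as the hard part — is exactly what Lemmas \ref{v<w}--\ref{BallCont} and Proposition \ref{F(u,u)FracStruc} accomplish (via $A(\boldsymbol{\theta},\mathbf{u}_k)<\lambda_1(\mathbf{u}_k)$ and Lemma \ref{BestApproxIncl}), and for $\mathbf{Sing}_d$ one additionally needs the weighted criterion of Theorem \ref{thm:CCLow3.6} with a scale-dependent $\rho$, since the separation degrades as $\varepsilon_i\to 0$. So the proposal is a reasonable roadmap in the same spirit as the paper, but the two counting arguments that actually produce the theorem's bounds are missing, and the upper-bound route as described would not close.
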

\begin{remark} We remark several things about the second argument in Theorem \ref{Thm_Main}.
    \begin{enumerate}
        \item Since $\log_q(1+y)=y/\log q +o(y)$ as $y\to 0$, Theorem \ref{Thm_Main} has the same order of magnitude as in Theorem \ref{Thm_CC} for all small enough $\varepsilon$. (In fact, the lower bound of Theorem \ref{Thm_Main} is slightly better than $\epsilon^t$ in Theorem \ref{Thm_CC}.)
        \item Note that the lower bound is increasing with $0<\varepsilon\leq \left(\frac{d(q-1)}{(2d-1)q}\right)^{\frac{1}{d-1}}$ and the upper bound is nontrivial for $0<\varepsilon\leq (q+1)^{-1}q^{-2}$, otherwise the upper bound is greater than $d$.  Therefore, the nontrivial regions of $\varepsilon$ are $\left(0,\left(\frac{d(q-1)}{(2d-1)q}\right)^{\frac{1}{d-1}}\right]$ and $(0,(q+1)^{-1}q^{-2}]$ for the lower bound and the upper bound, respectively. 
        \item While Theorem \ref{Thm_CC} only works for $\varepsilon$ small enough, our result works for relatively large $\varepsilon$. We are able to obtain this stronger result by the ultrametric nature which will be explained at the end of the introduction. In both real and function field cases, it is totally open to determine/bound reasonably the Hausdorff dimension of $\mathbf{DI}_d(\varepsilon)$ for any $\varepsilon>0$. See \cite{KM, KSY} for $\varepsilon<1$ sufficiently near $1$.
        \item We note that in \cite[Section 7]{CC16}, Cheung and Chevallier proved that when $d=2$, the exponent $t$ in Theorem \ref{Thm_CC} can be taken to be greater than $1$, instead of greater than $2$. In order to do so, they defined a more refined fractal structure, and used a claim about chord length in balls in $\mathbb{R}^2$ \cite[Lemma 7.7]{CC16} to show that the structure they defined is indeed nested. Since the geometry in the function field setting is non-Euclidean, similar results cannot be achieved immediately and new ideas would be needed.
    \end{enumerate} 
\end{remark}
One may think of the case that the base field is infinite. In this case, there have been several developments in Diophantine approximation as follows: Roth theorem \cite{Uch} and Schmidt subspace theorem \cite{Rat} over base fields of characteristic zero; a disproof of Littlewood conjecture \cite{DL} when the base field is infinite (see also \cite{AB}).
However, the polynomial ring over an infinite base field behaves differently from integers. In particular, the number of polynomials within a certain radius is infinite. Since our proof relies heavily on counting polynomials, we cannot handle this case.

We will now discuss a dynamical interpretation of our main theorem. Let $\operatorname{SL}_{d+1}(\widetilde{\mathcal{K}})$ be the group of invertible $(d+1) \times (d+1)$ matrices $g$ over $\widetilde{\mathcal{K}}$ with $\vert \det(g)\vert=1$ and let $\operatorname{SL}_{d+1}(\mathcal{R})$ be the subgroup of $\operatorname{SL}_{d+1}(\widetilde{\mathcal{K}})$ with entries in $\mathcal{R}$. Then, the homogeneous space $\mathcal{L}_{d+1}=\operatorname{SL}_{d+1}(\widetilde{\mathcal{K}})/\operatorname{SL}_{d+1}(\mathcal{R})$ is identified with the space of unimodular lattices in $\widetilde{\mathcal{K}}^{d+1}$ under the identification $$g\operatorname{SL}_{d+1}(\mathcal{R})\mapsto g\mathcal{R}^{d+1}.$$
There is a well-known dynamical interpretation of singular vectors. For $\boldsymbol{\theta}\in \widetilde{\mathcal{K}}^d$, consider the lattice $\Delta_{\boldsymbol{\theta}}=\begin{pmatrix}
    \mathbf{I}_d&\boldsymbol{\theta}\\
    0&1
\end{pmatrix}\mathcal{R}^{d+1}$ and the diagonal flow $g_t=\operatorname{diag}\{x^{t},\dots,x^{t},x^{-dt}\}$. Then, $\boldsymbol{\theta}$ is singular if and only if the orbit $\{g_t\Delta_{\boldsymbol{\theta}}:t\in\mathbb{N}\}$ is divergent, that is eventually leaves every compact set in $\mathcal{L}_{d+1}$. See for instance \cite[Lemma 2.1]{BKL}.

This relationship was originally studied by Dani \cite{D} in the real setting. Divergent trajectories for diagonal flows have been well studied in the real setting (see for example \cite{KW,W1,W2,T,ST,DFSU,KKLM}).

Theorem \ref{Thm_Main} implies the following dynamical result as in \cite[Corollary 1.2]{CC16}.
\begin{corollary}
    For $d\geq 2$, the Hausdorff dimension of the set
    \[
    \{\Delta\in\mathcal{L}_{d+1}:\text{ the orbit }\{g_t\Delta:t\in\mathbb{N}\}\text{ is divergent}\}
    \]
    is $\dim_H \operatorname{SL}_{d+1}(\widetilde{\mathcal{K}})-\frac{d}{d+1}$.
\end{corollary}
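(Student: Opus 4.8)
The plan is to deduce this corollary directly from Theorem~\ref{Thm_Main}(1) together with the standard ``fibering'' argument relating the divergent orbit set in $\mathcal{L}_{d+1}$ to the singular set $\mathbf{Sing}_d$ in $\widetilde{\mathcal{K}}^d$. By the dynamical interpretation recalled in the introduction (see \cite[Lemma~2.1]{BKL}), $\boldsymbol{\theta}\in\mathbf{Sing}_d$ if and only if the orbit $\{g_t\Delta_{\boldsymbol{\theta}}:t\in\mathbb{N}\}$ is divergent; so the set $D$ of lattices with divergent $g_t$-orbit at least contains the image of $\mathbf{Sing}_d$ under the map $\boldsymbol{\theta}\mapsto\Delta_{\boldsymbol{\theta}}$. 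More is true: divergence of $\{g_t\Delta:t\in\mathbb{N}\}$ depends only on the coset of $\Delta$ modulo the stable/neutral directions of the flow, so $D$ is, up to a bi-Lipschitz change of coordinates, a product of $\mathbf{Sing}_d$ with an open subset of the remaining coordinates of $\operatorname{SL}_{d+1}(\widetilde{\mathcal{K}})$ of dimension $\dim_H\operatorname{SL}_{d+1}(\widetilde{\mathcal{K}})-d$.

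Concretely, first I would use the Bruhat-type decomposition of (an open neighborhood of the identity in) $\operatorname{SL}_{d+1}(\widetilde{\mathcal{K}})$ as a product $U^- A M U^+$, where $U^+$ is the expanding horospherical subgroup for $g_t$ (the upper-right block, isomorphic to $\widetilde{\mathcal{K}}^d$ via the entries $\boldsymbol{\theta}$), $U^-$ the contracting horospherical subgroup (lower-left block, dimension $d$), and $AM$ the centralizer of $g_t$ (dimension $\dim_H\operatorname{SL}_{d+1}(\widetilde{\mathcal{K}})-2d$). A lattice $g\,\mathcal{R}^{d+1}$ with $g=u^- a m u^+$ near the identity has a divergent forward orbit if and only if $u^+\mathcal{R}^{d+1}$ does, because $g_t u^- a m$ stays in a fixed compact set while $g_t u^+ g_{-t}$ contracts: thus $g_t g = (g_t u^- a m g_{-t})(g_t u^+ g_{-t}) g_t$ and the first factor is bounded, so divergence is governed entirely by the $U^+$-coordinate. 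Hence in these coordinates $D$ is locally $(\text{open in }U^-AM)\times\mathbf{Sing}_d$.

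Next I would invoke the standard product formula for Hausdorff dimension in the presence of a bi-Lipschitz product structure: if $X$ is a subset of a metric space that is locally bi-Lipschitz to $E\times F$ with $F$ open in a space of (topological, hence Hausdorff) dimension $n$, then $\dim_H X = \dim_H E + n$. In the ultrametric setting this is especially clean — balls are genuine products — so one gets $\dim_H D = \dim_H\mathbf{Sing}_d + (\dim_H\operatorname{SL}_{d+1}(\widetilde{\mathcal{K}})-d)$. Plugging in $\dim_H\mathbf{Sing}_d=\frac{d^2}{d+1}$ from Theorem~\ref{Thm_Main}(1), and using $\frac{d^2}{d+1} - d = -\frac{d}{d+1}$, yields
\[
\dim_H D = \dim_H\operatorname{SL}_{d+1}(\widetilde{\mathcal{K}}) - d + \frac{d^2}{d+1} = \dim_H\operatorname{SL}_{d+1}(\widetilde{\mathcal{K}}) - \frac{d}{d+1},
\]
as claimed. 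Countable additivity of Hausdorff dimension lets one pass from the local statement on a neighborhood of the identity to all of $\mathcal{L}_{d+1}$, since $D$ is $\operatorname{SL}_{d+1}(\widetilde{\mathcal{K}})$-invariant in the appropriate sense (translating by a fixed group element is bi-Lipschitz) and $\mathcal{L}_{d+1}$ is covered by countably many such translated neighborhoods.

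The main obstacle I anticipate is making the ``local product structure'' argument fully rigorous: one must check that the map from a neighborhood of the identity in $U^-\times AM\times U^+$ to $\operatorname{SL}_{d+1}(\widetilde{\mathcal{K}})$ is bi-Lipschitz onto its image (straightforward from the non-archimedean implicit function theorem, as all structure constants are polynomial), and more delicately that ``$g_t g$ divergent $\iff$ $g_t u^+$ divergent'' holds on the nose rather than just up to a bounded error — but in the ultrametric setting the bounded factor $g_t u^- a m g_{-t}$ literally lies in a fixed compact open subgroup for all $t\ge0$, so Mahler's compactness criterion applied to $g_t g\mathcal{R}^{d+1}$ versus $g_t u^+\mathcal{R}^{d+1}$ gives the equivalence cleanly. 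This is entirely parallel to \cite[Corollary~1.2]{CC16} and \cite[Lemma~2.1]{BKL}, so I would keep the exposition brief and cite those sources for the routine parts.
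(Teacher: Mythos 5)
Your overall strategy is the one the paper intends (the paper simply defers to the verbatim argument of \cite[Corollary 1.2]{CC16} and \cite[Proposition 6.5]{BKL}): contracting/centralizer/expanding coordinates, divergence of $\{g_t\,g\Lambda_0\}$ governed by the $U^+$-coordinate via Mahler's criterion, the product formula for Hausdorff dimension, and $\dim_H\mathbf{Sing}_d=\frac{d^2}{d+1}$. The local analysis near the identity coset and the lower bound are fine. However, your globalization step contains a genuine gap: you justify passing from a neighborhood of the identity coset to all of $\mathcal{L}_{d+1}$ by saying the divergent set $D$ is ``$\operatorname{SL}_{d+1}(\widetilde{\mathcal{K}})$-invariant in the appropriate sense (translating by a fixed group element is bi-Lipschitz).'' Translation by a fixed $h$ is indeed bi-Lipschitz, but it does \emph{not} preserve $D$: $g_t h\Delta=(g_t h g_{-t})\,g_t\Delta$ and $g_t h g_{-t}$ is unbounded for general $h$, so divergence of $\{g_t h\Delta\}$ is not equivalent to divergence of $\{g_t\Delta\}$. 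Consequently, in a chart around a general point $h\Lambda_0$, the fiber of $D$ over the $U^-AM$-coordinates is \emph{not} $\mathbf{Sing}_d$; a priori it is the set $\{\boldsymbol{\theta}:\ \{g_t\,u^+(\boldsymbol{\theta})h\Lambda_0\}\text{ divergent}\}$, and bounding its dimension is exactly the point that needs an argument. Since the corollary requires a matching upper bound, this step cannot be waved through.

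The gap is fixable, and the fix is what the cited proofs actually use: $D$ is invariant under left multiplication by the non-expanding parabolic $P=U^-\cdot Z_G(g_t)$ only (for $c\in Z_G(g_t)$, $g_t c\Delta=c\,g_t\Delta$; for $u^-\in U^-$, $g_tu^-g_{-t}$ stays in a fixed compact set for $t\geq 0$). Working upstairs in $G=\operatorname{SL}_{d+1}(\widetilde{\mathcal{K}})$, the big cell $P\,U^+$ is open with complement of dimension at most $\dim_H G-1<\dim_H G-\frac{d}{d+1}$, and for $g=p\,u^+(\boldsymbol{\theta})$ one has $g\Lambda_0\in D$ if and only if $\boldsymbol{\theta}\in\mathbf{Sing}_d$, on the nose; since $U^+$ is abelian, in a chart around $h\Lambda_0$ the fiber becomes a translate $\mathbf{Sing}_d-\boldsymbol{\theta}_0(p)$ with $\boldsymbol{\theta}_0$ depending Lipschitz-continuously on the base coordinates, which is handled by a bi-Lipschitz shear. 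Then countably many product charts of $P\times U^+$, the product formula, and the local isometry $G\to\mathcal{L}_{d+1}$ give $\dim_H D=\left(\dim_H G-d\right)+\frac{d^2}{d+1}=\dim_H G-\frac{d}{d+1}$, with the exceptional set outside the big cell contributing at most $\dim_H G-1$. If you replace your invariance claim by this $P$-invariance plus big-cell argument (or by the fiber-wise uniform upper bound as in \cite[Proposition 6.5]{BKL}), the proof is complete and coincides with the paper's intended one.
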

\begin{proof}
    The same proof verbatim of \cite[Corollary 1.2]{CC16} works in our setting. See also the proof of \cite[Proposition 6.5]{BKL}. 
\end{proof}

Finally, we discuss how we are able to obtain stronger results than Theorem \ref{Thm_CC}. We basically follow the proof of \cite{CC16}, but the ultrametric nature enables us to obtain better results. By the ultrametric nature, we mean the following lemma.
\begin{lemma}[Ultrametric inequality]
    \label{umIneq}
    For every $\mathbf{v},\mathbf{w}\in \widetilde{\mathcal{K}}^d$, we have
    \begin{equation}
    \label{eqn:ultrametric}
        \| \mathbf{v}+\mathbf{w}\|\leq \max\{\|\mathbf{v}\|,\|\mathbf{w}\|\}.
    \end{equation}
    Moreover, the inequality in (\ref{eqn:ultrametric}) is an equality if $\|\mathbf{v}\|\neq \|\mathbf{w}\|$.
\end{lemma}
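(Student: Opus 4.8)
The plan is to reduce the statement to the one--dimensional case, i.e.\ to the non-archimedean property of the absolute value $|\cdot|$ on $\widetilde{\mathcal{K}}$, and then pass to the supremum norm coordinate by coordinate.

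First I would record the scalar inequality. Any nonzero $f=\sum_{n\geq -N}a_n x^{-n}\in\widetilde{\mathcal{K}}$ has $|f|=q^{\deg(f)}=q^{-\operatorname{ord}(f)}$, where $\operatorname{ord}(f)=\min\{n:a_n\neq 0\}$; this is consistent with $|f|=q^{\deg f}$ for $f\in\mathcal{R}$. Given nonzero $f,g\in\widetilde{\mathcal{K}}$ with $f+g\neq 0$, the coefficient of $x^{-n}$ in $f+g$ vanishes for every $n<\min\{\operatorname{ord}(f),\operatorname{ord}(g)\}$, so $\operatorname{ord}(f+g)\geq\min\{\operatorname{ord}(f),\operatorname{ord}(g)\}$; and if $\operatorname{ord}(f)<\operatorname{ord}(g)$, then the coefficient of $x^{-\operatorname{ord}(f)}$ in $f+g$ equals $a_{\operatorname{ord}(f)}\neq 0$, whence $\operatorname{ord}(f+g)=\min\{\operatorname{ord}(f),\operatorname{ord}(g)\}$. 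Translating through $|\cdot|=q^{-\operatorname{ord}(\cdot)}$, and handling the trivial cases $f=0$, $g=0$, $f+g=0$ separately, this gives
\[
|f+g|\leq\max\{|f|,|g|\}\qquad\text{for all }f,g\in\widetilde{\mathcal{K}},
\]
with equality whenever $|f|\neq|g|$.

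Next I would deduce the vector inequality. Writing $\mathbf{v}=(v_1,\dots,v_d)$ and $\mathbf{w}=(w_1,\dots,w_d)$, for each coordinate $i$ we have $|v_i+w_i|\leq\max\{|v_i|,|w_i|\}\leq\max\{\|\mathbf{v}\|,\|\mathbf{w}\|\}$, and taking the maximum over $i$ yields $\|\mathbf{v}+\mathbf{w}\|\leq\max\{\|\mathbf{v}\|,\|\mathbf{w}\|\}$. For the equality clause, assume $\|\mathbf{v}\|\neq\|\mathbf{w}\|$, say $\|\mathbf{w}\|<\|\mathbf{v}\|$, and choose an index $i$ with $|v_i|=\|\mathbf{v}\|$. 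Then $|w_i|\leq\|\mathbf{w}\|<\|\mathbf{v}\|=|v_i|$, so the scalar equality case gives $|v_i+w_i|=|v_i|=\|\mathbf{v}\|$, and therefore $\|\mathbf{v}+\mathbf{w}\|\geq|v_i+w_i|=\|\mathbf{v}\|=\max\{\|\mathbf{v}\|,\|\mathbf{w}\|\}$; combined with the inequality already proved, this forces equality.

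I do not expect any real obstacle here: the entire content is the behavior of $\operatorname{ord}$ (equivalently $\deg$) under addition of Laurent series, namely that the lowest-order term survives unless it is cancelled, which happens only when the two orders coincide. Everything else is routine bookkeeping with the supremum norm, and the passage from the scalar estimate to the vector estimate is the only other ingredient.
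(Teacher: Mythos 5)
Your proof is correct: the scalar non-archimedean estimate via $\operatorname{ord}$ (with equality when the orders differ) followed by the coordinatewise passage to the supremum norm is exactly the standard argument. The paper states Lemma \ref{umIneq} without proof as a well-known property of $\widetilde{\mathcal{K}}$, so there is nothing further to compare; your write-up fills in precisely the routine verification the authors omit.
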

There are two main reasons for the difference between Theorems \ref{Thm_CC} and \ref{Thm_Main}. The first reason is about the characterization of $\varepsilon$-Dirichlet improvability in terms of the length of the shortest vector in the \textit{Farey lattice} given in Definition \ref{Def_Fareylattice} with respect to best approximations. This corresponds to \cite[Corollary 4.4]{CC16}. In the real case, the characterization works up to multiplication by $1/2$ and by $2$, but the ultrametric nature allows us to obtain the exact equivalence as in Corollary \ref{Cor_SingChar}. The second reason comes from Minkowski's second theorem (Theorem \ref{Mink2nd}). While Minkowski's second theorem on $\mathbb{R}^d$ works with $2^d/d!$ and $2^d$ multiplications (see \cite[Equation (M)]{CC16} and \cite{Cas59}), we have the equality over $\widetilde{\mathcal{K}}^d$ as in Theorem \ref{Mink2nd}. These two reasons ultimately ensure that the fractal structures we build work well for all $0<\varepsilon<1$, unlike \cite{CC16}.

\vspace{.3cm}
The article is organized as follows. In \cref{Sec:pre}, we recall some basic estimates in number theory and geometry of numbers which will be used later. In \cref{sec:SSCov}, we recall the definition of self-similar structures and some related dimension results given in \cite{CC16}.
In \cref{sec:bestApprox}, we characterize $\varepsilon$-Dirichlet improvability in terms of best approximations, define the Farey lattice, and connect it with best approximations. In \cref{sec:UpBnd} and \cref{sec:LowBnd}, we define the fractal structures using the Farey lattices and compute the upper and lower bounds of the Hausdorff dimensions of $\mathbf{DI}_d(\varepsilon)$ and $\mathbf{Sing}_d$, respectively. 

\vspace{5mm}
\noindent\textbf{Acknowledgments}. We would like to thank the anonymous referees for carefully reading this
article and providing helpful comments.
\section{Preliminaries}\label{Sec:pre}
\subsection{Basic number theory}\label{subsec:BasicNT}
In this subsection, we gather some notation and facts about basic number theory in function fields following \cite{Ros}. 

Recall that $\mathbb{F}_q$ is a finite field with $q$ elements for a prime power $q$ and $\mathcal{R}=\mathbb{F}_q[x]$ is the ring of polynomials over $\mathbb{F}_q$. A non-zero polynomial $f\in\mathcal{R}$ is called a \textit{monic polynomial} if the leading coefficient is equal to 1. Monic polynomials play the role of positive integers. For $f,g \in \mathcal{R}$, the \textit{greatest common divisor} of $f$ and $g$, denoted $\gcd(f,g)$, is the monic polynomial of largest degree which divides both $f$ and $g$. Define the Euler totient function $\phi(f)$ of $f\in\mathcal{R}$ to be the number of non-zero polynomials of degree less than $\deg(f)$ and relatively prime to $f$, that is,
$$\phi(f)=\#\{g\in\mathcal{R}\setminus\{0\}:\deg(g)<\deg(f)\ \text{ and } \gcd(f,g)=1\}.$$
Consider the divisor function $D_1(f)$ of $f\in \mathcal{R}$ defined by $D_1(f)=\sum_{g|f}|g|$, where $\sum_{g|f}$ stands for the summation over all monic divisors of $f$.
We will need the following estimates:
\begin{lemma}{\cite[Proposition 2.7]{Ros}}
\label{lem:phiEval}
For every integer $\ell\geq 1$, we have 
$$\sum_{\deg(f)=\ell}\phi(f)=\frac{(q-1)^2}{q}q^{2\ell}.$$
\end{lemma}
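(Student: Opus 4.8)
The plan is to reduce the assertion to a sum over \emph{monic} polynomials, apply the classical divisor-sum identity for the Euler totient, and then read off the quantity $a_\ell:=\sum_{\deg(g)=\ell,\ g\ \mathrm{monic}}\phi(g)$ from a short recursion.

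First, note that multiplying $f$ by a unit $c\in\mathbb{F}_q\setminus\{0\}$ changes neither $\deg(f)$ nor $\gcd(g,f)$ for any $g$, so $\phi(cf)=\phi(f)$. Since every polynomial of degree $\ell$ can be written uniquely as $cf$ with $c$ a unit and $f$ monic of degree $\ell$, the sum in the statement (taken over all polynomials of degree $\ell$) satisfies $\sum_{\deg(f)=\ell}\phi(f)=(q-1)\,a_\ell$. Hence it suffices to prove $a_\ell=q^{2\ell-1}(q-1)$ for every $\ell\geq 1$.

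Next, I would use the function-field analogue of $\sum_{d\mid n}\phi(d)=n$, namely $\sum_{g\mid f,\ g\ \mathrm{monic}}\phi(g)=|f|$ for every monic $f$ (with the standard convention $\phi(1)=1$; the value $0$ dictated by the literal definition is immaterial, as it only affects $a_0$), which is elementary and may be quoted from \cite{Ros}. Summing this identity over all monic $f$ of degree $\ell$, interchanging the order of summation, and using that a monic $g$ of degree $k$ has exactly $q^{\ell-k}$ monic multiples of degree $\ell$, one obtains
\[
q^{2\ell}=\sum_{\substack{\deg(f)=\ell\\ f\ \mathrm{monic}}}|f|=\sum_{\substack{\deg(f)=\ell\\ f\ \mathrm{monic}}}\sum_{\substack{g\mid f\\ g\ \mathrm{monic}}}\phi(g)=\sum_{k=0}^{\ell}a_k\,q^{\ell-k}.
\]
Subtracting $q$ times this identity for $\ell-1$ from the identity for $\ell$ collapses the sum and yields $a_\ell=q^{2\ell}-q^{2\ell-1}=q^{2\ell-1}(q-1)$ for all $\ell\geq 1$. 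Combining with the first step, $\sum_{\deg(f)=\ell}\phi(f)=(q-1)\,a_\ell=\frac{(q-1)^2}{q}q^{2\ell}$, which is the claimed formula.

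I do not expect a genuine obstacle here: the only delicate point is the mismatch between the definition's value $\phi(1)=0$ and the value $\phi(1)=1$ used in the divisor-sum identity, which cancels out for $\ell\geq 1$ and should merely be flagged. If one prefers to bypass the recursion, an alternative is to write $\phi(f)=\sum_{g\mid f}\mu(f/g)|g|$ by M\"obius inversion, sum over all $f$ of degree $\ell$, and use that $\sum_{\deg(d)=j,\ d\ \mathrm{monic}}\mu(d)$ equals $1$, $-q$, $0$ according as $j=0$, $j=1$, $j\geq 2$; this reproduces $\frac{(q-1)^2}{q}q^{2\ell}$ in a single computation.
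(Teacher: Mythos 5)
Your argument is correct. The reduction in your first step is exactly what the paper does: $\phi$ is invariant under multiplication by units of $\mathbb{F}_q$, so the sum over all polynomials of degree $\ell$ is $(q-1)$ times the sum over monic ones. The difference is in how the monic sum $a_\ell=\sum_{\deg(g)=\ell,\,g\text{ monic}}\phi(g)=q^{2\ell-1}(q-1)$ is obtained: the paper simply quotes this as \cite[Proposition 2.7]{Ros}, whereas you reprove it from the divisor-sum identity $\sum_{g\mid f}\phi(g)=|f|$ via the recursion $\sum_{k=0}^{\ell}a_k q^{\ell-k}=q^{2\ell}$ (or, alternatively, by M\"obius inversion using $\sum_{\deg(d)=j}\mu(d)=1,-q,0$ for $j=0,1,\geq 2$). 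Both of your derivations are sound, and your handling of the convention $\phi(1)=1$ versus the literal value $0$ from the paper's definition is right: $a_0$ enters both instances of the identity with the same coefficient $q^{\ell}$ (after multiplying the $\ell-1$ case by $q$), so it cancels and the formula for $a_\ell$ is unaffected for $\ell\geq 1$. What your route buys is a self-contained proof not relying on Rosen's book, at the cost of a little extra bookkeeping; the paper's route is shorter because the monic identity is a standard quoted fact. There is no gap.
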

\begin{proof}
    It follows from \cite[Proposition 2.7]{Ros} that
    $$\sum_{\substack{\deg(f)=\ell \\ f\ \text{monic}}}\phi(f)=\frac{q-1}{q}q^{2\ell}.$$
    Given monic polynomial $f\in\mathcal{R}$, if $g\in \mathbb{F}_q^* f$, then $\phi(g)=\phi(f)$. Hence the lemma follows.
\end{proof}
\begin{lemma}
    \label{lem:D1Eval}
    For every integer $\ell\geq 1$, we have
    $$\sum_{\deg(f)=\ell}\phi(f)D_1(f)\leq (q-1)q^{3\ell}.$$
\end{lemma}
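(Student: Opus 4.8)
The plan is to bound $\sum_{\deg(f)=\ell}\phi(f)D_1(f)$ by decoupling a pointwise estimate for $\phi$ from an exact evaluation of $\sum_{\deg(f)=\ell}D_1(f)$. Concretely, I would first establish the refinement $\phi(f)\le\frac{q-1}{q}|f|$, valid for every $f\in\mathcal{R}$ with $\deg f\ge1$, and then compute $\sum_{\deg(f)=\ell}D_1(f)=q^{2\ell+1}-q^{\ell}$; multiplying these two facts yields the assertion with a little room to spare.

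For the pointwise bound I would use the Euler product $\phi(f)=|f|\prod_{P\mid f}\bigl(1-|P|^{-1}\bigr)$, the product running over monic irreducibles $P$ dividing $f$ (the function-field analogue of $\varphi(n)=n\prod_{p\mid n}(1-1/p)$; see \cite{Ros}). Any $f$ with $\deg f\ge1$ has at least one monic irreducible factor $P$, and the irreducibles of smallest absolute value are the linear ones with $|P|=q$; hence the factor corresponding to $P$ is at most $\frac{q-1}{q}$ while every remaining factor is $<1$, so $\phi(f)\le\frac{q-1}{q}|f|$. For $\deg f=\ell$ this reads $\phi(f)\le(q-1)q^{\ell-1}$.

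For the divisor sum I would interchange the order of summation. Writing $D_1(f)=\sum_{g\mid f}|g|$ and grouping the pairs $(f,g)$ by the monic divisor $g$: if $\deg g=j\le\ell$, the degree-$\ell$ polynomials $f$ divisible by $g$ correspond bijectively, via $f=gh$, to the polynomials $h$ of degree $\ell-j$, of which there are $(q-1)q^{\ell-j}$. Since there are $q^{j}$ monic polynomials of degree $j$, each of absolute value $q^{j}$, I get
\[
\sum_{\deg(f)=\ell}D_1(f)=\sum_{j=0}^{\ell}q^{2j}(q-1)q^{\ell-j}=(q-1)q^{\ell}\sum_{j=0}^{\ell}q^{j}=q^{2\ell+1}-q^{\ell}.
\]
Combining the two ingredients, $\sum_{\deg(f)=\ell}\phi(f)D_1(f)\le(q-1)q^{\ell-1}\bigl(q^{2\ell+1}-q^{\ell}\bigr)=(q-1)\bigl(q^{3\ell}-q^{2\ell-1}\bigr)\le(q-1)q^{3\ell}$, as claimed.

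I do not anticipate a genuine obstacle; the one point that needs care is that the naive bound $\phi(f)\le|f|$ is too lossy, since it produces a main term of size $q^{3\ell+1}-q^{2\ell}>(q-1)q^{3\ell}$, so the extra factor $\frac{q-1}{q}$ coming from the minimal irreducible absolute value $q$ is what makes the estimate work. An alternative, closer in spirit to Lemma~\ref{lem:phiEval}, is to first reduce to monic $f$ (gaining the factor $q-1$, as $\phi$ and $D_1$ are invariant under scaling by $\mathbb{F}_q^{*}$), then write $f=gh$ with $g,h$ monic, bound $\phi(gh)\le\phi(g)|h|$ by multiplicativity of $\phi$, isolate the term $\deg g=0$ (where $\phi(g)=1$, not matching the formula of Lemma~\ref{lem:phiEval}), and apply the monic form of Lemma~\ref{lem:phiEval} to the remaining sum over $g$; this route even gives the sharper bound $q^{3\ell}$ for the monic sum, hence the same conclusion after multiplying by $q-1$. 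I would present the first argument for brevity.
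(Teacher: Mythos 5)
Your primary argument has a genuine error: the pointwise bound $\phi(f)\le\frac{q-1}{q}\vert f\vert$ is false, and the inequality on the Euler factors is reversed. Since $\vert P\vert\ge q$ for every monic irreducible $P$, each factor satisfies $1-\vert P\vert^{-1}\ge\frac{q-1}{q}$, i.e.\ the linear primes give the \emph{smallest} factor, not an upper bound of $\frac{q-1}{q}$ for an arbitrary prime divisor. Concretely, if $f$ is irreducible of degree $\ell\ge 2$ then $\phi(f)=\vert f\vert-1>\frac{q-1}{q}\vert f\vert$ (e.g.\ $q=2$, $f=x^2+x+1$: $\phi(f)=3>2$). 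Worse, the decoupling itself cannot be repaired: the best uniform bound is $\phi(f)\le q^\ell-1$, and $(q^\ell-1)\sum_{\deg f=\ell}D_1(f)=(q^\ell-1)(q^{2\ell+1}-q^\ell)$ exceeds $(q-1)q^{3\ell}$ for every $\ell\ge 2$, so no pointwise estimate on $\phi$ alone, multiplied against your (correct) evaluation $\sum_{\deg f=\ell}D_1(f)=q^{2\ell+1}-q^\ell$, can give the lemma. The cancellation has to come from the \emph{product} $\phi(f)D_1(f)$: this is exactly what the paper exploits, writing $\phi(f)D_1(f)=\vert f\vert^2\prod_i\bigl(1-\vert P_i\vert^{-(e_i+1)}\bigr)\le\vert f\vert^2$ from the explicit formulas and then summing over the $(q-1)q^\ell$ polynomials of degree $\ell$.

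Your fallback sketched in the last paragraph, by contrast, is sound and you should promote it to the main argument: reduce to monic $f$ (factor $q-1$), expand $D_1(f)=\sum_{g\mid f}\vert g\vert$ and write $f=gh$, use $\phi(gh)\le\phi(g)\vert h\vert$ (valid, since the prime set of $gh$ contains that of $g$ and the extra Euler factors are $<1$), and apply the monic form of Lemma~\ref{lem:phiEval} to $\sum_{\deg g=j}\phi(g)$ with the $j=0$ term handled separately; this gives $q^{3\ell}$ for the monic sum and hence $(q-1)q^{3\ell}$ overall. That route is a legitimately different proof from the paper's (divisor-swapping plus the averaged $\phi$-sum, versus a pointwise bound on $\phi(f)D_1(f)$ via the multiplicative formulas), and both are of comparable length; but as written, the argument you chose to present does not prove the lemma.
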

\begin{proof}
    Given a non-zero polynomial $f\in \mathcal{R}$, let $f=\alpha P_1^{e_1}P_2^{e_2}\cdots P_t^{e_t}$ be its decomposition to irreducible components, where $\alpha \in \mathbb{F}_q^*$, $P_i$ are distinct monic irreducible polynomials, and $e_i$ are non-negative integers.
    It follows from \cite[Propositions 1.7 and 2.4]{Ros} that
    \[
    \phi(f)=\prod_{i=1}^t \left(|P_i|^{e_i}-|P_i|^{e_i-1}\right)\quad\text{and}\quad D_1(f)=\prod_{i=1}^t \frac{|P_i|^{e_i+1}-1}{|P_i|-1}.
    \]
    Hence, we have
    \[
    \phi(f)D_1(f)=|f|\prod_{i=1}^t\frac{\vert P_i\vert^{e_i+1}-1}{\vert P_i\vert}
    =|f|^2 \prod_{i=1}^t\left(1-\frac{1}{|P_i|^{e_i+1}}\right) \leq |f|^2.
    \]
    Therefore, 
    \[
    \sum_{\deg(f)=\ell} \phi(f)D_1(f)\leq \sum_{\deg(f)=\ell}q^{2\ell}= (q-1)q^{3\ell}.
    \]
\end{proof}

\subsection{Geometry of numbers}
\label{subsec:GON}
An important concept in the geometry of numbers is successive minima. Minkowski's second theorem connects between the successive minima and the determinant/covolume of a lattice. The function field version of this theorem was proved by Mahler \cite{Mah}. In order to state it, we define the determinant of a lattice.
\begin{definition}
    For a lattice $\Lambda=g\mathcal{R}^d$ in $\widetilde{\mathcal{K}}^d$ with $g\in\mathrm{GL}_d(\widetilde{\mathcal{K}})$, we define its determinant by $$\det(\Lambda)=\vert \det(g)\vert.$$
\end{definition}

\begin{theorem}[Minkowski's second theorem]
\label{Mink2nd}
For any lattice $\Lambda \subseteq \widetilde{\mathcal{K}}^d$ and $i=1,...d$, let 
$$\lambda_i(\Lambda)=\min\{r>0: \text{ there exist }i \text{ linearly independent vectors in } \Lambda \text{ of norm }\leq r\}.$$
Then,
\begin{equation*}
    \prod_{i=1}^d \lambda_i(\Lambda)=\det(\Lambda).
\end{equation*}
\end{theorem}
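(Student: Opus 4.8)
The plan is to prove both inequalities $\prod \lambda_i(\Lambda) \leq \det(\Lambda)$ and $\prod \lambda_i(\Lambda) \geq \det(\Lambda)$, exploiting the ultrametric structure which, unlike the real case, makes the "successive minima body" an honest $\mathcal{O}$-module and eliminates all the $2^d/d!$ slack. Here $\mathcal{O} = \mathbb{F}_q[[x^{-1}]]$ denotes the valuation ring of $\widetilde{\mathcal{K}}$, i.e. the closed unit ball. First I would fix a lattice $\Lambda = g\mathcal{R}^d$ and choose linearly independent vectors $\mathbf{w}_1,\dots,\mathbf{w}_d \in \Lambda$ with $\|\mathbf{w}_i\| = \lambda_i(\Lambda) =: \lambda_i$, ordered so $\lambda_1 \leq \cdots \leq \lambda_d$; such vectors exist because the norm takes values in the discrete set $q^{\mathbb{Z}} \cup \{0\}$ and $\Lambda$ is discrete, so each minimum is attained.

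The key structural step is to show that $\mathbf{w}_1,\dots,\mathbf{w}_d$ actually form an $\mathcal{R}$-basis of $\Lambda$. This is where the ultrametric inequality (Lemma \ref{umIneq}) does the real work: if some $\mathbf{v}\in\Lambda$ were not in $\mathcal{R}\mathbf{w}_1 + \cdots + \mathcal{R}\mathbf{w}_d$, one writes $\mathbf{v} = \sum c_i \mathbf{w}_i$ with $c_i \in \widetilde{\mathcal{K}}$, picks the largest index $j$ with $c_j \notin \mathcal{R}$, and uses division in $\mathcal{R}$ to replace $\mathbf{v}$ by $\mathbf{v} - \sum_{i} a_i \mathbf{w}_i$ (with $a_i\in\mathcal{R}$) so that the remaining coefficients $c_i'$ lie in $x^{-1}\mathcal{O}$ for $i\le j$; the ultrametric inequality then forces $\|\mathbf{v} - \sum a_i\mathbf{w}_i\| < \lambda_j$ while the vector together with $\mathbf{w}_1,\dots,\mathbf{w}_{j-1}$ stays $j$ linearly independent vectors, contradicting minimality of $\lambda_j$. (The crucial point is that over an ultrametric field $\|\sum c_i'\mathbf{w}_i\| \leq \max_i |c_i'|\lambda_i$ with no error term, whereas over $\mathbb{R}$ one only gets the triangle inequality and must work with a fundamental domain.) Once $(\mathbf{w}_i)$ is an $\mathcal{R}$-basis, $\det(\Lambda) = |\det(W)|$ where $W$ is the matrix with rows $\mathbf{w}_i$.

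It then remains to compute $|\det W|$. Writing $\mathbf{w}_i = \lambda_i \mathbf{u}_i$ with $\|\mathbf{u}_i\| = 1$, one has $\det W = \big(\prod \lambda_i\big)\det U$ where $U$ has unit-norm rows, so $|\det U|\leq 1$ and hence $\prod\lambda_i \geq \det(\Lambda)$ is immediate. For the reverse, I would show $|\det U| = 1$, equivalently that the reductions $\bar{\mathbf{u}}_i$ of the $\mathbf{u}_i$ modulo $x^{-1}\mathcal{O}$ are $\mathbb{F}_q$-linearly independent in $\mathbb{F}_q^d$: if not, an $\mathbb{F}_q$-relation $\sum \alpha_i \bar{\mathbf{u}}_i = 0$ with $\alpha_{j}\ne 0$ of maximal index $j$ lifts to $\sum_{i\le j}\alpha_i \mathbf{u}_i \in x^{-1}\mathcal{O}^d$, giving $\|\sum_{i\le j}(\alpha_i\lambda_i/\lambda_j)\mathbf{w}_i\| < 1$, i.e. a lattice vector of norm $<\lambda_j$ independent from $\mathbf{w}_1,\dots,\mathbf{w}_{j-1}$ — again contradicting the definition of $\lambda_j$. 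Therefore $|\det U| = 1$ and $\prod_{i=1}^d\lambda_i(\Lambda) = \det(\Lambda)$.

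The main obstacle is the basis step: one must be careful that the division/reduction argument produces a genuinely new short vector that is linearly independent from the appropriate initial segment $\mathbf{w}_1,\dots,\mathbf{w}_{j-1}$, not merely short. Organizing the two contradiction arguments so that both the "not an $\mathcal{R}$-basis" case and the "$|\det U|<1$" case are handled by the same "choose the top offending index, reduce, contradict $\lambda_j$" template is the cleanest route, and I would state a small lemma encapsulating it. Everything else — discreteness, attainment of minima, the multiplicativity $\det W = \prod\lambda_i \cdot \det U$ — is routine given the ultrametric norm.
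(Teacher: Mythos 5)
The paper does not actually prove Theorem \ref{Mink2nd}: it quotes the result from Mahler \cite{Mah}. So your proposal can only be judged on its own terms, and while its skeleton (minima-realizing vectors form an $\mathcal{R}$-basis, then $|\det U|\le 1$ for the unit-normalized matrix and $|\det U|=1$ via reduction modulo $x^{-1}\mathcal{O}$) is a sound route --- and indeed the basis claim is \emph{true} over $\widetilde{\mathcal{K}}$, unlike over $\mathbb{Z}$ where it fails for $d\ge 5$ --- your ``reduce and contradict $\lambda_j$'' template has a genuine gap: it fails whenever the minima repeat, i.e.\ $\lambda_{j-1}=\lambda_j$. Concretely, in the basis step you produce $\mathbf{v}'=\sum_{i\le j}c_i'\mathbf{w}_i\in\Lambda$ with $|c_i'|\le q^{-1}$ and $c_j'\neq 0$, so $\|\mathbf{v}'\|\le q^{-1}\lambda_j$; but the $j$ independent vectors $\mathbf{w}_1,\dots,\mathbf{w}_{j-1},\mathbf{v}'$ have norms $\lambda_1,\dots,\lambda_{j-1},q^{-1}\lambda_j$, and if $\lambda_{j-1}=\lambda_j$ this only exhibits $j$ independent vectors of norm $\le\lambda_j$, which is exactly what the definition of $\lambda_j$ permits --- no contradiction. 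The identical flaw occurs in your second use of the template (deducing $|\det U|=1$ from an $\mathbb{F}_q$-relation $\sum_i\alpha_i\bar{\mathbf{u}}_i=0$), so both halves of the argument are affected; repeated minima are not an exceptional case (e.g.\ all $\lambda_i$ are equal for $\Lambda=\mathcal{R}^d$).

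The repair is local. In either situation let $j$ be the top index with nonzero coefficient and set $i_0=\min\{i:\lambda_i=\lambda_j\}$. The vector you construct has nonzero $\mathbf{w}_j$-component, hence is linearly independent from $\mathbf{w}_1,\dots,\mathbf{w}_{i_0-1}$, and has norm $\le q^{-1}\lambda_j=q^{-1}\lambda_{i_0}$; together with $\mathbf{w}_1,\dots,\mathbf{w}_{i_0-1}$, whose norms are $\lambda_1\le\cdots\le\lambda_{i_0-1}<\lambda_{i_0}$, this gives $i_0$ independent lattice vectors of norm $<\lambda_{i_0}$, contradicting the minimality in the definition of $\lambda_{i_0}$. (In the $|\det U|=1$ step, also note that the lattice vector must be obtained by scaling the relation $\sum_{i\le j}\alpha_i\mathbf{u}_i\in x^{-1}\mathcal{O}^d$ by $x^{k_j}$ with $q^{k_j}=\lambda_j$, so the coefficients are $\alpha_i x^{k_j-k_i}\in\mathcal{R}$, i.e.\ the ratios $\lambda_j/\lambda_i$, not $\lambda_i/\lambda_j$ as written, and the resulting norm bound is $<\lambda_j$ rather than $<1$.) With these corrections your proof is complete. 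You could also note a shorter route consistent with the paper's toolkit: by \cite[Theorem 1.15]{A23}, used elsewhere in the paper, every lattice is $\operatorname{SL}_d(\mathcal{O})$-equivalent to $\mathcal{R}x^{k_1}\mathbf{e}_1\oplus\cdots\oplus\mathcal{R}x^{k_d}\mathbf{e}_d$ with $q^{k_i}=\lambda_i(\Lambda)$, and since such matrices preserve the norm and have $|\det|=1$, both sides of the identity equal $q^{k_1+\cdots+k_d}$ by inspection.
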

Throughout this paper, $B(\mathbf{x},r)$ stands for the closed ball of radius $r>0$ centered at $\mathbf{x}\in\widetilde{\mathcal{K}}^d$, that is, $B(\mathbf{x},r)=\{\mathbf{y}\in\widetilde{\mathcal{K}}^d : \|\mathbf{x}-\mathbf{y}\|\leq r\}$. 
The following lattice point counting result will be used for the upper bound of $\dim_H\mathbf{DI}_d(\varepsilon)$.
\begin{proposition}\label{Prop_BK}\cite[Lemma 6.2]{BK}
    Let $\Lambda\subseteq \widetilde{\mathcal{K}}^d$ be a lattice. Then
    \[
    \# (\Lambda\cap B(0,r)) = \prod_{i=1}^d \left\lceil \frac{qr}{\lambda_i(\Lambda)} \right\rceil.
    \]
\end{proposition}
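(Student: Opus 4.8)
The plan is to reduce to the case $r=q^{k}$ with $k\in\mathbb{Z}$ — legitimate since all norms on $\widetilde{\mathcal{K}}^{d}$ take values in $q^{\mathbb{Z}}\cup\{0\}$ — and then to exploit the exact orthogonality that the ultrametric provides (in $\mathbb{R}^{d}$ one only has a Minkowski-reduced basis, losing the factors $2^{d}/d!$ and $2^{d}$). Concretely, the whole argument rests on the following claim: every lattice $\Lambda\subseteq\widetilde{\mathcal{K}}^{d}$ has an $\mathcal{R}$-basis $v_{1},\dots,v_{d}$ with $\|v_{i}\|=\lambda_{i}(\Lambda)$ and
\[
\Bigl\|\sum_{i=1}^{d}a_{i}v_{i}\Bigr\|=\max_{1\le i\le d}|a_{i}|\,\lambda_{i}(\Lambda)\qquad\text{for all }a_{1},\dots,a_{d}\in\widetilde{\mathcal{K}}.
\]

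Granting the claim, the proposition is pure bookkeeping. Write $\lambda_{i}:=\lambda_{i}(\Lambda)=q^{\ell_{i}}$. A vector $\sum_{i}a_{i}v_{i}$ with $a_{i}\in\mathcal{R}$ lies in $B(0,q^{k})$ if and only if $|a_{i}|\le q^{k-\ell_{i}}$ for every $i$, i.e. each $a_{i}$ is either $0$ or a polynomial of degree at most $k-\ell_{i}$. The number of such $a_{i}$ is $q^{k-\ell_{i}+1}$ when $k\ge\ell_{i}$ and is $1$ otherwise; in both cases this equals $\lceil q^{k-\ell_{i}+1}\rceil=\lceil qr/\lambda_{i}\rceil$ (a power of $q$ is its own ceiling when $\ge1$ and has ceiling $1$ when $<1$). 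Multiplying over $i$ gives $\#(\Lambda\cap B(0,r))=\prod_{i=1}^{d}\lceil qr/\lambda_{i}\rceil$.

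The substantive step, and the one I expect to be the main obstacle, is the claim; I would prove it by induction on $d$, phrased for an arbitrary ultrametric norm on a finite-dimensional $\widetilde{\mathcal{K}}$-vector space taking values in $q^{\mathbb{Z}}\cup\{0\}$, so that it survives passage to a quotient norm. Pick $v_{1}\in\Lambda$ with $\|v_{1}\|=\lambda_{1}(\Lambda)$; then $v_{1}$ is primitive, since $v_{1}=fw$ with $f\in\mathcal{R}$, $\deg f\ge1$, would give $\|w\|=q^{-\deg f}\|v_{1}\|<\lambda_{1}(\Lambda)$. Hence $\Lambda\cap\widetilde{\mathcal{K}}v_{1}=\mathcal{R}v_{1}$, so the image of $\Lambda$ under the quotient map $\pi\colon\widetilde{\mathcal{K}}^{d}\to\widetilde{\mathcal{K}}^{d}/\widetilde{\mathcal{K}}v_{1}$ (with the quotient norm) is again a lattice, and induction furnishes an orthogonal basis $\bar w_{2},\dots,\bar w_{d}$ of $\pi(\Lambda)$ with $\|\bar w_{i}\|=\lambda_{i-1}(\pi(\Lambda))$.

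The crux is the elementary estimate coming from the decomposition $\widetilde{\mathcal{K}}=\mathcal{R}\oplus x^{-1}\mathcal{O}$ (where $\mathcal{O}=\mathbb{F}_{q}[[x^{-1}]]$ is the valuation ring, so $x^{-1}\mathcal{O}=\{c:|c|\le q^{-1}\}$): for every $y\in\Lambda\setminus\mathcal{R}v_{1}$ and every $a\in\widetilde{\mathcal{K}}$ one has $\|y+av_{1}\|\ge\lambda_{1}(\Lambda)$. Indeed, writing $a=p+c$ with $p\in\mathcal{R}$, $|c|\le q^{-1}$, the element $y+pv_{1}$ is a nonzero vector of $\Lambda$, so $\|y+pv_{1}\|\ge\lambda_{1}(\Lambda)>q^{-1}\lambda_{1}(\Lambda)\ge\|cv_{1}\|$, and Lemma~\ref{umIneq} forces $\|y+av_{1}\|=\|y+pv_{1}\|\ge\lambda_{1}(\Lambda)$. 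Applying this with $y$ a lift of $\bar w_{i}$ and with $a$ a minimizer of $\|y+av_{1}\|$ shows that the minimum over $a\in\widetilde{\mathcal{K}}$ is already attained at some $p\in\mathcal{R}$, so each $\bar w_{i}$ lifts to $w_{i}\in\Lambda$ with $\|w_{i}\|=\|\bar w_{i}\|$; taking the minimum over $y$ also gives $\lambda_{1}(\pi(\Lambda))\ge\lambda_{1}(\Lambda)$. One then checks that $v_{1},w_{2},\dots,w_{d}$ is an orthogonal $\mathcal{R}$-basis of $\Lambda$ — by comparing $\|\sum a_{i}v_{i}\|$ with the norm of its $\pi$-image and with $\|a_{1}v_{1}\|=|a_{1}|\lambda_{1}(\Lambda)$ — and that its norms $\lambda_{1}(\Lambda)\le\lambda_{1}(\pi(\Lambda))\le\cdots\le\lambda_{d-1}(\pi(\Lambda))$ are exactly the successive minima of $\Lambda$, by the standard fact that an orthogonal basis $e_{1},\dots,e_{d}$ with $\|e_{1}\|\le\cdots\le\|e_{d}\|$ satisfies $\lambda_{j}(\Lambda)=\|e_{j}\|$. (As a consistency check, orthogonality also yields $\det\Lambda=\prod_{i}\lambda_{i}(\Lambda)$, in agreement with Theorem~\ref{Mink2nd}.)
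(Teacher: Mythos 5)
There is nothing in the paper to compare against here: Proposition \ref{Prop_BK} is quoted from \cite[Lemma 6.2]{BK} without proof. Judged on its own, your argument is sound. The bookkeeping step is correct once one has an orthogonal basis $v_1,\dots,v_d$ of $\Lambda$ with $\|v_i\|=\lambda_i(\Lambda)$, and your inductive construction of such a basis works: the shortest vector is primitive, so $\Lambda\cap\widetilde{\mathcal{K}}v_1=\mathcal{R}v_1$ and $\pi(\Lambda)$ is a lattice for the quotient norm; the splitting $\widetilde{\mathcal{K}}=\mathcal{R}\oplus x^{-1}\mathcal{O}$ together with the equality case of Lemma \ref{umIneq} shows the quotient norm is attained at a lift in $\Lambda$ and is $\geq\lambda_1(\Lambda)$; and then the inequality $\|\xi\|\geq\|\pi(\xi)\|$ combined with the ultrametric inequality yields orthogonality of $v_1,w_2,\dots,w_d$, whose norms, being nondecreasing, must be the successive minima. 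Be aware, though, that this existence statement is exactly what the paper already invokes in \cref{subsec:Orth} via the Hadamard inequality \cite[Lemma 2.4]{RW} and Theorem \ref{Mink2nd}, so the bulk of your work could have been replaced by that citation. One caveat about your opening reduction: replacing $r$ by the largest power of $q$ below it leaves the left-hand side unchanged but can strictly increase the right-hand side, so the asserted equality literally holds only for $r\in q^{\mathbb{Z}}$; for instance $d=1$, $\Lambda=\mathcal{R}$, $q=2$, $r=3/2$ gives $\#(\Lambda\cap B(0,r))=2$ while $\lceil qr/\lambda_1\rceil=3$. For general $r$ one only gets $\#(\Lambda\cap B(0,r))\leq\prod_i\lceil qr/\lambda_i(\Lambda)\rceil$. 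This is a defect of the statement as reproduced rather than of your proof, and it is harmless for the paper, which uses the proposition either with radii in $q^{\mathbb{Z}}$ (Proposition \ref{D(u)Cnt}) or merely as an upper bound (Proposition \ref{E(u,v,epsilon)Cnt}), but your claim that the reduction is ``legitimate'' for proving the equality as stated should be qualified accordingly.
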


We shall prove positive characteristic analogues of the results of \cite[Section 2]{CC16}. Firstly, given $X\subseteq \widetilde{\mathcal{K}}^d$, we denote the radius of the largest ball disjoint from $X$ by
$$e(X)=\sup\{r>0:\exists \alpha\in \widetilde{\mathcal{K}}^d: B(\alpha,r)\cap X=\emptyset\}$$
Let $\mathcal{O}$ be the maximal compact subgroup of $\widetilde{\mathcal{K}}$, that is
$$\mathcal{O}=\mathbb{F}_q\left[\left[x^{-1}\right]\right]=\{f\in \widetilde{\mathcal{K}}:\vert f\vert\leq 1\}.$$
Then, the group of isometries for the $\operatorname{SL}_d(\widetilde{\mathcal{K}})$ action on $\mathcal{L}_d$ is $\operatorname{SL}_d(\mathcal{O})$. 
\begin{lemma}
The function $e$ is $\operatorname{SL}_d(\mathcal{O})$ invariant.
\end{lemma}
\begin{proof}
    Denote $r:=e(X)$ and let $u\in \operatorname{SL}_d(\mathcal{O})$. Assume towards a contradiction that for every $\alpha\in\widetilde{\mathcal{K}}^d$, $B(\alpha,r)\cap uX\neq \emptyset$. Let $\mathbf{v}\in B(\alpha,r)\cap uX$. Then, $u^{-1}\mathbf{v}\in X$ and 
    $$\Vert u^{-1}\alpha-u^{-1}\mathbf{v}\Vert=\Vert \alpha-\mathbf{v}\Vert\leq r.$$
    Hence, for every $\alpha\in \widetilde{\mathcal{K}}^d$, $B(u^{-1}\alpha,r)\cap X\neq \emptyset$, which is a contradiction to the definition of $r$. Thus, $e(uX)\geq e(X)$, and by replacing $u$ with $u^{-1}$ and $X$ with $uX$, we obtain that $e(X)=e(uX)$.
\end{proof}
\begin{lemma}
\label{e(X)Lattice}
For any lattice $\Lambda\subseteq \widetilde{\mathcal{K}}^d$,
\begin{equation*}
    q^2e(\Lambda)=\lambda_d(\Lambda)
\end{equation*}
\end{lemma}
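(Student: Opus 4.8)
The plan is to prove $q^2 e(\Lambda)=\lambda_d(\Lambda)$ for any lattice $\Lambda\subseteq\widetilde{\mathcal{K}}^d$ by relating the covering radius $e(\Lambda)$ to the last successive minimum, exploiting the ultrametric structure which forces balls to be cosets of $\mathcal{O}$-modules. First I would observe that by the $\operatorname{SL}_d(\mathcal{O})$-invariance of $e$ just established, together with the fact that any lattice can be put by an element of $\operatorname{SL}_d(\mathcal{O})$ (or more precisely by row operations over $\mathcal{O}$, using the ultrametric to perform a "Hermite/Minkowski reduction") into a form adapted to its successive minima, it suffices to treat a convenient normal form. Concretely, by Minkowski's second theorem (Theorem \ref{Mink2nd}) and the non-archimedean reduction theory, one may assume $\Lambda$ has a basis $\mathbf{w}_1,\dots,\mathbf{w}_d$ with $\|\mathbf{w}_i\|=\lambda_i(\Lambda)$ and such that the "staircase" structure holds: the module generated by $\mathbf{w}_1,\dots,\mathbf{w}_d$ over $\mathcal{O}$ is exactly the product of balls $B(0,\lambda_1)\times\cdots\times B(0,\lambda_d)$ after a suitable isometry.

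Next I would prove the two inequalities. For $q^2 e(\Lambda)\geq\lambda_d(\Lambda)$: I need to exhibit a point $\alpha$ and radius $r$ with $r$ slightly below $\lambda_d(\Lambda)/q^2$ such that $B(\alpha,r)\cap\Lambda=\emptyset$. Taking $\alpha$ to be (in the reduced coordinates) a point whose last coordinate is a Laurent series with leading term of absolute value exactly $\lambda_d(\Lambda)/q$, i.e. $\alpha_d=x^{\deg(\lambda_d)-1}$ in the appropriate normalization, one checks that no lattice vector can lie within distance $\lambda_d/q^2$ of $\alpha$: any lattice vector whose last coordinate is within $\lambda_d/q^2$ of $\alpha_d$ would have last coordinate of absolute value $\lambda_d/q<\lambda_d(\Lambda)$, hence (since the $d$-th minimum is realized only by using $\mathbf{w}_d$) would force that vector to lie in the span of $\mathbf{w}_1,\dots,\mathbf{w}_{d-1}$, which have vanishing last coordinate — contradiction. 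For the reverse inequality $q^2 e(\Lambda)\leq\lambda_d(\Lambda)$: given any $\alpha\in\widetilde{\mathcal{K}}^d$, I must find a lattice point within distance $\lambda_d(\Lambda)/q^2$. Here I would use that the $\mathcal{O}$-span of the reduced basis is $\prod_i B(0,\lambda_i)$ (an ultrametric ball is its own "fundamental domain up to translation"), so reducing $\alpha$ modulo $\Lambda$ lands it in a box whose $i$-th side has radius $\lambda_i/q$; the largest such side is $\lambda_d/q$, but one extra factor of $q$ comes from the fact that the reduced point, lying in $B(0,\lambda_i/q)$ which is a strict sub-ball, can be pushed to within $\lambda_i/q^2$ of a lattice point by one further reduction step — this is exactly the ultrametric gain encoded in $\lceil qr/\lambda_i\rceil$ of Proposition \ref{Prop_BK}.

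Actually the cleanest route is probably to relate $e(\Lambda)$ directly to lattice-point counting: $e(\Lambda)\geq r$ iff some ball $B(\alpha,r)$ misses $\Lambda$, iff $\bigcup_{\mathbf{v}\in\Lambda}B(\mathbf{v},r)\neq\widetilde{\mathcal{K}}^d$. Since balls of radius $r$ either coincide or are disjoint in an ultrametric space, $\bigcup_{\mathbf{v}\in\Lambda}B(\mathbf{v},r)=\widetilde{\mathcal{K}}^d$ precisely when $\Lambda$ together with $B(0,r)$ generates everything, i.e. when $\Lambda+B(0,r)=\widetilde{\mathcal{K}}^d$; and $\Lambda+B(0,r)$ is a subgroup, so this holds iff the quotient $\widetilde{\mathcal{K}}^d/(\Lambda+B(0,r))$ is trivial. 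Computing the "index" via determinants: $B(0,r)$ is an $\mathcal{O}$-module with $\#(B(0,r)+\Lambda/\Lambda)$ governed by $\prod_i\lceil qr/\lambda_i(\Lambda)\rceil$ from Proposition \ref{Prop_BK} (reinterpreted), and this equals the full "volume ratio" exactly when $qr/\lambda_i(\Lambda)\geq 1$ for all $i$ with the product of ceilings saturating — the threshold being $r=\lambda_d(\Lambda)/q$ for coverage, hence $e(\Lambda)$, the sup of radii that fail to cover, equals $\lambda_d(\Lambda)/q^2$ (one discrete step below, by the $q$-adic valuation taking integer values on $\deg$).

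The main obstacle I anticipate is making the reduction-theory step rigorous and clean: asserting that every lattice admits a basis simultaneously realizing all successive minima AND generating the product-of-balls $\mathcal{O}$-module is true in the non-archimedean setting (unlike over $\mathbb{R}$) but needs a careful induction, and one must be careful that the isometry group used is exactly $\operatorname{SL}_d(\mathcal{O})$ rather than the larger $\mathrm{GL}_d(\mathcal{O})$ — a determinant-scaling adjustment may be needed. Once that structural lemma is in place, both inequalities reduce to bookkeeping with $q$-powers, and the factor $q^2$ (rather than $q$) emerges naturally because $e$ measures the radius of a ball that \emph{strictly} avoids the lattice, costing one discrete ultrametric step ($\times q^{-1}$) beyond the covering threshold $\lambda_d/q$.
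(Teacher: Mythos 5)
Your overall strategy coincides with the paper's: reduce by $\operatorname{SL}_d(\mathcal{O})$-invariance to a diagonal lattice $\Lambda=\mathcal{R}x^{\log_q\lambda_1}\mathbf{e}_1\oplus\cdots\oplus\mathcal{R}x^{\log_q\lambda_d}\mathbf{e}_d$, get the lower bound $e(\Lambda)\geq q^{-2}\lambda_d(\Lambda)$ by exhibiting a point whose $d$-th coordinate has norm $\lambda_d/q$ (the paper takes $\mathbf{v}=\sum_i x^{\log_q\lambda_i-1}\mathbf{e}_i$ and checks the $d$-th coordinate exactly as you do), and get the upper bound from the fact that balls of radius $\lambda_d/q$ already meet $\Lambda$, together with the discreteness of the value group. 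That part of your proposal is fine.

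However, your second paragraph contains a genuine error in the upper bound: you claim that every $\alpha\in\widetilde{\mathcal{K}}^d$ lies within distance $\lambda_d(\Lambda)/q^2$ of a lattice point, the "extra reduction step" gaining a factor $q$. This is false, and it directly contradicts the ball you just constructed for the lower bound: your own point $\alpha$ (with $\alpha_d=x^{\log_q\lambda_d-1}$) is at distance exactly $\lambda_d/q$ from $\Lambda$, since every lattice vector has $d$-th coordinate either $0$ or of norm $\geq\lambda_d$. The correct covering statement is only $\Lambda+B(0,\lambda_d(\Lambda)/q)=\widetilde{\mathcal{K}}^d$ (coordinate-wise division with remainder in the diagonal form, or the covering theorem the paper cites), and the bound $e(\Lambda)\leq q^{-2}\lambda_d(\Lambda)$ then follows because a ball missing $\Lambda$ must have radius strictly below $\lambda_d/q$, hence, radii being norm values in $q^{\mathbb{Z}}$, at most $\lambda_d/q^2$. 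Your third paragraph does identify this correct threshold ("coverage at $\lambda_d/q$, $e$ one discrete step below"), which is exactly the paper's argument, but as written it rests on Proposition \ref{Prop_BK} "reinterpreted" and a saturation heuristic rather than on an actual proof of the covering statement; to make the proposal sound you should delete the false $\lambda_d/q^2$-covering claim and prove $\Lambda+B(0,\lambda_d/q)=\widetilde{\mathcal{K}}^d$ directly for the reduced lattice (which is a one-line coordinate-wise reduction), then invoke discreteness of the value group.
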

\begin{proof}
Since $e$ is $\operatorname{SL}_d(\mathcal{O})$ invariant, by \cite[Theorem 1.24]{A23}, it suffices to assume that $\Lambda=\mathcal{R}x^{\log_q\lambda_1(\Lambda)}\mathbf{e}_1\oplus\dots \oplus \mathcal{R}x^{\log_q\lambda_d(\Lambda)}\mathbf{e}_d$, where $\mathbf{e}_1,\dots,\mathbf{e}_d$ is the canonical basis for $\widetilde{\mathcal{K}}^d$. Define $\mathbf{v}=\sum_{i=1}^dx^{\log_q\lambda_i(\Lambda)-1}\mathbf{e}_i$. We claim that
\begin{equation*}
    B\left(\mathbf{v},\frac{1}{q^2}\lambda_d(\Lambda)\right)\cap \Lambda=\emptyset.
\end{equation*}
Assume towards a contradiction that $\mathbf{u}=\sum_{i=1}^da_ix^{\log_q\lambda_i(\Lambda)}\mathbf{e}_i\in B\left(\mathbf{v},\frac{1}{q^2}\lambda_d(\Lambda)\right)\cap \Lambda$, where $a_1\dots a_d\in \mathcal{R}$. Then,
\begin{equation}
\label{eqn:maxCoorInBall}
    \Vert \mathbf{u}-\mathbf{v}\Vert=\max_{i=1,\dots,d}\lambda_i(\Lambda)\left|a_i-\frac{1}{x}\right|\leq q^{-2}\lambda_d(\Lambda).
\end{equation}
In particular, (\ref{eqn:maxCoorInBall}) holds for $i=d$, so that $\left|a_i-\frac{1}{x}\right|\leq q^{-2}$, but this cannot hold for $a_i\in \mathcal{R}$. Hence, $B\left(\mathbf{v},\frac{1}{q^2}\lambda_d(\Lambda)\right)\cap \Lambda \neq \emptyset$, so that $e(\Lambda)\geq q^{-2}\lambda_d(\Lambda)$. 

On the other hand, by \cite[Theorem 1.25]{A23},
$$\Lambda+B\left(0,\frac{1}{q}\lambda_d(\Lambda)\right)=\widetilde{\mathcal{K}}^d.$$
Thus, every ball of radius $\frac{1}{q}\lambda_d(\Lambda)$ intersects $\Lambda$. Hence, $e(\Lambda)\leq q^{-2}\lambda_d(\Lambda)$.
\end{proof}
\begin{lemma}
\label{e(hyp)}
    Let $L\subseteq \widetilde{\mathcal{K}}^d$, let $L'\subset L$ be a sublattice of dimension $d-1$ with minimal volume and let $H'=\operatorname{span}_{\widetilde{\mathcal{K}}}(L')$. Then, $q^2e(L+H')=\lambda_d(L)$. 
\end{lemma}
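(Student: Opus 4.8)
The plan is to reduce the statement for the pair $(L,H')$ to the lattice case already established in \Cref{e(X)Lattice}, by showing that the quotient $(L+H')$ looks like a one-dimensional lattice after collapsing the hyperplane $H'$. First I would invoke the $\operatorname{SL}_d(\mathcal{O})$-invariance of $e$ together with the normal form from \cite[Theorem 1.15]{A23} to assume $L = \mathcal{R}x^{\log_q\lambda_1(L)}\mathbf{e}_1\oplus\dots\oplus\mathcal{R}x^{\log_q\lambda_d(L)}\mathbf{e}_d$; the key point is that in this diagonal form, the minimal-covolume sublattice $L'$ of rank $d-1$ is spanned by $\mathbf{e}_1,\dots,\mathbf{e}_{d-1}$ scaled appropriately — this should follow from Minkowski's second theorem (\Cref{Mink2nd}) applied to $L$ and $L'$, since omitting the largest successive minimum $\lambda_d(L)$ minimizes the product $\prod_{i}\lambda_i$, hence the covolume. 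Thus $H' = \operatorname{span}_{\widetilde{\mathcal{K}}}(\mathbf{e}_1,\dots,\mathbf{e}_{d-1})$.

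Next I would analyze $L + H'$ directly. Writing a point of $\widetilde{\mathcal{K}}^d$ as $(\mathbf{y},z)$ with $\mathbf{y}\in\widetilde{\mathcal{K}}^{d-1}$ and $z\in\widetilde{\mathcal{K}}$, we have $L + H' = \widetilde{\mathcal{K}}^{d-1}\times (\mathcal{R}x^{\log_q\lambda_d(L)})$. Then for any center $\alpha = (\boldsymbol{\beta},\gamma)$, the ball $B(\alpha,r)$ is disjoint from $L+H'$ if and only if $B(\gamma, r)\cap \mathcal{R}x^{\log_q\lambda_d(L)} = \emptyset$ in $\widetilde{\mathcal{K}}$, because the $\widetilde{\mathcal{K}}^{d-1}$-component imposes no constraint. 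So $e(L+H')$ equals $e$ of the one-dimensional lattice $\mathcal{R}x^{\log_q\lambda_d(L)}\subseteq\widetilde{\mathcal{K}}$, which by \Cref{e(X)Lattice} (in dimension one) is $q^{-2}\lambda_d(\mathcal{R}x^{\log_q\lambda_d(L)}) = q^{-2}\lambda_d(L)$, giving $q^2 e(L+H') = \lambda_d(L)$ as desired. Concretely, the disjoint ball is realized by $\mathbf{v} = x^{\log_q\lambda_d(L) - 1}\mathbf{e}_d$, and the argument that no lattice point of $L+H'$ lies in $B(\mathbf{v}, q^{-2}\lambda_d(L))$ mirrors the computation around \eqref{eqn:maxCoorInBall} in the proof of \Cref{e(X)Lattice}, now only needing to inspect the $d$-th coordinate.

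I expect the main obstacle to be the justification that $L'$ has the claimed diagonal shape (spanned by the first $d-1$ basis vectors) rather than some skew rank-$(d-1)$ sublattice of smaller covolume. Even though intuitively dropping the longest vector is optimal, one must rule out that a cleverly chosen combination of all $d$ basis directions produces a rank-$(d-1)$ sublattice of $L$ with covolume smaller than $\prod_{i=1}^{d-1}\lambda_i(L)$. The clean way to handle this is: for any rank-$(d-1)$ sublattice $L''\subseteq L$, the successive minima satisfy $\lambda_i(L'')\geq\lambda_i(L)$ for $i=1,\dots,d-1$ (any $i$ independent short vectors in $L''$ are also in $L$), so by Minkowski's second theorem $\operatorname{covol}(L'') = \prod_{i=1}^{d-1}\lambda_i(L'')\geq\prod_{i=1}^{d-1}\lambda_i(L)$, with equality for $L' = \bigoplus_{i=1}^{d-1}\mathcal{R}x^{\log_q\lambda_i(L)}\mathbf{e}_i$; hence any minimal-covolume $L'$ spans the same hyperplane $H'$ (or at worst one giving the same value of $e$, which is all we need). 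Once this normalization is in hand, the remainder is a short ultrametric computation essentially identical to the previous lemma.
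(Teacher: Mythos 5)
Your proposal is correct and takes essentially the same approach as the paper's proof: reduce by the $\operatorname{SL}_d(\mathcal{O})$-invariance of $e$ and \cite[Theorem 1.15]{A23} to the diagonal form of $L$, identify $H'$ with $\operatorname{span}_{\widetilde{\mathcal{K}}}\{\mathbf{e}_1,\dots,\mathbf{e}_{d-1}\}$ via Theorem \ref{Mink2nd}, and compute $e(L+H')=q^{-2}\lambda_d(L)$ using the same disjoint ball centered at $x^{\log_q\lambda_d(L)-1}\mathbf{e}_d$. The only minor difference is that the paper gets the upper bound from monotonicity ($L\subseteq L+H'$, so $e(L+H')\leq e(L)=q^{-2}\lambda_d(L)$ by Lemma \ref{e(X)Lattice}), whereas you read both bounds off the product structure $L+H'=\widetilde{\mathcal{K}}^{d-1}\times\mathcal{R}x^{\log_q\lambda_d(L)}$, and your justification that a minimal-covolume $L'$ has the claimed span is detail the paper asserts without proof.
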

\begin{proof}
    Since $e$ is $\operatorname{SL}_d(\mathcal{O})$ invariant, then, by \cite[Theorem 1.24]{A23} , it suffices to assume that $L=\mathcal{R}x^{\log_q\lambda_1(L)}\mathbf{e}_1\oplus\dots\oplus \mathcal{R}x^{\log_q\lambda_d(L)}\mathbf{e}_d$.
    In this case, Theorem \ref{Mink2nd} implies that $$H'=\operatorname{span}_{\widetilde{\mathcal{K}}}\big\{x^{\log_q\lambda_1(L)}\mathbf{e}_1,\dots ,x^{\log_q\lambda_{d-1}(L)}\mathbf{e}_{d-1}\big\}.$$ 
    Thus, 
    \begin{equation*}
        L+H'=x^{\log_q\lambda_d(L)}\mathbf{e}_d+H'.
    \end{equation*}
    If $\mathbf{v}=\sum_{i=1}^da_ix^{\log_q\lambda_d(L)}\mathbf{e}_i\in B\left(x^{\log_q\lambda_d(L)-1},\frac{1}{q^2}\lambda_d(L)\right)\cap (L+H')$, where $a_d\in \mathcal{R}$, then, 
    \begin{equation*}
        \lambda_d(L)\left|a_d-x^{-1}\right|=\left| a_dx^{\log_q\lambda_d(L)}-x^{\log_q\lambda_d(L)-1}\right|\leq \frac{1}{q^2}\lambda_d(L).
    \end{equation*}
    Thus, $\left|a_d-x^{-1}\right|\leq q^{-2}$, but this cannot hold for $a_d\in \mathcal{R}$. Hence, $$B\left(x^{\log_q\lambda_d(L)}\mathbf{e}_d,\frac{1}{q^2}\lambda_d(L)\right)\cap (L+H')=\emptyset,$$ 
    so that $e(L+H')\geq \frac{1}{q^2}\lambda_d(L)$. On the other hand, since $L\subseteq L+H'$, then, $e(L+H')\leq e(L)=\frac{1}{q^2}\lambda_d(L)$. Hence, $e(L+H')=\frac{1}{q^2}\lambda_d(L)$. 
    \end{proof}

\section{Self-similar coverings}
\label{sec:SSCov}
In this section, we recall the definition of self-similar structures and some related dimension results following \cite{C07,CC16}.
\begin{definition}\label{Def_SelfSimilar}
Let $X$ be a metric space with a metric $d_X$. A \textit{self-similar structure} on $X$ is a triple $(J,\sigma,B)$, where $J$ is countable, $\sigma\subseteq J\times J$, and $B$ is a map from $J$ into the set of bounded subsets of $X$.
For $\alpha\in J$, we denote 
\begin{equation*}
    \sigma(\alpha)=\{\beta\in J:(\alpha,\beta)\in \sigma\}.
\end{equation*}
We say that a sequence $\{\alpha_k\}_{k\in \mathbb{N}}\subseteq J$ is \textit{$\sigma$-admissible} if for every $k\in \mathbb{N}$, $\alpha_{k+1}\in \sigma(\alpha_k)$.  For a bounded subset $E$ of $X$, we denote $\operatorname{diam}(E)=\sup\{d_X(y,z):y,z\in E\}$.

For a subset $S$ of $X$, a \textit{self-similar covering} of $S$ is a self-similar structure $(J,\sigma,B)$ such that for every $\theta\in S$, there exists a $\sigma$-admissible sequence $\{\alpha_k\}_{k\in \mathbb{N}}$ in $J$ satisfying
\begin{itemize}
    \item $\lim_{k\to \infty}\operatorname{diam}B(\alpha_k)=0$;
    \item $\bigcap_{k\in \mathbb{N}}B(\alpha_k)=\{\theta\}$.
\end{itemize}
In this case, we say that the self-similar structure $(J,\sigma,B)$ covers the subset $S$.

We call a triple $(J,\sigma,B)$ a \textit{strictly nested self-similar structure} on $X$ if it is a self-similar structure on $X$ satisfying  
\begin{itemize}
    \item for all $\alpha\in J$, $\sigma(\alpha)$ is finite, $B(\alpha)$ is a nonempty compact subset of $X$, and for all $\beta\in\sigma(\alpha)$ we have $B(\beta)\subset B(\alpha)$;
    \item for all $\sigma$-admissible sequences $(\alpha_k)_{k\in\mathbb{N}}$, $\lim_{k\to\infty}\operatorname{diam}B(\alpha_k)=0$;
    \item for all $\alpha\in J$ and each $\beta\in\sigma(\alpha)$, $\operatorname{diam}B(\beta)<\operatorname{diam}B(\alpha)$.
\end{itemize}
\end{definition}

\begin{theorem}\cite[Theorem 5.3]{C07} 
\label{dim_HUppBnd}
    Let $X$ be a metric space, $S$ be a subset of $X$, $(J,\sigma,B)$ be a self-similar covering of $S$, and $s>0$. If for every $\alpha\in J$, 
    \[
        \sum_{\beta\in \sigma(\alpha)}\operatorname{diam}(B(\beta))^s\leq \operatorname{diam}(B(\alpha))^s,
    \]
    then $\dim_H(S)\leq s$. 
\end{theorem}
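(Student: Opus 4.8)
The plan is to bound the $s$-dimensional Hausdorff measure of $S$ directly, using the admissible sequences guaranteed by the self-similar covering. The key idea is that the hypothesis $\sum_{\beta\in\sigma(\alpha)}\operatorname{diam}(B(\beta))^s\le \operatorname{diam}(B(\alpha))^s$ is a \emph{supermultiplicativity-of-the-wrong-sign}, i.e.\ subadditivity, condition that propagates down each level of the tree: if I fix a root $\alpha_0\in J$ and look at the set $\mathcal{A}_n(\alpha_0)$ of all length-$n$ admissible strings starting from $\alpha_0$, then by induction on $n$,
\[
\sum_{(\alpha_0,\dots,\alpha_n)\in\mathcal{A}_n(\alpha_0)}\operatorname{diam}(B(\alpha_n))^s\ \le\ \operatorname{diam}(B(\alpha_0))^s .
\]
The inductive step is immediate: group the length-$(n+1)$ strings by their length-$n$ prefix and apply the hypothesis to each terminal node $\alpha_n$.

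Next I would turn this into a covering estimate. First reduce to a single root: since $J$ is countable, write $S=\bigcup_{\alpha_0\in J} S_{\alpha_0}$, where $S_{\alpha_0}$ is the set of $\theta\in S$ admitting an admissible sequence with first term $\alpha_0$ and $\bigcap_k B(\alpha_k)=\{\theta\}$; as a countable union it suffices to show $\dim_H(S_{\alpha_0})\le s$ for each fixed $\alpha_0$. Now fix $\delta>0$. For each $\theta\in S_{\alpha_0}$ pick its admissible sequence; since $\operatorname{diam}B(\alpha_k)\to 0$, there is a first index $k(\theta)$ with $\operatorname{diam}B(\alpha_{k(\theta)})<\delta$, and $\theta\in B(\alpha_{k(\theta)})$. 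The collection $\mathcal{C}_\delta$ of all sets $B(\alpha_{k(\theta)})$ arising this way is a countable cover of $S_{\alpha_0}$ by sets of diameter $<\delta$. The one point needing care is to control $\sum_{C\in\mathcal{C}_\delta}\operatorname{diam}(C)^s$: each such $C=B(\alpha_{k})$ sits at the end of some admissible string from $\alpha_0$, so $\mathcal{C}_\delta$ is (after removing repetitions) a subset of $\bigcup_n \{B(\alpha_n):(\alpha_0,\dots,\alpha_n)\in\mathcal{A}_n(\alpha_0)\}$, but I must avoid double-counting across levels. The clean way is to observe that the strings terminating members of $\mathcal{C}_\delta$ form an \emph{antichain} in the prefix order — if one were a prefix of another, the diameters would be strictly decreasing (third bullet of strictly nested, or more simply the fact that $\operatorname{diam}B$ is non-increasing along admissible sequences together with the stopping rule), so one can always pass to the minimal such strings, which are prefix-incomparable. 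For an antichain, a single application of the inductive inequality above (pushing every string out to the maximum of its lengths and using $\operatorname{diam}(B(\beta))^s\le$ total at its parent repeatedly) gives $\sum_{C\in\mathcal{C}_\delta}\operatorname{diam}(C)^s\le\operatorname{diam}(B(\alpha_0))^s$.

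Therefore $\mathcal{H}^s_\delta(S_{\alpha_0})\le \operatorname{diam}(B(\alpha_0))^s$ for every $\delta>0$, hence $\mathcal{H}^s(S_{\alpha_0})\le\operatorname{diam}(B(\alpha_0))^s<\infty$, which forces $\dim_H(S_{\alpha_0})\le s$; taking the countable union over $\alpha_0\in J$ yields $\dim_H(S)\le s$. I expect the main obstacle to be the bookkeeping in the previous paragraph: making the ``prune to an antichain and sum over the antichain'' argument fully rigorous, i.e.\ verifying that the stopping-time cover can indeed be replaced by one indexed by prefix-incomparable strings and that the level-$n$ inequality then bounds the antichain sum. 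This is essentially the standard mass-distribution / Cantor-set covering argument, and since the statement is quoted from \cite[Theorem 5.3]{C07} I would, in the interest of space, give the short self-contained argument above and refer to \cite{C07} for the routine details.
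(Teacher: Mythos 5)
The paper itself does not prove this theorem: it is quoted from \cite[Theorem 5.3]{C07}, so there is no internal argument to compare against. Your proposal reconstructs what is essentially the standard (and Cheung's) proof: the level-by-level induction, the reduction to a single root $\alpha_0$, the stopping-time cover $\mathcal{C}_\delta$, and the conclusion $\mathcal{H}^s_\delta(S_{\alpha_0})\le\operatorname{diam}(B(\alpha_0))^s$ are all correct. In fact your antichain worry is lighter than you think: the stopping strings are \emph{automatically} prefix-incomparable, since if $(\alpha_0,\dots,\alpha_j)$ is a stopping string then $\operatorname{diam}B(\alpha_j)<\delta$, so no admissible sequence passing through $\alpha_j$ can have its first sub-$\delta$ index strictly later than $j$; no pruning to minimal strings is needed.

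The one step that does not work as written is your justification of the antichain bound: ``pushing every string out to the maximum of its lengths'' goes the wrong way. The hypothesis $\sum_{\beta\in\sigma(\alpha)}\operatorname{diam}(B(\beta))^s\le\operatorname{diam}(B(\alpha))^s$ makes the sum \emph{decrease} as you descend the tree, so replacing an antichain element $w$ by its extensions to the maximal length only bounds the extended sum by $\operatorname{diam}(B(\mathrm{last}(w)))^s$, i.e.\ you get an inequality for the pushed-out family, not for the antichain itself (and if $\sigma(\mathrm{last}(w))=\emptyset$ the pushed-out contribution is $0$, which bounds nothing). The correct argument is the same top-down grouping you already used for the level-$n$ inequality, applied to antichains: for a finite prefix-incomparable family $A$ of admissible strings from $\alpha_0$, either $A=\{(\alpha_0)\}$ (trivial), or partition $A=\bigsqcup_{\beta\in\sigma(\alpha_0)}A_\beta$ according to the second entry, apply the inductive hypothesis in each subtree rooted at $\beta$ to get $\sum_{w\in A_\beta}\operatorname{diam}(B(\mathrm{last}(w)))^s\le\operatorname{diam}(B(\beta))^s$, and sum over $\beta$ using the hypothesis at $\alpha_0$; infinite antichains follow by taking suprema over finite subfamilies. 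With that one-line replacement your proof is complete and matches the cited source's approach.
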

\begin{theorem}
\label{dim_HlowBnd}
\cite[Theorem 3.4]{CC16}
    Let $X$ be a metric space and $S$ be a subset of $X$. Suppose that there is a strictly nested self-similar structure $(J,\sigma,B)$ that covers a subset of $S$ and constants $c,s\geq 0$ and $\rho\in (0,1)$ such that
    \begin{enumerate}
        \item\label{item_lower_1} for all $\alpha\in J$ and all $\beta\in \sigma(\alpha)$, there are at most $c$ points $\gamma$ in $\sigma(\alpha)\setminus \{\beta\}$ such that
        $$d_X\left(B(\beta),B(\gamma)\right)\leq \rho\operatorname{diam}B(\alpha);$$
        \item\label{item_lower_2} for all $\alpha\in J,$
        \[
            \sum_{\beta\in \sigma(\alpha)}\operatorname{diam}B(\beta)^s\geq (c+1)\operatorname{diam}B(\alpha)^s.
        \]
    \end{enumerate}
    Then $S$ contains a subset of positive $s$-dimensional Hausdorff measure.
\end{theorem}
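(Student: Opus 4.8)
The plan is to realise a suitable Cantor-type subset $K\subseteq S$ as the attractor of the given structure, to equip it with a Frostman measure built from the two numerical hypotheses, and then to conclude by the mass distribution principle.

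First I would pass to a rooted substructure. Using the hypothesis that $(J,\sigma,B)$ covers a subset of $S$, fix one $\sigma$-admissible sequence witnessing a point of that subset and let $\alpha_{\ast}$ be its first term (taking $\alpha_\ast$ far enough along that $\operatorname{diam}B(\alpha_\ast)>0$); restrict $J$ to the set $J_{\ast}$ of vertices reachable from $\alpha_{\ast}$ along $\sigma$-admissible paths. Since the structure is strictly nested, each $B(\alpha)$ is compact, $B(\beta)\subset B(\alpha)$ for $\beta\in\sigma(\alpha)$, and $\operatorname{diam}B(\alpha_k)\to 0$ along every infinite $\sigma$-admissible sequence. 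Hence
\[
K\;:=\;\bigcap_{n\ge 0}\ \bigcup_{\substack{(\alpha_{\ast}=\alpha_{0},\dots,\alpha_{n})\\ \sigma\text{-admissible}}}B(\alpha_{n})
\]
is a nonempty compact set, each of whose points is the unique common point of a nested chain $B(\alpha_{0})\supset B(\alpha_{1})\supset\cdots$ with $(\alpha_{k})$ $\sigma$-admissible from $\alpha_{\ast}$; the covering hypothesis is exactly what guarantees $K\subseteq S$. It therefore suffices to produce a finite Borel measure $\mu$ on $K$ with $\mu(K)>0$ and $\mu(B(x,r))\le C\,r^{s}$ for all $x\in K$ and all small $r>0$, for then the mass distribution principle gives $\mathcal{H}^{s}(K)\ge\mu(K)/C>0$.

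Next I would define $\mu$ by Kolmogorov extension along the tree of finite $\sigma$-admissible chains from $\alpha_{\ast}$: set $\mu(B(\alpha_{\ast}))=\operatorname{diam}B(\alpha_{\ast})^{s}$ and, for $\alpha\in J_{\ast}$ and $\beta\in\sigma(\alpha)$, declare recursively
\[
\mu(B(\beta))\;=\;\frac{\operatorname{diam}B(\beta)^{s}}{\sum_{\beta'\in\sigma(\alpha)}\operatorname{diam}B(\beta')^{s}}\,\mu(B(\alpha)).
\]
The conditional masses of the children of $\alpha$ sum to $\mu(B(\alpha))$, and since $\sigma(\alpha)$ is finite and the cylinders $B(\alpha)$ are nested and generate the Borel $\sigma$-algebra on $K$ (their diameters tending to $0$), this extends to a well-defined finite Borel measure on $K$ with $\mu(K)=\operatorname{diam}B(\alpha_{\ast})^{s}>0$. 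Now hypothesis (\ref{item_lower_2}) gives $\sum_{\beta'\in\sigma(\alpha)}\operatorname{diam}B(\beta')^{s}\ge(c+1)\operatorname{diam}B(\alpha)^{s}$, so $\mu(B(\beta))\le(c+1)^{-1}\operatorname{diam}B(\alpha)^{-s}\operatorname{diam}B(\beta)^{s}\,\mu(B(\alpha))$; telescoping this along a chain $\alpha_{\ast}=\alpha_{0},\dots,\alpha_{n}$ (the diameter ratios telescoping) yields the cylinder estimate
\[
\mu(B(\alpha_{n}))\;\le\;(c+1)^{-n}\,\operatorname{diam}B(\alpha_{n})^{s}.
\]

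The remaining step — which I expect to be the crux — is to convert this into a Frostman bound $\mu(B(x,r))\le C\,r^{s}$ for balls; the obstacle is that the branching is irregular, so a single ball can meet cylinders of many different generations. Fix $x\in K$ and $r>0$ small enough that $\operatorname{diam}B(\alpha_{\ast})\ge 2r/\rho$, and let $\mathcal{C}$ be the set of cylinders $B(\alpha)$ that are $\subseteq$-maximal subject to $\operatorname{diam}B(\alpha)<2r/\rho$; by strict nesting and diameter decay $\mathcal{C}$ is a partition of $K$, and the parent $\alpha^{-}$ of any $\alpha$ with $B(\alpha)\in\mathcal{C}$ has $\operatorname{diam}B(\alpha^{-})\ge 2r/\rho$. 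Let $\mathcal{T}$ be the finite rooted tree consisting of those $\alpha$ with $B(\alpha)\in\mathcal{C}$ and $B(\alpha)\cap B(x,r)\neq\emptyset$, together with their ancestors up to their lowest common ancestor. A node of $\mathcal{T}$ with at least two children in $\mathcal{T}$ is a proper ancestor of some such $\alpha$, hence has $B$-diameter $\ge 2r/\rho$; and any two of its children in $\mathcal{T}$ pass through subtrees meeting $B(x,r)$, so they lie within distance $2r$ of each other, which by the choice of scale is $\le\rho$ times the parent's diameter. Hypothesis (\ref{item_lower_1}) therefore forces every node of $\mathcal{T}$ to have at most $c+1$ children in $\mathcal{T}$. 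Writing $n_{\alpha}$ for the generation of $\alpha$ and using $\mu(B(\alpha))\le(c+1)^{-n_{\alpha}}\operatorname{diam}B(\alpha)^{s}\le(c+1)^{-n_{\alpha}}(2/\rho)^{s}r^{s}$ from the cylinder estimate, and covering $B(x,r)\cap K$ by the members of $\mathcal{C}$ meeting it,
\[
\mu(B(x,r))\;\le\;\sum_{\substack{\alpha:\,B(\alpha)\in\mathcal{C}\\ B(\alpha)\cap B(x,r)\neq\emptyset}}\mu(B(\alpha))\;\le\;(2/\rho)^{s}r^{s}\sum_{\alpha\ \text{leaf of }\mathcal{T}}(c+1)^{-n_{\alpha}}\;\le\;(2/\rho)^{s}\,r^{s},
\]
the last step because any rooted tree with at most $c+1$ children per node satisfies $\sum_{\text{leaves}}(c+1)^{-\mathrm{depth}}\le 1$ and $n_{\alpha}$ is at least the depth of $\alpha$ in $\mathcal{T}$. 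Taking $C=(2/\rho)^{s}$, the mass distribution principle gives $\mathcal{H}^{s}(K)\ge\mu(K)/C>0$, and since $K\subseteq S$ we conclude that $S$ contains a subset of positive $s$-dimensional Hausdorff measure. All of this is routine except the last counting argument, which is the one place where the separation condition (\ref{item_lower_1}) is genuinely used and which I expect to require the most care in writing up rigorously.
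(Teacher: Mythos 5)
The paper does not prove this statement; it is quoted from [CC16, Theorem 3.4], and your proposal reconstructs essentially the same mass-distribution argument used there: distribute mass over the tree of admissible chains proportionally to $\operatorname{diam}B(\beta)^s$, get the cylinder bound $(c+1)^{-n}\operatorname{diam}B(\alpha_n)^s$ from hypothesis (\ref{item_lower_2}), and use hypothesis (\ref{item_lower_1}) at the scale $2r/\rho$ to bound the branching of the tree of cylinders meeting a ball, then apply the mass distribution principle. The architecture and the crucial counting step are correct, but two points need repair before this is airtight. First, your recursive formula need not define a Borel measure on $K$ taking exactly those values on cylinders: distinct members of $\sigma(\alpha)$ can have overlapping, or even identical, sets $B(\beta)$ (hypothesis (\ref{item_lower_1}) only limits how many siblings are $\rho$-close; it gives no disjointness), so the collection $\{B(\alpha)\}$ is not a partition at each level and the equalities you impose can be inconsistent. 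The standard fix is to place the weights $m(\alpha)$ on the compact space of $\sigma$-admissible branches rooted at $\alpha_\ast$ and push forward by the limit map; then in the ball estimate one uses that every branch whose limit lies in $B(x,r)$ passes through exactly one member $\alpha$ of your antichain $\mathcal{C}$, and that member satisfies $B(\alpha)\cap B(x,r)\neq\emptyset$, so $\mu(B(x,r))\leq\sum m(\alpha)$ over such $\alpha$, with $m(\alpha)\leq (c+1)^{-n_\alpha}\operatorname{diam}B(\alpha)^s$; your tree-counting then goes through verbatim (also note $\mathcal{C}$ is only a cover of $K$, not a partition, and the set of its members meeting $B(x,r)$ need not be finite, though the antichain bound $\sum(c+1)^{-\mathrm{depth}}\leq 1$ still holds). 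Second, your assertion that ``the covering hypothesis is exactly what guarantees $K\subseteq S$'' is backwards with respect to the stated definition: a self-similar covering of a subset $S'\subseteq S$ only says that points of $S'$ admit admissible sequences shrinking to them, not that every admissible limit point lies in $S$; and pruning the tree to branches ending in $S'$ would destroy hypothesis (\ref{item_lower_2}). What the theorem really requires (and what is verified whenever it is applied, e.g.\ in Proposition \ref{F(u,u)FracStruc}, where one checks that $\bigcap_k B(\mathbf{u}_k)$ is a point of $\mathbf{DI}_d(\varepsilon)$ for \emph{every} admissible sequence) is that the intersection point of every admissible chain belongs to $S$; with that reading your full attractor $K$ does lie in $S$ and the proof closes.
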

\begin{theorem}\cite[Theorem 3.6]{CC16}
\label{thm:CCLow3.6}
  Let $X$ be a metric space and $S$ be a subset of $X$. Suppose that there is a strictly nested self-similar structure $(J,\sigma,B)$ that covers a subset of $S$, a subset $J_0\subset J$ that contains a tail of any $\sigma$-admissible sequence, a function $\rho:J\to (0,1)$, and two constants $c,s\geq 0$ such that
  \begin{enumerate}
      \item\label{item_wlower_1} for all $\alpha\in J_0$ and all $\beta\in\sigma(\alpha)$, there are at most $c$ points $\gamma$ in $\sigma(\alpha)\setminus\{\beta\}$ such that
      \[
    d_X(B(\beta),B(\gamma))\leq\rho(\alpha)\operatorname{diam}B(\alpha);
      \]
      \item\label{item_wlower_2} for all $\alpha\in J_0$,
      \[
      \sum_{\beta\in\sigma(\alpha)}\left(\rho(\beta)\operatorname{diam}B(\beta)\right)^s \geq (c+1)\left(\rho(\alpha)\operatorname{diam}B(\alpha)\right)^s;
      \]
      \item\label{item_wlower_3} for all $\alpha\in J_0$ and all $\beta\in\sigma(\alpha)$, $\rho(\beta)\operatorname{diam}B(\beta)<\rho(\alpha)\operatorname{diam}B(\alpha)$.
      \end{enumerate}
      Then $S$ contains a subset of positive $s$-dimensional Hausdorff measure.
\end{theorem}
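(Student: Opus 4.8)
We sketch the proof in the case $X=\widetilde{\mathcal{K}}^d$ with the supremum metric --- the only case used here, and the one in which the ultrametric inequality (Lemma~\ref{umIneq}) streamlines the argument; the general metric-space statement is \cite[Theorem~3.6]{CC16}. The plan is a mass distribution (Frostman) argument running parallel to the proof of Theorem~\ref{dim_HlowBnd}, with the quantity $\operatorname{diam}B(\alpha)$ replaced everywhere by the \emph{weighted diameter} $D(\alpha):=\rho(\alpha)\operatorname{diam}B(\alpha)$: then hypotheses~(\ref{item_wlower_2}) and~(\ref{item_wlower_3}) are precisely the analogues for $D$ of the hypotheses of Theorem~\ref{dim_HlowBnd}, while~(\ref{item_wlower_1}) records that only boundedly many siblings of a cell $B(\beta)$ lie within distance $D(\alpha)$ of it. First I would dispose of the subset $J_0$. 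Since $J_0$ contains a tail of every $\sigma$-admissible sequence, there is a node $\alpha_\ast$ all of whose proper descendants in the admissibility tree lie in $J_0$ --- otherwise one could splice together an admissible sequence with infinitely many terms outside $J_0$. Passing to the (again strictly nested) sub-structure rooted at a suitable such $\alpha_\ast$, we may assume every non-root node lies in $J_0$, so that~(\ref{item_wlower_1})--(\ref{item_wlower_3}) hold throughout; write $K=\bigcap_k\bigcup\{B(\beta):\beta\text{ at level }k\}$ for its limit set, a nonempty compact subset of $S$, each point of which is cut out along a unique branch $(\alpha_k)_{k\ge0}$.

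Next I would build the measure, setting $\mu(B(\alpha_\ast))=1$ and, recursively,
\[
\mu(B(\beta))=\mu(B(\alpha))\,\frac{D(\beta)^s}{\sum_{\beta'\in\sigma(\alpha)}D(\beta')^s}\qquad\text{for }\beta\in\sigma(\alpha);
\]
this consistent family extends to a Borel probability measure on $K$. Hypothesis~(\ref{item_wlower_2}) bounds each denominator below by $(c+1)D(\alpha)^s$, so $\mu(B(\beta))\le(c+1)^{-1}\mu(B(\alpha))\big(D(\beta)/D(\alpha)\big)^s$, and telescoping along a branch yields the key estimate $\mu(B(\alpha_k))\le C_0(c+1)^{-k}D(\alpha_k)^s$, with $C_0$ depending only on the structure. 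By~(\ref{item_wlower_3}) the sequence $D(\alpha_k)$ strictly decreases, and $D(\alpha_k)\le\operatorname{diam}B(\alpha_k)\to0$ since $\rho<1$; hence for $x\in K$ and all sufficiently small $r>0$ (below a threshold depending only on the root, so that $k\ge1$ below) there is a unique index $k=k(x,r)$ with $D(\alpha_k)<r\le D(\alpha_{k-1})$.

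It remains to prove the ball estimate $\mu(B(x,r))\le C\,r^s$, from which the mass distribution principle gives $\mathcal{H}^s(K)>0$, hence $\dim_H S\ge s$. Here Lemma~\ref{umIneq} is used decisively. Fix $x$ and $k=k(x,r)$. Since $r\le D(\alpha_{k-1})<\operatorname{diam}B(\alpha_{k-1})$ and balls in an ultrametric space are pairwise nested or disjoint with every point a centre, $B(x,r)\subseteq B(\alpha_{k-1})$; since moreover distinct cells at a common level are disjoint for the structures built in this paper, every level-$k$ cell meeting $B(x,r)$ is a child of $\alpha_{k-1}$, and by~(\ref{item_wlower_1}) at $\alpha_{k-1}$ --- the relevant separations being at most $\operatorname{diam}B(x,r)\le r\le D(\alpha_{k-1})$ --- there are at most $c+1$ of them. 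For each such child $\beta$ one has either $B(\beta)\subseteq B(x,r)$, whence $D(\beta)<\operatorname{diam}B(\beta)\le r$ and $\mu(B(\beta))\le C_0(c+1)^{-k}r^s$ by the key estimate, or $B(x,r)\subseteq B(\beta)$, which forces $\beta=\alpha_k$ and then $\mu(B(x,r))\le\mu(B(\alpha_k))\le C_0(c+1)^{-k}D(\alpha_k)^s<C_0(c+1)^{-k}r^s$ directly; summing over the (at most $c+1$) children in the first case gives $\mu(B(x,r))\le C_0(c+1)^{1-k}r^s\le C_0\,r^s$.

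The main obstacle is exactly this last step, the choice of a working scale at which $B(x,r)$ meets only boundedly many cells each of $\mu$-mass $\lesssim r^s$. In the real-variable argument of \cite{CC16} one must absorb several factors of $2$ --- from $\operatorname{diam}B(x,r)\le2r$ and from comparing ordinary radii with weighted diameters --- into the choice of scale and into the implied constants, and this is ultimately what restricts the results of \cite{CC16} to small $\varepsilon$; the ultrametric inequality removes all of it, since $\operatorname{diam}B(x,r)\le r$, distinct cells at a level are disjoint, and a small ball about a point of a cell sits inside that cell, so the comparisons become exact. An alternative, instead of the direct argument, would be to deduce Theorem~\ref{thm:CCLow3.6} from Theorem~\ref{dim_HlowBnd} by replacing each $B(\alpha)$ with a sub-ball of radius $D(\alpha)$ --- which fits since $\rho(\alpha)<1$ --- and checking that the covering and nesting axioms survive; the delicate point there is to re-express hypothesis~(\ref{item_wlower_1}), a statement about the original cells, in terms of the shrunken ones.
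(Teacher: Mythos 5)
The paper itself gives no argument here (it simply cites \cite{CC16}), so your proposal has to stand on its own, and it has two genuine gaps. The first is the one-line disposal of $J_0$, which is exactly the feature that distinguishes Theorem \ref{thm:CCLow3.6} from Theorem \ref{dim_HlowBnd}. You claim that otherwise ``one could splice together an admissible sequence with infinitely many terms outside $J_0$'', but the negation of your claim is only that every node has \emph{some} proper descendant outside $J_0$, and nothing in the hypotheses lets you continue the splice past such a descendant: the definition of a strictly nested structure only requires $\sigma(\alpha)$ to be finite, and condition (\ref{item_wlower_2}) forces $\sigma(\alpha)\neq\emptyset$ only for $\alpha\in J_0$. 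It is perfectly consistent with the tail hypothesis that every node has a childless descendant outside $J_0$ (dead ends never appear in any admissible sequence), in which case no node $\alpha_\ast$ with all proper descendants in $J_0$ exists and your reduction collapses. A correct treatment has to restrict attention to nodes admitting infinite admissible continuations (or follow the argument of \cite{CC16} directly), and then one must re-examine condition (\ref{item_wlower_2}), since discarding children removes terms from the sum; this is not automatic and is where the real work of the $J_0$-version lies.

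The second gap is that your ball estimate uses properties that are not hypotheses of the theorem. The steps ``$B(x,r)\subseteq B(\alpha_{k-1})$'' and the dichotomy ``$B(\beta)\subseteq B(x,r)$ or $B(x,r)\subseteq B(\beta)$'' require the cells $B(\alpha)$ to be ultrametric \emph{balls}, whereas the statement allows arbitrary compact sets; and the step ``every level-$k$ cell meeting $B(x,r)$ is a child of $\alpha_{k-1}$'' invokes disjointness of distinct same-level cells, which the hypotheses guarantee only when $c=0$ (condition (\ref{item_wlower_1}) with $c\geq 1$ permits up to $c$ siblings to be arbitrarily close or overlapping, and then both the containment argument and the count of at most $c+1$ relevant cells fail). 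So even granting the restriction to $X=\widetilde{\mathcal{K}}^d$, what you prove is the special case of ball cells with $c=0$ --- enough for Corollary \ref{Cor_singlowerbound}, where the structure has ball cells and $c=0$, but not the cited statement; if you intend only that special case, say so explicitly and state it as such. Two smaller repairs: applying (\ref{item_wlower_1}) at $\alpha_{k-1}$ needs $\alpha_{k-1}\in J_0$, so either force $k\geq 2$ by shrinking the threshold on $r$ or absorb the root level into the constant; and the measure should be built on the space of branches and pushed forward, since two distinct nodes may carry identical cells, so the recursion as written need not define a measure on $X$.
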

\section{Best approximation vectors, orthogonality, and Farey lattices}
\label{sec:bestApprox}
In this section, we recall the definition of best approximation vectors and prove several properties of best approximations. Then, we introduce orthogonality and Farey lattices over function fields and prove some related properties. In particular, we are able to obtain stronger properties than those in the real case due to Lemma \ref{umIneq}. 

\subsection{Best approximation vectors}
\label{subsec:bestApprox}
We denote 
$$Q=\{(a_1,\dots, a_d,b)\in \mathcal{R}^{d+1}:\gcd(a_1,\dots,a_d,b)=1,b\neq 0\}.$$
For $\mathbf{u}=(a,b)=(a_1,\dots, a_d,b)\in Q$, we denote $\hat{\mathbf{u}}=\frac{a}{b}$ and $\vert \mathbf{u}\vert=\vert b\vert$. Given $\boldsymbol{\theta}\in \widetilde{\mathcal{K}}^d$ and $\mathbf{u}\in Q$, we denote $A(\boldsymbol{\theta},\mathbf{u})=\Vert b\boldsymbol{\theta}-a\Vert$. 
\begin{definition}
\label{def:BestApprox}
    We say that $\mathbf{u}=(a,b)\in Q$ is a \textit{best approximation of $\boldsymbol{\theta}\in \widetilde{\mathcal{K}}^d$} if the following conditions hold.
    \begin{enumerate}
        \item \label{bestApproxInc}For every $\mathbf{v}\in Q$ with $\vert \mathbf{v}\vert<\vert \mathbf{u}\vert$, $A(\boldsymbol{\theta},\mathbf{u})<A(\boldsymbol{\theta},\mathbf{v})$;
        \item For every $\mathbf{v}\in Q$ with $\vert \mathbf{v}\vert\leq \vert\mathbf{u}\vert$, $A(\boldsymbol{\theta},\mathbf{u})\leq A(\boldsymbol{\theta},\mathbf{v})$. 
    \end{enumerate}
    For each $\boldsymbol{\theta}\in \widetilde{\mathcal{K}}^d\setminus \mathcal{K}^d$, we associate \textit{a sequence $\{\mathbf{u}_n\}_{n\in \mathbb{N}}$ of best approximation vectors for $\boldsymbol{\theta}$} with the following properties:
    \begin{enumerate}
        \item $|\mathbf{u}_1|=1$;
        \item $|\mathbf{u}_n|<|\mathbf{u}_{n+1}|$ for all $n\in\mathbb{N}$;
        \item there is no best approximation $\mathbf{u}\in Q$ of $\boldsymbol{\theta}$ with $|\mathbf{u}_n|<|\mathbf{u}|<|\mathbf{u}_{n+1}|$ for any $n\in\mathbb{N}$.
    \end{enumerate}
\end{definition}
\begin{remark}\label{Remark_uniqueBA}
    See \cite{BZ,KLP} for the existence of a sequence of best approximation vectors for $\boldsymbol{\theta}\in \widetilde{\mathcal{K}}^d\setminus\mathcal{K}^d$.  
    We note that multiplying by $\mathbb{F}_q^*$ does not effect the norm of a vector. Thus, if $\mathbf{u}=(a,b)\in Q$ is a best approximation of $\boldsymbol{\theta}\in \widetilde{\mathcal{K}}^d$, then every element of $\mathbb{F}_q^*\mathbf{u}$ is also a best approximation. Furthermore, $b$ is unique up to $\mathbb{F}_q^*$-multiples in the following sense. If $\mathbf{v}=(a',b')\in Q$ satisfies $\vert b\vert=\vert b'\vert$ and $b'\notin \mathbb{F}_q^*b$, then $\mathbf{v}$ is not a best approximation of $\boldsymbol{\theta}$. To see this, assume that $\mathbf{v}$ is a best approximation of $\boldsymbol{\theta}$. Note that there exists some element $\alpha\in \mathbb{F}_q^*$ such that $0<|b-\alpha b'|<|b|$. Since $\mathbf{u}$ and $\alpha\mathbf{v}$ are best approximations of $\boldsymbol{\theta}$ with $|\mathbf{u}|=|\alpha\mathbf{v}|$, we have $A(\boldsymbol{\theta},\mathbf{u})=A(\boldsymbol{\theta},\alpha\mathbf{v})$, hence,
    $$\|(b-\alpha b')\boldsymbol{\theta} - (a-\alpha a')\|\leq \max\{A(\boldsymbol{\theta},\mathbf{u}),A(\boldsymbol{\theta},\alpha \mathbf{v})\}=A(\boldsymbol{\theta},\mathbf{u}).$$ 
    This is a contradiction to the definition of a best approximation.
\end{remark}
We note that by Definition \ref{def:BestApprox}(\ref{bestApproxInc}), if $\{\mathbf{u}_n\}_{n\in \mathbb{N}}$ is a sequence of best approximation vectors, then, the sequence $\{ A(\boldsymbol{\theta},\mathbf{u}_n)\}_{n\in \mathbb{N}}$ is strictly decreasing. 
\begin{lemma}
\label{A(alpha,v)<=A(hat(u),v)}
    Assume that $\mathbf{u}\in Q$ is a best approximation of $\boldsymbol{\theta}\in \widetilde{\mathcal{K}}^d$. Then, for every $\mathbf{v}\in Q$ such that $\vert \mathbf{v}\vert\leq \vert \mathbf{u}\vert$ and $\mathbf{v}\notin \mathbb{F}_q^*\mathbf{u}$, we have
    $$A(\boldsymbol{\theta},\mathbf{v})=A(\hat{\mathbf{u}},\mathbf{v}).$$
\end{lemma}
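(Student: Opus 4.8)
The plan is to combine the ultrametric identity of Lemma~\ref{umIneq} with the two defining inequalities of a best approximation. Write $\mathbf{u}=(a,b)$ and $\mathbf{v}=(a',b')$. If $\boldsymbol{\theta}=\hat{\mathbf{u}}$ the claimed identity is tautological, so we may assume $\boldsymbol{\theta}\neq\hat{\mathbf{u}}$, equivalently $A(\boldsymbol{\theta},\mathbf{u})=|b|\,\|\boldsymbol{\theta}-\hat{\mathbf{u}}\|>0$. The starting point is the splitting
\[
b'\boldsymbol{\theta}-a'=b'(\boldsymbol{\theta}-\hat{\mathbf{u}})+(b'\hat{\mathbf{u}}-a'),
\]
together with the elementary computation $\|b'(\boldsymbol{\theta}-\hat{\mathbf{u}})\|=|b'|\,\|\boldsymbol{\theta}-\hat{\mathbf{u}}\|=\tfrac{|b'|}{|b|}\,A(\boldsymbol{\theta},\mathbf{u})$, so everything reduces to comparing the norm of $b'(\boldsymbol{\theta}-\hat{\mathbf{u}})$ with $A(\hat{\mathbf{u}},\mathbf{v})=\|b'\hat{\mathbf{u}}-a'\|$.

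The main step is the chain $\tfrac{|b'|}{|b|}\,A(\boldsymbol{\theta},\mathbf{u})\leq A(\boldsymbol{\theta},\mathbf{u})\leq A(\boldsymbol{\theta},\mathbf{v})$, where the first inequality uses $|\mathbf{v}|=|b'|\leq|b|=|\mathbf{u}|$ and the second is the non-strict defining inequality of a best approximation applied to $\mathbf{v}$ (legitimate since $|\mathbf{v}|\leq|\mathbf{u}|$). Hence $\|b'(\boldsymbol{\theta}-\hat{\mathbf{u}})\|\leq A(\boldsymbol{\theta},\mathbf{v})$. If this inequality is strict, then in $b'\hat{\mathbf{u}}-a'=(b'\boldsymbol{\theta}-a')-b'(\boldsymbol{\theta}-\hat{\mathbf{u}})$ the two summands on the right have different norms, so the equality clause of Lemma~\ref{umIneq} yields $A(\hat{\mathbf{u}},\mathbf{v})=\|b'\hat{\mathbf{u}}-a'\|=\|b'\boldsymbol{\theta}-a'\|=A(\boldsymbol{\theta},\mathbf{v})$, which is the desired identity.

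The delicate point, which I expect to be the only real obstacle, is the borderline case $\tfrac{|b'|}{|b|}\,A(\boldsymbol{\theta},\mathbf{u})=A(\boldsymbol{\theta},\mathbf{v})$. Since $A(\boldsymbol{\theta},\mathbf{u})>0$, this equality together with $\tfrac{|b'|}{|b|}\leq 1$ and $A(\boldsymbol{\theta},\mathbf{u})\leq A(\boldsymbol{\theta},\mathbf{v})$ forces $|b'|=|b|$ and $A(\boldsymbol{\theta},\mathbf{v})=A(\boldsymbol{\theta},\mathbf{u})$. I would then rule this out by repeating the argument of Remark~\ref{Remark_uniqueBA}: since $b$ and $b'$ have the same degree and $b'\notin\mathbb{F}_q^*b$, after rescaling $\mathbf{v}$ by a suitable constant in $\mathbb{F}_q^*$ (which affects neither $|\mathbf{v}|$, nor $A(\boldsymbol{\theta},\mathbf{v})$, nor membership in $Q$, nor the hypothesis $\mathbf{v}\notin\mathbb{F}_q^*\mathbf{u}$) we may assume $0<|b-b'|<|b|$; then Lemma~\ref{umIneq} gives $\|(b-b')\boldsymbol{\theta}-(a-a')\|\leq\max\{A(\boldsymbol{\theta},\mathbf{u}),A(\boldsymbol{\theta},\mathbf{v})\}=A(\boldsymbol{\theta},\mathbf{u})$, and dividing $\mathbf{u}-\mathbf{v}$ by the greatest common divisor $g$ of its coordinates produces $\mathbf{w}\in Q$ with $|\mathbf{w}|\leq|b-b'|<|\mathbf{u}|$ and $A(\boldsymbol{\theta},\mathbf{w})=|g|^{-1}\|(b-b')\boldsymbol{\theta}-(a-a')\|\leq A(\boldsymbol{\theta},\mathbf{u})$, contradicting the strict defining inequality of a best approximation of $\boldsymbol{\theta}$. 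Thus the borderline case is vacuous and the lemma follows.
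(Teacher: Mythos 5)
Your strict case is fine: the decomposition $b'\boldsymbol{\theta}-a'=b'(\boldsymbol{\theta}-\hat{\mathbf{u}})+(b'\hat{\mathbf{u}}-a')$, the bound $\|b'(\boldsymbol{\theta}-\hat{\mathbf{u}})\|=\tfrac{|b'|}{|b|}A(\boldsymbol{\theta},\mathbf{u})\le A(\boldsymbol{\theta},\mathbf{v})$ coming from the best-approximation property, and the equality clause of Lemma~\ref{umIneq} do yield $A(\hat{\mathbf{u}},\mathbf{v})=A(\boldsymbol{\theta},\mathbf{v})$ whenever that bound is strict, and your reduction of the borderline case to $|b'|=|b|$ together with $A(\boldsymbol{\theta},\mathbf{v})=A(\boldsymbol{\theta},\mathbf{u})$ is correct. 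The gap is in how you dispose of the borderline case: you assert ``since $b$ and $b'$ have the same degree and $b'\notin\mathbb{F}_q^*b$'', but $b'\notin\mathbb{F}_q^*b$ is not a hypothesis and does not follow from the hypotheses. The lemma only assumes $\mathbf{v}\notin\mathbb{F}_q^*\mathbf{u}$, which leaves open the sub-case $b'\in\mathbb{F}_q^*b$ with $a'$ not the corresponding multiple of $a$ (after rescaling, $b'=b$ and $a'\neq a$). In that sub-case your rescaling produces $b-b'=0$, so there is no vector $\mathbf{w}$ with $0<|\mathbf{w}|<|\mathbf{u}|$ to feed into the defining inequality of a best approximation, and the Remark~\ref{Remark_uniqueBA}-style contradiction is unavailable; as written, your case analysis is not exhaustive.

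The missing ingredient is the observation (used by the paper at exactly this point, in the form $A(\boldsymbol{\theta},\mathbf{u})\le 1$) that a best approximation satisfies $A(\boldsymbol{\theta},\mathbf{u})<1$: taking $\mathbf{v}_0=(a_0,1)\in Q$ with $a_0$ the polynomial part of $\boldsymbol{\theta}$, one has $|\mathbf{v}_0|=1\le|\mathbf{u}|$, so condition (2) of the definition gives $A(\boldsymbol{\theta},\mathbf{u})\le\|\boldsymbol{\theta}-a_0\|\le q^{-1}$. With this in hand, in the sub-case $b'=b$, $a'\neq a$ we get $\|a-a'\|\ge 1>A(\boldsymbol{\theta},\mathbf{u})$, so the equality clause of Lemma~\ref{umIneq} applied to $b\boldsymbol{\theta}-a'=(b\boldsymbol{\theta}-a)+(a-a')$ gives $A(\boldsymbol{\theta},\mathbf{v})=\|a-a'\|>A(\boldsymbol{\theta},\mathbf{u})$; hence the borderline equality $A(\boldsymbol{\theta},\mathbf{v})=A(\boldsymbol{\theta},\mathbf{u})$ never occurs when $b'\in\mathbb{F}_q^*b$ (and indeed both sides of the lemma equal $\|a-a'\|$ there, which is how the paper handles this case directly). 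Once you insert this observation your argument closes; it then amounts to repackaging the paper's two one-sided ultrametric estimates into a single decomposition plus the equality clause, so the content is essentially the same as the paper's proof.
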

\begin{proof}
    Write $\mathbf{u}=(a,b)$ and $\mathbf{v}=(a',b')$.
    Since $\vert b'\vert\leq \vert b\vert$, by Lemma \ref{umIneq} it follows that
    \begin{equation}
    \begin{split}
    \label{eqn:A(theta,v)Bnd}
        A(\boldsymbol{\theta},\mathbf{v})&=\Vert b'\boldsymbol{\theta}-a'\Vert=\vert b'\vert\Bigg\Vert \boldsymbol{\theta}-\frac{a'}{b'}\Bigg\Vert
        \leq \vert b'\vert\cdot \max\Bigg\{\bigg\Vert \boldsymbol{\theta}-\frac{a}{b}\bigg\Vert,\bigg\Vert \frac{a'}{b'}-\frac{a}{b}\bigg\Vert\Bigg\}\\
        &\leq \max\Bigg\{\Vert b\boldsymbol{\theta}-a\Vert, \bigg\Vert b'\frac{a}{b}-a'\bigg\Vert\Bigg\}
        =\max\{A(\boldsymbol{\theta},\mathbf{u}),A(\hat{\mathbf{u}},\mathbf{v})\}.
    \end{split}
    \end{equation}
    By Remark \ref{Remark_uniqueBA}, since $b$ is unique up to $\mathbb{F}_q^*$-multiples, if $b'\notin \mathbb{F}_q^* b$, then we have
    $A(\boldsymbol{\theta},\mathbf{u})<A(\boldsymbol{\theta},\mathbf{v})$. We note that $b$ is unique as soon as the value $A(\boldsymbol{\theta},\mathbf{u})$ is fixed. Thus, the right hand side of (\ref{eqn:A(theta,v)Bnd}) is equal to $A(\hat{\mathbf{u}},\mathbf{v})$, that is, $A(\boldsymbol{\theta},\mathbf{v})\leq A(\hat{\mathbf{u}},\mathbf{v})$. Suppose that $b'\in \mathbb{F}_q^* b$ and we may assume that $b'=b$ and $A(\boldsymbol{\theta},\mathbf{u})=A(\boldsymbol{\theta},\mathbf{v})$.
    Since $\mathbf{v}\notin \mathbb{F}_q^* \mathbf{u}$ and $A(\boldsymbol{\theta},\mathbf{u})\leq 1$, it follows that $A(\hat{\mathbf{u}},\mathbf{v})=\|a-a'\|\geq 1 \geq A(\boldsymbol{\theta},\mathbf{v})$.

    On the other hand, by Lemma \ref{umIneq},
    \begin{equation*}
    \begin{split}
        A(\hat{\mathbf{u}},\mathbf{v})&=\Bigg\Vert b'\frac{a}{b}-a'\Bigg\Vert
        \leq \max\Bigg\{A(\boldsymbol{\theta},\mathbf{v}),\left|\frac{b'}{b}\right|\Vert b\boldsymbol{\theta}-a\Vert\Bigg\}\\
        &\leq \max\{A(\boldsymbol{\theta},\mathbf{v}),A(\boldsymbol{\theta},\mathbf{u})\}=A(\boldsymbol{\theta},\mathbf{v}).
    \end{split}
    \end{equation*}
    Therefore, we have $A(\hat{\mathbf{u}},\mathbf{v})=A(\boldsymbol{\theta},\mathbf{v})$. 
\end{proof}
\begin{definition}\label{Def_bestappro2}
    For $\mathbf{u}\in Q$ and $r>0$, denote 
    $$
    \mathcal{B}(\mathbf{u},r)=\{\boldsymbol{\theta}\in \widetilde{\mathcal{K}}^d:A(\boldsymbol{\theta},\mathbf{u})<r\}
    \quad\text{and}\quad\bar{\mathcal{B}}(\mathbf{u},r)=\{\boldsymbol{\theta}\in \widetilde{\mathcal{K}}^d:A(\boldsymbol{\theta},\mathbf{u})\leq r\}.
    $$
    For $\mathbf{v}\in Q$, define
    $$\Delta_{\mathbf{v}}(\mathbf{u})=\{\boldsymbol{\theta}\in \widetilde{\mathcal{K}}^d:A(\boldsymbol{\theta},\mathbf{u})<A(\boldsymbol{\theta},\mathbf{v})\}
    \quad\text{and}\quad
    \bar{\Delta}_{\mathbf{v}}(\mathbf{u})=\{\boldsymbol{\theta}\in \widetilde{\mathcal{K}}^d:A(\boldsymbol{\theta},\mathbf{u})\leq A(\boldsymbol{\theta},\mathbf{v})\}.
    $$
    Then, $\mathbf{u}$ is a best approximation vector for $\boldsymbol{\theta}$ if and only if $\boldsymbol{\theta}\in \Delta(\mathbf{u})$, where 
    $$\Delta(\mathbf{u})=\left(\bigcap_{\vert \mathbf{v}\vert<\vert \mathbf{u}\vert}\Delta_{\mathbf{v}}(\mathbf{u})\right)\cap \left(\bigcap_{\vert \mathbf{v}\vert=\vert \mathbf{u}\vert}
    \bar{\Delta}_{\mathbf{v}}(\mathbf{u})\right).$$
    In a view of Lemma \ref{A(alpha,v)<=A(hat(u),v)}, we define 
    $$r(\mathbf{u})=\min_{\substack{
    \mathbf{v}\in Q: |\mathbf{v}|\leq |\mathbf{u}|,\\
     \mathbf{v}\notin \mathbb{F}_q^*\mathbf{u}
    }}A(\hat{\mathbf{u}},\mathbf{v}).$$
\end{definition}
\begin{lemma}
\label{BestApproxIncl}
    For $\mathbf{u}\in Q$,
    we have 
    $$\mathcal{B}(\mathbf{u},r(\mathbf{u})) \subseteq
    \Delta(\mathbf{u}) \subseteq \bar{\mathcal{B}}(\mathbf{u},r(\mathbf{u})).$$
\end{lemma}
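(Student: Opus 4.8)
The plan is to establish the two inclusions $\mathcal{B}(\mathbf{u},r(\mathbf{u})) \subseteq \Delta(\mathbf{u})$ and $\Delta(\mathbf{u}) \subseteq \bar{\mathcal{B}}(\mathbf{u},r(\mathbf{u}))$ separately, in both cases exploiting Lemma \ref{A(alpha,v)<=A(hat(u),v)} to translate statements about $A(\boldsymbol{\theta},\mathbf{v})$ for competing vectors $\mathbf{v}$ into statements about the fixed quantity $A(\hat{\mathbf{u}},\mathbf{v})$, which is exactly how $r(\mathbf{u})$ is defined.

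First I would prove $\mathcal{B}(\mathbf{u},r(\mathbf{u})) \subseteq \Delta(\mathbf{u})$. Take $\boldsymbol{\theta}$ with $A(\boldsymbol{\theta},\mathbf{u}) < r(\mathbf{u})$; I need to show $\boldsymbol{\theta} \in \Delta_{\mathbf{v}}(\mathbf{u})$ for every $\mathbf{v}\in Q$ with $|\mathbf{v}|<|\mathbf{u}|$ and $\boldsymbol{\theta}\in\bar{\Delta}_{\mathbf{v}}(\mathbf{u})$ for every $\mathbf{v}$ with $|\mathbf{v}|=|\mathbf{u}|$. For any such $\mathbf{v}\notin \mathbb{F}_q^*\mathbf{u}$ we have $A(\boldsymbol{\theta},\mathbf{u}) < r(\mathbf{u}) \leq A(\hat{\mathbf{u}},\mathbf{v})$. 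Now apply Lemma \ref{A(alpha,v)<=A(hat(u),v)}: since $|\mathbf{v}|\leq|\mathbf{u}|$ and $\mathbf{v}\notin\mathbb{F}_q^*\mathbf{u}$, we get $A(\boldsymbol{\theta},\mathbf{v}) = A(\hat{\mathbf{u}},\mathbf{v}) > A(\boldsymbol{\theta},\mathbf{u})$, which gives the strict inequality $\boldsymbol{\theta}\in\Delta_{\mathbf{v}}(\mathbf{u})$ (hence also $\boldsymbol{\theta}\in\bar\Delta_{\mathbf{v}}(\mathbf{u})$). The case $\mathbf{v}\in\mathbb{F}_q^*\mathbf{u}$ with $|\mathbf{v}|<|\mathbf{u}|$ is vacuous, and for $|\mathbf{v}|=|\mathbf{u}|$, $\mathbf{v}\in\mathbb{F}_q^*\mathbf{u}$ one has $A(\boldsymbol{\theta},\mathbf{v})=A(\boldsymbol{\theta},\mathbf{u})$ so $\boldsymbol{\theta}\in\bar\Delta_{\mathbf{v}}(\mathbf{u})$ trivially. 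This establishes the first inclusion.

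For the second inclusion $\Delta(\mathbf{u}) \subseteq \bar{\mathcal{B}}(\mathbf{u},r(\mathbf{u}))$, suppose $\boldsymbol{\theta}\in\Delta(\mathbf{u})$, i.e.\ $\mathbf{u}$ is a best approximation of $\boldsymbol{\theta}$. Let $\mathbf{v}_0 \in Q$ with $|\mathbf{v}_0|\leq|\mathbf{u}|$, $\mathbf{v}_0\notin\mathbb{F}_q^*\mathbf{u}$ be a vector realizing the minimum in the definition of $r(\mathbf{u})$, so $r(\mathbf{u}) = A(\hat{\mathbf{u}},\mathbf{v}_0)$. By Lemma \ref{A(alpha,v)<=A(hat(u),v)} we have $A(\hat{\mathbf{u}},\mathbf{v}_0) = A(\boldsymbol{\theta},\mathbf{v}_0)$. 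Since $\mathbf{u}$ is a best approximation and $|\mathbf{v}_0|\leq|\mathbf{u}|$, the defining inequality of a best approximation (condition (2), or (1) if $|\mathbf{v}_0|<|\mathbf{u}|$) yields $A(\boldsymbol{\theta},\mathbf{u}) \leq A(\boldsymbol{\theta},\mathbf{v}_0) = r(\mathbf{u})$, so $\boldsymbol{\theta}\in\bar{\mathcal{B}}(\mathbf{u},r(\mathbf{u}))$.

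I do not anticipate a serious obstacle here; the content is entirely bookkeeping once Lemma \ref{A(alpha,v)<=A(hat(u),v)} is in hand. The one point requiring a little care is checking that the competing vectors lying in $\mathbb{F}_q^*\mathbf{u}$ are correctly handled in the definition of $\Delta(\mathbf{u})$ — they contribute nothing to the strict part and are automatically satisfied in the non-strict part — and, in the second inclusion, confirming that the minimum defining $r(\mathbf{u})$ is actually attained (the set of admissible $\mathbf{v}$ is nonempty since $d\geq 1$ and there are polynomials of every degree, and $A(\hat{\mathbf{u}},\mathbf{v})$ takes values in the discrete set $q^{\mathbb{Z}}\cup\{0\}$, so a minimizer exists).
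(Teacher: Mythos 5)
Your second inclusion $\Delta(\mathbf{u})\subseteq\bar{\mathcal{B}}(\mathbf{u},r(\mathbf{u}))$ is fine and matches the paper's argument: once $\boldsymbol{\theta}\in\Delta(\mathbf{u})$, Lemma \ref{A(alpha,v)<=A(hat(u),v)} legitimately applies and gives $A(\boldsymbol{\theta},\mathbf{u})\leq A(\boldsymbol{\theta},\mathbf{v})=A(\hat{\mathbf{u}},\mathbf{v})$ for every admissible $\mathbf{v}$, hence $A(\boldsymbol{\theta},\mathbf{u})\leq r(\mathbf{u})$.

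The first inclusion, however, contains a genuine gap: the argument is circular. Lemma \ref{A(alpha,v)<=A(hat(u),v)} carries the standing hypothesis that $\mathbf{u}$ is a best approximation of $\boldsymbol{\theta}$ (its proof uses Remark \ref{Remark_uniqueBA} and the defining inequalities of a best approximation), and that hypothesis is precisely the membership $\boldsymbol{\theta}\in\Delta(\mathbf{u})$ you are trying to prove for an arbitrary $\boldsymbol{\theta}\in\mathcal{B}(\mathbf{u},r(\mathbf{u}))$. You invoke the lemma citing only the conditions $|\mathbf{v}|\leq|\mathbf{u}|$ and $\mathbf{v}\notin\mathbb{F}_q^*\mathbf{u}$, but without already knowing that $\mathbf{u}$ is a best approximation of $\boldsymbol{\theta}$, the identity $A(\boldsymbol{\theta},\mathbf{v})=A(\hat{\mathbf{u}},\mathbf{v})$ is not available. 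The repair (and the paper's route) is to bypass the lemma in this direction and use the ultrametric inequality directly: for $\mathbf{v}$ with $|\mathbf{v}|\leq|\mathbf{u}|$ and $\mathbf{v}\notin\mathbb{F}_q^*\mathbf{u}$,
\[
A(\hat{\mathbf{u}},\mathbf{v})=|\mathbf{v}|\,\|\hat{\mathbf{u}}-\hat{\mathbf{v}}\|\leq |\mathbf{v}|\max\{\|\boldsymbol{\theta}-\hat{\mathbf{u}}\|,\|\boldsymbol{\theta}-\hat{\mathbf{v}}\|\}\leq \max\{A(\boldsymbol{\theta},\mathbf{u}),A(\boldsymbol{\theta},\mathbf{v})\},
\]
where the last step uses $|\mathbf{v}|\leq|\mathbf{u}|$; since $A(\boldsymbol{\theta},\mathbf{u})<r(\mathbf{u})\leq A(\hat{\mathbf{u}},\mathbf{v})$, the maximum must equal $A(\boldsymbol{\theta},\mathbf{v})$, so $A(\boldsymbol{\theta},\mathbf{u})<A(\boldsymbol{\theta},\mathbf{v})$ and $\boldsymbol{\theta}\in\Delta_{\mathbf{v}}(\mathbf{u})$; the vectors $\mathbf{v}\in\mathbb{F}_q^*\mathbf{u}$ are handled as you indicated. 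A minor additional remark: your justification that the minimum defining $r(\mathbf{u})$ is attained is incomplete, since discreteness of $q^{\mathbb{Z}}\cup\{0\}$ alone does not exclude an infimum at $0$; one should note that $A(\hat{\mathbf{u}},\mathbf{v})\geq |\mathbf{u}|^{-1}$ for all admissible $\mathbf{v}$, because $b'a-a'b\neq 0$ when $\mathbf{v}=(a',b')\notin\mathbb{F}_q^*\mathbf{u}$.
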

\begin{proof}
    If $\boldsymbol{\theta}\in \Delta(\mathbf{u})$, that is, $\mathbf{u}$ is a best approximation vector for $\boldsymbol{\theta}$, 
    then it follows from Lemma \ref{A(alpha,v)<=A(hat(u),v)} that for every $\mathbf{v}\in Q$ with $\vert \mathbf{v}\vert\leq \vert \mathbf{u}\vert$ and $\mathbf{v}\notin \mathbb{F}_q^*\mathbf{u}$,
    we have $$A(\boldsymbol{\theta},\mathbf{u})\leq A(\boldsymbol{\theta},\mathbf{v})= A(\hat{\mathbf{u}},\mathbf{v}).$$ 
    Hence, $\Delta(\mathbf{u})\subseteq \bar{\mathcal{B}}(\mathbf{u},r(\mathbf{u}))$.
    
    If $\boldsymbol{\theta}\in \mathcal{B}(\mathbf{u},r(\mathbf{u}))$, 
    then we have $A(\boldsymbol{\theta},\mathbf{u})<A(\hat{\mathbf{u}},\mathbf{v})$ 
    for every $\mathbf{v}\in Q$ with $\vert \mathbf{v}\vert\leq \vert \mathbf{u}\vert$ and $\mathbf{v}\notin \mathbb{F}_q^*\mathbf{u}$.
    It follows that for every such $\mathbf{v}$, 
    \begin{equation*}
        A(\boldsymbol{\theta},\mathbf{u})
        <A(\hat{\mathbf{u}},\mathbf{v})
        =\vert \mathbf{v}\vert \Vert \hat{\mathbf{u}}-\hat{\mathbf{v}}\Vert
        \leq \vert \mathbf{v}\vert\max\left\{\Vert \boldsymbol{\theta}-\hat{\mathbf{u}}\Vert,\Vert \boldsymbol{\theta}-\hat{\mathbf{v}}\Vert\right\}
        \leq \max\{A(\boldsymbol{\theta},\mathbf{u}),A(\boldsymbol{\theta},\mathbf{v})\}.
    \end{equation*}
    Hence, $A(\boldsymbol{\theta},\mathbf{u})<A(\boldsymbol{\theta},\mathbf{v})$, that is, $\boldsymbol{\theta}\in \Delta_{\mathbf{v}}(\mathbf{u})$ for every such $\mathbf{v}$.  
    Since $A(\boldsymbol{\theta},\mathbf{u})=A(\boldsymbol{\theta},\mathbf{v})$ for every $\mathbf{v}\in \mathbb{F}_q^*\mathbf{u}$, we finally have
    $\boldsymbol{\theta}\in \Delta(\mathbf{u})$. 
\end{proof}
\begin{lemma}
\label{A(u_i+j,u_i)<lambda_1(u_i+1)}
    Let $\boldsymbol{\theta}\in \widetilde{\mathcal{K}}^d\setminus \mathcal{K}^d$ and let $\{\mathbf{u}_{n}\}_{n\in \mathbb{N}}$ be a sequence of best approximation vectors for $\boldsymbol{\theta}$. Then, for every $i,j\in \mathbb{N}$, we have 
    \begin{equation*}
        A(\hat{\mathbf{u}}_{i+j},\mathbf{u}_i)= A(\boldsymbol{\theta},\mathbf{u}_i)=r(\mathbf{u}_{i+1}).
    \end{equation*}
\end{lemma}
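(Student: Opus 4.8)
The plan is to read off both equalities from Lemma~\ref{A(alpha,v)<=A(hat(u),v)}, the defining inequalities of best approximations, and the equality clause of the ultrametric inequality. First I would record two basic facts about the sequence $\{\mathbf{u}_n\}$: the denominator norms $|\mathbf{u}_n|$ are strictly increasing and the values $A(\boldsymbol{\theta},\mathbf{u}_n)$ are strictly decreasing with $A(\boldsymbol{\theta},\mathbf{u}_n)<1$ for every $n$ (take the polynomial part of $\boldsymbol{\theta}$ to get $A(\boldsymbol{\theta},\mathbf{u}_1)<1$). Indeed, two best approximations of equal denominator norm have equal $A(\boldsymbol{\theta},\cdot)$-value by property~(2) of the definition applied both ways, hence are $\mathbb{F}_q^*$-multiples of one another by Remark~\ref{Remark_uniqueBA} together with the fact that, since $A(\boldsymbol{\theta},\cdot)<1$, the numerator is determined by the denominator (two polynomial numerators giving value $<1$ differ by a polynomial vector of norm $<1$, hence coincide). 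In particular $|\mathbf{u}_i|<|\mathbf{u}_{i+j}|$ and $\mathbf{u}_i\notin\mathbb{F}_q^*\mathbf{u}_{i+j}$ whenever $j\geq 1$.

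The first equality is then immediate: applying Lemma~\ref{A(alpha,v)<=A(hat(u),v)} with the best approximation $\mathbf{u}_{i+j}$ and the competitor $\mathbf{v}=\mathbf{u}_i$ yields $A(\hat{\mathbf{u}}_{i+j},\mathbf{u}_i)=A(\boldsymbol{\theta},\mathbf{u}_i)$, independently of $j$. For the second equality I would apply the same lemma with $\mathbf{u}_{i+1}$ as the best approximation to conclude that $A(\hat{\mathbf{u}}_{i+1},\mathbf{v})=A(\boldsymbol{\theta},\mathbf{v})$ for every $\mathbf{v}\in Q$ with $|\mathbf{v}|\leq|\mathbf{u}_{i+1}|$ and $\mathbf{v}\notin\mathbb{F}_q^*\mathbf{u}_{i+1}$, so that $r(\mathbf{u}_{i+1})=\min_{\mathbf{v}}A(\boldsymbol{\theta},\mathbf{v})$ over exactly that set. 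Since $\mathbf{u}_i$ belongs to the set, $r(\mathbf{u}_{i+1})\leq A(\boldsymbol{\theta},\mathbf{u}_i)$, and everything reduces to proving $A(\boldsymbol{\theta},\mathbf{v})\geq A(\boldsymbol{\theta},\mathbf{u}_i)$ for every competitor $\mathbf{v}$.

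For this I would first establish the auxiliary identity $A(\boldsymbol{\theta},\mathbf{u}_i)=\min\{A(\boldsymbol{\theta},\mathbf{w}):\mathbf{w}\in Q,\ |\mathbf{w}|<|\mathbf{u}_{i+1}|\}$ — true for $|\mathbf{w}|\leq|\mathbf{u}_i|$ by property~(2) for $\mathbf{u}_i$, and for $|\mathbf{u}_i|<|\mathbf{w}|<|\mathbf{u}_{i+1}|$ because a least-norm minimizer of $A(\boldsymbol{\theta},\cdot)$ over denominators $<|\mathbf{u}_{i+1}|$ is itself a best approximation and hence has norm $\leq|\mathbf{u}_i|$, as $\mathbf{u}_{i+1}$ is the successor of $\mathbf{u}_i$. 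This handles every competitor with $|\mathbf{v}|<|\mathbf{u}_{i+1}|$. For a competitor $\mathbf{v}=(a',b')$ with $|\mathbf{v}|=|\mathbf{u}_{i+1}|$, writing $\mathbf{u}_{i+1}=(a_{i+1},b_{i+1})$: if $b'\in\mathbb{F}_q^*b_{i+1}$, then after scaling we may take $b'=b_{i+1}$ and then $a'\neq a_{i+1}$, so $\|a'-a_{i+1}\|\geq 1$, and since $\|a'-a_{i+1}\|\leq\max\{A(\boldsymbol{\theta},\mathbf{u}_{i+1}),A(\boldsymbol{\theta},\mathbf{v})\}$ by Lemma~\ref{umIneq} while $A(\boldsymbol{\theta},\mathbf{u}_{i+1})<1$, we get $A(\boldsymbol{\theta},\mathbf{v})\geq 1>A(\boldsymbol{\theta},\mathbf{u}_i)$; if $b'\notin\mathbb{F}_q^*b_{i+1}$, then $A(\boldsymbol{\theta},\mathbf{v})>A(\boldsymbol{\theta},\mathbf{u}_{i+1})$ (equality would make $\mathbf{v}$ a best approximation of the same denominator norm, contradicting the first paragraph), so choosing $c\in\mathbb{F}_q^*$ with $0<|b_{i+1}-cb'|<|\mathbf{u}_{i+1}|$ and applying the equality clause of Lemma~\ref{umIneq} to $\mathbf{u}_{i+1}-c\mathbf{v}$ produces, after dividing by the gcd of its entries, a $\mathbf{z}\in Q$ with $|\mathbf{z}|<|\mathbf{u}_{i+1}|$ and $A(\boldsymbol{\theta},\mathbf{z})\leq A(\boldsymbol{\theta},\mathbf{v})$, whence $A(\boldsymbol{\theta},\mathbf{v})\geq A(\boldsymbol{\theta},\mathbf{z})\geq A(\boldsymbol{\theta},\mathbf{u}_i)$ by the auxiliary identity. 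In all cases $A(\boldsymbol{\theta},\mathbf{v})\geq A(\boldsymbol{\theta},\mathbf{u}_i)$, so $r(\mathbf{u}_{i+1})=A(\boldsymbol{\theta},\mathbf{u}_i)$.

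I expect the competitors of denominator norm exactly $|\mathbf{u}_{i+1}|$ to be the main obstacle, since a priori such a $\mathbf{v}$ could make $A(\hat{\mathbf{u}}_{i+1},\mathbf{v})$ drop strictly below $A(\boldsymbol{\theta},\mathbf{u}_i)$. This is precisely where the ultrametric structure is essential: the equality case of Lemma~\ref{umIneq} lets one subtract the right $\mathbb{F}_q^*$-multiple of $\mathbf{v}$ from $\mathbf{u}_{i+1}$ while keeping the value of $A(\boldsymbol{\theta},\cdot)$ exactly equal to $A(\boldsymbol{\theta},\mathbf{v})$, thereby lowering the denominator and reducing to the already-understood regime — the exactness of this step being exactly the feature absent in the real-variable setting.
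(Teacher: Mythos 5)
Your proof is correct and takes essentially the same route as the paper: the first equality via Lemma \ref{A(alpha,v)<=A(hat(u),v)} applied with $\mathbf{v}=\mathbf{u}_i$, and the second by rewriting $r(\mathbf{u}_{i+1})$ as a minimum of $A(\boldsymbol{\theta},\cdot)$ over competitors and splitting them according to whether the denominator lies in $\mathbb{F}_q^*b_{i+1}$, with the equal-denominator case bounded below by $1$. Your auxiliary identity and the unit-multiple subtraction trick simply spell out the step the paper compresses into ``it follows from the definition of best approximations and Remark \ref{Remark_uniqueBA}''.
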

\begin{proof}
    For the first equality, observe that by Lemma \ref{A(alpha,v)<=A(hat(u),v)}, $A(\boldsymbol{\theta},\mathbf{v})=A(\hat{\mathbf{u}}_{i+j},\mathbf{v})$ for every $\mathbf{v}\in Q$ with $\vert \mathbf{v}\vert\leq \vert \mathbf{u}_{i+j}\vert$ and $\mathbf{v}\notin \mathbb{F}_q^*\mathbf{u}_{i+j}$. In particular, this holds for $\mathbf{v}=\mathbf{u}_i$, which gives rise to the first equality. 
    
    For the second part, by Lemma \ref{A(alpha,v)<=A(hat(u),v)}, we have
    \begin{equation*}
    \label{eqn:r(u_i+1)Ineq}
    r(\mathbf{u}_{i+1})
    =\min_{\substack{
    \mathbf{v}\in Q: \vert \mathbf{v}\vert\leq \vert \mathbf{u}_{i+1}\vert,\\
    \mathbf{v}\notin \mathbb{F}_q^*\mathbf{u}_{i+1}
    }}A(\hat{\mathbf{u}}_{i+1},\mathbf{v})
    =\min_{\substack{
    \mathbf{v}\in Q:\vert \mathbf{v}\vert\leq \vert \mathbf{u}_{i+1}\vert,\\
    \mathbf{v}\notin \mathbb{F}_q^*\mathbf{u}_{i+1}\\}}A(\boldsymbol{\theta},\mathbf{v}).
    \end{equation*}
    Then it is clear that $r(\mathbf{u}_{i+1})\leq A(\boldsymbol{\theta},\mathbf{u}_i)$.
    Denoting $\mathbf{u}_{i+1}=(a,b)$ and using Remark \ref{Remark_uniqueBA},
    it follows from the definition of best approximations that $A(\boldsymbol{\theta},\mathbf{v})\geq A(\boldsymbol{\theta},\mathbf{u}_i)$ for every $\mathbf{v}=(a',b')\in Q$ with $|\mathbf{v}|\leq |\mathbf{u}_{i+1}|$ and $b'\notin \mathbb{F}_q^* b$.
    For every $\mathbf{v}=(a',b')\in Q$ with $b'\in\mathbb{F}_q^* b$ but $\mathbf{v}\notin \mathbb{F}_q^* \mathbf{u}_{i+1}$, we have $A(\hat{\mathbf{u}}_{i+1},\mathbf{v})\geq 1$.
    Hence, we have the second equality of the lemma. 
\end{proof}

\begin{corollary}
\label{Cor_SingChar}
Let $\boldsymbol{\theta}\in \widetilde{\mathcal{K}}^d\setminus \mathcal{K}^d$ and let $\{\mathbf{u}_{n}\}_{n\in \mathbb{N}}$ be the sequence of best approximation vectors for $\boldsymbol{\theta}$. Then
$\boldsymbol{\theta}\in \mathbf{DI}_d(\varepsilon)$ if and only if  $|\mathbf{u}_n|^{1/d}r(\mathbf{u}_n)<\varepsilon$ for all large enough $n\geq 1$.
In particular, $\boldsymbol{\theta}\in \mathbf{Sing}_d$ if and only if $|\mathbf{u}_n|^{1/d}r(\mathbf{u}_n)\to 0$ as $n\to \infty$.
\end{corollary}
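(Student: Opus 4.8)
The plan is to route everything through the best approximation sequence $\{\mathbf{u}_n\}$, using two inputs: the identity $r(\mathbf{u}_{n+1})=A(\boldsymbol{\theta},\mathbf{u}_n)$ from Lemma~\ref{A(u_i+j,u_i)<lambda_1(u_i+1)}, and the fact that best approximation vectors are genuine minimizers of $A(\boldsymbol{\theta},\cdot)$ among integer vectors of bounded height. First recall that, since $\boldsymbol{\theta}\notin\mathcal{K}^d$, the sequence is infinite, the heights $|\mathbf{u}_n|$ are strictly increasing (by the uniqueness in Remark~\ref{Remark_uniqueBA}) with $|\mathbf{u}_n|\to\infty$, and the values $A(\boldsymbol{\theta},\mathbf{u}_n)$ strictly decrease; hence every large $T$ lies in a unique interval $[|\mathbf{u}_n|,|\mathbf{u}_{n+1}|)$. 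The lemma I would isolate is that for such $n$,
\[
\min\{\,\|b\boldsymbol{\theta}-\mathbf{a}\| : (\mathbf{a},b)\in\mathcal{R}^{d+1},\ 0<|b|\le T\,\}=A(\boldsymbol{\theta},\mathbf{u}_n),
\]
the minimum being attained at $\mathbf{u}_n$. To see this, dividing a competitor $(\mathbf{a},b)$ by $\gcd(\mathbf{a},b)$ produces $\mathbf{v}\in Q$ with $|\mathbf{v}|\le|b|\le T<|\mathbf{u}_{n+1}|$ and $A(\boldsymbol{\theta},\mathbf{v})\le\|b\boldsymbol{\theta}-\mathbf{a}\|$, so it is enough to show that $|\mathbf{v}|<|\mathbf{u}_{n+1}|$ forces $A(\boldsymbol{\theta},\mathbf{v})\ge A(\boldsymbol{\theta},\mathbf{u}_n)$: otherwise a counterexample of least height, and then of least $A$-value, would satisfy both defining conditions of a best approximation, hence equal some $\mathbf{u}_m$ with $A(\boldsymbol{\theta},\mathbf{u}_m)<A(\boldsymbol{\theta},\mathbf{u}_n)$, which forces $m\ge n+1$ and so $|\mathbf{u}_m|\ge|\mathbf{u}_{n+1}|>|\mathbf{v}|=|\mathbf{u}_m|$, a contradiction. (This optimality is also available from \cite{BZ,KLP}.)

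For the ``if'' direction, assume $|\mathbf{u}_n|^{1/d}r(\mathbf{u}_n)<\varepsilon$ for all $n\ge N$ and set $T_0=|\mathbf{u}_N|$; given $T>T_0$, pick $n\ge N$ with $|\mathbf{u}_n|\le T<|\mathbf{u}_{n+1}|$ and use $(\mathbf{a},b)=\mathbf{u}_n$, for which $0<|b|=|\mathbf{u}_n|\le T$ and, by Lemma~\ref{A(u_i+j,u_i)<lambda_1(u_i+1)} and $|\mathbf{u}_{n+1}|>T$,
\[
\|b\boldsymbol{\theta}-\mathbf{a}\|=A(\boldsymbol{\theta},\mathbf{u}_n)=r(\mathbf{u}_{n+1})<\frac{\varepsilon}{|\mathbf{u}_{n+1}|^{1/d}}<\frac{\varepsilon}{T^{1/d}},
\]
so $\boldsymbol{\theta}\in\mathbf{DI}_d(\varepsilon)$. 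For the ``only if'' direction, let $\boldsymbol{\theta}\in\mathbf{DI}_d(\varepsilon)$ with threshold $T_0$, fix $n$ with $|\mathbf{u}_{n-1}|>T_0$, feed any $T\in[|\mathbf{u}_{n-1}|,|\mathbf{u}_n|)$ into the definition, and reduce the resulting vector to a primitive $\mathbf{v}$ as above; then $|\mathbf{v}|\le T<|\mathbf{u}_n|$, so by the key lemma (for the index $n-1$) and Lemma~\ref{A(u_i+j,u_i)<lambda_1(u_i+1)},
\[
r(\mathbf{u}_n)=A(\boldsymbol{\theta},\mathbf{u}_{n-1})\le A(\boldsymbol{\theta},\mathbf{v})\le\|b\boldsymbol{\theta}-\mathbf{a}\|<\frac{\varepsilon}{T^{1/d}}.
\]
Letting $T\uparrow|\mathbf{u}_n|$ and using that $|\cdot|$ is valued in $q^{\mathbb{Z}}$ yields $|\mathbf{u}_n|^{1/d}r(\mathbf{u}_n)<\varepsilon$ for all large $n$. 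The $\mathbf{Sing}_d$ statement is then immediate: $\boldsymbol{\theta}\in\mathbf{Sing}_d$ iff $\boldsymbol{\theta}\in\mathbf{DI}_d(\varepsilon)$ for every $\varepsilon>0$, iff for every $\varepsilon>0$ the quantity $|\mathbf{u}_n|^{1/d}r(\mathbf{u}_n)$ is eventually smaller than $\varepsilon$, that is, $|\mathbf{u}_n|^{1/d}r(\mathbf{u}_n)\to 0$.

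The main obstacle is the key lemma --- establishing that best approximations are really the pointwise minimizers of $A(\boldsymbol{\theta},\cdot)$ below the next best-approximation height --- together with the bookkeeping between the continuous parameter $T$ and the discrete value group of $\widetilde{\mathcal{K}}$, which is exactly what sharpens the equivalence to $|\mathbf{u}_n|^{1/d}r(\mathbf{u}_n)<\varepsilon$ rather than the factor-of-$2$ version of the real-variable analogue \cite[Corollary 4.4]{CC16}. Everything else is the two substitutions above, powered by Lemma~\ref{A(u_i+j,u_i)<lambda_1(u_i+1)}.
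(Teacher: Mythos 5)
Your argument follows the same route as the paper: the paper's proof simply asserts that $\boldsymbol{\theta}\in\mathbf{DI}_d(\varepsilon)$ is equivalent to $|\mathbf{u}_n|^{1/d}A(\boldsymbol{\theta},\mathbf{u}_{n-1})<\varepsilon$ for all large $n$ and then invokes Lemma \ref{A(u_i+j,u_i)<lambda_1(u_i+1)} to replace $A(\boldsymbol{\theta},\mathbf{u}_{n-1})$ by $r(\mathbf{u}_n)$; you supply exactly the verification the paper calls ``clear'', namely that on each window $|\mathbf{u}_n|\le T<|\mathbf{u}_{n+1}|$ the quantity $A(\boldsymbol{\theta},\mathbf{u}_n)$ is the minimum of $\|b\boldsymbol{\theta}-\mathbf{a}\|$ over $0<|b|\le T$, and your minimal-counterexample argument for this is sound (the minimum exists since only finitely many $(\mathbf{a},b)$ of a given height fall below any fixed bound). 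The one step to flag is the last one in the ``only if'' direction: letting $T\uparrow|\mathbf{u}_n|$ in $r(\mathbf{u}_n)<\varepsilon T^{-1/d}$ gives only $|\mathbf{u}_n|^{1/d}r(\mathbf{u}_n)\le\varepsilon$, and discreteness of the value group upgrades this to a strict inequality only when $\varepsilon|\mathbf{u}_n|^{-1/d}\notin q^{\mathbb{Z}}$, i.e.\ when $\varepsilon\notin q^{\mathbb{Z}/d}$; for $\varepsilon\in q^{\mathbb{Z}/d}$ equality can genuinely occur while $\boldsymbol{\theta}\in\mathbf{DI}_d(\varepsilon)$ (the condition $\|b\boldsymbol{\theta}-\mathbf{a}\|<\varepsilon T^{-1/d}$ is still met for every $T$ strictly below $|\mathbf{u}_n|$), so the clean characterization is really with $\le$ rather than $<$. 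This boundary imprecision is not yours alone --- it is hidden in the paper's own one-line assertion --- and it is harmless for the ``in particular'' statement about $\mathbf{Sing}_d$ and for the subsequent dimension bounds, but to be airtight you should either state the equivalence with $\le$ or note that the strict form holds for $\varepsilon$ outside the countable set $q^{\mathbb{Z}/d}$.
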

\begin{proof}
Write $\mathbf{u}_n = (a_n,b_n)$. It is clear that 
$\boldsymbol{\theta}\in \mathbf{DI}_d(\varepsilon)$ if and only if $|b_n|^{1/d}\|b_{n-1}\boldsymbol{\theta}-a_{n-1}\|<\varepsilon$ for all large enough $n\geq 2$. Hence, the corollary follows from Lemma \ref{A(u_i+j,u_i)<lambda_1(u_i+1)}.
\end{proof}

\subsection{Orthogonality and Farey lattice}
\label{subsec:Orth}
One of the main differences from the real case is orthogonality. We use the following definition of orthogonality from \cite[Lemma 3.3]{KST}.
\begin{definition}\label{Def_Orth}
    A subset $\{\mathbf{x}_1,\dots,\mathbf{x}_k\}$ of $\widetilde{\mathcal{K}}^d$ is said to be \textit{orthogonal} if 
    \[
    \|r_1\mathbf{x}_1+\cdots+r_k\mathbf{x}_k\|=\max\{|r_1|\|\mathbf{x}_1\|,\dots,|r_k|\|\mathbf{x}_k\|\}
    \]
    for any $r_1,\dots,r_k \in \widetilde{\mathcal{K}}$.
\end{definition}
By the Hadamard inequality \cite[Lemma 2.4]{RW} and Theorem \ref{Mink2nd}, any lattice $\Lambda$ in $\widetilde{\mathcal{K}}^d$ admits an orthogonal basis $\xi_1,\dots,\xi_d$ of the lattice $\Lambda$ with $\|\xi_i\|=\lambda_i(\Lambda)$ for each $i=1,\dots,d$. This corresponds to a ``Minkowski reduced basis'' in the real case.

\begin{definition}\label{Def_Fareylattice}
    For $\mathbf{u}\in Q$, we define the \textit{Farey lattice} $\Lambda_{\mathbf{u}}$ in $\widetilde{\mathcal{K}}^d$ by 
$$\Lambda_{\mathbf{u}}=\operatorname{span}_{\mathcal{R}}\{\mathbf{e}_1,\dots, \mathbf{e}_d,\hat{\mathbf{u}}\}=\pi_{\mathbf{u}}(\mathcal{R}^{d+1}),$$
where $\pi_{\mathbf{u}}(a,b)=b\hat{\mathbf{u}}-a$, where $(a,b)\in \widetilde{\mathcal{K}}^d\times\widetilde{\mathcal{K}}$.  
Note that by Lemma \ref{Lemma_requalLam}, $\det(\Lambda_{\mathbf{u}})=\vert \mathbf{u}\vert^{-1}$. We define 
$$\hat{\lambda}_i(\Lambda_{\mathbf{u}})=\vert \mathbf{u}\vert^{\frac{1}{d}}\lambda_i(\Lambda_{\mathbf{u}}) \quad\text{for every } i=1,\dots, d,$$
and we say that $\hat{\lambda}_i$ are the \textit{normalized successive minima} of $\Lambda_{\mathbf{u}}$.

For $\mathbf{u}\in Q$, let $\Lambda_{\mathbf{u}}'$ be the $d-1$ dimensional sublattice of $\Lambda_{\mathbf{u}}$ of minimal covolume, and denote $H_{\mathbf{u}}'=\operatorname{span}_{\widetilde{\mathcal{K}}}(\Lambda_{\mathbf{u}}')$ and $H_{\mathbf{u}}=\pi_{\mathbf{u}}^{-1}(H_{\mathbf{u}}')$. It can be easily checked that $\Lambda_{\mathbf{u}}' = \mathrm{span}_\mathcal{R}\{\xi_1,\dots,\xi_{d-1}\}$ and $H_{\mathbf{u}}' = \mathrm{span}_{\widetilde{\mathcal{K}}}\{\xi_1,\dots,\xi_{d-1}\}$.
\end{definition}

One of the reasons we consider the Farey lattice is because $\lambda_1(\Lambda_{\mathbf{u}})$ is indeed equal to $r(\mathbf{u})$ in Definition \ref{Def_bestappro2}.
\begin{lemma}\label{Lemma_requalLam}
    For $\mathbf{u}\in Q$, we have $\det(\Lambda_{\mathbf{u}})=|\mathbf{u}|^{-1}$ and $\lambda_1(\Lambda_{\mathbf{u}})=r(\mathbf{u}).$
\end{lemma}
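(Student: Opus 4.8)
The plan is to unwind both sides of the claimed identity directly from the definitions. Recall that $\Lambda_{\mathbf u}=\pi_{\mathbf u}(\mathcal R^{d+1})$ where $\pi_{\mathbf u}(a,b)=b\hat{\mathbf u}-a$, so a typical nonzero element of $\Lambda_{\mathbf u}$ has the form $b\hat{\mathbf u}-a$ for $(a,b)\in\mathcal R^d\times\mathcal R$ not both zero. On the other side, $r(\mathbf u)=\min A(\hat{\mathbf u},\mathbf v)$ where $\mathbf v=(a',b')$ ranges over $Q$ with $|\mathbf v|\le|\mathbf u|$ and $\mathbf v\notin\mathbb F_q^*\mathbf u$, and $A(\hat{\mathbf u},\mathbf v)=\|b'\hat{\mathbf u}-a'\|=\|\pi_{\mathbf u}(\mathbf v)\|$. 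So both quantities are infima of $\|\pi_{\mathbf u}(a,b)-\text{integer vector}\|$-type norms; the whole content is matching the index sets over which the minima are taken.

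First I would prove $\lambda_1(\Lambda_{\mathbf u})\le r(\mathbf u)$. Given any $\mathbf v=(a',b')\in Q$ achieving the minimum defining $r(\mathbf u)$, the element $\pi_{\mathbf u}(\mathbf v)=b'\hat{\mathbf u}-a'$ lies in $\Lambda_{\mathbf u}$; I must check it is nonzero, which follows since $\mathbf v\notin\mathbb F_q^*\mathbf u$ forces $\hat{\mathbf v}\ne\hat{\mathbf u}$ (here one uses $\gcd$ of coordinates is $1$ for both, so $b'\hat{\mathbf u}=a'$ would give $\mathbf v\in\mathbb F_q^*\mathbf u$), hence $b'\hat{\mathbf u}-a'\ne 0$. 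Then $\lambda_1(\Lambda_{\mathbf u})\le\|\pi_{\mathbf u}(\mathbf v)\|=A(\hat{\mathbf u},\mathbf v)=r(\mathbf u)$.

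For the reverse inequality $r(\mathbf u)\le\lambda_1(\Lambda_{\mathbf u})$, I would take a shortest nonzero vector $\mathbf w=\pi_{\mathbf u}(a_0,b_0)=b_0\hat{\mathbf u}-a_0\in\Lambda_{\mathbf u}$ with $\|\mathbf w\|=\lambda_1(\Lambda_{\mathbf u})$, and manufacture from it an admissible competitor $\mathbf v$ for $r(\mathbf u)$. If $b_0=0$ then $\mathbf w=-a_0\in\mathcal R^d\setminus\{0\}$ has norm $\ge 1$; but $\lambda_1(\Lambda_{\mathbf u})\le\|\hat{\mathbf u}\text{-related short vector}\|$ is $<1$ in the relevant cases (or one notes $\mathbf e_i\in\Lambda_{\mathbf u}$ forces $\lambda_1\le 1$, and a separate elementary argument handles the boundary), so one may reduce to $b_0\ne 0$. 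Then divide $(a_0,b_0)$ by $g=\gcd(a_0,\dots,b_0)$ to get $\mathbf v\in Q$ with $\pi_{\mathbf u}(\mathbf v)=g^{-1}\mathbf w$, so $\|\pi_{\mathbf u}(\mathbf v)\|=|g|^{-1}\lambda_1(\Lambda_{\mathbf u})\le\lambda_1(\Lambda_{\mathbf u})$, and one checks $|\mathbf v|=|b_0/g|\le|\mathbf u|$ (this is the point needing care — if $|b_0|>|\mathbf u|$, subtract a suitable $\mathcal R$-multiple of $\mathbf u$ from $(a_0,b_0)$, which changes $\pi_{\mathbf u}$ by an element of $\mathcal R^d$ and hence cannot increase the norm past $\lambda_1$ by minimality, letting us assume $|b_0|\le|\mathbf u|$) and $\mathbf v\notin\mathbb F_q^*\mathbf u$ (since $\pi_{\mathbf u}(\mathbf v)\ne 0$). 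Hence $r(\mathbf u)\le A(\hat{\mathbf u},\mathbf v)\le\lambda_1(\Lambda_{\mathbf u})$.

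The main obstacle is the bookkeeping around the constraint $|\mathbf v|\le|\mathbf u|$ in the definition of $r(\mathbf u)$: a shortest lattice vector of $\Lambda_{\mathbf u}$ need not a priori come from a $(a,b)$ with $|b|\le|\mathbf u|$, so one has to use the reduction-mod-$\mathbf u$ trick together with the ultrametric inequality (Lemma \ref{umIneq}) and the minimality of $\lambda_1$ to bring the $b$-coordinate into range without lengthening the vector. Everything else is a direct translation through $\pi_{\mathbf u}$, using that coprimality of coordinates makes the map $Q/\mathbb F_q^*\to\Lambda_{\mathbf u}\setminus\{0\}$ behave well.
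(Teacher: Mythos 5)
Your route is the same as the paper's: translate everything through $\pi_{\mathbf{u}}$, get $\lambda_1(\Lambda_{\mathbf{u}})\le r(\mathbf{u})$ by noting that admissible $\mathbf{v}$ give nonzero lattice vectors, and get the reverse inequality by Euclidean division, replacing a competitor $(a_0,b_0)$ by $(a_0,b_0)-k\mathbf{u}$ to force the $b$-coordinate into the range $|b|\le|\mathbf{u}|$. The first direction, including your check that $\mathbf{v}\notin\mathbb{F}_q^*\mathbf{u}$ forces $\pi_{\mathbf{u}}(\mathbf{v})\ne 0$ (using the coprimality of the coordinates of both $\mathbf{u}$ and $\mathbf{v}$), is correct.

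The one place where your write-up goes wrong is the justification of the crucial reduction step. You say that subtracting $k\mathbf{u}$ ``changes $\pi_{\mathbf{u}}$ by an element of $\mathcal{R}^d$ and hence cannot increase the norm past $\lambda_1$ by minimality.'' Minimality of $\lambda_1$ only bounds norms of nonzero lattice vectors from \emph{below}, so it cannot prevent an increase; and if the image really did change by a nonzero element of $\mathcal{R}^d$, its norm could jump to $1$, in which case your argument would only yield $r(\mathbf{u})\le 1$ rather than $r(\mathbf{u})\le\lambda_1(\Lambda_{\mathbf{u}})$. What actually saves the step, and what the paper uses, is the kernel identity $\pi_{\mathbf{u}}(\mathbf{u})=b\hat{\mathbf{u}}-a=0$, so that $\pi_{\mathbf{u}}\big((a_0,b_0)-k\mathbf{u}\big)=\pi_{\mathbf{u}}(a_0,b_0)$ and the norm is literally unchanged, i.e.\ $A(\hat{\mathbf{u}},\mathbf{v})=A(\hat{\mathbf{u}},\mathbf{v}-k\mathbf{u})$. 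With that substitution your argument is sound: dividing by the gcd only shrinks both the norm and $|b|$, and the boundary case $b_0=0$ (which forces $\lambda_1(\Lambda_{\mathbf{u}})=1$ and hence $\hat{\mathbf{u}}\in\mathcal{R}^d$, $b\in\mathbb{F}_q^*$) is settled by the explicit competitor $(a+b\mathbf{e}_1,b)$, giving $A=1$; the paper's own proof glosses over this degenerate case in the same way. So: same approach and correct conclusion, but replace the ``minimality'' reasoning by the observation that $\mathbf{u}\in\mathrm{Ker}(\pi_{\mathbf{u}})$.
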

\begin{proof}
In order to prove the first equality, let $\mathbf{u}=(a,b)$ and consider the group homomorphism $\varphi:\Lambda_\mathbf{u} \to \mathcal{R}/b\mathcal{R}$ given by
\[
\varphi\left(\sum_{i=1}^d c_i \mathbf{e}_i + c_{d+1}\hat{\mathbf{u}}\right)=c_{d+1}+b\mathcal{R}.
\]
Since $\gcd(a,b)=1$, the map $\varphi$ is well-defined. Since $\varphi$ is surjective and $\mathrm{Ker}(\varphi)=\mathcal{R}^d$, we have $[\Lambda_\mathbf{u}:\mathcal{R}^d]=\#\mathcal{R}/b\mathcal{R}=|b|$, where $[\Lambda_\mathbf{u}:\mathcal{R}^d]$ denotes the index of the subgroup $\mathcal{R}^d$ in $\Lambda_\mathbf{u}$. Hence, it follows from $\det(\mathcal{R}^d)=1$ that $\det(\Lambda_\mathbf{u})=|b|^{-1}=|\mathbf{u}|^{-1}$.

Now, let us prove the second equality. First, observe that
\[
\lambda_1(\Lambda_{\mathbf{u}})=\min_{\substack{(a,b)\in \mathcal{R}^d\times \mathcal{R} \\ b\hat{\mathbf{u}}-a \neq 0}}\| b\hat{\mathbf{u}}-a\| = \min_{\mathbf{v}\in Q, \mathbf{v}\notin \mathbb{F}_q^* \mathbf{u}} A(\hat{\mathbf{u}},\mathbf{v}) \leq r(\mathbf{u}).
\]
If $\lambda_1(\Lambda_{\mathbf{u}}) = A(\hat{\mathbf{u}},\mathbf{v})$ for some $\mathbf{v} \in Q$ with $|\mathbf{v}|>|\mathbf{u}|$, then writing $\mathbf{u}=(a,b),\mathbf{v}=(a',b')$ and using Euclidean division of polynomials, we have $b'=kb+r$ for some $k,r\in\mathcal{R}$ with $0<|r|<|b|$. Since $A(\hat{\mathbf{u}},\mathbf{v}) = A(\hat{\mathbf{u}},\mathbf{v}-k\mathbf{u})$ and $0<|\mathbf{v}-k\mathbf{u}|<|\mathbf{u}|$, we have $\lambda_1(\Lambda_{\mathbf{u}}) \geq r(\mathbf{u})$.
\end{proof}

The following two lemmas are a function field analogue of the results of \cite[Section 5.2]{CC16} for the geometry associated with the subspace $H_\mathbf{u}$.
\begin{lemma}
\label{H_uCond}
    Let $\mathbf{u},\mathbf{v}\in Q$. Then, $\mathbf{v}\in H_{\mathbf{u}}$ if and only if $\hat{\mathbf{v}}\in \hat{\mathbf{u}}+H_{\mathbf{u}}'$.
\end{lemma}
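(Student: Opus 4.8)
The plan is to unwind the definitions of $H_{\mathbf{u}}$ and $H_{\mathbf{u}}'$ through the projection $\pi_{\mathbf{u}}$, which is the natural tool since $H_{\mathbf{u}} = \pi_{\mathbf{u}}^{-1}(H_{\mathbf{u}}')$ by definition. Write $\mathbf{u}=(a,b)$ and $\mathbf{v}=(a',b')$ with $b,b'\neq 0$. Recall $\pi_{\mathbf{u}}(a',b') = b'\hat{\mathbf{u}}-a'$ and $\hat{\mathbf{v}}=a'/b'$, so $\pi_{\mathbf{u}}(\mathbf{v}) = b'(\hat{\mathbf{u}}-\hat{\mathbf{v}})$. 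The statement $\mathbf{v}\in H_{\mathbf{u}}$ means by definition $\pi_{\mathbf{u}}(\mathbf{v})\in H_{\mathbf{u}}'$, i.e. $b'(\hat{\mathbf{u}}-\hat{\mathbf{v}})\in H_{\mathbf{u}}'$.

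The key observation is that $H_{\mathbf{u}}'$ is a $\widetilde{\mathcal{K}}$-subspace of $\widetilde{\mathcal{K}}^d$ (it is $\operatorname{span}_{\widetilde{\mathcal{K}}}(\Lambda_{\mathbf{u}}')$), and $b'\in\widetilde{\mathcal{K}}^*$ is a nonzero scalar. Therefore $b'(\hat{\mathbf{u}}-\hat{\mathbf{v}})\in H_{\mathbf{u}}'$ if and only if $\hat{\mathbf{u}}-\hat{\mathbf{v}}\in H_{\mathbf{u}}'$, which is exactly the assertion $\hat{\mathbf{v}}\in\hat{\mathbf{u}}+H_{\mathbf{u}}'$. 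This gives both directions at once. First I would state the computation $\pi_{\mathbf{u}}(\mathbf{v}) = b'(\hat{\mathbf{u}}-\hat{\mathbf{v}})$ explicitly, then invoke $b'\neq 0$ together with the fact that $H_{\mathbf{u}}'$ is closed under scaling by elements of $\widetilde{\mathcal{K}}$, and conclude.

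There is essentially no obstacle here; the only point requiring a word of care is that $\mathbf{v}\in Q$ guarantees $b'\neq 0$, so dividing and multiplying by $b'$ is legitimate, and that $\hat{\mathbf{v}}$ is well-defined. One might also note for completeness that $\hat{\mathbf{u}}-\hat{\mathbf{v}}\in\Lambda_{\mathbf{u}}$ would require $b-b'$ considerations, but this is not needed: membership in the $\widetilde{\mathcal{K}}$-span $H_{\mathbf{u}}'$ is all that is asserted, and the scalar argument suffices. Thus the lemma reduces to the trivial fact that a linear subspace is invariant under nonzero rescaling, once the projection formula is written down.
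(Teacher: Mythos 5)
Your proof is correct and is essentially the paper's own argument: unwind $H_{\mathbf{u}}=\pi_{\mathbf{u}}^{-1}(H_{\mathbf{u}}')$, compute $\pi_{\mathbf{u}}(\mathbf{v})=b'(\hat{\mathbf{u}}-\hat{\mathbf{v}})$, and use that the $\widetilde{\mathcal{K}}$-subspace $H_{\mathbf{u}}'$ is stable under the nonzero scalar $b'$. No gaps; nothing further needed.
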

\begin{proof}
    Notice that $\mathbf{v}\in H_{\mathbf{u}}$ if and only if $\pi_{\mathbf{u}}(\mathbf{v})\in H_{\mathbf{u}}'$. Hence, if we write $\mathbf{v}=(a,b)$, then it is equivalent to $b\hat{\mathbf{u}}-a\in H_{\mathbf{u}}'$. Hence, $b^{-1}\pi_{\mathbf{u}}(\mathbf{v})=\hat{\mathbf{u}}-\hat{\mathbf{v}}\in H_{\mathbf{u}}'$. Thus, it is equivalent to $\hat{\mathbf{v}}\in \hat{\mathbf{u}}+H_{\mathbf{u}}'$. 
\end{proof}
\begin{lemma}
\label{alphaLen}
    Let $\mathbf{u},\mathbf{v}\in Q$ be such that $\mathbf{v}\in H_{\mathbf{u}}$ and $\Vert \hat{\mathbf{u}}-\hat{\mathbf{v}}\Vert\leq \frac{\lambda_1(\Lambda_{\mathbf{u}})}{\vert \mathbf{u}\vert}$, and let $\alpha\in \Lambda_{\mathbf{v}}\setminus H_{\mathbf{u}}'$. Then we have $\|\alpha\| \geq \lambda_d(\Lambda_{\mathbf{u}})\geq q^{-2}\lambda_d(\Lambda_{\mathbf{v}})$.
\end{lemma}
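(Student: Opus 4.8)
The statement has two inequalities: $\|\alpha\| \geq \lambda_d(\Lambda_{\mathbf{u}})$ for $\alpha \in \Lambda_{\mathbf{v}} \setminus H_{\mathbf{u}}'$, and $\lambda_d(\Lambda_{\mathbf{u}}) \geq q^{-2}\lambda_d(\Lambda_{\mathbf{v}})$. I would begin with the second inequality, since it is a clean consequence of the geometric lemmas in Section 2. The hypothesis $\|\hat{\mathbf{u}}-\hat{\mathbf{v}}\| \leq \lambda_1(\Lambda_{\mathbf{u}})/|\mathbf{u}|$ together with $\mathbf{v} \in H_{\mathbf{u}}$ should force $\Lambda_{\mathbf{v}} \subseteq \Lambda_{\mathbf{u}} + H_{\mathbf{u}}'$: indeed any lattice point of $\Lambda_{\mathbf{v}}$ has the form $b'\hat{\mathbf{v}} - a'$, and using $\hat{\mathbf{v}} = \hat{\mathbf{u}} - (\hat{\mathbf{u}} - \hat{\mathbf{v}})$ with $\hat{\mathbf{u}}-\hat{\mathbf{v}} \in H_{\mathbf{u}}'$ (by Lemma \ref{H_uCond}, since $\mathbf{v}\in H_{\mathbf{u}}$), one writes $b'\hat{\mathbf{v}} - a' = (b'\hat{\mathbf{u}} - a') - b'(\hat{\mathbf{u}}-\hat{\mathbf{v}})$, the first term in $\Lambda_{\mathbf{u}}$ and the second in $H_{\mathbf{u}}'$ (here one uses that $H_{\mathbf{u}}'$ is a $\widetilde{\mathcal{K}}$-subspace, so closed under scaling by $b'$). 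Hence $\Lambda_{\mathbf{v}} \subseteq \Lambda_{\mathbf{u}} + H_{\mathbf{u}}'$, and applying Lemma \ref{e(hyp)} with $L = \Lambda_{\mathbf{u}}$ gives $e(\Lambda_{\mathbf{u}} + H_{\mathbf{u}}') = q^{-2}\lambda_d(\Lambda_{\mathbf{u}})$; since $e$ is monotone under inclusion (a ball avoiding a superset avoids the subset), $e(\Lambda_{\mathbf{v}}) \leq e(\Lambda_{\mathbf{u}} + H_{\mathbf{u}}') $ which by Lemma \ref{e(X)Lattice} reads $q^{-2}\lambda_d(\Lambda_{\mathbf{v}}) \leq q^{-2}\lambda_d(\Lambda_{\mathbf{u}})$, i.e. $\lambda_d(\Lambda_{\mathbf{u}}) \geq q^{-2}\lambda_d(\Lambda_{\mathbf{v}})$.

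For the first inequality, let $\alpha \in \Lambda_{\mathbf{v}} \setminus H_{\mathbf{u}}'$. By the inclusion just established, $\alpha \in \Lambda_{\mathbf{u}} + H_{\mathbf{u}}'$, so write $\alpha = \mu + h$ with $\mu \in \Lambda_{\mathbf{u}}$ and $h \in H_{\mathbf{u}}'$. Since $\alpha \notin H_{\mathbf{u}}'$, necessarily $\mu \notin H_{\mathbf{u}}'$. Now take the orthogonal basis $\xi_1,\dots,\xi_d$ of $\Lambda_{\mathbf{u}}$ with $\|\xi_i\| = \lambda_i(\Lambda_{\mathbf{u}})$ and $H_{\mathbf{u}}' = \operatorname{span}_{\widetilde{\mathcal{K}}}\{\xi_1,\dots,\xi_{d-1}\}$ (as recorded in Definition \ref{Def_Fareylattice}). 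Writing $\mu = \sum_{i=1}^d c_i \xi_i$ with $c_i \in \mathcal{R}$, the condition $\mu \notin H_{\mathbf{u}}'$ means $c_d \neq 0$, hence $|c_d| \geq 1$. Writing $h = \sum_{i=1}^{d-1} t_i \xi_i$ with $t_i \in \widetilde{\mathcal{K}}$, orthogonality of the $\xi_i$ gives
\[
\|\alpha\| = \max\{|c_1 + t_1|\|\xi_1\|, \dots, |c_{d-1}+t_{d-1}|\|\xi_{d-1}\|,\ |c_d|\|\xi_d\|\} \geq |c_d|\,\lambda_d(\Lambda_{\mathbf{u}}) \geq \lambda_d(\Lambda_{\mathbf{u}}).
\]
Combining the two parts yields $\|\alpha\| \geq \lambda_d(\Lambda_{\mathbf{u}}) \geq q^{-2}\lambda_d(\Lambda_{\mathbf{v}})$, as claimed.

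**Main obstacle.** The only delicate point is justifying the inclusion $\Lambda_{\mathbf{v}} \subseteq \Lambda_{\mathbf{u}} + H_{\mathbf{u}}'$ cleanly from the two hypotheses $\mathbf{v} \in H_{\mathbf{u}}$ and $\|\hat{\mathbf{u}}-\hat{\mathbf{v}}\| \leq \lambda_1(\Lambda_{\mathbf{u}})/|\mathbf{u}|$; in fact, as the argument above shows, $\mathbf{v}\in H_{\mathbf{u}}$ alone already gives $\hat{\mathbf{u}}-\hat{\mathbf{v}}\in H_{\mathbf{u}}'$ via Lemma \ref{H_uCond}, and then the inclusion follows formally from $H_{\mathbf{u}}'$ being a vector subspace — so the norm hypothesis may only be needed to ensure $\hat{\mathbf{v}}$ is genuinely close to $\hat{\mathbf{u}}$ in later applications, or to guarantee the decomposition $\alpha = \mu + h$ can be arranged with $h$ small (which is not actually needed for the stated bound). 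I would double-check whether the norm hypothesis is essential here or merely inherited from the context; the proof of the displayed inequalities themselves uses only $\mathbf{v}\in H_{\mathbf{u}}$, Lemmas \ref{e(X)Lattice}, \ref{e(hyp)}, \ref{H_uCond}, and the orthogonal-basis structure. The rest is routine: monotonicity of $e(\cdot)$ under set inclusion, and the defining property of an orthogonal basis.
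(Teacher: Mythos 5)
Your first inequality is fine and is essentially the paper's argument: from $\mathbf{v}\in H_{\mathbf{u}}$ and Lemma \ref{H_uCond} one gets $\hat{\mathbf{u}}-\hat{\mathbf{v}}\in H_{\mathbf{u}}'$, hence $\Lambda_{\mathbf{v}}\subseteq\Lambda_{\mathbf{u}}+H_{\mathbf{u}}'$, and the orthogonal-basis computation then gives $\|\alpha\|\geq\lambda_d(\Lambda_{\mathbf{u}})$ for every $\alpha\in\Lambda_{\mathbf{v}}\setminus H_{\mathbf{u}}'$.

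The second inequality, however, has a genuine gap: you apply the monotonicity of $e$ in the wrong direction. Since $e(X)$ is the radius of the largest ball disjoint from $X$, enlarging $X$ can only decrease $e$; your own parenthetical justification (``a ball avoiding a superset avoids the subset'') proves exactly that $e(\Lambda_{\mathbf{u}}+H_{\mathbf{u}}')\leq e(\Lambda_{\mathbf{v}})$, not the inequality $e(\Lambda_{\mathbf{v}})\leq e(\Lambda_{\mathbf{u}}+H_{\mathbf{u}}')$ that you then use. So the inclusion $\Lambda_{\mathbf{v}}\subseteq\Lambda_{\mathbf{u}}+H_{\mathbf{u}}'$ together with Lemmas \ref{e(X)Lattice} and \ref{e(hyp)} only yields $q^{-2}\lambda_d(\Lambda_{\mathbf{u}})\leq q^{-2}\lambda_d(\Lambda_{\mathbf{v}})$, i.e. $\lambda_d(\Lambda_{\mathbf{u}})\leq\lambda_d(\Lambda_{\mathbf{v}})$, which points the wrong way for the bound you want. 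Relatedly, your closing speculation that the hypothesis $\|\hat{\mathbf{u}}-\hat{\mathbf{v}}\|\leq\lambda_1(\Lambda_{\mathbf{u}})/|\mathbf{u}|$ is dispensable is incorrect: it is precisely what makes the second inequality work. The paper proves $e(\Lambda_{\mathbf{v}})\leq q^{2}e(\Lambda_{\mathbf{u}})$, equivalently $\lambda_d(\Lambda_{\mathbf{v}})\leq q^{2}\lambda_d(\Lambda_{\mathbf{u}})$ by Lemma \ref{e(X)Lattice}, via a density-transfer argument: any $\beta\in\widetilde{\mathcal{K}}^d$ lies within $q^{2}e(\Lambda_{\mathbf{u}})=\lambda_d(\Lambda_{\mathbf{u}})$ of some point $b\hat{\mathbf{u}}+a$ of $\Lambda_{\mathbf{u}}$ with $|b|\leq|\mathbf{u}|$, and since $\|b(\hat{\mathbf{u}}-\hat{\mathbf{v}})\|\leq|\mathbf{u}|\cdot\lambda_1(\Lambda_{\mathbf{u}})/|\mathbf{u}|\leq\lambda_d(\Lambda_{\mathbf{u}})$, the ultrametric inequality shows $\beta$ is also within $\lambda_d(\Lambda_{\mathbf{u}})$ of the point $b\hat{\mathbf{v}}+a\in\Lambda_{\mathbf{v}}$; hence every ball of radius $q^{2}e(\Lambda_{\mathbf{u}})$ meets $\Lambda_{\mathbf{v}}$, giving $e(\Lambda_{\mathbf{v}})\leq q^{2}e(\Lambda_{\mathbf{u}})$. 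You need an argument of this kind (using the norm hypothesis) in place of your monotonicity step; as written, the second half of the lemma is not established.
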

\begin{proof}
    Since $\mathbf{v}\in H_{\mathbf{u}}$, by Lemma \ref{H_uCond}, we have $\hat{\mathbf{v}}-\hat{\mathbf{u}}\in H_{\mathbf{u}}'$. Hence, $\Lambda_{\mathbf{v}}\subseteq \Lambda_{\mathbf{u}}+\widetilde{\mathcal{K}}(\hat{\mathbf{v}}-\hat{\mathbf{u}})\subseteq \Lambda_{\mathbf{u}}+H_{\mathbf{u}}'$. 
    Let $\xi_1,\dots,\xi_d$ be an orthogonal basis of $\Lambda_\mathbf{u}$ with $\|\xi_i\|=\lambda_i(\Lambda_\mathbf{u})$ for each $i=1,\dots,d$.
    Then we have $\alpha\in (\Lambda_{\mathbf{u}}+H_{\mathbf{u}}')\setminus H_{\mathbf{u}}'=(\mathcal{R}\setminus\{0\})\xi_d+H_{\mathbf{u}}'$. Since $H_{\mathbf{u}}'=\mathrm{span}_{\widetilde{\mathcal{K}}}\{\xi_1,\dots,\xi_{d-1}\}$, it follows from the orthogonality of $\{\xi_1,\dots,\xi_d\}$ that $\Vert \alpha\Vert\geq \lambda_d(\Lambda_\mathbf{u})$. 
    
    We now claim that $e(\Lambda_{\mathbf{v}})\leq q^2 e(\Lambda_{\mathbf{u}})$. Then, by Lemma \ref{e(X)Lattice}, we have 
    $$\Vert \alpha\Vert\geq \lambda_d(\Lambda_\mathbf{u})=q^2e(\Lambda_{\mathbf{u}}) \geq e(\Lambda_{\mathbf{v}}) = q^{-2}\lambda_d(\Lambda_{\mathbf{v}}),$$
    which completes the proof.

    Observe that by definition of $e(\Lambda_{\mathbf{u}})$, for any $\beta\in \widetilde{\mathcal{K}}^d$, $B(\beta,q^2e(\Lambda_{\mathbf{u}}))\cap \Lambda_{\mathbf{u}} \neq \emptyset$. Thus, there exist $(a,b)\in \mathcal{R}^d \times \mathcal{R}$ with $|b|\leq |\mathbf{u}|$, such that $\|\beta - (b\hat{\mathbf{u}}+a)\|\leq q^2e(\Lambda_{\mathbf{u}})$.
    Since $\Vert \hat{\mathbf{u}}-\hat{\mathbf{v}}\Vert\leq \frac{\lambda_1(\Lambda_{\mathbf{u}})}{\vert \mathbf{u}\vert}$ and $q^2 e(\Lambda_{\mathbf{u}})=\lambda_d(\Lambda_{\mathbf{u}})\geq \lambda_1(\Lambda_{\mathbf{u}})$, we have
    $$\|\beta - (b\hat{\mathbf{v}}+a)\|\leq \max\{\|\beta - (b\hat{\mathbf{u}}+a)\|, \|b(\hat{\mathbf{u}}-\hat{\mathbf{v}})\|\}\leq \max\{q^2e(\Lambda_{\mathbf{u}}), \lambda_1(\Lambda_{\mathbf{u}})\}=q^2e(\Lambda_{\mathbf{u}}).$$
    Hence, $e(\Lambda_{\mathbf{v}})\leq q^2e(\Lambda_{\mathbf{u}})$, which completes the proof. 
\end{proof}

We will often consider inverse images of the map $\pi_\mathbf{u}$ of lattice vectors in $\Lambda_{\mathbf{u}}$. Therefore, we will need the following lemma.
\begin{lemma}
\label{lem:pi^-1(Val)}
    Let $\mathbf{u}\in Q$, $\alpha\in \Lambda_{\mathbf{u}}$, and $k\in\mathbb{Z}_{\geq 0}$. Then we have 
    $$\#\{\mathbf{v}\in \pi^{-1}_{\mathbf{u}}(\alpha)\cap Q:\vert \mathbf{v}\vert=q^k\vert \mathbf{u}\vert\}\leq (q-1)q^{k}.$$
Moreover, if $\alpha\in \Lambda_{\mathbf{u}}$ is primitive, then
 $$\#\{\mathbf{v}\in \pi^{-1}_{\mathbf{u}}(\alpha)\cap Q:\vert \mathbf{v}\vert=q^k\vert \mathbf{u}\vert\}= (q-1)q^{k}.$$
\end{lemma}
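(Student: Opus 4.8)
The plan is to describe the fibre $\pi_{\mathbf{u}}^{-1}(\alpha)$ over $\mathcal{R}^{d+1}$ explicitly as a coset of $\mathcal{R}\mathbf{u}$, count how many of its elements have $b$-coordinate of the prescribed degree, and then deal with the $\gcd$ condition defining $Q$. First I would determine $\ker\big(\pi_{\mathbf{u}}|_{\mathcal{R}^{d+1}}\big)$: writing $\mathbf{u}=(a,b)$, the relation $\pi_{\mathbf{u}}(a',b')=b'\tfrac{a}{b}-a'=0$ is exactly $b'a_i=ba'_i$ for all $i$, and since $\gcd(a_1,\dots,a_d,b)=1$ because $\mathbf{u}\in Q$, a short divisibility argument forces $b\mid b'$, say $b'=jb$ with $j\in\mathcal{R}$, and then $a'=ja$. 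Hence $\ker\big(\pi_{\mathbf{u}}|_{\mathcal{R}^{d+1}}\big)=\mathcal{R}\mathbf{u}$. Because $\Lambda_{\mathbf{u}}=\pi_{\mathbf{u}}(\mathcal{R}^{d+1})$ by definition, for $\alpha\in\Lambda_{\mathbf{u}}$ the set $\pi_{\mathbf{u}}^{-1}(\alpha)\cap\mathcal{R}^{d+1}$ is a single coset $\mathbf{v}_0+\mathcal{R}\mathbf{u}=\{\mathbf{v}_0+j\mathbf{u}:j\in\mathcal{R}\}$; in particular $\pi_{\mathbf{u}}^{-1}(\alpha)\cap Q$ is contained in it.

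Next I would count. With $\mathbf{v}_0=(a_0,b_0)$ and $n=\deg b$, the vector $\mathbf{v}_0+j\mathbf{u}$ has $b$-coordinate $b_0+jb$, so $|\mathbf{v}_0+j\mathbf{u}|=q^{k}|\mathbf{u}|$ precisely when $\deg(b_0+jb)=n+k$. As $j$ ranges over $\mathcal{R}$, the element $b_0+jb$ ranges bijectively over the coset $r+\mathcal{R}b$, where $r$ is the Euclidean remainder of $b_0$ modulo $b$ (so $\deg r<n$); every element of this coset of degree $\ge n$ has the form $r+j'b$ with $j'\ne 0$, and then has degree exactly $n+\deg j'$, so there are exactly $(q-1)q^{k}$ of them of degree $n+k$. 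Hence the coset $\mathbf{v}_0+\mathcal{R}\mathbf{u}$ contains exactly $(q-1)q^{k}$ elements of absolute value $q^{k}|\mathbf{u}|$, and since $Q\subseteq\mathcal{R}^{d+1}$ this already gives the asserted inequality.

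For the ``moreover'' part I would show that when $\alpha$ is primitive, every $\mathbf{v}$ in this coset with nonzero $b$-coordinate (which is automatic once $|\mathbf{v}|=q^{k}|\mathbf{u}|$) already lies in $Q$. Indeed, if $g=\gcd$ of the coordinates of $\mathbf{v}$ were a non-unit, then $g^{-1}\mathbf{v}\in\mathcal{R}^{d+1}$ and, by $\widetilde{\mathcal{K}}$-linearity of $\pi_{\mathbf{u}}$, $\pi_{\mathbf{u}}(g^{-1}\mathbf{v})=g^{-1}\alpha\in\pi_{\mathbf{u}}(\mathcal{R}^{d+1})=\Lambda_{\mathbf{u}}$, so $\alpha=g\cdot(g^{-1}\alpha)$ with $g^{-1}\alpha\in\Lambda_{\mathbf{u}}$, contradicting the primitivity of $\alpha$. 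Thus all $(q-1)q^{k}$ elements of the coset of the prescribed size lie in $Q$, which yields the equality.

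The only step that needs a little care is the degree count: the leading terms of $b_0+jb$ can cancel when $\deg b_0>n$, so rather than a case analysis I would reduce modulo $b$ first, after which counting the polynomials of a fixed degree inside a given coset is immediate.
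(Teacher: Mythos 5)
Your proof is correct and follows essentially the same route as the paper: you identify the fibre $\pi_{\mathbf{u}}^{-1}(\alpha)\cap\mathcal{R}^{d+1}$ as a coset of $\mathcal{R}\mathbf{u}$ (the paper does this via the same $\gcd$-divisibility argument), count elements of the prescribed degree after Euclidean reduction modulo the $b$-coordinate using the ultrametric degree estimate, and rule out non-unit common divisors by the same contradiction with the primitivity of $\alpha$. No gaps; the differences are purely presentational.
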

\begin{proof}
Denote $\mathbf{u}=(a_0,b_0)$. Using Euclidean division of polynomials, we can choose $(a_1,b_1)\in \pi_{\mathbf{u}}^{-1}(\alpha)\cap \mathcal{R}^{d+1}$ with $|b_1|<|b_0|$.
We first claim that
    $$\{\mathbf{v}\in \pi_{\mathbf{u}}^{-1}(\alpha)\cap \mathcal{R}^{d+1}:\vert \mathbf{v}\vert=q^k\vert \mathbf{u}\vert\}=\big\{(a_1+a_0s,b_1+b_0s): s\in \mathcal{R}\text{ with }\vert s\vert=q^k\big\}.$$
    It is clear that the right hand side is contained in the left hand side. For $\mathbf{v}=(a,b)\in \mathcal{R}^{d+1}$ with $\pi_{\mathbf{u}}(\mathbf{v})=\alpha$ and $|\mathbf{v}|=q^k|\mathbf{u}|$, we have $\alpha=b\frac{a_0}{b_0}-a=b_1\frac{a_0}{b_0}-a_1$.
Thus, $(b-b_1)\frac{a_0}{b_0}=a-a_1$.
Since $\gcd(a_0,b_0)=1$, it follows that $b_0$ divides $b-b_1$. Hence, there exists some $s\in \mathcal{R}$ such that $b=b_1+b_0s$. Thus, $a=a_1+a_0s$. Since $\vert b_1\vert<\vert b_0\vert$, we have 
$$q^k\vert \mathbf{u}\vert=\vert \mathbf{v}\vert=\vert b\vert=\vert b-b_1\vert=\vert b_0s\vert=\vert s\vert \vert \mathbf{u}\vert,$$
which completes the proof of the claim. 

Since $\#\{s\in\mathcal{R}:|s|=q^k\}=(q-1)q^{k}$, the first argument of the lemma follows from this claim. For the second argument, it is enough to show that if $\alpha\in\Lambda_{\mathbf{u}}$ is primitive, then $(a_1+a_0s,b_1+b_0s)\in Q$ for any $s\in\mathcal{R}$ with $\vert s\vert=q^k$. Indeed, if not, $(a_1+a_0s,b_1+b_0s)=s_0\mathbf{w}$ for some $s_0\in \mathcal{R}$ with $|s_0|>1$ and $\mathbf{w}\in Q$. Since $\pi_{\mathbf{u}}$ is linear, we have
\[
\alpha=\pi_{\mathbf{u}}(a_1+a_0s,b_1+b_0s)=s_0 \pi_{\mathbf{u}}(\mathbf{w}).
\]
This is a contradiction to the primitivity of $\alpha$. Hence, the second argument follows.

\end{proof}

\section{Upper Bound}
\label{sec:UpBnd}
In order to estimate the upper bound of the Hausdorff dimension of $\mathbf{DI}_d(\varepsilon)$, we will define the following self-similar structure for $\mathbf{DI}_d(\varepsilon)$.
\begin{definition} 
    For $\mathbf{u}\in Q$, we define
$$D(\mathbf{u})=\bigg\{\mathbf{v}\in Q:\ \vert \mathbf{u}\vert\leq \vert \mathbf{v}\vert,\ \mathbf{v}\in H_{\mathbf{u}},\ \text{and } \Vert \hat{\mathbf{u}}-\hat{\mathbf{v}}\Vert\leq \frac{\lambda_1(\Lambda_{\mathbf{u}})}{\vert \mathbf{u}\vert}\bigg\}.$$
For $\mathbf{u}\in Q$ and $\mathbf{v}\in D(\mathbf{u})$, we define
\[
E(\mathbf{u},\mathbf{v},\varepsilon)=\Bigg\{\mathbf{w}\in Q:\ \vert \mathbf{v}\vert<\vert \mathbf{w}\vert,\ \mathbf{w}\notin H_{\mathbf{u}},\ \text{and } \Vert \hat{\mathbf{v}}-\hat{\mathbf{w}}\Vert<\frac{\varepsilon}{\vert \mathbf{v}\vert \vert \mathbf{w}\vert^{1/d}}\Bigg\},
\] and 
\[\sigma_{\varepsilon}(\mathbf{u})=\bigcup_{\mathbf{v}\in D(\mathbf{u})}E(\mathbf{u},\mathbf{v},\varepsilon),\quad
Q_{\varepsilon}=\{\mathbf{u}\in Q:\hat{\lambda}_1(\Lambda_{\mathbf{u}})<\varepsilon\},
\quad B(\mathbf{u})=B\left(\hat{\mathbf{u}},\frac{1}{\vert \mathbf{u}\vert^{1+1/d}}\right).
\]
\end{definition}
\begin{remark}\label{Rem_selfsimilar}
We will consider the triple $(Q_{\varepsilon},\sigma_{\varepsilon},B)$ as a self-similar structure on $\widetilde{\mathcal{K}}^d$. Recall Definition \ref{Def_SelfSimilar} and note that $(\mathbf{u},\mathbf{w})\in \sigma_\varepsilon$ if and only if $\mathbf{w}\in E(\mathbf{u},\mathbf{v},\varepsilon)$ for some $\mathbf{v}\in D(\mathbf{u})$. Observe that $\sigma_{\varepsilon}(\mathbf{u})\subseteq Q_{\varepsilon}$ for every $\mathbf{u}\in Q$ since for any $\mathbf{w}\in\sigma_\varepsilon(\mathbf{u})$ we have
$$\hat{\lambda}_1(\mathbf{w})\leq \vert \mathbf{w}\vert^{\frac{1}{d}}\Vert\pi_{\mathbf{w}}(\mathbf{v})\Vert=\vert \mathbf{w}\vert^{\frac{1}{d}}\vert \mathbf{v}\vert\cdot \Vert \hat{\mathbf{v}}-\hat{\mathbf{w}}\Vert<\varepsilon.$$
Thus, the triple $(Q_{\varepsilon},\sigma_{\varepsilon},B)$ is indeed a self-similar structure.
\end{remark}
For given $\varepsilon>0$, it follows from Corollary \ref{Cor_SingChar} and Lemma \ref{Lemma_requalLam} that 
$$\mathbf{DI}_d(\varepsilon)\setminus \mathcal{K}^d = \{\boldsymbol{\theta}\in \widetilde{\mathcal{K}}^d \setminus\mathcal{K}^d:  \mathbf{u}_n \in Q_{\varepsilon} \text{ for all large enough } n\geq 1 \},$$
where $\{\mathbf{u}_n\}_{n\geq 1}$ is the sequence of best approximation vectors for $\boldsymbol{\theta}$. We denote by $\widetilde{\mathbf{DI}}_d(\varepsilon)$ the above set in the equation. Since the Hausdorff dimension of the rationals $\mathcal{K}^d$ is zero, we can work with $\widetilde{\mathbf{DI}}_d(\varepsilon)$ instead of $\mathbf{DI}_d(\varepsilon)$. We also define
$$\widetilde{\mathbf{DI}}_d^*(\varepsilon)=\{\boldsymbol{\theta}=(\theta_1,\dots,\theta_d)\in \widetilde{\mathbf{DI}}_d(\varepsilon):1,\theta_1,\dots, \theta_d\text{ are linearly independent over } \mathcal{K}\}.$$


\begin{proposition}
\label{ssCover}
The triple $(Q_{\varepsilon},\sigma_{\varepsilon},B)$ is a self-similar covering of $\widetilde{\mathbf{DI}}_d^*(\varepsilon)$. 
\end{proposition}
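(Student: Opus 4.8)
The plan is to establish two things: that $(Q_{\varepsilon},\sigma_{\varepsilon},B)$ is a self-similar structure in the sense recalled in \cref{sec:SSCov}, and that it has the covering property for $\widetilde{\mathbf{DI}}_d^*(\varepsilon)$. The structure part I would dispatch first: $Q_{\varepsilon}\subseteq\mathcal{R}^{d+1}$ is countable and each $B(\mathbf{u})$ is a ball, so the only nontrivial point is $\sigma_{\varepsilon}\subseteq Q_{\varepsilon}\times Q_{\varepsilon}$. If $\mathbf{u}\in Q_{\varepsilon}$ and $\mathbf{w}\in E(\mathbf{u},\mathbf{v},\varepsilon)$ for some $\mathbf{v}\in D(\mathbf{u})$, then $\mathbf{v}\in H_{\mathbf{u}}$ while $\mathbf{w}\notin H_{\mathbf{u}}$, and since $H_{\mathbf{u}}$ is an $\widetilde{\mathcal{K}}$-subspace this forces $\mathbf{v}\notin\mathbb{F}_q^*\mathbf{w}$; together with $|\mathbf{v}|<|\mathbf{w}|$, Lemma \ref{Lemma_requalLam} and Definition \ref{Def_bestappro2} give $\lambda_1(\Lambda_{\mathbf{w}})=r(\mathbf{w})\le A(\hat{\mathbf{w}},\mathbf{v})=|\mathbf{v}|\,\|\hat{\mathbf{v}}-\hat{\mathbf{w}}\|<\varepsilon|\mathbf{w}|^{-1/d}$, i.e. $\hat{\lambda}_1(\Lambda_{\mathbf{w}})<\varepsilon$ and $\mathbf{w}\in Q_{\varepsilon}$.

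For the covering property I would fix $\boldsymbol{\theta}\in\widetilde{\mathbf{DI}}_d^*(\varepsilon)$ with best approximation sequence $\{\mathbf{u}_n\}_{n\ge1}$ — recall $|\mathbf{u}_n|$ strictly increases to $\infty$ and $A(\boldsymbol{\theta},\mathbf{u}_n)$ strictly decreases along best approximations — and an $N_0\ge2$ with $\mathbf{u}_n\in Q_{\varepsilon}$ for $n\ge N_0$. Two preliminary observations drive the argument. First, $\boldsymbol{\theta}\in B(\mathbf{u}_n)$ for all $n$: by Lemmas \ref{A(u_i+j,u_i)<lambda_1(u_i+1)} and \ref{Lemma_requalLam} and Theorem \ref{Mink2nd}, $\|\boldsymbol{\theta}-\hat{\mathbf{u}}_n\|=A(\boldsymbol{\theta},\mathbf{u}_n)|\mathbf{u}_n|^{-1}=\lambda_1(\Lambda_{\mathbf{u}_{n+1}})|\mathbf{u}_n|^{-1}\le|\mathbf{u}_{n+1}|^{-1/d}|\mathbf{u}_n|^{-1}\le|\mathbf{u}_n|^{-1-1/d}$. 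Second, for any index $n_0$ the best approximations do not all lie in $H_{\mathbf{u}_{n_0}}$: otherwise Lemma \ref{H_uCond} would give $\hat{\mathbf{u}}_n\in\hat{\mathbf{u}}_{n_0}+H_{\mathbf{u}_{n_0}}'$ for all $n\ge n_0$, and since $\Lambda_{\mathbf{u}_{n_0}}\subseteq\mathcal{K}^d$ the hyperplane $H_{\mathbf{u}_{n_0}}'$ is spanned by vectors of $\mathcal{K}^d$, hence is the kernel of a nonzero $\mathcal{K}$-rational linear functional $\ell$ and so is closed; passing to the limit $\hat{\mathbf{u}}_n\to\boldsymbol{\theta}$ would then give $\ell(\boldsymbol{\theta})=\ell(\hat{\mathbf{u}}_{n_0})\in\mathcal{K}$, a $\mathcal{K}$-linear relation among $1,\theta_1,\dots,\theta_d$, contradicting the defining property of $\widetilde{\mathbf{DI}}_d^*(\varepsilon)$.

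Then I would build the admissible sequence recursively: put $n_0=N_0$, and given $n_k$ let $n_{k+1}=m$ be the least index $>n_k$ with $\mathbf{u}_m\notin H_{\mathbf{u}_{n_k}}$ — this exists by the second observation, and $m\ge n_k+1$ since $\pi_{\mathbf{u}_{n_k}}(\mathbf{u}_{n_k})=0\in H_{\mathbf{u}_{n_k}}'$ — and set $\mathbf{v}:=\mathbf{u}_{m-1}$. I would check $\mathbf{v}\in D(\mathbf{u}_{n_k})$: indeed $|\mathbf{u}_{n_k}|\le|\mathbf{u}_{m-1}|$, $\mathbf{u}_{m-1}\in H_{\mathbf{u}_{n_k}}$ by minimality of $m$, and when $m-1>n_k$, Lemma \ref{umIneq} with the strict monotonicities gives $\|\hat{\mathbf{u}}_{n_k}-\hat{\mathbf{u}}_{m-1}\|=\|\boldsymbol{\theta}-\hat{\mathbf{u}}_{n_k}\|=r(\mathbf{u}_{n_k+1})|\mathbf{u}_{n_k}|^{-1}<r(\mathbf{u}_{n_k})|\mathbf{u}_{n_k}|^{-1}=\lambda_1(\Lambda_{\mathbf{u}_{n_k}})|\mathbf{u}_{n_k}|^{-1}$, the left side being $0$ if $m-1=n_k$. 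Likewise $\mathbf{u}_m\in E(\mathbf{u}_{n_k},\mathbf{v},\varepsilon)$: $|\mathbf{u}_{m-1}|<|\mathbf{u}_m|$, $\mathbf{u}_m\notin H_{\mathbf{u}_{n_k}}$, and $\|\hat{\mathbf{u}}_{m-1}-\hat{\mathbf{u}}_m\|=\|\boldsymbol{\theta}-\hat{\mathbf{u}}_{m-1}\|=\lambda_1(\Lambda_{\mathbf{u}_m})|\mathbf{u}_{m-1}|^{-1}<\varepsilon|\mathbf{u}_{m-1}|^{-1}|\mathbf{u}_m|^{-1/d}$, using $\mathbf{u}_m\in Q_{\varepsilon}$. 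Hence $\mathbf{u}_{n_{k+1}}\in\sigma_{\varepsilon}(\mathbf{u}_{n_k})$, so $\{\mathbf{u}_{n_k}\}_k$ is $\sigma_{\varepsilon}$-admissible; moreover $\operatorname{diam}B(\mathbf{u}_{n_k})\le|\mathbf{u}_{n_k}|^{-1-1/d}\to0$, and since $\boldsymbol{\theta}$ lies in every $B(\mathbf{u}_{n_k})$ these balls are nested by ultrametricity, so $\bigcap_k B(\mathbf{u}_{n_k})=\{\boldsymbol{\theta}\}$, which finishes the covering property.

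The hard part is the second observation: it is the unique place where restricting to $\widetilde{\mathbf{DI}}_d^*(\varepsilon)$ rather than $\widetilde{\mathbf{DI}}_d(\varepsilon)$ is essential, and it hinges on recognizing $H_{\mathbf{u}}'$ as a $\mathcal{K}$-rational, hence closed, subspace so that the limit $\hat{\mathbf{u}}_n\to\boldsymbol{\theta}$ stays inside the affine hyperplane $\hat{\mathbf{u}}_{n_0}+H_{\mathbf{u}_{n_0}}'$. Everything else is routine manipulation with Lemma \ref{umIneq} and the best-approximation identities from \cref{sec:bestApprox}.
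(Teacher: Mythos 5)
Your proof is correct and takes essentially the same route as the paper: an admissible subsequence of best approximations is built by jumping to the next best approximation outside $H_{\mathbf{u}_{n_k}}$, with the $D$- and $E$-membership checks carried out via Lemmas \ref{A(u_i+j,u_i)<lambda_1(u_i+1)} and \ref{Lemma_requalLam} and the intersection point identified through $\boldsymbol{\theta}\in B(\mathbf{u}_n)$ and Minkowski's theorem. Your two extra verifications --- that $\sigma_{\varepsilon}(\mathbf{u})\subseteq Q_{\varepsilon}$, and that the linear independence of $1,\theta_1,\dots,\theta_d$ over $\mathcal{K}$ (via the $\mathcal{K}$-rationality, hence closedness, of $H_{\mathbf{u}}'$) guarantees the existence of $n_{k+1}$ --- are details the paper leaves implicit, and you supply them correctly.
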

\begin{proof}
    Fix $\boldsymbol{\theta}\in \widetilde{\mathbf{DI}}_d^*(\varepsilon)$. We now claim that the sequence of best approximations $\{\mathbf{u}_n\}_{n\in \mathbb{N}}$ of $\boldsymbol{\theta}$ contains a $\sigma_{\varepsilon}$-admissible sequence. By definition, there exists some $n_0$ such that $\mathbf{u}_n\in Q_{\varepsilon}$ for every $n> n_0$. Choose $n_1=n_0+1$ and assume that we have already constructed a $\sigma_{\varepsilon}$-admissible sequence $\mathbf{u}_{n_1},\dots, \mathbf{u}_{n_i}$. We let $n_{i+1}$ be the smallest $m>n_i$ such that $\mathbf{u}_m\notin H_{\mathbf{u}_{n_i}}$. Since $1,\theta_1\dots \theta_d$ are linearly independent over $\mathcal{K}$, such $n_{i+1}$ indeed exists. Now we let $\mathbf{u}=\mathbf{u}_{n_i}$, $\mathbf{v}=\mathbf{u}_{n_{i+1}-1}$, and $\mathbf{w}=\mathbf{u}_{n_{i+1}}$. Then, $\vert \mathbf{u}\vert\leq \vert \mathbf{v}\vert<\vert \mathbf{w}\vert$, $\mathbf{v}\in H_{\mathbf{u}}$, and $\mathbf{w}\notin H_{\mathbf{u}}$. Moreover, it follows from Lemmas \ref{A(u_i+j,u_i)<lambda_1(u_i+1)} and \ref{Lemma_requalLam} that
    $$\Vert \hat{\mathbf{u}}-\hat{\mathbf{v}}\Vert=\frac{A(\hat{\mathbf{v}},\mathbf{u})}{|\mathbf{u}|}<\frac{\lambda_1(\Lambda_{\mathbf{u}})}{\vert \mathbf{u}\vert}$$
    and 
    $$\Vert \hat{\mathbf{v}}-\hat{\mathbf{w}}\Vert
    =\frac{A(\hat{\mathbf{w}},\mathbf{v})}{|\mathbf{v}|}
    =\frac{\lambda_1(\Lambda_{\mathbf{w}})}{\vert \mathbf{v}\vert}=\frac{\hat{\lambda}_1(\Lambda_{\mathbf{w}})}{\vert \mathbf{v}\vert\cdot \vert \mathbf{w}\vert^{1/d}}<\frac{\varepsilon}{\vert \mathbf{v}\vert\cdot \vert \mathbf{w}\vert^{1/d}}.$$
    Thus, $\mathbf{u}_{n_{i+1}}\in \sigma_{\varepsilon}(\mathbf{u}_{n_i})$ so that $\{\mathbf{u}_{n_i}\}_{i\in \mathbb{N}}$ is $\sigma_{\varepsilon}$-admissible. 

    It is obvious that $\mathrm{diam}\ B(\mathbf{u}_{n_i})\to 0$ as $i\to\infty$. We shall now prove that $\bigcap_{i\in\mathbb{N}}B(\mathbf{u}_{n_i}) = \{\boldsymbol{\theta}\}$.
    Note that Theorem \ref{Mink2nd} implies $\lambda_1(\Lambda_{\mathbf{u}})\leq |\mathbf{u}|^{-1/d}$ for any $\mathbf{u}\in Q$.
    By Lemmas \ref{BestApproxIncl} and \ref{Lemma_requalLam}, for any best approximation vector $\mathbf{u}$ of $\boldsymbol{\theta}$, we have
    $$\Delta(\mathbf{u})\subseteq \bar{\mathcal{B}}(\mathbf{u},r(\mathbf{u})) = B\left(\hat{\mathbf{u}},\frac{\lambda_1(\Lambda_{\mathbf{u}})}{\vert \mathbf{u}\vert}\right)\subseteq B\left(\hat{\mathbf{u}},\vert\mathbf{u}\vert^{-(1+1/d)}\right)=B(\mathbf{u}).$$
    This implies that $\boldsymbol{\theta}\in B(\mathbf{u})$ for every best approximation vector $\mathbf{u}$ of $\boldsymbol{\theta}$. Therefore, we have $\bigcap_{i\in\mathbb{N}}B(\mathbf{u}_{n_i}) = \{\boldsymbol{\theta}\}$.
\end{proof}

\begin{proposition}
\label{D(u)Cnt}
    Let $t>d$ and $\mathbf{u}\in Q$. Then, 
    $$\sum_{\mathbf{v}\in D(\mathbf{u})}\left(\frac{\vert \mathbf{u}\vert}{\vert \mathbf{v}\vert}\right)^t\leq\frac{(q-1)q^{t-1}}{q^{t-d}-1}.$$
\end{proposition}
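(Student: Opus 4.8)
The plan is to parametrize $D(\mathbf{u})$ by the lattice vectors $\alpha = \pi_{\mathbf{u}}(\mathbf{v}) \in \Lambda_{\mathbf{u}}$ together with the $\mathcal{R}$-parameter $s$ arising from Euclidean division, and then convert the sum over $D(\mathbf{u})$ into a sum over lattice points in a ball, weighted by the fiber count from Lemma \ref{lem:pi^-1(Val)}. First I would note that for $\mathbf{v}\in D(\mathbf{u})$, the conditions $\mathbf{v}\in H_{\mathbf{u}}$ and $\|\hat{\mathbf{u}}-\hat{\mathbf{v}}\| \leq \lambda_1(\Lambda_{\mathbf{u}})/|\mathbf{u}|$ translate, via $\alpha = \pi_{\mathbf{u}}(\mathbf{v}) = b(\hat{\mathbf{u}}-\hat{\mathbf{v}})$ where $\mathbf{v}=(a',b')$ with $|b'| = |\mathbf{v}|$, into $\alpha \in H_{\mathbf{u}}' \cap \Lambda_{\mathbf{u}}$ and $\|\alpha\| \leq \frac{|\mathbf{v}|}{|\mathbf{u}|}\lambda_1(\Lambda_{\mathbf{u}})$; and $|\mathbf{v}|\geq |\mathbf{u}|$ is part of the definition. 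Writing $|\mathbf{v}| = q^k|\mathbf{u}|$ for $k\geq 0$, Lemma \ref{lem:pi^-1(Val)} gives at most $(q-1)q^k$ vectors $\mathbf{v}$ in a given fiber $\pi_{\mathbf{u}}^{-1}(\alpha)$ with $|\mathbf{v}| = q^k|\mathbf{u}|$, so
\[
\sum_{\mathbf{v}\in D(\mathbf{u})}\left(\frac{|\mathbf{u}|}{|\mathbf{v}|}\right)^t \leq \sum_{k\geq 0}(q-1)q^k q^{-kt}\,\#\left\{\alpha\in H_{\mathbf{u}}'\cap\Lambda_{\mathbf{u}} : \|\alpha\|\leq q^k\lambda_1(\Lambda_{\mathbf{u}})\right\}.
\]
Here I must also include $\alpha=0$ (corresponding to $\hat{\mathbf{v}}=\hat{\mathbf{u}}$), which is harmless.

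Next I would bound the lattice point count in the $(d-1)$-dimensional lattice $L' := H_{\mathbf{u}}'\cap\Lambda_{\mathbf{u}} = \Lambda_{\mathbf{u}}'$. By Proposition \ref{Prop_BK} applied to $L'\subseteq H_{\mathbf{u}}'\cong\widetilde{\mathcal{K}}^{d-1}$, this count is $\prod_{i=1}^{d-1}\lceil q^{k+1}\lambda_1(\Lambda_{\mathbf{u}})/\lambda_i(\Lambda_{\mathbf{u}})\rceil$, using that the successive minima of $\Lambda_{\mathbf{u}}'$ are $\lambda_1(\Lambda_{\mathbf{u}}),\dots,\lambda_{d-1}(\Lambda_{\mathbf{u}})$ (from the orthogonal basis description in Definition \ref{Def_Fareylattice}). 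Since $\lambda_1(\Lambda_{\mathbf{u}})\leq\lambda_i(\Lambda_{\mathbf{u}})$ and $q^{k+1}\geq q$, each ceiling is at most $q^{k+1}\lambda_1(\Lambda_{\mathbf{u}})/\lambda_i(\Lambda_{\mathbf{u}}) + 1 \leq 2q^{k+1}\lambda_1(\Lambda_{\mathbf{u}})/\lambda_i(\Lambda_{\mathbf{u}})$ — but to match the clean constant in the statement I would instead use the cruder bound $\lceil y\rceil \leq q y$ valid when $y\geq q^{-1}$ (true here since $q^{k+1}\lambda_1/\lambda_i \geq q^{k}\geq 1$ is not quite it; more carefully $q^{k+1}\lambda_1(\Lambda_{\mathbf{u}})/\lambda_i(\Lambda_{\mathbf{u}})$ may be small, so I would bound $\lceil y\rceil\leq 1 + y$ and split accordingly, or observe the count is actually bounded by something like $q^{(k+1)(d-1)}\prod\lambda_1/\lambda_i = q^{(k+1)(d-1)}\lambda_1^{d-1}/(\lambda_1\cdots\lambda_{d-1})$). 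Using Minkowski's second theorem $\prod_{i=1}^d\lambda_i(\Lambda_{\mathbf{u}}) = \det(\Lambda_{\mathbf{u}}) = |\mathbf{u}|^{-1}$ together with $\lambda_1(\Lambda_{\mathbf{u}})^d\leq\prod\lambda_i = |\mathbf{u}|^{-1}$, i.e. $\lambda_1(\Lambda_{\mathbf{u}})\leq|\mathbf{u}|^{-1/d}$, one gets $\lambda_1(\Lambda_{\mathbf{u}})^{d-1}/(\lambda_1\cdots\lambda_{d-1}) = \lambda_1^{d-1}\lambda_d/(\lambda_1\cdots\lambda_d) = \lambda_1^{d-1}\lambda_d|\mathbf{u}| \leq \lambda_d^d|\mathbf{u}|$, and $\lambda_d\leq\lambda_1\cdots\lambda_d\cdot\lambda_1^{-(d-1)}$ won't help directly — the cleanest route is $\lambda_1^{d-1}/(\lambda_1\cdots\lambda_{d-1})\leq 1$ since each factor $\lambda_1/\lambda_i\leq 1$, giving the count $\leq q^{(k+1)(d-1)}$ up to the ceiling corrections, hence essentially a bound of the form $C\cdot q^{k(d-1)}$ uniformly in $\mathbf{u}$.

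Plugging this back, the sum becomes a geometric series $\sum_{k\geq 0}(q-1)q^k q^{-kt} q^{(k+1)(d-1)}(\text{up to constants}) = (q-1)q^{d-1}\sum_{k\geq 0}q^{k(d-t)}$, which converges precisely because $t>d$, summing to $(q-1)q^{d-1}\cdot\frac{1}{1-q^{d-t}} = \frac{(q-1)q^{d-1}}{1 - q^{d-t}} = \frac{(q-1)q^{t-1}}{q^{t-d}-1}$ after multiplying numerator and denominator by $q^{t-d}$. The main obstacle I anticipate is the careful handling of the ceiling functions in Proposition \ref{Prop_BK}: a naive bound $\lceil y\rceil\leq y+1$ produces cross-terms $\prod_i(y_i+1)$ that, when expanded, give lower-order-in-$k$ contributions which must each be summed as geometric series and shown to be absorbed into the stated constant — or, more likely, the authors arrange the inequality $\|\hat{\mathbf{u}}-\hat{\mathbf{v}}\| \leq \lambda_1(\Lambda_{\mathbf{u}})/|\mathbf{u}|$ (non-strict) so that $\alpha$ ranges over $\|\alpha\|\leq q^k\lambda_1(\Lambda_{\mathbf{u}})$ with $q^k\lambda_1(\Lambda_{\mathbf{u}})\geq\lambda_1(\Lambda_{\mathbf{u}})$ always, making every ceiling argument $\geq 1$ so that $\lceil y\rceil\leq qy$ holds cleanly for the relevant $y$. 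I would verify that exponent bookkeeping once and trust the geometric series to deliver the exact constant.
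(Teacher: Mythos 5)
Your overall route is exactly the paper's: split $D(\mathbf{u})$ into the sets $D_k(\mathbf{u})=\{\mathbf{v}\in D(\mathbf{u}):|\mathbf{v}|=q^k|\mathbf{u}|\}$, push forward by $\pi_{\mathbf{u}}$ into $\Lambda_{\mathbf{u}}'=\Lambda_{\mathbf{u}}\cap H_{\mathbf{u}}'$ with $\|\pi_{\mathbf{u}}(\mathbf{v})\|\le q^k\lambda_1(\Lambda_{\mathbf{u}})$, bound each fiber by $(q-1)q^k$ via Lemma \ref{lem:pi^-1(Val)}, count lattice points with Proposition \ref{Prop_BK}, and sum the geometric series; your series computation and final constant are the paper's.

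The one place you leave hanging is precisely the ceiling bookkeeping, and the resolution you lean toward is not valid. You propose $\lceil y_i\rceil\le q\,y_i$ for $y_i=q^{k+1}\lambda_1(\Lambda_{\mathbf{u}})/\lambda_i(\Lambda_{\mathbf{u}}')$, arguing that the radius $q^k\lambda_1$ is at least $\lambda_1$; but the relevant ratio involves $\lambda_i(\Lambda_{\mathbf{u}}')$ in the denominator, and for $i\ge 2$ this can exceed $q^{k+2}\lambda_1$ by an arbitrary amount, so $y_i$ can be far below $q^{-1}$, in which case $\lceil y_i\rceil=1>q\,y_i$ and the per-factor (hence the product) inequality fails. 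The correct one-line fix, which is what the paper's display uses implicitly, needs no case analysis and no ``up to constants'': since $\lambda_1(\Lambda_{\mathbf{u}})\le\lambda_i(\Lambda_{\mathbf{u}}')$, each argument satisfies $y_i\le q^{k+1}$, and $q^{k+1}$ is an integer, so $\lceil y_i\rceil\le q^{k+1}$ for every $i$. This gives
\[
\#\bigl(\Lambda_{\mathbf{u}}'\cap B(0,q^k\lambda_1(\Lambda_{\mathbf{u}}))\bigr)=\prod_{i=1}^{d-1}\Bigl\lceil \tfrac{q^{k+1}\lambda_1(\Lambda_{\mathbf{u}})}{\lambda_i(\Lambda_{\mathbf{u}}')}\Bigr\rceil\le q^{(k+1)(d-1)},
\]
hence $\#D_k(\mathbf{u})\le (q-1)q^{d-1}q^{kd}$ exactly, and your geometric series then yields $\frac{(q-1)q^{t-1}}{q^{t-d}-1}$ with no absorbed error terms. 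With that substitution your argument is complete; the detour through Minkowski's second theorem and $\lambda_1\le|\mathbf{u}|^{-1/d}$ is unnecessary here.
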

\begin{proof}
    For $k\geq 0$, denote
    $$D_k(\mathbf{u})=\{\mathbf{v}\in D(\mathbf{u}):\vert \mathbf{v}\vert=q^k\vert \mathbf{u}\vert\}.$$
    Then, 
    \begin{equation}
    \label{eqn:D_ksum}
        \sum_{\mathbf{v}\in D(\mathbf{u})}\left(\frac{\vert \mathbf{u}\vert}{\vert \mathbf{v}\vert}\right)^t=\sum_{k=0}^{\infty}\sum_{\mathbf{v}\in D_k(\mathbf{u})}\frac{1}{q^{kt}}=\sum_{k=0}^{\infty}\frac{\#D_k(\mathbf{u})}{q^{kt}}.
    \end{equation}
    To estimate $\# D_k(\mathbf{u})$, 
    consider the map $\pi_{\mathbf{u}}$ restricted to $D_k(\mathbf{u})$. By definition, we have 
    \[
    \pi_{\mathbf{u}}(D_k(\mathbf{u})) \subseteq \Lambda_{\mathbf{u}}'=\Lambda_{\mathbf{u}}\cap \pi_{\mathbf{u}}(H_{\mathbf{u}})\quad\text{and}\quad
    \|\pi_{\mathbf{u}}(\mathbf{v})\|=\vert \mathbf{v}\vert\Vert \hat{\mathbf{u}}-\hat{\mathbf{v}}\Vert \leq q^k\lambda_1(\Lambda_{\mathbf{u}})
    \] for any $\mathbf{v}\in D_k(\mathbf{u})$.
    Hence, it follows that $\pi_{\mathbf{u}}(D_k(\mathbf{u}))\subseteq B(0,q^k\lambda_1(\Lambda_{\mathbf{u}}))\cap \Lambda_{\mathbf{u}}'$. 
    
    Let $\xi_1,\dots,\xi_d$ be an orthogonal basis of the lattice $\Lambda_{\mathbf{u}}$ with $\|\xi_i\|=\lambda_i(\Lambda_\mathbf{u})$ for each $i=1,\dots,d$ as in the paragraph after Definition \ref{Def_Orth}. Since $\Lambda_{\mathbf{u}}' = \mathrm{span}_\mathcal{R}\{\xi_1,\dots,\xi_{d-1}\}$, it follows that
    \[\begin{split}
    \#\left(B(0,q^k\lambda_1(\Lambda_{\mathbf{u}}))\cap \Lambda_{\mathbf{u}}'\right)&=\#\left\{\sum_{i=1}^{d-1}a_i\xi_i:a_i\in \mathcal{R},\vert a_i\vert\leq q^k\frac{\lambda_1(\Lambda_{\mathbf{u}})}{\lambda_i(\Lambda_{\mathbf{u}})}\right\}\\
    &\leq \prod_{i=1}^{d-1}\#\{a_i\in\mathcal{R}:|a_i|\leq q^k\} = q^{(k+1)(d-1)}. 
    \end{split}\]
    Thus, it follows from Lemma \ref{lem:pi^-1(Val)} that
    \begin{equation}
    \label{eqn:D_kEst}
        \#D_k(\mathbf{u})\leq (q-1)q^{k}\#\left(B(0,q^k\lambda_1(\Lambda_{\mathbf{u}})\cap \Lambda_{\mathbf{u}}')\right)
        \leq (q-1)q^{d-1}q^{kd}.
    \end{equation}
    Hence, by plugging in (\ref{eqn:D_kEst}) into (\ref{eqn:D_ksum}), we obtain that
    \begin{equation*}
        \sum_{\mathbf{v}\in D(\mathbf{u})}\left(\frac{\vert \mathbf{u}\vert}{\vert \mathbf{v}\vert}\right)^t\leq (q-1)q^{d-1}\sum_{k=0}^{\infty}q^{-k(t-d)}=\frac{(q-1)q^{d-1}}{1-q^{-(t-d)}} =  \frac{(q-1)q^{t-1}}{q^{t-d}-1}.
    \end{equation*}
\end{proof}
\begin{proposition}
    \label{E(u,v,epsilon)Cnt}
    For every $t>d$, we have
    $$\sum_{\mathbf{w}\in E(\mathbf{u},\mathbf{v},\varepsilon)}\left(\frac{\vert \mathbf{v}\vert}{\vert \mathbf{w}\vert}\right)^t\leq \frac{(q-1)(q+q^2)^d \varepsilon^d}{q^{t-d}-1}.$$
\end{proposition}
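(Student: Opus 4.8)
The plan is to follow the proof of Proposition~\ref{D(u)Cnt}: stratify $E(\mathbf{u},\mathbf{v},\varepsilon)$ according to the ratio $|\mathbf{w}|/|\mathbf{v}|$ and push everything forward under the linear map $\pi_{\mathbf{v}}$ into the Farey lattice $\Lambda_{\mathbf{v}}$. For $k\ge 1$ put
\[
E_k=\{\mathbf{w}\in E(\mathbf{u},\mathbf{v},\varepsilon):|\mathbf{w}|=q^k|\mathbf{v}|\},\qquad R_k=\varepsilon\,q^{k(1-1/d)}\,|\mathbf{v}|^{-1/d};
\]
then $E(\mathbf{u},\mathbf{v},\varepsilon)=\bigsqcup_{k\ge 1}E_k$, so $\sum_{\mathbf{w}\in E(\mathbf{u},\mathbf{v},\varepsilon)}(|\mathbf{v}|/|\mathbf{w}|)^t=\sum_{k\ge 1}q^{-kt}\#E_k$. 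Since $\sum_{k\ge 1}q^{k(d-t)}=(q^{t-d}-1)^{-1}$, it suffices to prove $\#E_k\le (q-1)q^{d}\varepsilon^{d}q^{kd}$ for every $k\ge 1$; this in fact beats the claimed bound, because $q^d\le(q+q^2)^d$.

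First I would move to $\Lambda_{\mathbf{v}}$. For $\mathbf{w}=(a,b)\in E_k$ we have $\pi_{\mathbf{v}}(\mathbf{w})=b\hat{\mathbf{v}}-a=b(\hat{\mathbf{v}}-\hat{\mathbf{w}})\in\Lambda_{\mathbf{v}}$ with $\|\pi_{\mathbf{v}}(\mathbf{w})\|=|\mathbf{w}|\,\|\hat{\mathbf{v}}-\hat{\mathbf{w}}\|<\varepsilon\,|\mathbf{w}|^{1-1/d}/|\mathbf{v}|=R_k$, so $\pi_{\mathbf{v}}(\mathbf{w})\in B(0,R_k/q)$ (the norm is a power of $q$ strictly below $R_k$). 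Writing $\hat{\mathbf{v}}-\hat{\mathbf{w}}=(\hat{\mathbf{v}}-\hat{\mathbf{u}})+(\hat{\mathbf{u}}-\hat{\mathbf{w}})$ and using Lemma~\ref{H_uCond} with $\mathbf{v}\in H_{\mathbf{u}}$ (which holds as $\mathbf{v}\in D(\mathbf{u})$) and $\mathbf{w}\notin H_{\mathbf{u}}$, the first summand lies in $H_{\mathbf{u}}'$ while the second does not, hence $\pi_{\mathbf{v}}(\mathbf{w})\in\Lambda_{\mathbf{v}}\setminus H_{\mathbf{u}}'$. Lemma~\ref{alphaLen}, whose hypotheses are exactly the defining conditions of $D(\mathbf{u})$, now yields $\|\pi_{\mathbf{v}}(\mathbf{w})\|\ge q^{-2}\lambda_d(\Lambda_{\mathbf{v}})$; comparing with $\|\pi_{\mathbf{v}}(\mathbf{w})\|<R_k$ we deduce the dichotomy: $E_k=\emptyset$ unless $R_k\ge q^{-1}\lambda_d(\Lambda_{\mathbf{v}})$. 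By Lemma~\ref{lem:pi^-1(Val)}, applied with $\mathbf{v}$ in place of $\mathbf{u}$, every fibre of $\pi_{\mathbf{v}}$ meets $E_k$ in at most $(q-1)q^k$ points, so by Proposition~\ref{Prop_BK},
\[
\#E_k\le (q-1)q^k\,\#\big(\Lambda_{\mathbf{v}}\cap B(0,R_k/q)\big)=(q-1)q^k\prod_{i=1}^d\big\lceil R_k/\lambda_i(\Lambda_{\mathbf{v}})\big\rceil .
\]

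It remains to estimate the product, which is the only place the geometry enters. If $E_k\ne\emptyset$, the dichotomy gives $R_k\ge q^{-1}\lambda_d(\Lambda_{\mathbf{v}})\ge q^{-1}\lambda_i(\Lambda_{\mathbf{v}})$ for all $i$, so each $R_k/\lambda_i(\Lambda_{\mathbf{v}})$ is a power of $q$ that is $\ge q^{-1}$, whence $\lceil R_k/\lambda_i(\Lambda_{\mathbf{v}})\rceil\le q\,R_k/\lambda_i(\Lambda_{\mathbf{v}})$. Multiplying over $i$ and using Minkowski's second theorem (Theorem~\ref{Mink2nd}) in the form $\prod_{i=1}^d\lambda_i(\Lambda_{\mathbf{v}})=\det(\Lambda_{\mathbf{v}})=|\mathbf{v}|^{-1}$, we obtain $\prod_{i=1}^d\lceil R_k/\lambda_i(\Lambda_{\mathbf{v}})\rceil\le q^dR_k^{d}|\mathbf{v}|=q^d\varepsilon^{d}q^{k(d-1)}$, hence $\#E_k\le (q-1)q^k\cdot q^d\varepsilon^{d}q^{k(d-1)}=(q-1)q^{d}\varepsilon^{d}q^{kd}$, and summing over $k$ completes the proof. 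I expect no genuine obstacle here beyond the ultrametric bookkeeping (the equivalence of $\|\cdot\|<R_k$ with $\|\cdot\|\le R_k/q$ and the exact form of the dichotomy); the conceptual point is that the constraint $\mathbf{w}\notin H_{\mathbf{u}}$ cannot be dropped, since for a general (possibly very skew) lattice $\Lambda_{\mathbf{v}}$ the product $\prod_i\lceil R_k/\lambda_i(\Lambda_{\mathbf{v}})\rceil$ is not comparable to $R_k^d|\mathbf{v}|$.
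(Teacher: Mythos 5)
Your route is the paper's own: stratify by $|\mathbf{w}|=q^k|\mathbf{v}|$, push forward by $\pi_{\mathbf{v}}$ into $\Lambda_{\mathbf{v}}\setminus H_{\mathbf{u}}'$ (Lemma \ref{H_uCond}), use Lemma \ref{alphaLen} to control $\lambda_d(\Lambda_{\mathbf{v}})$ when $E_k\neq\emptyset$, count fibres with Lemma \ref{lem:pi^-1(Val)}, and count lattice points with Proposition \ref{Prop_BK} and Theorem \ref{Mink2nd}. The one place you deviate is the ultrametric ``rounding'': from $\|\pi_{\mathbf{v}}(\mathbf{w})\|<R_k$ with $\|\pi_{\mathbf{v}}(\mathbf{w})\|\in q^{\mathbb{Z}}$ you conclude $\pi_{\mathbf{v}}(\mathbf{w})\in B(0,R_k/q)$, and correspondingly that $E_k=\emptyset$ unless $R_k\geq q^{-1}\lambda_d(\Lambda_{\mathbf{v}})$, and that each $R_k/\lambda_i(\Lambda_{\mathbf{v}})$ is a power of $q$. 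This is not valid: $R_k=\varepsilon q^{k(d-1)/d}|\mathbf{v}|^{-1/d}$ involves the arbitrary real $\varepsilon$ and a fractional power of $q$, so it is generally not in $q^{\mathbb{Z}}$, and then $\|\alpha\|<R_k$ does not force $\|\alpha\|\leq R_k/q$ (e.g.\ $q=2$, $R_k=1.5$, $\|\alpha\|=1>R_k/q$). Likewise Lemma \ref{alphaLen} only yields $\lambda_d(\Lambda_{\mathbf{v}})\leq q^2R_k$, not $\leq qR_k$. Hence your intermediate claim $\#E_k\leq(q-1)q^{d}\varepsilon^{d}q^{kd}$ is not established as written.

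The slip is local and costs only constants, so the proposition still follows from your argument after the obvious repair, which lands exactly on the paper's computation: keep the closed ball of radius $R_k$, so Proposition \ref{Prop_BK} gives $\#E_k\leq(q-1)q^k\prod_{i=1}^{d}\lceil qR_k/\lambda_i(\Lambda_{\mathbf{v}})\rceil$, and use $\lambda_d(\Lambda_{\mathbf{v}})\leq q^2R_k$ to bound $\lceil qR_k/\lambda_i\rceil\leq qR_k/\lambda_i+1\leq (q+q^2)R_k/\lambda_i$; multiplying and using $\prod_{i=1}^{d}\lambda_i(\Lambda_{\mathbf{v}})=|\mathbf{v}|^{-1}$ gives $\#E_k\leq(q-1)(q+q^2)^{d}\varepsilon^{d}q^{kd}$ and hence the stated inequality. (Alternatively, your observation that $\lceil y\rceil\leq qy$ whenever $y\geq q^{-1}$ is true for any real $y$, not just powers of $q$; applied with $y=qR_k/\lambda_i\geq q^{-1}$ it gives the constant $q^{2d}\leq(q+q^2)^{d}$.) So the result survives, but the sharper constant $q^{d}$ you claim does not follow from the argument you gave.
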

\begin{proof}
    For $k\geq 1$, denote
    $$E_k(\mathbf{u},\mathbf{v},\varepsilon)=\{\mathbf{w}\in E(\mathbf{u},\mathbf{v},\varepsilon):\vert \mathbf{w}\vert=q^k\vert \mathbf{v}\vert\}.$$
    Then, 
    \begin{equation}
    \label{eqn:E_ksum}
        \sum_{\mathbf{w}\in E(\mathbf{u},\mathbf{v},\varepsilon)}\left(\frac{\vert \mathbf{v}\vert}{\vert \mathbf{w}\vert}\right)^t=\sum_{k=1}^{\infty}\sum_{\mathbf{w}\in E_k(\mathbf{u},\mathbf{v},\varepsilon)}\frac{1}{q^{kt}}=\sum_{k=1}^{\infty}\frac{\#E_k(\mathbf{u},\mathbf{v},\varepsilon)}{q^{kt}}.
    \end{equation}
    Observe that for any $\mathbf{w}\in E_k(\mathbf{u},\mathbf{v},\varepsilon)$, 
    $$\|\pi_{\mathbf{v}}(\mathbf{w})\|=\vert \mathbf{w}\vert\Vert \hat{\mathbf{v}}-\hat{\mathbf{w}}\Vert < |\mathbf{w}|\frac{\varepsilon}{|\mathbf{v}||\mathbf{w}|^{1/d}}=\varepsilon q^{k\frac{d-1}{d}}|\mathbf{v}|^{-1/d}.$$
    Since $\mathbf{v}\in H_{\mathbf{u}}$ and $\mathbf{w}\notin H_{\mathbf{u}}$, it follows from Lemma \ref{H_uCond} that $\pi_{\mathbf{v}}(\mathbf{w}) \notin H_{\mathbf{u}}'$.
    Hence, we have 
    $$\pi_{\mathbf{v}}\left(E_k(\mathbf{u},\mathbf{v},\varepsilon)\right)\subseteq B\left(0,\varepsilon q^{k\frac{d-1}{d}}|\mathbf{v}|^{-1/d}\right)\cap \Lambda_{\mathbf{v}}\setminus H_\mathbf{u}'.$$ 
    If the set $B\left(0,\varepsilon q^{k\frac{d-1}{d}}|\mathbf{v}|^{-1/d}\right)\cap \Lambda_{\mathbf{v}}\setminus H_\mathbf{u}'$
    is non-empty, then it follows from Lemma \ref{alphaLen} that 
    \begin{equation}\label{eqn:lambdaBound}
    \lambda_d(\Lambda_{\mathbf{v}}) \leq q^2 \varepsilon q^{k\frac{d-1}{d}}|\mathbf{v}|^{-1/d}.
    \end{equation}
    It follows from Theorem \ref{Mink2nd}, Proposition \ref{Prop_BK}, Lemma \ref{lem:pi^-1(Val)}, and \eqref{eqn:lambdaBound} that 
    \begin{equation}
    \begin{split}
    \label{eqn:E_kEst}
        \#E_k(\mathbf{u},\mathbf{v},\varepsilon)
        &\leq (q-1)q^{k}\#\left(B(0,\varepsilon q^{ k\frac{d-1}{d}}\vert \mathbf{v}\vert^{-\frac{1}{d}})\cap\Lambda_{\mathbf{v}}\setminus H_{\mathbf{u}}'\right)\\
        &\leq(q-1)q^{k}\prod_{i=1}^{d}\Bigg\lceil\frac{q \varepsilon q^{k\frac{d-1}{d}}|\mathbf{v}|^{-1/d}}{\lambda_i(\Lambda_{\mathbf{v}})}\Bigg\rceil\\
        &\leq (q-1)q^{k}\prod_{i=1}^{d}\left(\frac{q \varepsilon q^{k\frac{d-1}{d}}|\mathbf{v}|^{-1/d}+\lambda_d(\Lambda_{\mathbf{v}})}{\lambda_i(\Lambda_{\mathbf{v}})}\right)\\
        &\leq (q-1)q^{k}\prod_{i=1}^{d}\frac{(q+q^2) \varepsilon q^{k\frac{d-1}{d}}|\mathbf{v}|^{-1/d}}{\lambda_i(\Lambda_{\mathbf{v}})}\\
        &=(q-1)q^{k}(q+q^2)^d \varepsilon^d q^{k(d-1)}. 
    \end{split}
    \end{equation}
    Hence, by plugging in (\ref{eqn:E_kEst}) into (\ref{eqn:E_ksum}), we obtain that
    \begin{equation*}
    \begin{split}
        \sum_{\mathbf{w}\in E(\mathbf{u},\mathbf{v},\varepsilon)}\left(\frac{\vert \mathbf{v}\vert}{\vert \mathbf{w}\vert}\right)^t
        \leq \sum_{k=1}^{\infty}\frac{(q-1)(q+q^2)^d \varepsilon^d q^{kd}}{q^{kt}}
        &=(q-1)(q+q^2)^d \varepsilon^d \sum_{k=1}^{\infty}q^{-k(t-d)}\\
        &=\frac{(q-1)(q+q^2)^d \varepsilon^d}{q^{t-d}-1}.
    \end{split}
    \end{equation*}
\end{proof}
\begin{corollary}
\label{lem:DIUpp}
    For every $\varepsilon>0$, we have 
    $$\dim_H \mathbf{DI}_d(\varepsilon) \leq \frac{d^2}{d+1}+\frac{d}{d+1}\log_q\left(1+\sqrt{(q-1)^2(q+1)^dq^{2d} \varepsilon^d}\right).
    $$
    In particular, we have $\dim_H \mathbf{Sing}_d\leq \frac{d^2}{d+1}$.
\end{corollary}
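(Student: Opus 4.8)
The plan is to apply Theorem~\ref{dim_HUppBnd} to the self-similar covering $(Q_\varepsilon,\sigma_\varepsilon,B)$ of $\widetilde{\mathbf{DI}}_d^*(\varepsilon)$ furnished by Proposition~\ref{ssCover}, after two routine reductions. First, recall $\dim_H\mathcal K^d=0$, so $\dim_H\mathbf{DI}_d(\varepsilon)=\dim_H\widetilde{\mathbf{DI}}_d(\varepsilon)$; moreover $\widetilde{\mathbf{DI}}_d(\varepsilon)\setminus\widetilde{\mathbf{DI}}_d^*(\varepsilon)$ lies in the countable union of the $\mathcal K$-rational affine hyperplanes $\{\boldsymbol\theta\in\widetilde{\mathcal K}^d:c_0+c_1\theta_1+\cdots+c_d\theta_d=0\}$ over $(c_0,\dots,c_d)\in\mathcal R^{d+1}$ with $(c_1,\dots,c_d)\neq 0$, each of Hausdorff dimension $d-1$, so $\dim_H\bigl(\widetilde{\mathbf{DI}}_d(\varepsilon)\setminus\widetilde{\mathbf{DI}}_d^*(\varepsilon)\bigr)\le d-1<\frac{d^2}{d+1}$; hence it suffices to bound $\dim_H\widetilde{\mathbf{DI}}_d^*(\varepsilon)$. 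Second, if $\varepsilon>(q+1)^{-1}q^{-2}$ then $\sqrt{(q-1)^2(q+1)^dq^{2d}\varepsilon^d}>q-1$, so the asserted right-hand side exceeds $\frac{d^2}{d+1}+\frac{d}{d+1}=d$ and there is nothing to prove, as $\mathbf{DI}_d(\varepsilon)\subseteq\widetilde{\mathcal K}^d$ has Hausdorff dimension at most $d$. So from now on assume $\varepsilon\le(q+1)^{-1}q^{-2}$.

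Put $s=\frac{d^2}{d+1}+\frac{d}{d+1}\log_q\!\bigl(1+\sqrt{(q-1)^2(q+1)^dq^{2d}\varepsilon^d}\bigr)$ and $t=\frac{d+1}{d}\,s=d+\log_q\!\bigl(1+\sqrt{(q-1)^2(q+1)^dq^{2d}\varepsilon^d}\bigr)$, so $q^{t-d}=1+\sqrt{(q-1)^2(q+1)^dq^{2d}\varepsilon^d}$, with $t>d$ since $\varepsilon>0$ and $q^{t-d}\le q$ by the standing assumption on $\varepsilon$. Because $\operatorname{diam}B(\mathbf u)=|\mathbf u|^{-(1+1/d)}$, one has $\operatorname{diam}B(\mathbf w)^s/\operatorname{diam}B(\mathbf u)^s=(|\mathbf u|/|\mathbf w|)^{(d+1)s/d}=(|\mathbf u|/|\mathbf w|)^t$, so the hypothesis of Theorem~\ref{dim_HUppBnd} for $(Q_\varepsilon,\sigma_\varepsilon,B)$ is precisely $\sum_{\mathbf w\in\sigma_\varepsilon(\mathbf u)}(|\mathbf u|/|\mathbf w|)^t\le1$ for every $\mathbf u\in Q_\varepsilon$. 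Writing $\sigma_\varepsilon(\mathbf u)=\bigcup_{\mathbf v\in D(\mathbf u)}E(\mathbf u,\mathbf v,\varepsilon)$, factoring $(|\mathbf u|/|\mathbf w|)^t=(|\mathbf u|/|\mathbf v|)^t(|\mathbf v|/|\mathbf w|)^t$, and using Propositions~\ref{D(u)Cnt} and~\ref{E(u,v,epsilon)Cnt},
\begin{align*}
\sum_{\mathbf w\in\sigma_\varepsilon(\mathbf u)}\left(\frac{|\mathbf u|}{|\mathbf w|}\right)^{t}
&\le\sum_{\mathbf v\in D(\mathbf u)}\left(\frac{|\mathbf u|}{|\mathbf v|}\right)^{t}\sum_{\mathbf w\in E(\mathbf u,\mathbf v,\varepsilon)}\left(\frac{|\mathbf v|}{|\mathbf w|}\right)^{t}\\
&\le\frac{(q-1)q^{t-1}}{q^{t-d}-1}\cdot\frac{(q-1)(q+q^2)^d\varepsilon^d}{q^{t-d}-1}=q^{t-d-1}\le1,
\end{align*}
where the middle equality uses $(q+q^2)^d=q^d(q+1)^d$ together with $(q^{t-d}-1)^2=(q-1)^2(q+1)^dq^{2d}\varepsilon^d$, and the final inequality is $q^{t-d}\le q$. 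Thus Theorem~\ref{dim_HUppBnd} gives $\dim_H\widetilde{\mathbf{DI}}_d^*(\varepsilon)\le s$, and with the first reduction this is the claimed bound on $\dim_H\mathbf{DI}_d(\varepsilon)$. For the last assertion, $\mathbf{Sing}_d=\bigcap_{\varepsilon>0}\mathbf{DI}_d(\varepsilon)$, so $\dim_H\mathbf{Sing}_d$ is at most the infimum of the bound above over $\varepsilon>0$; since $\sqrt{(q-1)^2(q+1)^dq^{2d}\varepsilon^d}\to0$ as $\varepsilon\to0^+$, this infimum equals $\frac{d^2}{d+1}$.

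I do not expect a serious obstacle: all the combinatorics is already contained in Propositions~\ref{D(u)Cnt} and~\ref{E(u,v,epsilon)Cnt} and in the covering of Proposition~\ref{ssCover}, so what remains is the constant bookkeeping above, arranged so that the two counting estimates compose to a ratio of exactly $q^{t-d-1}$. The only point needing a word of care is the value of $\operatorname{diam}B(\mathbf u)$: in the ultrametric $\widetilde{\mathcal K}^d$ the diameter of a ball equals the largest value of the norm it contains, which need not equal the nominal radius $|\mathbf u|^{-(1+1/d)}$ in general. One handles this by running the argument with the nominal radii throughout --- these only overestimate the true diameters and still shrink to $0$ along $\sigma_\varepsilon$-admissible sequences --- which does not affect the upper-bound mechanism of Theorem~\ref{dim_HUppBnd}.
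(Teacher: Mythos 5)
Your proposal is correct and takes essentially the same route as the paper: it applies Theorem~\ref{dim_HUppBnd} to the covering of Proposition~\ref{ssCover} with $t=\frac{d+1}{d}s$, composes Propositions~\ref{D(u)Cnt} and~\ref{E(u,v,epsilon)Cnt}, and then reduces from $\widetilde{\mathbf{DI}}_d^*(\varepsilon)$ to $\mathbf{DI}_d(\varepsilon)$ via the dimension of the $\mathcal{K}$-linear dependence locus, with the $\mathbf{Sing}_d$ bound obtained by letting $\varepsilon\to 0$. Your explicit dispatch of the trivial range $\varepsilon>(q+1)^{-1}q^{-2}$ (ensuring $s\leq d$, which the paper assumes implicitly) and your remark that one should run the covering theorem with the nominal radii $\vert\mathbf{u}\vert^{-(1+1/d)}$ rather than exact ultrametric diameters are sensible clarifications of points the paper glosses over, but they do not change the argument.
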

\begin{proof}
    For any $\frac{d^2}{d+1}<s\leq d$, it follows from Propositions \ref{D(u)Cnt} and \ref{E(u,v,epsilon)Cnt} with $t=\frac{d+1}{d}s$ that
    \begin{equation}
    \label{eqn:HdimUppBnd}
    \begin{split}
        \sum_{\mathbf{w}\in \sigma_{\varepsilon}(\mathbf{u})}\frac{\operatorname{diam}(B(\mathbf{w}))^s}{\operatorname{diam}(B(\mathbf{u}))^s}
        &=\sum_{\mathbf{w}\in \sigma_{\varepsilon}(\mathbf{u})}\left(\frac{\vert \mathbf{u}\vert}{\vert \mathbf{w}\vert}\right)^{(1+1/d)s}\\
        &\leq \sum_{\mathbf{v}\in D(\mathbf{u})}\left(\frac{\vert \mathbf{u}\vert}{\vert \mathbf{v}\vert}\right)^{(1+1/d)s}\sum_{\mathbf{w}\in E(\mathbf{u},\mathbf{v},\varepsilon)}\left(\frac{\vert \mathbf{v}\vert}{\vert \mathbf{w}\vert}\right)^{(1+1/d)s}\\
        &\leq \frac{(q-1)^2(q+q^2)^d\varepsilon^{d}q^{\frac{d+1}{d}s-1}}{\left(q^{\frac{d+1}{d}s-d}-1\right)^2}
        \leq \frac{(q-1)^2(q+1)^dq^{2d}\varepsilon^{d}}{\left(q^{\frac{d+1}{d}s-d}-1\right)^2}.
    \end{split}
    \end{equation}
    If $s\geq \frac{d^2}{d+1}+\frac{d}{d+1}\log_q\left(1+\sqrt{(q-1)^2(q+1)^dq^{2d} \varepsilon^d}\right)$, 
    then the right hand side of (\ref{eqn:HdimUppBnd}) is $\leq 1$. Thus, by Theorem \ref{dim_HUppBnd} and Proposition \ref{ssCover}, 
    \begin{equation}\label{Eq_dim_DIT}
        \dim_H \widetilde{\mathbf{DI}}_d^*(\varepsilon) \leq \frac{d^2}{d+1}+\frac{d}{d+1}\log_q\left(1+\sqrt{(q-1)^2(q+1)^dq^{2d} \varepsilon^d}\right).
    \end{equation}
    Since the set of $\boldsymbol{\theta}\in \widetilde{\mathcal{K}}^d$ such that $1,\theta_1,\cdots,\theta_d$ are linearly dependent over $\mathcal{K}$ has Hausdorff dimension $d-1$, the corollary follows from \eqref{Eq_dim_DIT}.
\end{proof}

\section{Lower bound}
\label{sec:LowBnd}
In this section, we will estimate the lower bound of the Hausdorff dimension of $\mathbf{DI}_d(\varepsilon)$.
For simplicity, we will use the following notation throughout this section: for $\mathbf{u}\in Q$ and $i=1,\dots,d$,
\[
\lambda_i(\mathbf{u})=\lambda_i(\Lambda_{\mathbf{u}})\qquad\text{and}\qquad \hat{\lambda}_i(\mathbf{u})=\hat{\lambda}_i(\Lambda_{\mathbf{u}}).
\]

\subsection{Self-similar structure}
\label{subsec:LowSelfSim}
While a certain orthogonal projection was used in \cite{CC16}, we will utilize the orthogonal basis given in the paragraph after Definition \ref{Def_Orth}. The basic idea is to directly use the lattice given in \cite[Lemma 8.7]{CC16}. For this, we need the following definition.

Given $\mathbf{u}\in Q$, we fix an orthogonal basis $\xi_1,\dots,\xi_d$ of the lattice $\Lambda_{\mathbf{u}}$ with $\|\xi_i\|=\lambda_i(\mathbf{u})$ for each $i=1,\dots,d$ as in the paragraph after Definition \ref{Def_Orth}.
Note that by Theorem \ref{Mink2nd}, $\det(\Lambda_{\mathbf{u}})=\lambda_1(\mathbf{u})\cdots\lambda_d(\mathbf{u})=1/|\mathbf{u}|$.
\begin{definition} 
Consider $\alpha\in \Lambda_{\mathbf{u}}$ of the form $\alpha=\sum_{i<d}m_i\xi_i + n\xi_d$ with $n\neq 0$.
\begin{enumerate}
    \item Define the lattice
    \[
    \Lambda_{\alpha} = \mathcal{R}\xi_1 +\cdots + \mathcal{R}\xi_{d-1} + \mathcal{R}\sum_{i<d}\frac{m_i}{n}\xi_i.
    \]
    Recall that $H_{\mathbf{u}}' = \mathrm{span}_{\widetilde{\mathcal{K}}}\{\xi_1,\dots,\xi_{d-1}\}$.
    Define the linear map $\pi_{\alpha}:\widetilde{\mathcal{K}}^d \to H_{\mathbf{u}}'$ by
    $$\pi_{\alpha}(\xi) = c_1\xi_1+\cdots+c_{d-1}\xi_{d-1} - c_d \sum_{i<d}\frac{m_i}{n}\xi_i,$$
    where $\xi = \sum_{i\leq d}c_i\xi_i$. One can easily check that $\pi_{\alpha}(\Lambda_{\mathbf{u}})= \Lambda_{\alpha}$ and $\mathrm{Ker}(\pi_{\alpha})=\widetilde{\mathcal{K}}\alpha$.

    \item Let $\mathbf{v}\in Q$ be such that $\pi_{\mathbf{u}}(\mathbf{v})=\alpha$, and assume that $\alpha$ is primitive and $\|\alpha\|=|n|\lambda_d(\mathbf{u})$. By Lemma \ref{Lem_covol_al}, we have
\[
\det(\Lambda_{\alpha}) = \frac{1}{|\mathbf{u}|\|\alpha\|}= \frac{1}{|\mathbf{u}\wedge\mathbf{v}|},
\]
where $|\mathbf{u}\wedge\mathbf{v}|=|\mathbf{u}||\mathbf{v}|\|\hat{\mathbf{u}}-\hat{\mathbf{v}}\|$. It is worth noting that the quantity $|\mathbf{u}\wedge\mathbf{v}|$ is the supremum norm of the natural projection of $\mathbf{u}\wedge\mathbf{v}\in \bigwedge^2 \widetilde{\mathcal{K}}^{d+1}$ onto the subspace spanned by $\mathbf{e}_1\wedge \mathbf{e}_{d+1},\dots,\mathbf{e}_d\wedge \mathbf{e}_{d+1}$. For each $i=1\dots,d-1$, denote $\lambda_i(\alpha)=\lambda_i(\Lambda_{\alpha})$ and
define $\hat{\lambda}_i(\alpha)=\hat{\lambda}_i(\Lambda_{\alpha})=\vert \mathbf{u}\wedge \mathbf{v}\vert^{\frac{1}{d-1}}\lambda_i(\Lambda_{\alpha})$.
\end{enumerate}\end{definition}

We first calculate the covolume of the lattice $\Lambda_\alpha$ when $\alpha$ is primitive.
\begin{lemma}\label{Lem_covol_al}
    Let $\mathbf{u}\in Q$ and let $\alpha = \sum_{i<d}m_i\xi_i + n\xi_d \in \Lambda_{\mathbf{u}}$ be primitive with $n\neq 0$. Then $$\det(\Lambda_{\alpha})=\frac{\lambda_1(\mathbf{u})\cdots\lambda_{d-1}(\mathbf{u})}{|n|}=\frac{1}{|\mathbf{u}||n|\lambda_d(\mathbf{u})}.$$
\end{lemma}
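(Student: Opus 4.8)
The second equality is immediate from Theorem \ref{Mink2nd} together with the identity $\det(\Lambda_{\mathbf{u}})=|\mathbf{u}|^{-1}$ recorded in Definition \ref{Def_Fareylattice}: since $\lambda_1(\mathbf{u})\cdots\lambda_d(\mathbf{u})=\det(\Lambda_{\mathbf{u}})=|\mathbf{u}|^{-1}$, we get $\lambda_1(\mathbf{u})\cdots\lambda_{d-1}(\mathbf{u})=(|\mathbf{u}|\lambda_d(\mathbf{u}))^{-1}$. So it suffices to prove $\det(\Lambda_\alpha)=\lambda_1(\mathbf{u})\cdots\lambda_{d-1}(\mathbf{u})/|n|$. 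The strategy is to view $\Lambda_\alpha$ as a finite-index overlattice of $\Lambda_{\mathbf{u}}'=\mathrm{span}_{\mathcal{R}}\{\xi_1,\dots,\xi_{d-1}\}$ inside the $(d-1)$-dimensional subspace $H_{\mathbf{u}}'$, to compute that index using the primitivity of $\alpha$, and to evaluate $\det(\Lambda_{\mathbf{u}}')$ by a second application of Theorem \ref{Mink2nd}.

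Concretely, writing $\beta=\sum_{i<d}\tfrac{m_i}{n}\xi_i$, we have $\Lambda_\alpha=\Lambda_{\mathbf{u}}'+\mathcal{R}\beta$ with $\Lambda_{\mathbf{u}}'\subseteq\Lambda_\alpha$, both of full rank $d-1$ in $H_{\mathbf{u}}'$. The quotient $\Lambda_\alpha/\Lambda_{\mathbf{u}}'$ is the cyclic $\mathcal{R}$-module generated by the class of $\beta$, hence isomorphic to $\mathcal{R}/\mathfrak{a}$ with $\mathfrak{a}=\{s\in\mathcal{R}:s\beta\in\Lambda_{\mathbf{u}}'\}$. Since $\{\xi_1,\dots,\xi_d\}$ is a basis of $\Lambda_{\mathbf{u}}$, one has $s\beta\in\Lambda_{\mathbf{u}}'$ precisely when $n\mid sm_i$ for all $i<d$, i.e. when $n\mid s\gcd(m_1,\dots,m_{d-1})$. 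Primitivity of $\alpha$ in $\Lambda_{\mathbf{u}}=\bigoplus_i\mathcal{R}\xi_i$ forces $\gcd(m_1,\dots,m_{d-1},n)\in\mathbb{F}_q^*$, hence $\gcd\big(\gcd(m_1,\dots,m_{d-1}),\,n\big)=1$, so $\mathfrak{a}=(n)$ and $\Lambda_\alpha/\Lambda_{\mathbf{u}}'\cong\mathcal{R}/(n)$ has cardinality $|n|=q^{\deg n}$. Multiplicativity of covolume under finite-index inclusions (from Smith normal form over the PID $\mathcal{R}$ and the defining formula $\det(g\mathcal{R}^{d-1})=|\det g|$) then gives $\det(\Lambda_{\mathbf{u}}')=|n|\,\det(\Lambda_\alpha)$. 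Finally, $\xi_1,\dots,\xi_{d-1}$ are orthogonal, lie in $\Lambda_{\mathbf{u}}'$, and (being contained in $\Lambda_{\mathbf{u}}$) cannot be improved upon, so they realise the successive minima of $\Lambda_{\mathbf{u}}'$ inside $H_{\mathbf{u}}'$; hence $\lambda_i(\Lambda_{\mathbf{u}}')=\lambda_i(\mathbf{u})$ for $i<d$, and identifying $H_{\mathbf{u}}'$ isometrically with $\widetilde{\mathcal{K}}^{d-1}$ via $\xi_i\mapsto x^{\log_q\lambda_i(\mathbf{u})}\mathbf{e}_i$ (an isometry by orthogonality, each $\lambda_i(\mathbf{u})$ being an integer power of $q$) and applying Theorem \ref{Mink2nd} to $\Lambda_{\mathbf{u}}'$ yields $\det(\Lambda_{\mathbf{u}}')=\prod_{i<d}\lambda_i(\mathbf{u})$. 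Combining the two displayed identities with the first paragraph proves the lemma.

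The one point that needs care is the normalisation of covolume for the non-full-rank lattices $\Lambda_\alpha$ and $\Lambda_{\mathbf{u}}'$, which live in the $(d-1)$-dimensional space $H_{\mathbf{u}}'$ rather than in $\widetilde{\mathcal{K}}^{d-1}$: one must check that transporting them to $\widetilde{\mathcal{K}}^{d-1}$ by an isometry does not depend on the chosen isometry, which holds because a linear isometry of the supremum norm on $\widetilde{\mathcal{K}}^{d-1}$ has $|\det|=1$; with this convention fixed, both the index computation and Theorem \ref{Mink2nd} apply to the transported lattices. Everything else is elementary divisibility bookkeeping driven by the primitivity hypothesis, so I do not anticipate any substantive difficulty there.
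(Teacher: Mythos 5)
Your argument is correct and follows essentially the same route as the paper: identify $\Lambda_{\mathbf{u}}'$ as a full-rank sublattice of $\Lambda_\alpha$, show the index is $|n|$ using $\gcd(m_1,\dots,m_{d-1},n)=1$ from primitivity (your cyclic-quotient computation $\Lambda_\alpha/\Lambda_{\mathbf{u}}'\cong\mathcal{R}/(n)$ is the same content as the paper's surjection $\varphi:\Lambda_\alpha\to\mathcal{R}/n\mathcal{R}$ with kernel $\Lambda_{\mathbf{u}}'$), and evaluate $\det(\Lambda_{\mathbf{u}}')=\lambda_1(\mathbf{u})\cdots\lambda_{d-1}(\mathbf{u})$ via orthogonality of $\xi_1,\dots,\xi_{d-1}$. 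Your extra care about the covolume normalisation in $H_{\mathbf{u}}'$ and the isometric transport to $\widetilde{\mathcal{K}}^{d-1}$ is a harmless elaboration of what the paper does implicitly.
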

\begin{proof}
    Note that $\Lambda_{\mathbf{u}}' = \mathrm{span}_{\mathcal{R}}\{\xi_1,\dots,\xi_{d-1}\}$ and by the orthogonality of $\{\xi_1,\dots,\xi_{d-1}\}$ we have $\det(\Lambda_\mathbf{u}')=\|\xi_1\|\cdots\|\xi_{d-1}\|=\lambda_1(\mathbf{u})\cdots\lambda_{d-1}(\mathbf{u})$.
    Since $\Lambda_{\mathbf{u}}'$ is a sublattice of $\Lambda_\alpha$ of full rank,
    \[
    \det(\Lambda_{\mathbf{u}}')=[\Lambda_\alpha:\Lambda_{\mathbf{u}}']\det(\Lambda_{\alpha}),
    \]
    where $[\Lambda_\alpha:\Lambda_{\mathbf{u}}']$ denotes the index of the subgroup $\Lambda_{\mathbf{u}}'$ of $\Lambda_\alpha$.
    Consider the group homomorphsim $\varphi:\Lambda_\alpha \to \mathcal{R}/n\mathcal{R}$ given by $$\varphi\left(\sum_{i<d}c_i\xi_i + c_d \sum_{i<d}\frac{m_i}{n}\xi_i\right)=c_d+n\mathcal{R}.$$
    Since $\alpha$ is primitive, we have $\gcd(m_1,\dots,m_{d-1},n)=1$. Hence, $\varphi$ is well-defined. 
    Since $\varphi$ is surjective and $\mathrm{Ker}(\varphi)=\Lambda_{\mathrm{u}}'$, we have $[\Lambda_\alpha:\Lambda_{\mathbf{u}}']=\#\mathcal{R}/n\mathcal{R}=|n|$. 
\end{proof}

\begin{lemma}
\label{lem:lambda_1(w))}
    Let $\mathbf{u}\in Q$, $\alpha = \sum_{i<d}m_i\xi_i + n\xi_d \in \Lambda_{\mathbf{u}}$ be primitive with $n\neq 0$, and $\mathbf{v}\in Q$ be such that $\pi_{\mathbf{u}}(\mathbf{v})=\alpha$. Assume that $\|\alpha\|=|n|\lambda_d(\mathbf{u})$. Then $\frac{|\mathbf{u}\wedge\mathbf{v}|}{|\mathbf{v}|}\leq \lambda_1(\alpha)$ implies $\lambda_1(\mathbf{v})=\frac{|\mathbf{u}\wedge\mathbf{v}|}{|\mathbf{v}|}$.
\end{lemma}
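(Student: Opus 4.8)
The plan is to prove the equality as two inequalities, only the second of which uses the hypothesis. Write $\mathbf{u}=(a_0,b_0)$ and $\mathbf{v}=(a_1,b_1)$. Since $\alpha=\pi_{\mathbf{u}}(\mathbf{v})=b_1\hat{\mathbf{u}}-a_1=b_1(\hat{\mathbf{u}}-\hat{\mathbf{v}})$, we have $\hat{\mathbf{u}}-\hat{\mathbf{v}}=\alpha/b_1$, hence $|\mathbf{u}\wedge\mathbf{v}|=|\mathbf{u}||\mathbf{v}|\,\|\hat{\mathbf{u}}-\hat{\mathbf{v}}\|=|\mathbf{u}|\,\|\alpha\|$, and $\pi_{\mathbf{v}}(a,b)=b\hat{\mathbf{v}}-a=\pi_{\mathbf{u}}(a,b)-(b/b_1)\alpha$ for every $(a,b)$. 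Applying this with $(a,b)=\mathbf{u}$ and using $\pi_{\mathbf{u}}(\mathbf{u})=0$ gives $\pi_{\mathbf{v}}(\mathbf{u})=-(b_0/b_1)\alpha$, a nonzero vector of $\Lambda_{\mathbf{v}}$ with $\|\pi_{\mathbf{v}}(\mathbf{u})\|=(|\mathbf{u}|/|\mathbf{v}|)\|\alpha\|=|\mathbf{u}\wedge\mathbf{v}|/|\mathbf{v}|$; thus $\lambda_1(\mathbf{v})\le|\mathbf{u}\wedge\mathbf{v}|/|\mathbf{v}|$ with no hypothesis needed. The substance is the reverse inequality, which I will obtain by splitting an arbitrary nonzero $\beta\in\Lambda_{\mathbf{v}}$ according to whether $\beta\in\mathrm{Ker}(\pi_{\alpha})=\widetilde{\mathcal{K}}\alpha$.

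First I record two facts. (i) Since $\pi_{\mathbf{u}}(a,b)-\pi_{\mathbf{v}}(a,b)=(b/b_1)\alpha\in\mathrm{Ker}(\pi_{\alpha})$, we have $\pi_{\alpha}\circ\pi_{\mathbf{v}}=\pi_{\alpha}\circ\pi_{\mathbf{u}}$, hence $\pi_{\alpha}(\Lambda_{\mathbf{v}})=\pi_{\alpha}(\Lambda_{\mathbf{u}})=\Lambda_{\alpha}$. (ii) $\pi_{\alpha}$ does not increase the norm on $\widetilde{\mathcal{K}}^d$; this is the single step that uses $\|\alpha\|=|n|\lambda_d(\mathbf{u})$. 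Indeed, for $\xi=\sum_{i\le d}c_i\xi_i$ the orthogonality of $\{\xi_1,\dots,\xi_d\}$ gives $\|\pi_{\alpha}(\xi)\|=\max_{i<d}|c_i-c_d m_i/n|\,\lambda_i(\mathbf{u})$, which by the ultrametric inequality is at most $\max\big(\max_{i<d}|c_i|\lambda_i(\mathbf{u}),\ |c_d|\max_{i<d}(|m_i|/|n|)\lambda_i(\mathbf{u})\big)$; the first term is $\le\|\xi\|$, and since $\|\alpha\|=\max\big(\max_{i<d}|m_i|\lambda_i(\mathbf{u}),\ |n|\lambda_d(\mathbf{u})\big)$ equals $|n|\lambda_d(\mathbf{u})$ we get $(|m_i|/|n|)\lambda_i(\mathbf{u})\le\lambda_d(\mathbf{u})$ for each $i<d$, so the second term is $\le|c_d|\lambda_d(\mathbf{u})\le\|\xi\|$.

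Then I conclude. Let $\beta\in\Lambda_{\mathbf{v}}\setminus\{0\}$. If $\pi_{\alpha}(\beta)\ne0$ then $\pi_{\alpha}(\beta)\in\Lambda_{\alpha}\setminus\{0\}$, so by (i), (ii) and the hypothesis $\|\beta\|\ge\|\pi_{\alpha}(\beta)\|\ge\lambda_1(\alpha)\ge|\mathbf{u}\wedge\mathbf{v}|/|\mathbf{v}|$. If $\pi_{\alpha}(\beta)=0$, i.e. $\beta\in\Lambda_{\mathbf{v}}\cap\widetilde{\mathcal{K}}\alpha$, I claim $\Lambda_{\mathbf{v}}\cap\widetilde{\mathcal{K}}\alpha=\mathcal{R}\,\pi_{\mathbf{v}}(\mathbf{u})$: writing $\beta=\pi_{\mathbf{v}}(\mathbf{w})$ with $\mathbf{w}\in\mathcal{R}^{d+1}$, the identity from the first paragraph shows $\beta\in\widetilde{\mathcal{K}}\alpha$ iff $\pi_{\mathbf{u}}(\mathbf{w})\in\widetilde{\mathcal{K}}\alpha$; by primitivity of $\alpha$ in $\Lambda_{\mathbf{u}}$ we have $\Lambda_{\mathbf{u}}\cap\widetilde{\mathcal{K}}\alpha=\mathcal{R}\alpha=\mathcal{R}\,\pi_{\mathbf{u}}(\mathbf{v})$, so this holds iff $\mathbf{w}\in\mathcal{R}\mathbf{v}+(\mathrm{Ker}(\pi_{\mathbf{u}})\cap\mathcal{R}^{d+1})$, and $\mathrm{Ker}(\pi_{\mathbf{u}})\cap\mathcal{R}^{d+1}=\mathcal{R}\mathbf{u}$ because the entries of $\mathbf{u}\in Q$ are coprime; applying $\pi_{\mathbf{v}}$ and using $\pi_{\mathbf{v}}(\mathbf{v})=0$ gives $\beta\in\mathcal{R}\,\pi_{\mathbf{v}}(\mathbf{u})$, so $\|\beta\|\ge\|\pi_{\mathbf{v}}(\mathbf{u})\|=|\mathbf{u}\wedge\mathbf{v}|/|\mathbf{v}|$. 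In both cases $\|\beta\|\ge|\mathbf{u}\wedge\mathbf{v}|/|\mathbf{v}|$, hence $\lambda_1(\mathbf{v})\ge|\mathbf{u}\wedge\mathbf{v}|/|\mathbf{v}|$, which together with the first paragraph gives $\lambda_1(\mathbf{v})=|\mathbf{u}\wedge\mathbf{v}|/|\mathbf{v}|$.

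The step I expect to be the main obstacle is fact (ii), the norm-non-increasing property of $\pi_{\alpha}$: this is precisely where the normalization $\|\alpha\|=|n|\lambda_d(\mathbf{u})$ is essential (without it the $m_i/n$ coefficients are not controlled by $\lambda_d(\mathbf{u})$ and the statement can fail), and keeping the orthogonal-basis bookkeeping correct is the only genuinely non-formal point. The identification $\Lambda_{\mathbf{v}}\cap\widetilde{\mathcal{K}}\alpha=\mathcal{R}\,\pi_{\mathbf{v}}(\mathbf{u})$ is routine, though it does use both the primitivity of $\alpha$ in $\Lambda_{\mathbf{u}}$ and $\mathbf{u}\in Q$.
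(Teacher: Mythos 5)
Your proposal is correct and follows essentially the same route as the paper's proof: the norm-non-increasing property of $\pi_{\alpha}$ under the normalization $\|\alpha\|=|n|\lambda_d(\mathbf{u})$, the identity $\pi_{\alpha}(\Lambda_{\mathbf{v}})=\pi_{\alpha}(\Lambda_{\mathbf{u}})=\Lambda_{\alpha}$, and the case split on whether a lattice vector lies on the line $\widetilde{\mathcal{K}}\alpha=\widetilde{\mathcal{K}}\pi_{\mathbf{v}}(\mathbf{u})$, handled by primitivity. You merely spell out two points the paper leaves implicit (the upper bound $\lambda_1(\mathbf{v})\leq|\mathbf{u}\wedge\mathbf{v}|/|\mathbf{v}|$ via the vector $\pi_{\mathbf{v}}(\mathbf{u})$, and the transfer of primitivity of $\alpha$ to $\pi_{\mathbf{v}}(\mathbf{u})$), which is fine.
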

\begin{proof}
    We first claim that for any $\xi \in \widetilde{\mathcal{K}}^d$, $\|\xi\| \geq \|\pi_{\alpha}(\xi)\|$. Indeed, writing $\xi = \sum_{i\leq d}c_i\xi_i$, we have
    \[
    \|\pi_{\alpha}(\xi)\| \leq \max_{i=1,\dots,d-1}\left|c_i-c_d\frac{m_i}{n}\right|\Vert \xi_i\|
    \leq \max_{i=1,\dots,d-1} \max\Big\{|c_i|\lVert\xi_i\|,\left|c_d\frac{m_i}{n}\right|\|\xi_i\|\Big\}.
    \]
    Since $\|\alpha\|=|n|\lambda_d(\mathbf{u})$, we have $|\frac{m_i}{n}|\|\xi_i\|\leq \|\xi_d\|$. Hence by the orthogonality of $\{\xi_1,\dots,\xi_d\}$, 
    $\|\pi_{\alpha}(\xi)\| \leq \max_{i=1,\dots,d}|c_i|\|\xi_i\| = \|\xi\|$.

    Observe that since $\mathrm{Ker}(\pi_\alpha)=\widetilde{\mathcal{K}}\alpha=\widetilde{\mathcal{K}}(\widehat{\mathbf{u}}-\widehat{\mathbf{v}})$, it follows that $\pi_{\alpha}(b\widehat{\mathbf{u}}-a)=\pi_{\alpha}(b\widehat{\mathbf{v}}-a)$ for any $(a,b)\in \mathcal{R}^d\times\mathcal{R}$. Hence $\pi_{\alpha}(\Lambda_{\mathbf{u}})=\pi_{\alpha}(\Lambda_{\mathbf{v}})=\Lambda_{\alpha}$.
    
    Now, let $\beta=\pi_{\mathbf{v}}(\mathbf{u})$. For any $\gamma \in \Lambda_{\mathbf{v}}\setminus \widetilde{\mathcal{K}}\beta$, since $\mathrm{Ker}(\pi_\alpha)=\widetilde{\mathcal{K}}\beta$, it follows from the claim and the observation that $\|\gamma\| \geq \|\pi_{\alpha}(\gamma)\| \geq \lambda_1(\alpha)\geq \frac{|\mathbf{u}\wedge\mathbf{v}|}{|\mathbf{v}|}$.
 Finally, we claim that $\beta$ is primitive. Assuming this claim, if $\gamma \in \Lambda_{\mathbf{v}}\cap\widetilde{\mathcal{K}}\beta \setminus \{0\}$, then $\|\gamma\|\geq \|\beta\|=\frac{|\mathbf{u}\wedge\mathbf{v}|}{|\mathbf{v}|}$.
    Therefore, $\lambda_1(\mathbf{v})=\frac{|\mathbf{u}\wedge\mathbf{v}|}{|\mathbf{v}|}$. 
    
     In order to prove the last claim, suppose that $\beta$ is not primitive and write $\beta=f\gamma$ with $\gamma \in \Lambda_{\mathbf{v}}$ and $f\in\mathcal{R}$ with $\deg f \geq 1$. Choose $\mathbf{w}\in Q$ such that $\gamma=\pi_{\mathbf{v}}(\mathbf{w})$. By linearity of $\pi_\mathbf{v}$, $\pi_\mathbf{v}(f\mathbf{w}-\mathbf{u})=0$, hence $f\mathbf{w}-\mathbf{u}=f'\mathbf{v}$ for some $f'\in\mathcal{R}$. This implies that $f'\alpha= f\pi_{\mathbf{u}}(\mathbf{w})$. Since $\alpha$ is primitive, we have $f|f'$. This contradicts the fact that $f\mathbf{w}-f'\mathbf{v}=\mathbf{u}\in Q$.
\end{proof}

We shall now define the fractal structure for the lower bound of the Hausdorff dimension of $\mathbf{DI}_d(\varepsilon)$. 
\begin{definition}\label{Def_lowerstructure}
Given $\mathbf{u}\in Q$ and $\varepsilon>0$, define
\[
\Lambda_{\mathbf{u}}(\varepsilon)=\{\alpha\in \Lambda_{\mathbf{u}}: \alpha\text{ is primitive and } \hat{\lambda}_1(\alpha)>\varepsilon\}.
\]
Given $n\in \mathcal{R}$ and $N\in \mathbb{N}$, define
\[
C_n'({\mathbf{u}})=\Bigg\{\mathbf{v}=\sum_{i=1}^{d-1}\alpha_i\xi_i+n\xi_d:\alpha_i\in \widetilde{\mathcal{K}},\vert \alpha_i\vert<\frac{\vert n\vert\lambda_d(\mathbf{u})}{\lambda_i(\mathbf{u})}\Bigg\};\quad
C_N(\mathbf{u})=\bigcup_{0<\vert n\vert\leq q^N}C_n'(\mathbf{u}),
\]
and define

$$F_N(\mathbf{u},\varepsilon)=\bigcup_{\alpha\in \Lambda_{\mathbf{u}}(\varepsilon)\cap C_N(\mathbf{u})}\zeta(\mathbf{u},\alpha,\varepsilon);\qquad F(\mathbf{u},\varepsilon)=\bigcup_{N\in\mathbb{N}}F_N(\mathbf{u},\varepsilon),$$
where 
$$\zeta(\mathbf{u},\alpha,\varepsilon)=\Bigg\{\mathbf{v}\in Q:\pi_{\mathbf{u}}(\mathbf{v})=\alpha,\left(\frac{\vert \mathbf{u}\wedge \mathbf{v}\vert}{\varepsilon}\right)^{\frac{d}{d-1}}\leq \vert \mathbf{v}\vert\leq \left(\frac{q\vert \mathbf{u}\wedge \mathbf{v}\vert}{\varepsilon}\right)^{\frac{d}{d-1}}\Bigg\}.$$
Finally, we define 
\[
\sigma_{\varepsilon,N}(\mathbf{u})=F_N(\mathbf{u},\varepsilon),\quad
Q_{\varepsilon,N}=\bigcup_{\mathbf{u}\in Q}\sigma_{\varepsilon,N}(\mathbf{u}),\quad 
B(\mathbf{u})=B\left(\hat{\mathbf{u}},\frac{\lambda_1(\mathbf{u})}{q\vert \mathbf{u}\vert}\right). 
\]
\end{definition}

\begin{lemma}
\label{v<w}
    Let $0<\varepsilon<1$ and $\mathbf{v}\in F(\mathbf{u},\varepsilon)$. Then $\lambda_1(\mathbf{v})=\frac{|\mathbf{u}\wedge\mathbf{v}|}{|\mathbf{v}|}$, $\frac{1}{q}\varepsilon\leq \hat{\lambda}_1(\mathbf{v})\leq \varepsilon$, and $\vert \mathbf{v}\vert>\vert \mathbf{u}\vert$. 
\end{lemma}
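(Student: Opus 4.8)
The plan is to extract the three assertions from the defining membership $\mathbf{v}\in F(\mathbf{u},\varepsilon)$, which says $\mathbf{v}\in\zeta(\mathbf{u},\alpha,\varepsilon)$ for some $\alpha\in\Lambda_{\mathbf{u}}(\varepsilon)\cap C_N(\mathbf{u})$. So we have $\pi_{\mathbf{u}}(\mathbf{v})=\alpha$ with $\alpha$ primitive, $\hat\lambda_1(\alpha)>\varepsilon$, the norm constraint $\left(\tfrac{|\mathbf{u}\wedge\mathbf{v}|}{\varepsilon}\right)^{d/(d-1)}\le|\mathbf{v}|\le\left(\tfrac{q|\mathbf{u}\wedge\mathbf{v}|}{\varepsilon}\right)^{d/(d-1)}$, and (from $\alpha\in C_N(\mathbf{u})$, i.e. $\alpha=\sum_{i<d}m_i\xi_i+n\xi_d$ with coefficients obeying $|m_i|\lambda_i(\mathbf{u})<|n|\lambda_d(\mathbf{u})$) the key fact $\|\alpha\|=|n|\lambda_d(\mathbf{u})$, which is exactly the hypothesis needed to invoke Lemmas \ref{Lem_covol_al} and \ref{lem:lambda_1(w))}.

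First I would handle $\lambda_1(\mathbf{v})=\tfrac{|\mathbf{u}\wedge\mathbf{v}|}{|\mathbf{v}|}$. By Lemma \ref{lem:lambda_1(w))}, it suffices to check $\tfrac{|\mathbf{u}\wedge\mathbf{v}|}{|\mathbf{v}|}\le\lambda_1(\alpha)$. Using the lower bound $|\mathbf{v}|\ge\left(\tfrac{|\mathbf{u}\wedge\mathbf{v}|}{\varepsilon}\right)^{d/(d-1)}$ from $\zeta$, together with Lemma \ref{Lem_covol_al} giving $\det(\Lambda_\alpha)=\tfrac{1}{|\mathbf{u}\wedge\mathbf{v}|}$ and the hypothesis $\hat\lambda_1(\alpha)=|\mathbf{u}\wedge\mathbf{v}|^{1/(d-1)}\lambda_1(\alpha)>\varepsilon$, a short computation yields $\tfrac{|\mathbf{u}\wedge\mathbf{v}|}{|\mathbf{v}|}\le|\mathbf{u}\wedge\mathbf{v}|\cdot\left(\tfrac{\varepsilon}{|\mathbf{u}\wedge\mathbf{v}|}\right)^{d/(d-1)}=\varepsilon^{d/(d-1)}|\mathbf{u}\wedge\mathbf{v}|^{-1/(d-1)}<\varepsilon\cdot\varepsilon^{1/(d-1)}|\mathbf{u}\wedge\mathbf{v}|^{-1/(d-1)}\cdot\varepsilon^{-1/(d-1)}$—more cleanly, $\tfrac{|\mathbf{u}\wedge\mathbf{v}|}{|\mathbf{v}|}\le\left(\tfrac{\varepsilon}{|\mathbf{u}\wedge\mathbf{v}|^{1/d}}\right)^{?}$; in any case the inequality reduces to $\varepsilon\le\hat\lambda_1(\alpha)$, which is the hypothesis. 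This gives $\lambda_1(\mathbf{v})=\tfrac{|\mathbf{u}\wedge\mathbf{v}|}{|\mathbf{v}|}$.

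Next, for the bound on $\hat\lambda_1(\mathbf{v})=|\mathbf{v}|^{1/d}\lambda_1(\mathbf{v})=|\mathbf{v}|^{1/d-1}|\mathbf{u}\wedge\mathbf{v}|=|\mathbf{u}\wedge\mathbf{v}|\cdot|\mathbf{v}|^{-(d-1)/d}$, I would simply substitute the two sides of the $\zeta$-constraint: from $|\mathbf{v}|\ge\left(\tfrac{|\mathbf{u}\wedge\mathbf{v}|}{\varepsilon}\right)^{d/(d-1)}$ we get $|\mathbf{v}|^{-(d-1)/d}\le\tfrac{\varepsilon}{|\mathbf{u}\wedge\mathbf{v}|}$, hence $\hat\lambda_1(\mathbf{v})\le\varepsilon$; and from $|\mathbf{v}|\le\left(\tfrac{q|\mathbf{u}\wedge\mathbf{v}|}{\varepsilon}\right)^{d/(d-1)}$ we get $|\mathbf{v}|^{-(d-1)/d}\ge\tfrac{\varepsilon}{q|\mathbf{u}\wedge\mathbf{v}|}$, hence $\hat\lambda_1(\mathbf{v})\ge\tfrac{\varepsilon}{q}$. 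Finally, $|\mathbf{v}|>|\mathbf{u}|$ follows from $\hat\lambda_1(\mathbf{v})\le\varepsilon<1$: by Theorem \ref{Mink2nd}, $\lambda_1(\mathbf{v})\le\det(\Lambda_{\mathbf{v}})^{1/d}=|\mathbf{v}|^{-1/d}$, so $\hat\lambda_1(\mathbf{v})\le 1$ always, but we need strictness; instead use $\lambda_1(\mathbf{v})=\tfrac{|\mathbf{u}\wedge\mathbf{v}|}{|\mathbf{v}|}$, so $|\mathbf{v}|^{1/d}\tfrac{|\mathbf{u}\wedge\mathbf{v}|}{|\mathbf{v}|}\le\varepsilon<1$ gives $|\mathbf{u}\wedge\mathbf{v}|<|\mathbf{v}|^{(d-1)/d}<|\mathbf{v}|$, and since $|\mathbf{u}\wedge\mathbf{v}|=|\mathbf{u}||\mathbf{v}|\,\|\hat{\mathbf{u}}-\hat{\mathbf{v}}\|\ge|\mathbf{u}|\lambda_1(\Lambda_{\mathbf{u}})\cdot\tfrac{|\mathbf{v}|}{|\mathbf{u}|}\ge\dots$; more directly, $|\mathbf{u}\wedge\mathbf{v}|=|\mathbf{u}|\,\|\alpha\|\ge|\mathbf{u}|\lambda_1(\Lambda_{\mathbf{u}})$ and comparing $|\mathbf{v}|>|\mathbf{u}\wedge\mathbf{v}|/\varepsilon^{?}$ with the lower $\zeta$ bound forces $|\mathbf{v}|$ large. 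The main obstacle is bookkeeping the fractional exponents $d/(d-1)$ consistently and making the $|\mathbf{v}|>|\mathbf{u}|$ step genuinely strict; I would pin it down via $|\mathbf{u}\wedge\mathbf{v}|=|\mathbf{u}|\|\alpha\|\ge|\mathbf{u}|\lambda_1(\Lambda_{\mathbf{u}})$ combined with $|\mathbf{v}|\ge(|\mathbf{u}\wedge\mathbf{v}|/\varepsilon)^{d/(d-1)}\ge|\mathbf{u}\wedge\mathbf{v}|/\varepsilon>|\mathbf{u}\wedge\mathbf{v}|\ge|\mathbf{u}|\lambda_1(\Lambda_{\mathbf{u}})$, and then noting $\lambda_1(\Lambda_{\mathbf{u}})$ could be small, so instead close the gap using $\|\alpha\|=|n|\lambda_d(\mathbf{u})\ge\lambda_d(\mathbf{u})\ge\det(\Lambda_{\mathbf{u}})^{1/d}=|\mathbf{u}|^{-1/d}$, whence $|\mathbf{u}\wedge\mathbf{v}|\ge|\mathbf{u}|^{1-1/d}$ and $|\mathbf{v}|\ge(|\mathbf{u}|^{1-1/d}/\varepsilon)^{d/(d-1)}=|\mathbf{u}|/\varepsilon^{d/(d-1)}>|\mathbf{u}|$.
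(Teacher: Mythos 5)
Your proposal is correct and follows essentially the same route as the paper: verify $\frac{|\mathbf{u}\wedge\mathbf{v}|}{|\mathbf{v}|}\leq\lambda_1(\alpha)$ from the $\zeta$-constraint together with $\hat{\lambda}_1(\alpha)>\varepsilon$ and $\varepsilon<1$, apply Lemma \ref{lem:lambda_1(w))}, read off $\frac{\varepsilon}{q}\leq\hat{\lambda}_1(\mathbf{v})\leq\varepsilon$ directly from the $\zeta$-inequalities, and obtain $|\mathbf{v}|>|\mathbf{u}|$ from $\|\alpha\|\geq\lambda_d(\mathbf{u})\geq|\mathbf{u}|^{-1/d}$ (i.e. $\hat{\lambda}_d(\mathbf{u})\geq 1$), exactly as in the paper, which reaches $\|\alpha\|\geq\lambda_d(\mathbf{u})$ via Lemma \ref{alphaLen} where you use $\|\alpha\|=|n|\lambda_d(\mathbf{u})$ from $\alpha\in C_N(\mathbf{u})$. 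Despite the wobbly intermediate chains, the final pinned-down inequalities are the paper's.
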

\begin{proof}
    Let $\alpha=\pi_{\mathbf{u}}(\mathbf{v})$. Since 
    $$\frac{\varepsilon}{q\vert \mathbf{u}\wedge \mathbf{v}\vert}\leq \frac{1}{\vert \mathbf{v}\vert^{\frac{d-1}{d}}}\leq \frac{\varepsilon}{\vert \mathbf{u}\wedge \mathbf{v}\vert}$$
    and $\hat{\lambda}_1(\alpha)=\vert \mathbf{u}\wedge \mathbf{v}\vert^{\frac{1}{d-1}}\lambda_1(\alpha)>\varepsilon$, it follows from $\varepsilon<1$ that
    \begin{equation*}
        \frac{\vert \mathbf{u}\wedge \mathbf{v}\vert}{\vert \mathbf{v}\vert}\leq \frac{\varepsilon^{\frac{d}{d-1}}}{\vert \mathbf{u}\wedge \mathbf{v}\vert^{\frac{1}{d-1}}}<\varepsilon^{\frac{1}{d-1}}\lambda_1(\alpha)<\lambda_1(\alpha).
    \end{equation*}
    Since $\alpha$ is primitive, by Lemma \ref{lem:lambda_1(w))}, we have $\lambda_1(\mathbf{v})=\frac{\vert \mathbf{u}\wedge \mathbf{v}\vert}{\vert \mathbf{v}\vert}$. Thus, 
    \begin{equation*}
        \frac{\varepsilon}{q}\leq \hat{\lambda}_1(\mathbf{v})=\frac{\vert \mathbf{u}\wedge \mathbf{v}\vert}{\vert \mathbf{v}\vert^{\frac{d-1}{d}}}\leq \varepsilon.
    \end{equation*}
    Since $\alpha\in \Lambda_{\mathbf{u}}\setminus H_{\mathbf{u}}'$, by Lemma \ref{alphaLen}, we have $\Vert \alpha \Vert\geq \lambda_d(\mathbf{u})$. Note that by Theorem \ref{Mink2nd}, $\hat{\lambda}_1(\mathbf{u})\cdots\hat{\lambda}_d(\mathbf{u})=1$. Hence, $\hat{\lambda}_d(\mathbf{u})\geq 1$. Therefore, 
    \begin{equation}
    \label{eqn:hat(lambda_d)(u)Ineq}
        1\leq \hat{\lambda}_d(\mathbf{u})=\vert \mathbf{u}\vert^{1/d}\lambda_d(\mathbf{u})\leq \vert \mathbf{u}\vert^{1/d}\Vert \alpha\Vert.
    \end{equation}
    It follows from \eqref{eqn:hat(lambda_d)(u)Ineq} that
    \[
        \frac{\vert \mathbf{u}\vert}{\vert\mathbf{v}\vert}\leq \frac{\varepsilon^{\frac{d}{d-1}}\vert \mathbf{u}\vert}{\vert \mathbf{u}\wedge \mathbf{v}\vert^{\frac{d}{d-1}}}=\frac{\varepsilon^{\frac{d}{d-1}}\vert \mathbf{u}\vert}{\Vert \alpha\Vert^{\frac{d}{d-1}}\vert \mathbf{u}\vert^{\frac{d}{d-1}}}
        =\frac{\varepsilon^{\frac{d}{d-1}}}{(\vert \mathbf{u}\vert^{\frac{1}{d}}\Vert \alpha\Vert)^{\frac{d}{d-1}}}\leq  \varepsilon^{\frac{d}{d-1}}.
    \]
    Since $\varepsilon<1$, we have $\vert \mathbf{v}\vert>\vert \mathbf{u}\vert$.
\end{proof}
\begin{lemma}
\label{BallCont}
   Let $0<\varepsilon<1$ and $\mathbf{v}\in F(\mathbf{u},\varepsilon)$.  Then, $B\left(\hat{\mathbf{v}},\frac{\lambda_1(\mathbf{v})}{\vert \mathbf{v}\vert}\right)\subset B\left(\hat{\mathbf{u}},\frac{\lambda_1(\mathbf{u})}{q\vert \mathbf{u}\vert}\right)$.
\end{lemma}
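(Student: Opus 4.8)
The plan is to reduce the stated inclusion, via the ultrametric inequality (Lemma \ref{umIneq}), to a single size comparison between $\hat{\mathbf{u}}$ and $\hat{\mathbf{v}}$, and then to exploit that every norm on $\widetilde{\mathcal{K}}^d$ lies in $\{q^{k}:k\in\mathbb{Z}\}$ in order to absorb the factor $q$ in the radius of $B(\mathbf{u})=B(\hat{\mathbf{u}},\lambda_1(\mathbf{u})/(q|\mathbf{u}|))$. In an ultrametric space a closed ball $B(\hat{\mathbf{v}},r)$ is contained in $B(\hat{\mathbf{u}},R)$ whenever $r\le R$ and $\hat{\mathbf{v}}\in B(\hat{\mathbf{u}},R)$; hence with $r=\lambda_1(\mathbf{v})/|\mathbf{v}|$ and $R=\lambda_1(\mathbf{u})/(q|\mathbf{u}|)$ it suffices to prove $\lambda_1(\mathbf{v})/|\mathbf{v}|\le R$ and $\|\hat{\mathbf{u}}-\hat{\mathbf{v}}\|\le R$. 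By Lemma \ref{v<w}, $\lambda_1(\mathbf{v})=|\mathbf{u}\wedge\mathbf{v}|/|\mathbf{v}|=|\mathbf{u}|\,\|\hat{\mathbf{u}}-\hat{\mathbf{v}}\|$ and $|\mathbf{v}|>|\mathbf{u}|$, so $\lambda_1(\mathbf{v})/|\mathbf{v}|=(|\mathbf{u}|/|\mathbf{v}|)\|\hat{\mathbf{u}}-\hat{\mathbf{v}}\|<\|\hat{\mathbf{u}}-\hat{\mathbf{v}}\|$; thus the first inequality follows from the second, and everything reduces to showing
\[
\|\hat{\mathbf{u}}-\hat{\mathbf{v}}\|\le \frac{\lambda_1(\mathbf{u})}{q|\mathbf{u}|}.
\]

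Put $\alpha=\pi_{\mathbf{u}}(\mathbf{v})$. Since $\|\pi_{\mathbf{u}}(\mathbf{v})\|=|\mathbf{v}|\,\|\hat{\mathbf{u}}-\hat{\mathbf{v}}\|$, one has $\|\hat{\mathbf{u}}-\hat{\mathbf{v}}\|=\|\alpha\|/|\mathbf{v}|=|\mathbf{u}\wedge\mathbf{v}|/(|\mathbf{u}||\mathbf{v}|)$, so the displayed inequality is equivalent to $|\mathbf{v}|\ge B$, where $B:=q|\mathbf{u}\wedge\mathbf{v}|/\lambda_1(\mathbf{u})$. Because $\mathbf{v}\in F(\mathbf{u},\varepsilon)$, there exist $N\in\mathbb{N}$ and $\alpha\in\Lambda_{\mathbf{u}}(\varepsilon)\cap C_N(\mathbf{u})$ with $\mathbf{v}\in\zeta(\mathbf{u},\alpha,\varepsilon)$, and the defining inequality of $\zeta$ gives $|\mathbf{v}|\ge(|\mathbf{u}\wedge\mathbf{v}|/\varepsilon)^{d/(d-1)}$. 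Now both $|\mathbf{v}|$ and $B$ lie in $\{q^{k}:k\in\mathbb{Z}\}$, so if we can show the strict inequality $(|\mathbf{u}\wedge\mathbf{v}|/\varepsilon)^{d/(d-1)}>B/q$, then $|\mathbf{v}|\ge(|\mathbf{u}\wedge\mathbf{v}|/\varepsilon)^{d/(d-1)}>B/q$ forces $|\mathbf{v}|\ge B$. Thus it remains to verify $(|\mathbf{u}\wedge\mathbf{v}|/\varepsilon)^{d/(d-1)}>|\mathbf{u}\wedge\mathbf{v}|/\lambda_1(\mathbf{u})$, which rearranges to
\[
|\mathbf{u}\wedge\mathbf{v}|^{1/(d-1)}\lambda_1(\mathbf{u})>\varepsilon^{d/(d-1)}.
\]

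For this, note that $\Lambda_{\mathbf{u}}'=\operatorname{span}_{\mathcal{R}}\{\xi_1,\dots,\xi_{d-1}\}$ is a sublattice of $\Lambda_{\alpha}$ with the same $\widetilde{\mathcal{K}}$-span $H_{\mathbf{u}}'$, and orthogonality of $\xi_1,\dots,\xi_{d-1}$ gives $\lambda_1(\Lambda_{\mathbf{u}}')=\|\xi_1\|=\lambda_1(\mathbf{u})$; hence $\lambda_1(\Lambda_{\alpha})\le\lambda_1(\Lambda_{\mathbf{u}}')=\lambda_1(\mathbf{u})$. Using $\hat{\lambda}_1(\alpha)=|\mathbf{u}\wedge\mathbf{v}|^{1/(d-1)}\lambda_1(\Lambda_{\alpha})$, the membership $\alpha\in\Lambda_{\mathbf{u}}(\varepsilon)$ (which gives $\hat{\lambda}_1(\alpha)>\varepsilon$), and $0<\varepsilon<1$ together with $d/(d-1)>1$ (which gives $\varepsilon>\varepsilon^{d/(d-1)}$), we obtain
\[
|\mathbf{u}\wedge\mathbf{v}|^{1/(d-1)}\lambda_1(\mathbf{u})\ge |\mathbf{u}\wedge\mathbf{v}|^{1/(d-1)}\lambda_1(\Lambda_{\alpha})=\hat{\lambda}_1(\alpha)>\varepsilon>\varepsilon^{d/(d-1)},
\]
as required.

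The step I expect to be the crux is the one where the factor $q$ in the radius of $B(\mathbf{u})$ is recovered: one really does need $|\mathbf{v}|\ge q|\mathbf{u}\wedge\mathbf{v}|/\lambda_1(\mathbf{u})$ rather than merely $|\mathbf{v}|\ge|\mathbf{u}\wedge\mathbf{v}|/\lambda_1(\mathbf{u})$, and this extra factor is furnished for free precisely because $|\mathbf{v}|$ must jump to the next power of $q$ — the analogous estimate over $\mathbb{R}$ would lose a constant. The remaining inputs ($\lambda_1(\Lambda_{\alpha})\le\lambda_1(\mathbf{u})$ and the identities for $|\mathbf{u}\wedge\mathbf{v}|$ and $\hat{\lambda}_1(\alpha)$) are routine unwinding of Definitions \ref{Def_Fareylattice} and \ref{Def_lowerstructure}.
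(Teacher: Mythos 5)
Your proof is correct and, at bottom, takes the same route as the paper: both rest on Lemma \ref{v<w}, the bound $\lambda_1(\Lambda_{\alpha})\leq\lambda_1(\mathbf{u})$, the membership condition $\hat{\lambda}_1(\alpha)>\varepsilon$, the lower bound on $\vert\mathbf{v}\vert$ from the definition of $\zeta(\mathbf{u},\alpha,\varepsilon)$, the ultrametric inequality, and the fact that all relevant quantities lie in $q^{\mathbb{Z}}$ to convert a strict inequality into the extra factor $q$. The only (cosmetic) difference is where the discreteness is invoked: you apply it to $\vert\mathbf{v}\vert$ so as to get $\|\hat{\mathbf{u}}-\hat{\mathbf{v}}\|\leq\frac{\lambda_1(\mathbf{u})}{q\vert\mathbf{u}\vert}$ before the ball containment, while the paper first proves the strict bounds $\|\hat{\mathbf{u}}-\hat{\mathbf{v}}\|,\ \frac{\lambda_1(\mathbf{v})}{\vert\mathbf{v}\vert}<\frac{\lambda_1(\mathbf{u})}{\vert\mathbf{u}\vert}$ and invokes discreteness only at the end for the point $\boldsymbol{\theta}$.
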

\begin{proof}
We first bound $\|\hat{\mathbf{u}}-\hat{\mathbf{v}}\|$ and $\frac{\lambda_1(\mathbf{v})}{\vert \mathbf{v}\vert}$. Let $\alpha=\pi_\mathbf{u}(\mathbf{v})$. Then it follows from $\xi_1\in \Lambda_{\alpha}$ that $\lambda_1(\alpha)\leq \Vert\xi_1\Vert=\lambda_1(\mathbf{u})$. Since $\varepsilon<1$, we have
    \begin{equation}
        \label{eqn:d(v,w)BND}
        \|\hat{\mathbf{u}}-\hat{\mathbf{v}}\|
        =\frac{\vert \mathbf{u}\wedge \mathbf{v}\vert}{\vert \mathbf{u}\vert\vert \mathbf{v}\vert}
        \leq\frac{\varepsilon^{\frac{d}{d-1}}}{\vert \mathbf{u}\wedge \mathbf{v}\vert^{\frac{1}{d-1}}\vert \mathbf{u}\vert}
        < \frac{\varepsilon^{\frac{1}{d-1}} \hat{\lambda}_1(\alpha)}{\vert \mathbf{u}\wedge \mathbf{v}\vert^{\frac{1}{d-1}}\vert \mathbf{u}\vert}
        \leq \varepsilon^{\frac{1}{d-1}}\frac{\lambda_1(\mathbf{u})}{\vert \mathbf{u}\vert}< \frac{\lambda_1(\mathbf{u})}{\vert \mathbf{u}\vert}.
    \end{equation}
By Lemma \ref{v<w} and (\ref{eqn:d(v,w)BND}), we have
    \begin{equation}
    \label{eqn:lambda1(w)/wBND}
        \frac{\lambda_1(\mathbf{v})}{\vert \mathbf{v}\vert}=\frac{\vert \mathbf{u}\wedge \mathbf{v}\vert}{\vert \mathbf{v}\vert^2}=\frac{\vert \mathbf{u}\vert}{\vert \mathbf{v}\vert}\|\hat{\mathbf{u}}-\hat{\mathbf{v}}\|<\frac{\lambda_1(\mathbf{u})}{\vert \mathbf{v}\vert}<\frac{\lambda_1(\mathbf{u})}{\vert \mathbf{u}\vert}.
    \end{equation}

    Now let $\boldsymbol{\theta}\in B\left(\hat{\mathbf{v}},\frac{\lambda_1(\mathbf{v})}{\vert \mathbf{v}\vert}\right)$. By Lemma \ref{umIneq}, \eqref{eqn:d(v,w)BND}, and \eqref{eqn:lambda1(w)/wBND},
    \begin{equation*}
        \|\boldsymbol{\theta}-\hat{\mathbf{u}}\|\leq \max\{\|\hat{\mathbf{u}}-\hat{\mathbf{v}}\|,\|\boldsymbol{\theta}-\hat{\mathbf{v}}\|\}< \frac{\lambda_1(\mathbf{u})}{|\mathbf{u}|}.
    \end{equation*}
   Since the image of the norm $\|\cdot\|$ is in $q^{\mathbb{Z}}\cup \{0\}$,then, $\frac{\lambda_1(\mathbf{u})}{\vert \mathbf{u}\vert}\in q^{\mathbb{Z}}\cup\{0\}$. Hence, $\|\boldsymbol{\theta}-\hat{\mathbf{u}}\|\leq \frac{\lambda_1(\mathbf{u})}{q|\mathbf{u}|}$, that is,
   $\boldsymbol{\theta}\in B\left(\hat{\mathbf{u}},\frac{\lambda_1(\mathbf{u})}{q\vert \mathbf{u}\vert}\right)$.
\end{proof}
\begin{proposition}
\label{F(u,u)FracStruc}
    For $0<\varepsilon<1$, the triple $(Q_{\varepsilon,N},\sigma_{\varepsilon,N},B)$ is a strictly nested self-similar structure covering a subset of $\mathbf{DI}_d(\varepsilon)$.
\end{proposition}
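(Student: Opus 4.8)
The plan is to verify each of the three clauses in the definition of a strictly nested self-similar structure for the triple $(Q_{\varepsilon,N},\sigma_{\varepsilon,N},B)$, and then to exhibit a $\sigma_{\varepsilon,N}$-admissible sequence of balls converging to a point of $\mathbf{DI}_d(\varepsilon)$ for a suitable collection of $\boldsymbol{\theta}$.

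First I would check the nesting and shrinking axioms. The containment $B(\mathbf{v})\subset B(\mathbf{u})$ for $\mathbf{v}\in\sigma_{\varepsilon,N}(\mathbf{u})=F_N(\mathbf{u},\varepsilon)$ is exactly Lemma \ref{BallCont}, since $B(\mathbf{u})=B(\hat{\mathbf{u}},\lambda_1(\mathbf{u})/(q|\mathbf{u}|))$ and $\frac{\lambda_1(\mathbf{v})}{|\mathbf{v}|}=\frac{|\mathbf{u}\wedge\mathbf{v}|}{|\mathbf{v}|^2}\geq \frac{\lambda_1(\mathbf{v})}{q|\mathbf{v}|}$, so in fact $B(\hat{\mathbf{v}},\lambda_1(\mathbf{v})/(q|\mathbf{v}|))\subseteq B(\hat{\mathbf{v}},\lambda_1(\mathbf{v})/|\mathbf{v}|)\subset B(\mathbf{u})$. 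That $\sigma_{\varepsilon,N}(\mathbf{u})$ is finite follows because the membership condition in $\zeta(\mathbf{u},\alpha,\varepsilon)$ pins down $|\mathbf{v}|$ to lie in the interval $[(|\mathbf{u}\wedge\mathbf{v}|/\varepsilon)^{d/(d-1)}, (q|\mathbf{u}\wedge\mathbf{v}|/\varepsilon)^{d/(d-1)}]$ — hence to at most one value of $|\mathbf{v}|$ for each fixed $\alpha$ — combined with the bound $|\mathbf{v}|\leq q^N|\mathbf{u}|$ coming from $\alpha\in C_N(\mathbf{u})$ and Lemma \ref{lem:pi^-1(Val)}, which says there are only finitely many $\mathbf{v}\in\pi_{\mathbf{u}}^{-1}(\alpha)\cap Q$ of bounded norm, while $\Lambda_{\mathbf{u}}(\varepsilon)\cap C_N(\mathbf{u})$ is finite since $C_N(\mathbf{u})$ is bounded. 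For $\mathrm{diam}\,B(\mathbf{v})<\mathrm{diam}\,B(\mathbf{u})$ and for $\mathrm{diam}\,B(\alpha_k)\to 0$ along admissible sequences, I would use Lemma \ref{v<w}, which gives $|\mathbf{v}|>|\mathbf{u}|$ strictly; since norms take values in $q^{\mathbb{Z}}$ this forces $|\mathbf{v}|\geq q|\mathbf{u}|$, so $\mathrm{diam}\,B(\mathbf{v})=\frac{\lambda_1(\mathbf{v})}{q|\mathbf{v}|}$, and combined with $\lambda_1(\mathbf{v})/|\mathbf{v}|<\lambda_1(\mathbf{u})/|\mathbf{u}|$ from \eqref{eqn:lambda1(w)/wBND} one gets geometric decay of the diameters, hence both the strict inequality and the limit.

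Next I would establish that $(Q_{\varepsilon,N},\sigma_{\varepsilon,N},B)$ covers a subset of $\mathbf{DI}_d(\varepsilon)$. Given any $\sigma_{\varepsilon,N}$-admissible sequence $(\mathbf{u}_k)$, the balls $B(\mathbf{u}_k)$ are nested with diameters tending to $0$, so $\bigcap_k B(\mathbf{u}_k)=\{\boldsymbol{\theta}\}$ for a unique $\boldsymbol{\theta}$; I need to show $\boldsymbol{\theta}\in\mathbf{DI}_d(\varepsilon)$. The key point is that Lemma \ref{v<w} gives $\hat{\lambda}_1(\mathbf{u}_k)=|\mathbf{u}_{k-1}\wedge\mathbf{u}_k|/|\mathbf{u}_k|^{(d-1)/d}\leq\varepsilon$ for all $k$, i.e. each $\mathbf{u}_k\in Q_{\varepsilon}$ in the notation of Section \ref{sec:UpBnd} (more precisely $\hat\lambda_1(\Lambda_{\mathbf{u}_k})\le\varepsilon$). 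Using $\boldsymbol{\theta}\in B(\hat{\mathbf{u}}_k,\lambda_1(\mathbf{u}_k)/(q|\mathbf{u}_k|))\subseteq\mathcal{B}(\mathbf{u}_k,r(\mathbf{u}_k))\subseteq\Delta(\mathbf{u}_k)$ via Lemmas \ref{BestApproxIncl} and \ref{Lemma_requalLam}, every $\mathbf{u}_k$ is a best approximation vector of $\boldsymbol{\theta}$; since $|\mathbf{u}_k|\to\infty$ these occur cofinally in the best-approximation sequence, and an intermediate best approximation $\mathbf{w}$ with $|\mathbf{u}_k|\le|\mathbf{w}|\le|\mathbf{u}_{k+1}|$ satisfies $\hat\lambda_1(\Lambda_{\mathbf{w}})\le\hat\lambda_1(\Lambda_{\mathbf{u}_{k+1}})$ by the monotonicity built into Lemma \ref{A(u_i+j,u_i)<lambda_1(u_i+1)} together with $A(\hat{\mathbf{u}}_{k+1},\mathbf{w})=r(\mathbf{u}_{k+1})$ being the minimal value; hence $|\mathbf{w}|^{1/d}r(\mathbf{w})<\varepsilon$ for all large best approximations $\mathbf{w}$, and Corollary \ref{Cor_SingChar} yields $\boldsymbol{\theta}\in\mathbf{DI}_d(\varepsilon)$. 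Taking $S$ to be the set of all such $\boldsymbol{\theta}$ obtained from admissible sequences, the triple covers $S\subseteq\mathbf{DI}_d(\varepsilon)$.

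The main obstacle I expect is the bookkeeping in the covering step: one must check that an arbitrary $\sigma_{\varepsilon,N}$-admissible sequence really produces consecutive (or at least cofinal) best approximation vectors of its limit $\boldsymbol{\theta}$ and that the $\varepsilon$-improvability inequality $|\mathbf{u}_n|^{1/d}r(\mathbf{u}_n)<\varepsilon$ propagates to \emph{all} sufficiently large $n$, not merely to the indices appearing in the sequence. This is exactly the subtlety that forces the use of Lemma \ref{v<w}'s bound $\hat\lambda_1(\mathbf{v})\le\varepsilon$ (rather than a weaker one) and the monotonicity of $r$ along the best approximation sequence; the ultrametric inequality is what makes both of these sharp, as promised in the introduction. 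The remaining axioms — finiteness, nesting, strict diameter decrease — are, by contrast, direct consequences of Lemmas \ref{v<w}, \ref{BallCont}, and \ref{lem:pi^-1(Val)} and require only the routine observation that $\|\cdot\|$ is discretely valued.
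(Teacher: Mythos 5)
Your verification of the structural clauses (nesting via Lemma \ref{BallCont}, strict decrease of diameters and their convergence to $0$ via Lemma \ref{v<w} and \eqref{eqn:lambda1(w)/wBND}, finiteness of $\sigma_{\varepsilon,N}(\mathbf{u})$), and your argument that every term of an admissible sequence is a best approximation of the limit point $\boldsymbol{\theta}$, agree with the paper's proof and are fine. The covering step, however, takes a different route from the paper and has a genuine gap at the boundary. You reduce to Corollary \ref{Cor_SingChar}, which requires the \emph{strict} inequality $|\mathbf{w}|^{1/d}r(\mathbf{w})<\varepsilon$ for every sufficiently large best approximation $\mathbf{w}$ of $\boldsymbol{\theta}$. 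What your chain of estimates actually yields (correctly, via $r(\mathbf{w})\le A(\boldsymbol{\theta},\mathbf{u}_k)=\lambda_1(\mathbf{u}_{k+1})$ and $|\mathbf{w}|\le|\mathbf{u}_{k+1}|$, rather than via the ``minimality of $A(\hat{\mathbf{u}}_{k+1},\mathbf{w})$'' you invoke) is
\[
|\mathbf{w}|^{1/d}r(\mathbf{w})\ \le\ |\mathbf{u}_{k+1}|^{1/d}\lambda_1(\mathbf{u}_{k+1})=\hat{\lambda}_1(\mathbf{u}_{k+1})\ \le\ \varepsilon ,
\]
and the last inequality is genuinely non-strict: the definition of $\zeta(\mathbf{u},\alpha,\varepsilon)$ in Definition \ref{Def_lowerstructure} allows $|\mathbf{v}|=\left(\frac{|\mathbf{u}\wedge\mathbf{v}|}{\varepsilon}\right)^{\frac{d}{d-1}}$, i.e.\ $\hat{\lambda}_1(\mathbf{v})=\varepsilon$ exactly, which can occur for $\varepsilon$ in $q^{\mathbb{Q}}$, and Lemma \ref{v<w} accordingly only gives $\hat{\lambda}_1(\mathbf{v})\le\varepsilon$. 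So the step ``hence $|\mathbf{w}|^{1/d}r(\mathbf{w})<\varepsilon$'' does not follow, and since the proposition is claimed for all $0<\varepsilon<1$, you cannot discard these boundary values of $\varepsilon$.

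The paper sidesteps both the strictness issue and the bookkeeping of intermediate best approximations by not using Corollary \ref{Cor_SingChar} at all: for any large $T$ choose $k$ with $|\mathbf{u}_k|\le T<|\mathbf{u}_{k+1}|$ and use $\mathbf{u}_k$ itself as the witness in the definition of $\varepsilon$-Dirichlet improvability, since by Lemma \ref{A(u_i+j,u_i)<lambda_1(u_i+1)} and the $\zeta$-condition
\[
A(\boldsymbol{\theta},\mathbf{u}_k)=A(\hat{\mathbf{u}}_{k+1},\mathbf{u}_k)=\frac{|\mathbf{u}_k\wedge\mathbf{u}_{k+1}|}{|\mathbf{u}_{k+1}|}\le \varepsilon\,|\mathbf{u}_{k+1}|^{-1/d}<\varepsilon\,T^{-1/d},
\]
where the strictness comes from $T<|\mathbf{u}_{k+1}|$, not from the fractal structure. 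To salvage your route you would have to prove and use a non-strict variant of Corollary \ref{Cor_SingChar} (which is true, by essentially this same observation), or switch to the paper's direct verification; as written, the appeal to the strict criterion is a real gap.
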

\begin{proof}
    Note that by the definition of $Q_{\varepsilon,N}$, if $\mathbf{u}\in Q_{\varepsilon,N}$, then, $\sigma_{\varepsilon,N}(\mathbf{u})\subseteq Q_{\varepsilon,N}$. We need to verify that for every $\sigma_{\varepsilon,N}$ admissible sequence $\{\mathbf{u}_k\}_{k\in \mathbb{N}}$, 
    \begin{enumerate}
        \item \label{nested} for all $k$, $B(\mathbf{u}_{k+1})\subset B(\mathbf{u}_k)$ and $\operatorname{diam}(B(\mathbf{u}_{k+1}))<\operatorname{diam}(B(\mathbf{u}_k))$;
        \item $\lim_{k\rightarrow \infty}\operatorname{diam}(B(\mathbf{u}_k))=0$ and $\bigcap_{k=1}^{\infty}B(\mathbf{u}_k)$ is a point contained in $\mathbf{DI}_d(\varepsilon)$.
    \end{enumerate}
    By Lemma \ref{BallCont}, it is clear that (\ref{nested}) holds. Moreover, due to the definition of $\sigma_{\varepsilon,N}$ and Lemma \ref{v<w}, $\vert \mathbf{u}_k\vert\rightarrow \infty$. Therefore, $\lim_{k\rightarrow \infty}\operatorname{diam}(B(\mathbf{u}_k))=0$. Hence, $\bigcap_{k=1}^{\infty}B(\mathbf{u}_k)$ is a single point $\boldsymbol{\theta}$. 
    
    In order to prove that $\boldsymbol{\theta}\in \mathbf{DI}_d(\varepsilon)$, we observe that $\boldsymbol{\theta}\in B(\mathbf{u}_k)$ for every $k\geq 1$. Thus, for every $k\geq 1$, we have $\|\boldsymbol{\theta}-\hat{\mathbf{u}}_k\|\leq \frac{\lambda_1(\mathbf{u}_k)}{q\vert \mathbf{u}_k\vert}$. Thus, it follows that
    $$A(\boldsymbol{\theta},\mathbf{u}_k)=\|\boldsymbol{\theta}-\hat{\mathbf{u}}_k\|\vert \mathbf{u}_k\vert< \lambda_1(\mathbf{u}_k).$$
    By Lemma \ref{BestApproxIncl}, $\boldsymbol{\theta}\in \Delta(\mathbf{u}_k)$, so that for every $k\geq 1$, $\mathbf{u}_k$ is a best approximation of $\boldsymbol{\theta}$. 
    
    
    Fix any large enough $T \geq 1$ so that there is $k\geq 1$ such that $|\mathbf{u}_k| \leq T < |\mathbf{u}_{k+1}|$. 
    Since $\mathbf{u}_{k}$'s are best approximations of $\boldsymbol{\theta}$, it follows from Lemma \ref{A(u_i+j,u_i)<lambda_1(u_i+1)} and $\vert \mathbf{u}_{k+1}\vert\geq \left(\frac{\vert \mathbf{u}_k\wedge \mathbf{u}_{k+1}\vert}{\varepsilon}\right)^{\frac{d}{d-1}}$ that
    \begin{equation*}
A(\boldsymbol{\theta},\mathbf{u}_k)=A(\hat{\mathbf{u}}_{k+1},\mathbf{u}_k)=\frac{\vert \mathbf{u}_k\wedge \mathbf{u}_{k+1}\vert}{\vert \mathbf{u}_{k+1}\vert}\leq \varepsilon \vert \mathbf{u}_{k+1}\vert^{-\frac{1}{d}}< \varepsilon T^{-\frac{1}{d}}.
\end{equation*}
This proves that $\boldsymbol{\theta}\in \mathbf{DI}_d(\varepsilon)$.
\end{proof}

\subsection{Well-separatedness}
We will need the following claim regarding the distance between two balls in $\widetilde{\mathcal{K}}^d$. 
\begin{lemma}
    \label{DistBalls}
    Let $\mathbf{u},\mathbf{u}'\in \widetilde{\mathcal{K}}^d$ and let $r,r'>0$. Then,
    $$d\left(B(\mathbf{u},r),B(\mathbf{u}',r')\right)=\begin{cases}
        0&\Vert \mathbf{u}-\mathbf{u}'\Vert\leq \max\{r,r'\}\\
        \Vert \mathbf{u}-\mathbf{u}'\Vert&\text{else}.
    \end{cases}$$
\end{lemma}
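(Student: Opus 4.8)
The plan is to exploit the ultrametric inequality (Lemma \ref{umIneq}) to reduce the computation to a case analysis on whether $\|\mathbf{u}-\mathbf{u}'\|$ exceeds $\max\{r,r'\}$. Recall that $d(B(\mathbf{u},r),B(\mathbf{u}',r')) = \inf\{\|\mathbf{x}-\mathbf{y}\| : \mathbf{x}\in B(\mathbf{u},r),\ \mathbf{y}\in B(\mathbf{u}',r')\}$, so I need both an upper bound (by exhibiting suitable points, or showing the balls meet) and a matching lower bound (valid for all admissible $\mathbf{x},\mathbf{y}$).

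First I would treat the case $\|\mathbf{u}-\mathbf{u}'\|\leq\max\{r,r'\}$, say WLOG $r\leq r'$ so that $\|\mathbf{u}-\mathbf{u}'\|\leq r'$. Then $\mathbf{u}\in B(\mathbf{u}',r')$, and also trivially $\mathbf{u}\in B(\mathbf{u},r)$, so the two balls share the point $\mathbf{u}$; hence the distance is $0$. (In fact, a standard feature of ultrametric spaces is that two balls are either disjoint or one contains the other, but I only need that they intersect here.)

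Next, the case $\|\mathbf{u}-\mathbf{u}'\| > \max\{r,r'\}$. For any $\mathbf{x}\in B(\mathbf{u},r)$ and $\mathbf{y}\in B(\mathbf{u}',r')$, write $\mathbf{x}-\mathbf{y} = (\mathbf{x}-\mathbf{u}) + (\mathbf{u}-\mathbf{u}') + (\mathbf{u}'-\mathbf{y})$. Since $\|\mathbf{x}-\mathbf{u}\|\leq r < \|\mathbf{u}-\mathbf{u}'\|$ and $\|\mathbf{u}'-\mathbf{y}\|\leq r' < \|\mathbf{u}-\mathbf{u}'\|$, the middle term strictly dominates, so by the equality clause of Lemma \ref{umIneq} (applied twice), $\|\mathbf{x}-\mathbf{y}\| = \|\mathbf{u}-\mathbf{u}'\|$. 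This simultaneously gives the lower bound $d(B(\mathbf{u},r),B(\mathbf{u}',r'))\geq\|\mathbf{u}-\mathbf{u}'\|$ and, taking $\mathbf{x}=\mathbf{u},\mathbf{y}=\mathbf{u}'$, the matching upper bound, so the distance equals $\|\mathbf{u}-\mathbf{u}'\|$.

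There is no real obstacle here; the only point requiring the slightest care is making sure the strict inequalities needed to invoke the equality clause of the ultrametric inequality are genuinely strict, which is exactly why the case split is placed at $\|\mathbf{u}-\mathbf{u}'\|$ versus $\max\{r,r'\}$ (with the boundary value absorbed into the first case). I would write the application of Lemma \ref{umIneq} to a sum of three terms by first grouping $(\mathbf{u}-\mathbf{u}')+(\mathbf{u}'-\mathbf{y})$, whose norm is $\|\mathbf{u}-\mathbf{u}'\|$ since $r'<\|\mathbf{u}-\mathbf{u}'\|$, and then adding $(\mathbf{x}-\mathbf{u})$, whose norm is $\leq r<\|\mathbf{u}-\mathbf{u}'\|$, to conclude.
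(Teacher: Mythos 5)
Your proposal is correct and follows essentially the same route as the paper: in the first case one exhibits a common point (equivalently, a choice of points at distance $0$), and in the second case one bounds the perturbation terms by $\max\{r,r'\}<\Vert\mathbf{u}-\mathbf{u}'\Vert$ and invokes the equality clause of Lemma \ref{umIneq} to conclude every pair of points is at distance exactly $\Vert\mathbf{u}-\mathbf{u}'\Vert$. The only cosmetic difference is that you group the three terms slightly differently than the paper, which first combines $\mathbf{v}-\mathbf{v}'$; the substance is identical.
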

\begin{proof}
    Notice that an element in $B(\mathbf{u},r)$ will be of the form $\mathbf{u}+\mathbf{v}$ where $\Vert \mathbf{v}\Vert\leq r$. Hence, if $\Vert \mathbf{u}-\mathbf{u}'\Vert\leq \max\{r,r'\}$, then, $-(\mathbf{u}-\mathbf{u}')\in B(0,\max\{r,r'\})$. Thus, 
    $$d\left(B(\mathbf{u},r),B(\mathbf{u}',r')\right)=\inf_{\Vert \mathbf{v}\Vert\leq r,\Vert \mathbf{v}'\Vert\leq r'}\Vert \mathbf{u}-\mathbf{u}'+\mathbf{v}-\mathbf{v}'\Vert\leq \Vert \mathbf{u}-\mathbf{u}'-(\mathbf{u}-\mathbf{u}')\Vert=0.$$
    On the other hand, assume that $\Vert \mathbf{u}-\mathbf{u}'\Vert>\max\{r,r'\}$. Then, for every $\Vert \mathbf{v}\Vert\leq r$ and $\Vert \mathbf{v}'\Vert\leq r'$, we have that 
    $$\Vert \mathbf{v}-\mathbf{v}'\Vert\leq \max\{\Vert \mathbf{v}\Vert,\Vert \mathbf{v}'\Vert\}\leq \max\{r,r'\}<\Vert \mathbf{u}-\mathbf{u}'\Vert.$$
    Thus, by Lemma \ref{umIneq},
    $$d\left(B(\mathbf{u},r),B(\mathbf{u}',r')\right)=\inf_{\Vert \mathbf{v}\Vert\leq r,\Vert \mathbf{v}'\Vert\leq r'}\Vert \mathbf{u}-\mathbf{u}'+\mathbf{v}-\mathbf{v}'\Vert=\Vert \mathbf{u}-\mathbf{u}'\Vert.$$
\end{proof}
\begin{lemma}\label{Lem_dist_v,w}
    Let $\mathbf{v},\mathbf{w}\in F(\mathbf{u},\varepsilon)$ be distinct. Then, $d(B(\mathbf{v}),B(\mathbf{w}))=\Vert \hat{\mathbf{v}}-\hat{\mathbf{w}}\Vert$.
\end{lemma}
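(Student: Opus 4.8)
The plan is to invoke Lemma \ref{DistBalls}, so the whole task reduces to showing that for distinct $\mathbf{v},\mathbf{w}\in F(\mathbf{u},\varepsilon)$, the centers are strictly farther apart than both radii; that is, $\|\hat{\mathbf{v}}-\hat{\mathbf{w}}\| > \max\bigl\{\tfrac{\lambda_1(\mathbf{v})}{|\mathbf{v}|},\tfrac{\lambda_1(\mathbf{w})}{|\mathbf{w}|}\bigr\}$. By symmetry it suffices to bound $\|\hat{\mathbf{v}}-\hat{\mathbf{w}}\|$ below by $\tfrac{\lambda_1(\mathbf{v})}{|\mathbf{v}|}$ under the assumption $|\mathbf{v}|\le|\mathbf{w}|$. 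Set $\alpha=\pi_{\mathbf{u}}(\mathbf{v})$ and $\beta=\pi_{\mathbf{u}}(\mathbf{w})$, both lying in $\Lambda_{\mathbf{u}}$.

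The key dichotomy is whether $\alpha$ and $\beta$ are proportional over $\widetilde{\mathcal{K}}$. If $\alpha\ne\beta$ are \emph{not} proportional, then $\hat{\mathbf{v}}-\hat{\mathbf{w}} = (\hat{\mathbf{v}}-\hat{\mathbf{u}})-(\hat{\mathbf{w}}-\hat{\mathbf{u}}) = \tfrac{1}{|\mathbf{v}|^{?}}$-type combination; more precisely $b_\mathbf{v}\hat{\mathbf{v}}-b_\mathbf{w}\hat{\mathbf{w}}$ up to integer vectors produces a nonzero element of $\Lambda_{\mathbf{u}}$ not proportional to $\alpha$, and a norm computation via the orthogonal basis $\xi_1,\dots,\xi_d$ together with Lemma \ref{alphaLen} forces the distance to be at least $\lambda_d(\mathbf{u})/|\mathbf{w}|$, which beats $\lambda_1(\mathbf{v})/|\mathbf{v}|$ using $\hat{\lambda}_1(\mathbf{v})\le\varepsilon<1\le\hat{\lambda}_d(\mathbf{u})$ from Lemma \ref{v<w} and Minkowski's second theorem. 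If $\alpha$ and $\beta$ \emph{are} proportional, then since both are primitive they differ by a unit in $\mathbb{F}_q^*$; combining $\pi_{\mathbf{u}}(\mathbf{v})$ and $\pi_{\mathbf{u}}(\mathbf{w})$ shows $\hat{\mathbf{v}}-\hat{\mathbf{w}}$ is a nonzero multiple of $\hat{\mathbf{u}}-\hat{\mathbf{v}}$ (hence of $\alpha$), and here I use the precise window in the definition of $\zeta(\mathbf{u},\cdot,\varepsilon)$ together with $\pi_{\mathbf{v}}(\mathbf{w})\in\widetilde{\mathcal{K}}\beta_{\mathbf{v}}$ being primitive, where $\beta_{\mathbf{v}}=\pi_{\mathbf{v}}(\mathbf{u})$: in that case $\|\hat{\mathbf{v}}-\hat{\mathbf{w}}\|\cdot|\mathbf{v}|=\lambda_1(\mathbf{v})$ exactly by Lemma \ref{v<w}, and since $\mathbf{v}\ne\mathbf{w}$ are distinct with $\pi_{\mathbf{u}}(\mathbf{v})\ne\pi_{\mathbf{u}}(\mathbf{w})$ when $\alpha\ne\beta$, one gets a strict gap from $\varepsilon<1$ exactly as in the proof of Lemma \ref{BallCont}.

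The main obstacle I anticipate is the proportional case: there, both $\mathbf{v}$ and $\mathbf{w}$ sit over the same line $\widetilde{\mathcal{K}}\alpha$, so the crude lattice-geometry bound via $\lambda_d(\mathbf{u})$ is unavailable, and one must extract the gap purely from the narrow multiplicative window $\bigl(\tfrac{|\mathbf{u}\wedge\mathbf{v}|}{\varepsilon}\bigr)^{d/(d-1)}\le|\mathbf{v}|\le\bigl(\tfrac{q|\mathbf{u}\wedge\mathbf{v}|}{\varepsilon}\bigr)^{d/(d-1)}$ plus the primitivity of the relevant projected vectors, which controls $|\mathbf{w}|-|\mathbf{v}|$ below by $|\mathbf{u}|$ via Lemma \ref{lem:pi^-1(Val)}. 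Once the strict center-separation is established in both cases, Lemma \ref{DistBalls} immediately yields $d(B(\mathbf{v}),B(\mathbf{w}))=\|\hat{\mathbf{v}}-\hat{\mathbf{w}}\|$.
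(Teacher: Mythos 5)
Your plan has genuine gaps, both in the target you reduce to and in the way you try to hit it. First, the balls in Definition \ref{Def_lowerstructure} have radius $\frac{\lambda_1(\cdot)}{q|\cdot|}$, not $\frac{\lambda_1(\cdot)}{|\cdot|}$, so what is needed for Lemma \ref{DistBalls} is $\|\hat{\mathbf{v}}-\hat{\mathbf{w}}\|>\max\bigl\{\frac{\lambda_1(\mathbf{v})}{q|\mathbf{v}|},\frac{\lambda_1(\mathbf{w})}{q|\mathbf{w}|}\bigr\}$, which by discreteness of the value group is the \emph{non-strict} bound $\|\hat{\mathbf{v}}-\hat{\mathbf{w}}\|\geq\max\bigl\{\frac{\lambda_1(\mathbf{v})}{|\mathbf{v}|},\frac{\lambda_1(\mathbf{w})}{|\mathbf{w}|}\bigr\}$. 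The strict inequality you aim for is actually false: taking $\mathbf{w}=c\mathbf{v}+s\mathbf{u}$ with $c\in\mathbb{F}_q^*$ and $|s\mathbf{u}|\leq|\mathbf{v}|$ produces a distinct element of $F(\mathbf{u},\varepsilon)$ with $\|\hat{\mathbf{v}}-\hat{\mathbf{w}}\|=\frac{\lambda_1(\mathbf{v})}{|\mathbf{v}|}$ exactly. Moreover, the reduction ``it suffices to beat $\frac{\lambda_1(\mathbf{v})}{|\mathbf{v}|}$ when $|\mathbf{v}|\leq|\mathbf{w}|$'' is not legitimate, since $\frac{\lambda_1(\mathbf{w})}{|\mathbf{w}|}$ can exceed $\frac{\lambda_1(\mathbf{v})}{|\mathbf{v}|}$ (the normalized minima of elements of $F(\mathbf{u},\varepsilon)$ are only pinned down up to a factor $q$ by Lemma \ref{v<w}). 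Second, your non-proportional case does not work: Lemma \ref{alphaLen} requires $\mathbf{v}\in H_{\mathbf{u}}$, whereas every $\mathbf{v}\in F(\mathbf{u},\varepsilon)$ has $\pi_{\mathbf{u}}(\mathbf{v})\notin H_{\mathbf{u}}'$, i.e.\ $\mathbf{v}\notin H_{\mathbf{u}}$ by Lemma \ref{H_uCond}; the relevant lattice vector $b_{\mathbf{v}}\pi_{\mathbf{u}}(\mathbf{w})-b_{\mathbf{w}}\pi_{\mathbf{u}}(\mathbf{v})\in\Lambda_{\mathbf{u}}$ may well lie in $H_{\mathbf{u}}'$, so the best generic bound is $\frac{\lambda_1(\mathbf{u})}{|\mathbf{v}||\mathbf{w}|}$; and even your hoped-for bound $\frac{\lambda_d(\mathbf{u})}{|\mathbf{w}|}$ does not dominate the radius $\frac{\lambda_1(\mathbf{v})}{q|\mathbf{v}|}$ once $|\mathbf{w}|\gg|\mathbf{v}|^{(d+1)/d}$, which does occur because the lemma is about $F(\mathbf{u},\varepsilon)=\bigcup_N F_N(\mathbf{u},\varepsilon)$ and there is no bound on $|\mathbf{w}|/|\mathbf{v}|$. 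So no absolute lower bound of this kind can close the argument.

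The paper avoids absolute lower bounds and any case split on proportionality. It argues by contradiction: if $\|\hat{\mathbf{v}}-\hat{\mathbf{w}}\|\leq\max\bigl\{\frac{\lambda_1(\mathbf{v})}{q|\mathbf{v}|},\frac{\lambda_1(\mathbf{w})}{q|\mathbf{w}|}\bigr\}$ and (WLOG) $\frac{\lambda_1(\mathbf{v})}{|\mathbf{v}|}\leq\frac{\lambda_1(\mathbf{w})}{|\mathbf{w}|}$, then $\frac{\lambda_1(\mathbf{v})}{|\mathbf{w}|}\leq\frac{\|\pi_{\mathbf{v}}(\mathbf{w})\|}{|\mathbf{w}|}=\|\hat{\mathbf{v}}-\hat{\mathbf{w}}\|\leq\frac{\lambda_1(\mathbf{w})}{q|\mathbf{w}|}$ forces $\lambda_1(\mathbf{v})<\lambda_1(\mathbf{w})$; on the other hand, the exact identities $\lambda_1(\mathbf{v})=|\mathbf{u}|\,\|\hat{\mathbf{u}}-\hat{\mathbf{v}}\|$ and $\lambda_1(\mathbf{w})=|\mathbf{u}|\,\|\hat{\mathbf{u}}-\hat{\mathbf{w}}\|$ from Lemma \ref{v<w}, combined with $|\mathbf{w}|>|\mathbf{u}|$ and the equality case of the ultrametric inequality (Lemma \ref{umIneq}) applied to the triangle through the third point $\hat{\mathbf{u}}$, give $\|\hat{\mathbf{u}}-\hat{\mathbf{w}}\|=\|\hat{\mathbf{u}}-\hat{\mathbf{v}}\|$, i.e.\ $\lambda_1(\mathbf{v})=\lambda_1(\mathbf{w})$, a contradiction. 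If you want to repair your write-up, this use of $\hat{\mathbf{u}}$ as the pivot, together with the correct radii, is the missing mechanism.
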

\begin{proof}
    By Lemma \ref{DistBalls}, it suffices to prove that $\Vert \hat{\mathbf{v}}-\hat{\mathbf{w}}\Vert>\max\Bigg\{\frac{\lambda_1(\mathbf{w})}{q\vert \mathbf{w}\vert},\frac{\lambda_1(\mathbf{v})}{q\vert \mathbf{v}\vert}\Bigg\}$. Assume towards a contradiction that $\Vert \hat{\mathbf{w}}-\hat{\mathbf{v}}\Vert\leq \max\Bigg\{\frac{\lambda_1(\mathbf{w})}{q\vert \mathbf{w}\vert},\frac{\lambda_1(\mathbf{v})}{q\vert \mathbf{v}\vert}\Bigg\}$, and assume without loss of generality that $\frac{\lambda_1(\mathbf{v})}{\vert \mathbf{v}\vert}\leq \frac{\lambda_1(\mathbf{w})}{\vert \mathbf{w}\vert}$. It follows that
    \[
    \frac{\lambda_1(\mathbf{v})}{\vert \mathbf{w}\vert}
        \leq \frac{\Vert \pi_{\mathbf{v}}(\mathbf{w})\Vert}{\vert \mathbf{w}\vert}
        =\|\hat{\mathbf{w}}-\hat{\mathbf{v}}\|\leq \frac{\lambda_1(\mathbf{w})}{q\vert \mathbf{w}\vert}.
        \]
    Thus, $\lambda_1(\mathbf{v})\leq \frac{\lambda_1(\mathbf{w})}{q}<\lambda_1(\mathbf{w})$. 
    It follows from Lemma \ref{v<w} that $\lambda_1(\mathbf{v})=\frac{\vert \mathbf{u}\wedge \mathbf{v}\vert}{\vert \mathbf{v}\vert}=|\mathbf{u}|\|\hat{\mathbf{u}}-\hat{\mathbf{v}}\|$, $\lambda_1(\mathbf{w})=\frac{\vert \mathbf{u}\wedge \mathbf{w}\vert}{\vert \mathbf{w}\vert}=|\mathbf{u}|\|\hat{\mathbf{u}}-\hat{\mathbf{w}}\|$, and 
    \begin{equation*}
    \|\hat{\mathbf{u}}-\hat{\mathbf{w}}\|=\frac{\lambda_1(\mathbf{w})}{\vert \mathbf{u}\vert}>\frac{\lambda_1(\mathbf{w})}{\vert \mathbf{w}\vert}>\|\hat{\mathbf{w}}-\hat{\mathbf{v}}\|.
    \end{equation*}
    By Lemma \ref{umIneq} we have 
    \begin{equation*}
        \|\hat{\mathbf{w}}-\hat{\mathbf{v}}\|<\|\hat{\mathbf{u}}-\hat{\mathbf{w}}\|\leq \max\{\|\hat{\mathbf{w}}-\hat{\mathbf{v}}\|,\|\hat{\mathbf{u}}-\hat{\mathbf{v}}\|\}.
    \end{equation*}
    It follows that $\|\hat{\mathbf{w}}-\hat{\mathbf{v}}\|<\|\hat{\mathbf{u}}-\hat{\mathbf{v}}\|$, so that
    by Lemma \ref{umIneq}, $\|\hat{\mathbf{u}}-\hat{\mathbf{w}}\|=\|\hat{\mathbf{u}}-\hat{\mathbf{v}}\|$. Thus, we have
    \begin{equation*}
        \frac{\lambda_1(\mathbf{w})}{\vert \mathbf{u}\vert}=\|\hat{\mathbf{u}}-\hat{\mathbf{w}}\|=\|\hat{\mathbf{u}}-\hat{\mathbf{v}}\|=\frac{\lambda_1(\mathbf{v})}{\vert \mathbf{u}\vert}.
    \end{equation*}
    This implies that $\lambda_1(\mathbf{w})=\lambda_1(\mathbf{v})$, which is a contradiction. Hence $d\left(B(\mathbf{w}),B(\mathbf{v})\right)=\Vert \hat{\mathbf{w}}-\hat{\mathbf{v}}\Vert$.
\end{proof}
\begin{lemma}
\label{lem:B(v),B(w)Dist}
    Let $\mathbf{v},\mathbf{w}\in F_N(\mathbf{u},\varepsilon)$ be distinct. Then, 
    \begin{equation*}
        d\left(B(\mathbf{v}),B(\mathbf{w})\right)
        \geq \left(\frac{\varepsilon}{q^{N+1}\hat{\lambda}_d(\mathbf{u})}\right)^{\frac{2d}{d-1}}\frac{\lambda_1(\mathbf{u})}{\vert \mathbf{u}\vert}.
    \end{equation*}
\end{lemma}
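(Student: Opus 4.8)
The plan is to reduce the bound to a lower estimate for $\|\hat{\mathbf{v}}-\hat{\mathbf{w}}\|$, to extract one factor of $|\mathbf{u}|\lambda_d(\mathbf{u})$ from the first minimum of the Farey lattice $\Lambda_{\mathbf{w}}$, and to control $|\mathbf{v}|$ and $|\mathbf{w}|$ from above using the $C_N(\mathbf{u})$-truncation.

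First I would invoke Lemma \ref{Lem_dist_v,w}: since $\mathbf{v},\mathbf{w}\in F_N(\mathbf{u},\varepsilon)\subseteq F(\mathbf{u},\varepsilon)$ are distinct, $d(B(\mathbf{v}),B(\mathbf{w}))=\|\hat{\mathbf{v}}-\hat{\mathbf{w}}\|$, and in particular $\hat{\mathbf{v}}\neq\hat{\mathbf{w}}$. Writing $\mathbf{v}=(a_{\mathbf{v}},b_{\mathbf{v}})$, the vector $\pi_{\mathbf{w}}(\mathbf{v})=b_{\mathbf{v}}\hat{\mathbf{w}}-a_{\mathbf{v}}=b_{\mathbf{v}}(\hat{\mathbf{w}}-\hat{\mathbf{v}})$ is a nonzero element of $\Lambda_{\mathbf{w}}$, so
\[
\|\hat{\mathbf{v}}-\hat{\mathbf{w}}\|=\frac{\|\pi_{\mathbf{w}}(\mathbf{v})\|}{|\mathbf{v}|}\ \geq\ \frac{\lambda_1(\mathbf{w})}{|\mathbf{v}|}.
\]

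Next I would bound $\lambda_1(\mathbf{w})$ from below. Put $\beta=\pi_{\mathbf{u}}(\mathbf{w})$; by Definition \ref{Def_lowerstructure} we have $\beta\in\Lambda_{\mathbf{u}}(\varepsilon)\cap C_N(\mathbf{u})$, so $\beta\in C_n'(\mathbf{u})$ for some $n\in\mathcal{R}$ with $0<|n|\leq q^N$, i.e. $\beta=\sum_{i<d}\beta_i\xi_i+n\xi_d$ with $|\beta_i|\lambda_i(\mathbf{u})<|n|\lambda_d(\mathbf{u})$; orthogonality of $\xi_1,\dots,\xi_d$ then gives $\|\beta\|=|n|\lambda_d(\mathbf{u})$, so in particular $\lambda_d(\mathbf{u})\leq\|\beta\|\leq q^N\lambda_d(\mathbf{u})$. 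Since $|\mathbf{u}\wedge\mathbf{w}|=|\mathbf{u}||\mathbf{w}|\|\hat{\mathbf{u}}-\hat{\mathbf{w}}\|=|\mathbf{u}|\,\|\pi_{\mathbf{u}}(\mathbf{w})\|=|\mathbf{u}|\,\|\beta\|$, Lemma \ref{v<w} applied to $\mathbf{w}$ yields
\[
\lambda_1(\mathbf{w})=\frac{|\mathbf{u}\wedge\mathbf{w}|}{|\mathbf{w}|}=\frac{|\mathbf{u}|\,\|\beta\|}{|\mathbf{w}|}\ \geq\ \frac{|\mathbf{u}|\,\lambda_d(\mathbf{u})}{|\mathbf{w}|}.
\]
The identical computation, applied to $\mathbf{v}\in\zeta(\mathbf{u},\alpha,\varepsilon)$ with $\alpha\in C_N(\mathbf{u})$, gives $\|\alpha\|\leq q^N\lambda_d(\mathbf{u})$, hence $|\mathbf{u}\wedge\mathbf{v}|\leq q^N|\mathbf{u}|\lambda_d(\mathbf{u})$, and then the defining inequalities of $\zeta(\mathbf{u},\alpha,\varepsilon)$ together with $|\mathbf{u}|\lambda_d(\mathbf{u})=|\mathbf{u}|^{\frac{d-1}{d}}\hat{\lambda}_d(\mathbf{u})$ give
\[
|\mathbf{v}|\leq\left(\frac{q\,|\mathbf{u}\wedge\mathbf{v}|}{\varepsilon}\right)^{\frac{d}{d-1}}\leq\left(\frac{q^{N+1}\hat{\lambda}_d(\mathbf{u})}{\varepsilon}\right)^{\frac{d}{d-1}}|\mathbf{u}|,
\]
and likewise for $|\mathbf{w}|$.

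Combining the three displays and using $\lambda_d(\mathbf{u})\geq\lambda_1(\mathbf{u})$ gives
\[
d(B(\mathbf{v}),B(\mathbf{w}))=\|\hat{\mathbf{v}}-\hat{\mathbf{w}}\|\geq\frac{\lambda_1(\mathbf{w})}{|\mathbf{v}|}\geq\frac{|\mathbf{u}|\,\lambda_d(\mathbf{u})}{|\mathbf{v}|\,|\mathbf{w}|}\geq\left(\frac{\varepsilon}{q^{N+1}\hat{\lambda}_d(\mathbf{u})}\right)^{\frac{2d}{d-1}}\frac{\lambda_1(\mathbf{u})}{|\mathbf{u}|},
\]
which is the claim. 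The step I expect to be most delicate is getting the correct power of $|\mathbf{u}|$: the naive estimate $\|\hat{\mathbf{v}}-\hat{\mathbf{w}}\|\geq 1/(|\mathbf{v}||\mathbf{w}|)$ (valid because $b_{\mathbf{w}}a_{\mathbf{v}}-b_{\mathbf{v}}a_{\mathbf{w}}$ is a nonzero vector in $\mathcal{R}^d$) is off by a factor of $|\mathbf{u}|$. The missing factor $|\mathbf{u}|\lambda_d(\mathbf{u})\geq|\mathbf{u}|^{\frac{d-1}{d}}$ is recovered from $\lambda_1(\mathbf{w})$ through Lemma \ref{v<w}, precisely because $\pi_{\mathbf{u}}(\mathbf{w})$ has a nonzero $\xi_d$-component — which is exactly what the $C_N(\mathbf{u})$-truncation and the choice of an orthogonal basis in Definition \ref{Def_Fareylattice} are arranged to guarantee.
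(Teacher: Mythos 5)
Your proof is correct and follows essentially the same route as the paper: reduce to $\|\hat{\mathbf{v}}-\hat{\mathbf{w}}\|$ via Lemma \ref{Lem_dist_v,w}, bound it below by a first minimum of a Farey lattice using Lemma \ref{v<w}, and control $|\mathbf{v}|,|\mathbf{w}|$ through the defining inequalities of $F_N(\mathbf{u},\varepsilon)$ together with $\|\pi_{\mathbf{u}}(\cdot)\|\leq q^N\lambda_d(\mathbf{u})$. The only cosmetic differences are that you work with $\pi_{\mathbf{w}}(\mathbf{v})$ and $\lambda_1(\mathbf{w})$ instead of $\pi_{\mathbf{v}}(\mathbf{w})$ and $\lambda_1(\mathbf{v})$, and you keep the sharper factor $\lambda_d(\mathbf{u})$ before relaxing to $\lambda_1(\mathbf{u})$ at the end.
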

\begin{proof}
    By Lemma \ref{v<w}, it follows that
    \begin{equation}
    \label{eqn:d(w,z)}
        \|\hat{\mathbf{v}}-\hat{\mathbf{w}}\|
        =\frac{\|\pi_{\mathbf{v}}(\mathbf{w})\|}{|\mathbf{w}|}
        \geq \frac{\lambda_1(\mathbf{v})}{|\mathbf{w}|}
        = \frac{|\mathbf{u}\wedge\mathbf{v}|}{|\mathbf{v}||\mathbf{w}|}
        = \frac{\|\pi_{\mathbf{u}}(\mathbf{v})\||\mathbf{u}|}{|\mathbf{v}||\mathbf{w}|}
        \geq \frac{\lambda_1(\mathbf{u})}{|\mathbf{u}|}\left(\frac{\vert \mathbf{u}\vert}{\vert \mathbf{v}\vert}\right)\left(\frac{\vert \mathbf{u}\vert}{\vert \mathbf{w}\vert}\right).
    \end{equation}
    By definition of $F_N(\mathbf{u},\varepsilon)$, we have
    $\vert \mathbf{v}\vert\leq  \left(\frac{q\vert \mathbf{u}\wedge \mathbf{v}\vert}{\varepsilon}\right)^{\frac{d}{d-1}}$ and $\|\pi_{\mathbf{u}}(\mathbf{v})\|\leq q^N\lambda_d(\mathbf{u})$. Thus,
    \begin{equation}\label{Eq_bound_v}
        \frac{\vert \mathbf{u}\vert}{\vert \mathbf{v}\vert}
        \geq \frac{\vert \mathbf{u}\vert \varepsilon^{\frac{d}{d-1}}}{(q\vert \mathbf{u}\wedge \mathbf{v}\vert)^{\frac{d}{d-1}}}
        =\frac{\varepsilon^{\frac{d}{d-1}}}{(q|\mathbf{u}|^{\frac{1}{d}}\|\pi_{\mathbf{u}}(\mathbf{v})\|)^{\frac{d}{d-1}}}
        \geq \left(\frac{\varepsilon}{q^{N+1}\hat{\lambda}_d(\mathbf{u})}\right)^{\frac{d}{d-1}}.
    \end{equation}
    By the same argument with $\mathbf{w}$ instead of $\mathbf{v}$, we have \eqref{Eq_bound_v} for $\mathbf{w}$.
    Combining with Lemma \ref{Lem_dist_v,w}, \eqref{eqn:d(w,z)}, and \eqref{Eq_bound_v} for $\mathbf{v}$ and $\mathbf{w}$, it follows that
    \begin{equation*}
        d\left(B(\mathbf{v}),B(\mathbf{w})\right)
        =\|\hat{\mathbf{v}}-\hat{\mathbf{w}}\|
        \geq \left(\frac{\varepsilon}{q^{N+1}\hat{\lambda}_d(\mathbf{u})}\right)^{\frac{2d}{d-1}}\frac{\lambda_1(\mathbf{u})}{\vert \mathbf{u}\vert}.
    \end{equation*}
\end{proof}

\subsection{Lower bound calculations}
\label{subsec:LowCalc}
In this subsection, we will estimate lower bounds of Hausdorff dimensions for the sets $\mathbf{DI}_d(\varepsilon)$ and $\mathbf{Sing}_d$ assuming the following proposition.  
\begin{proposition}
\label{prop:F_NSum}
    Let $0<\varepsilon<1$, $N\in \mathbb{N}$, and $\mathbf{u}\in Q_{\varepsilon,N}$. For any $s>0$, we have 
    \[
       \sum_{\mathbf{v}\in F_N(\mathbf{u},\varepsilon)} \left(\frac{\lambda_1(\mathbf{v})}{\vert \mathbf{v}\vert}\right)^s \left( \frac{\vert \mathbf{u}\vert}{\lambda_1(\mathbf{u})}\right)^s
        \geq \left(\frac{q-1}{q}-\varepsilon^{d-1}\right)\frac{\varepsilon^d(q-1)^2}{q^{d+s}}\sum_{T_0\leq k\leq T_N}q^{-(\frac{d+1}{d}s-d)k},
    \]
    where 
\begin{equation*}\label{Eq_def_T}
    T_i=\frac{d}{d-1}\left(i+1+\log_q \frac{\hat{\lambda}_d(\mathbf{u})}{\varepsilon}\right).
\end{equation*}
\end{proposition}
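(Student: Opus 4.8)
Since $F_N(\mathbf u,\varepsilon)=\bigcup_{\alpha\in\Lambda_{\mathbf u}(\varepsilon)\cap C_N(\mathbf u)}\zeta(\mathbf u,\alpha,\varepsilon)$ is a disjoint union (a vector $\mathbf v$ determines $\pi_{\mathbf u}(\mathbf v)$) and each $\zeta(\mathbf u,\alpha,\varepsilon)$ splits further according to the value $|\mathbf v|=q^{k}|\mathbf u|$, I would rewrite the left-hand side as a triple sum over $\alpha$, over admissible $k$, and over the fibre $\pi_{\mathbf u}^{-1}(\alpha)$. By Lemma~\ref{v<w}, for $\mathbf v\in F_N(\mathbf u,\varepsilon)$ one has $\lambda_1(\mathbf v)/|\mathbf v|=|\mathbf u\wedge\mathbf v|/|\mathbf v|^{2}=\|\alpha\|/(q^{2k}|\mathbf u|)$ with $\alpha=\pi_{\mathbf u}(\mathbf v)$, and since $\alpha$ is primitive Lemma~\ref{lem:pi^-1(Val)} gives exactly $(q-1)q^{k}$ vectors $\mathbf v\in Q$ with $\pi_{\mathbf u}(\mathbf v)=\alpha$, $|\mathbf v|=q^{k}|\mathbf u|$. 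Writing $i$ for the integer with $\|\alpha\|=|n|\lambda_d(\mathbf u)$, $|n|=q^{i}$ (legitimate because $\alpha\in C_n'(\mathbf u)$ forces $\|\alpha\|=|n|\lambda_d(\mathbf u)$, whence $0\le i\le N$) and using $|\mathbf u\wedge\mathbf v|=q^{i}\hat\lambda_d(\mathbf u)|\mathbf u|^{(d-1)/d}$, the two inequalities defining $\zeta(\mathbf u,\alpha,\varepsilon)$ translate into $T_{i-1}\le k\le T_{i}$; this interval has length $d/(d-1)\ge1$ so it contains an integer, and as $i$ runs from $1$ to $N$ these intervals tile $[T_0,T_N]$. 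Consequently the left-hand side is at least
\[
\sum_{i=1}^{N}N_i\sum_{\substack{k\in\mathbb Z\\ T_{i-1}\le k\le T_i}}(q-1)q^{k}\Big(\frac{q^{i}\lambda_d(\mathbf u)}{q^{2k}\lambda_1(\mathbf u)}\Big)^{s},
\]
where $N_i$ denotes the number of admissible $\alpha$'s at level $i$.

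\textbf{Counting admissible $\alpha$'s.}
The crux is the bound $N_i\ge\big(\tfrac{q-1}{q}-\varepsilon^{d-1}\big)(q-1)q^{di}\hat\lambda_d(\mathbf u)^{d}$. I would start from the number of all $\alpha=\sum_{j<d}m_j\xi_j+n\xi_d$ with $|n|=q^{i}$ and $|m_j|\lambda_j(\mathbf u)<q^{i}\lambda_d(\mathbf u)$, which equals $(q-1)q^{i}\prod_{j<d}q^{\,i+\log_q(\lambda_d(\mathbf u)/\lambda_j(\mathbf u))}=(q-1)q^{di}\hat\lambda_d(\mathbf u)^{d}$, and then remove two sub-families. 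First, the non-primitive $\alpha$'s: a Möbius computation using the identity $\sum_{\deg g=e}\mu(g)=0$ (sum over monic $g$) for $e\ge2$ shows that for every $i\ge1$ the primitive $\alpha$'s form exactly the proportion $1-q^{1-d}\ge(q-1)/q$ of the total. Second, the $\alpha$'s with $\hat\lambda_1(\alpha)\le\varepsilon$: I would parametrise $\alpha$ by the class of $\gamma=\sum_{j<d}(m_j/n)\xi_j$ in $\tfrac1n\Lambda_{\mathbf u}'/\Lambda_{\mathbf u}'\cong(\mathcal R/n\mathcal R)^{d-1}$ (each class having the same number $\hat\lambda_d(\mathbf u)^{d}$ of preimages in the $m$-box) and bound the number of classes for which the rank-$(d-1)$ lattice $\Lambda_\alpha=\Lambda_{\mathbf u}'+\mathcal R\gamma$ acquires a vector of normalised length $\le\varepsilon$ by a covering/lattice-point estimate in dimension $d-1$ of the shape $\le\varepsilon^{d-1}|n|^{d-1}$, using Minkowski's second theorem (Theorem~\ref{Mink2nd}), Proposition~\ref{Prop_BK}, and the lower bounds on $\hat\lambda_2(\mathbf u),\dots,\hat\lambda_d(\mathbf u)$ that the membership $\mathbf u\in Q_{\varepsilon,N}$ forces via the parent lattice. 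Combining the two subtractions yields the asserted bound on $N_i$.

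\textbf{Assembling.}
For each integer $k$ with $T_0\le k\le T_N$ I would fix a level $i$ with $k\in[T_{i-1},T_i]$; from $k\le T_i$ one gets $q^{i}\ge q^{\frac{d-1}{d}k-1}\varepsilon/\hat\lambda_d(\mathbf u)$. Plugging this lower bound for $q^i$ into $N_i\,q^{is}$ and the summand above, the powers of $\hat\lambda_d(\mathbf u)$ cancel, the remaining $\mathbf u$-dependence collapses via $\lambda_i(\mathbf u)=\hat\lambda_i(\mathbf u)|\mathbf u|^{-1/d}$ to a single factor $\hat\lambda_1(\mathbf u)^{-s}\ge\varepsilon^{-s}$, and the exponent of $q^{k}$ becomes $(d+s)\tfrac{d-1}{d}+(1-2s)=d-\tfrac{d+1}{d}s$. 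Hence each term is at least $\big(\tfrac{q-1}{q}-\varepsilon^{d-1}\big)\tfrac{(q-1)^{2}\varepsilon^{d}}{q^{d+s}}\,q^{-(\frac{d+1}{d}s-d)k}$, and summing over $k$ with $T_0\le k\le T_N$ gives the proposition.

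\textbf{Main obstacle.}
The only genuinely delicate step is the second subtraction above: controlling, uniformly in $0<\varepsilon<1$ and in the level $i$, how many $\alpha$'s are spoiled by $\Lambda_\alpha$ having an abnormally short vector, and extracting exactly the constant $\varepsilon^{d-1}$ rather than a worse power or an extra multiplicative constant. This forces one to use the ultrametric Minkowski second theorem together with the precise way the successive minima of $\Lambda_{\mathbf u}$ are inherited from those of its parent — which is what $\mathbf u\in Q_{\varepsilon,N}$ encodes; everything else is bookkeeping with exponents and one geometric series.
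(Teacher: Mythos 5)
Your structural reductions are correct and essentially coincide with the paper's: splitting $F_N(\mathbf{u},\varepsilon)$ according to $\alpha=\pi_{\mathbf{u}}(\mathbf{v})$ and $|\mathbf{v}|=q^k|\mathbf{u}|$, using Lemma \ref{lem:pi^-1(Val)} for the exact fibre count $(q-1)q^k$, translating the defining inequalities of $\zeta(\mathbf{u},\alpha,\varepsilon)$ into $T_{i-1}\le k\le T_i$ where $\|\alpha\|=q^i\lambda_d(\mathbf{u})$, and the final exponent bookkeeping (the cancellation of $\hat{\lambda}_d(\mathbf{u})$, the use of $\hat{\lambda}_1(\mathbf{u})\le\varepsilon$, and the exponent $d-\frac{d+1}{d}s$) all check out and are a harmless reordering of the paper's sums. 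Your Möbius computation showing that the primitive $\alpha$'s form the proportion $1-q^{1-d}\ge\frac{q-1}{q}$ of the box is also correct.

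The genuine gap is exactly the step you defer to the end: the bound $N_i\ge\bigl(\frac{q-1}{q}-\varepsilon^{d-1}\bigr)(q-1)q^{di}\hat{\lambda}_d(\mathbf{u})^d$, i.e.\ the claim that for each $n$ the number of residue classes $\mathbf{m}\in(\mathcal{R}/n\mathcal{R})^{d-1}$ whose lattice $\Lambda_\alpha$ has $\hat{\lambda}_1(\alpha)\le\varepsilon$ is at most $\varepsilon^{d-1}|n|^{d-1}$, proved "by a covering/lattice-point estimate" from Theorem \ref{Mink2nd} and Proposition \ref{Prop_BK}. No such soft geometric argument is available: a bad class is one for which some multiplier $a$ sends $\mathbf{m}$ into a short coset, and a union bound over short vectors $\mathbf{w}$ and multipliers $a$ produces solution sets of size $|\gcd(a,n)|^{d-1}$ and divisor sums such as $\sum_{g\mid n}\phi(n/g)|g|^{d-1}$ that cannot be absorbed into $\varepsilon^{d-1}|n|^{d-1}$ uniformly in $n$; moreover $Q_{\varepsilon,N}$ only controls $\hat{\lambda}_1(\mathbf{u})$, not $\hat{\lambda}_2(\mathbf{u}),\dots,\hat{\lambda}_d(\mathbf{u})$ individually. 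The paper's actual mechanism (Proposition \ref{prop_XnCount}, following Cheung--Chevallier) is arithmetic: it does not work with primitive classes at all, but restricts to $\gcd(m_1,n)=1$, uses the group action $Z_n(m)=mZ_n(1)$ to reduce to one class, ties the multiplier $a$ to the first coordinate of the short vector so that each spoiling vector contributes only $|\gcd(a,n)|^{d-2}$ solutions, and ends with the factor $\phi(n)D_1(n)|n|^{d-3}$, which is then controlled on average over $\deg(n)=\ell$ by Lemmas \ref{lem:phiEval} and \ref{lem:D1Eval} (note the constant $\frac{q-1}{q}$ comes from this coprimality average, not from primitivity). Your decomposition "all primitive minus bad primitive" blocks this route, because a primitive class need not have any single coordinate coprime to $n$ (e.g.\ $n=P_1P_2$, $m_1=P_1$, $m_2=P_2$), so the normalization $Z_n(m)=mZ_n(1)$ is unavailable, and you would have to supply a new counting argument for bad primitive classes — which you have not done. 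As written, the proposal establishes the bookkeeping around the key estimate but not the estimate itself.
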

This proposition will be proved in \cref{Subsec_proof}.
\begin{remark}\label{rem_lambda_bound}
Note that $\hat{\lambda}_d(\mathbf{u})\geq 1$ for any $\mathbf{u}\in Q$. On the other hand, if $\mathbf{u}\in Q_{\varepsilon,N}$, then it follows from the definition of $Q_{\varepsilon,N}$ and Lemma \ref{v<w} that $\hat{\lambda}_1(\mathbf{u})\geq \frac{1}{q}\varepsilon$. Hence, by Minkowski's second theorem (Theorem \ref{Mink2nd}), we have $\hat{\lambda}_d(\mathbf{u})\leq \hat{\lambda}_1^{-(d-1)}(\mathbf{u})\leq \frac{q^{d-1}}{\varepsilon^{d-1}}$. Thus, 
\[
T_0\leq \frac{d}{d-1}\left(d+\log_q\frac{1}{\varepsilon^d}\right)=\log_q \left(\frac{q}{\varepsilon}\right)^{\frac{d^2}{d-1}}\quad\text{and}\quad T_N\geq \frac{d}{d-1}\left(N+1+\log_q\frac{1}{\varepsilon}\right).
\]
\end{remark}
\begin{corollary}\label{Cor_lowerbound}
    Let $d\geq 2$. For every $0<\varepsilon<\left(\frac{q-1}{q}\right)^{\frac{1}{d-1}}$, we have $$\dim_H\mathbf{DI}_d(\varepsilon)\geq \frac{d^2}{d+1}+\frac{d}{d+1}\log_q \left(1+\frac{(q-1)^2}{q^{2d+\frac{d^2}{d-1}}}\left(\frac{q-1}{q}-\varepsilon^{d-1}\right)\varepsilon^{d}\right).$$
\end{corollary}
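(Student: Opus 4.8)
The plan is to apply Theorem~\ref{dim_HlowBnd} with $c=0$ to the strictly nested self-similar structure $(Q_{\varepsilon,N},\sigma_{\varepsilon,N},B)$ of Proposition~\ref{F(u,u)FracStruc}, for $N$ sufficiently large, with $s$ equal to the claimed bound. Fix $0<\varepsilon<\left(\tfrac{q-1}{q}\right)^{\frac1{d-1}}$ (so $\varepsilon<1$), and set
\[
A:=1+\frac{(q-1)^2}{q^{2d+d^2/(d-1)}}\Big(\tfrac{q-1}{q}-\varepsilon^{d-1}\Big)\varepsilon^d ,
\]
so that $A>1$ and the claimed exponent is $s=\frac{d^2}{d+1}+\frac{d}{d+1}\log_q A$, equivalently $q^{\frac{d+1}{d}s-d}=A$. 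I would first record the identity $\operatorname{diam}B(\mathbf{u})=\frac{\lambda_1(\mathbf{u})}{q\lvert\mathbf{u}\rvert}$ (the radius of $B(\mathbf{u})$ lies in $q^{\mathbb{Z}}$, so the closed ball has exactly that diameter), hence $\frac{\lambda_1(\mathbf{u})}{\lvert\mathbf{u}\rvert}=q\operatorname{diam}B(\mathbf{u})$; this converts Lemma~\ref{lem:B(v),B(w)Dist} and Proposition~\ref{prop:F_NSum} into statements about the diameters of the sets $B(\mathbf{u})$. For hypothesis~(\ref{item_lower_1}): if $\mathbf{u}\in Q_{\varepsilon,N}$ then $\hat{\lambda}_d(\mathbf{u})\le q^{d-1}/\varepsilon^{d-1}$ by Remark~\ref{rem_lambda_bound}, so Lemma~\ref{lem:B(v),B(w)Dist} gives, for distinct $\mathbf{v},\mathbf{w}\in\sigma_{\varepsilon,N}(\mathbf{u})=F_N(\mathbf{u},\varepsilon)$,
\[
d\big(B(\mathbf{v}),B(\mathbf{w})\big)\ \ge\ \Big(\tfrac{\varepsilon^d}{q^{N+d}}\Big)^{\frac{2d}{d-1}}\frac{\lambda_1(\mathbf{u})}{\lvert\mathbf{u}\rvert}\ =\ q\Big(\tfrac{\varepsilon^d}{q^{N+d}}\Big)^{\frac{2d}{d-1}}\operatorname{diam}B(\mathbf{u}),
\]
and taking $\rho:=\big(\varepsilon^d q^{-N-d}\big)^{\frac{2d}{d-1}}\in(0,1)$ makes every distinct pair satisfy $d(B(\mathbf{v}),B(\mathbf{w}))>\rho\operatorname{diam}B(\mathbf{u})$, so (\ref{item_lower_1}) holds with $c=0$.

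For hypothesis~(\ref{item_lower_2}) with $c=0$ one needs $\sum_{\mathbf{v}\in\sigma_{\varepsilon,N}(\mathbf{u})}\operatorname{diam}B(\mathbf{v})^s\ge\operatorname{diam}B(\mathbf{u})^s$, i.e.\ (after dividing, using the identity above) exactly the quantity estimated in Proposition~\ref{prop:F_NSum}. Since $q^{\frac{d+1}{d}s-d}=A>1$, that estimate reads
\[
\Big(\tfrac{q-1}{q}-\varepsilon^{d-1}\Big)\frac{\varepsilon^d(q-1)^2}{q^{d+s}}\sum_{T_0\le k\le T_N}A^{-k}\ \ge\ 1 ,
\]
whose summand decays in $k$. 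As $T_N-T_0=\tfrac{d}{d-1}N\to\infty$ while $T_0$ is bounded above independently of $N$ and $\mathbf{u}$ (Remark~\ref{rem_lambda_bound}), for $N$ large the geometric sum exceeds $(1-\delta)\,A^{1-\lceil T_0\rceil}/(A-1)$ uniformly in $\mathbf{u}$. Substituting $A-1=\frac{(q-1)^2}{q^{2d+d^2/(d-1)}}\big(\tfrac{q-1}{q}-\varepsilon^{d-1}\big)\varepsilon^d$ collapses the left side to $(1-\delta)\,A^{1-\lceil T_0\rceil}q^{d+d^2/(d-1)-s}$, and bounding $\lceil T_0\rceil\le\tfrac{d^2}{d-1}\log_q\tfrac q\varepsilon+1$ (Remark~\ref{rem_lambda_bound}) and inserting the value of $s$ reduces everything to the elementary inequality
\[
\Big(\tfrac{d^2}{d-1}\log_q\tfrac q\varepsilon+\tfrac d{d+1}\Big)\log_q A\ \le\ \frac{d(d^2+2d-1)}{d^2-1}.
\]
Since $\log_q A\le\tfrac1{\ln q}(A-1)$ is a small multiple of $\varepsilon^d$ while the right side divided by the left-side bracket is at least a positive constant over $1+\log_q\tfrac1\varepsilon$, and $\varepsilon^d\log_q\tfrac1\varepsilon$ is bounded on $(0,1)$ (maximal at $\varepsilon=e^{-1/d}$), this inequality holds with room to spare for every $d\ge2$, every prime power $q$, and every $\varepsilon$ in the stated range; choosing $\delta$ and then $N$ accordingly makes (\ref{item_lower_2}) hold for all $\mathbf{u}\in Q_{\varepsilon,N}$. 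Theorem~\ref{dim_HlowBnd} then yields a subset of $\mathbf{DI}_d(\varepsilon)$ of positive $s$-dimensional Hausdorff measure, whence $\dim_H\mathbf{DI}_d(\varepsilon)\ge s$, which is the assertion.

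The main obstacle is precisely this last step: one must run the geometric-sum estimate uniformly in $\mathbf{u}$, with the $\mathbf{u}$-dependence controlled only through the a priori bounds $1\le\hat{\lambda}_d(\mathbf{u})\le q^{d-1}/\varepsilon^{d-1}$, and then confirm the resulting elementary inequality on the whole interval $\big(0,(\tfrac{q-1}{q})^{1/(d-1)}\big)$ rather than merely for small $\varepsilon$. It is the ultrametric sharpness — exact equality in Minkowski's second theorem (Theorem~\ref{Mink2nd}) and in Corollary~\ref{Cor_SingChar}, reflected here in the clean relation $\operatorname{diam}B(\mathbf{u})=\lambda_1(\mathbf{u})/(q\lvert\mathbf{u}\rvert)$ and in the power $2d/(d-1)$ of Lemma~\ref{lem:B(v),B(w)Dist} — that makes the inequality valid over the full stated $\varepsilon$-range.
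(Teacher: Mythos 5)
Your proposal is correct and follows essentially the same route as the paper: apply Theorem~\ref{dim_HlowBnd} with $c=0$ to the structure of Proposition~\ref{F(u,u)FracStruc}, get separation from Lemma~\ref{lem:B(v),B(w)Dist} together with the bounds on $\hat{\lambda}_d(\mathbf{u})$ in Remark~\ref{rem_lambda_bound}, and feed Proposition~\ref{prop:F_NSum} into a geometric-series estimate whose $A^{-\lceil T_0\rceil}$ loss is absorbed by the factor $q^{-\frac{d^2}{d-1}}$ built into the constant. The only cosmetic difference is bookkeeping: the paper inserts a factor $(1-\delta)$ inside the logarithm defining the exponent and lets $\delta\to 0$, whereas you keep $s$ at the target value and verify the resulting elementary inequality (correctly, and uniformly over the stated $\varepsilon$-range) with slack.
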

\begin{proof}
    Let $(Q_{\varepsilon,N},\sigma_{\varepsilon,N},B)$ be the triple given in Definition \ref{Def_lowerstructure}, which is a strictly nested self-similar structure covering a subset of $\mathbf{DI}_d(\varepsilon)$ by Proposition \ref{F(u,u)FracStruc}. In order to use Theorem \ref{dim_HlowBnd}, we first verify the condition \eqref{item_lower_1} of Theorem \ref{dim_HlowBnd}.
    Indeed, recall that $\hat{\lambda}_d(\mathbf{u})\leq \frac{q^{d-1}}{\varepsilon^{d-1}}$ for any $\mathbf{u}\in Q_{\varepsilon,N}$ by Remark \ref{rem_lambda_bound}, hence it follows from Lemma \ref{lem:B(v),B(w)Dist} that \eqref{item_lower_1} of Theorem \ref{dim_HlowBnd} holds with $c=0$ and $\rho=q\left(\frac{\varepsilon^{d}}{q^{N+d}}\right)^{\frac{2d}{d-1}}$.
    Fix small $\delta>0$ and let $$f(\varepsilon)=\log_q\left(1+\frac{(1-\delta)(q-1)^2}{q^{2d+\frac{d^2}{d-1}}}\left(\frac{q-1}{q}-\varepsilon^{d-1}\right)\varepsilon^d\right).$$
    We claim that $\dim_H\mathbf{DI}_d(\varepsilon)\geq \frac{d^2}{d+1}+\frac{d}{d+1}f(\varepsilon)$. Since $\delta>0$ is arbitrary, this claim implies the corollary.
    By Proposition \ref{prop:F_NSum} and Remark \ref{rem_lambda_bound} with $s=\frac{d^2}{d+1}+\frac{d}{d+1}f(\varepsilon)$, it follows that for large enough $N$ and $\mathbf{u}\in Q_{\varepsilon,N}$,
    \[\begin{split}
    \sum_{\mathbf{v}\in \sigma_{\varepsilon,N}(\mathbf{u})}&\frac{\operatorname{diam}(B(\mathbf{v}))^s}{\operatorname{diam}(B(\mathbf{u}))^s}
    =\sum_{\mathbf{v}\in F_N(\mathbf{u},\varepsilon)} \left(\frac{\lambda_1(\mathbf{v})}{\vert \mathbf{v}\vert}\right)^s \left( \frac{\vert \mathbf{u}\vert}{\lambda_1(\mathbf{u})}\right)^s\\
    &\geq \left(\frac{q-1}{q}-\varepsilon^{d-1}\right)\frac{\varepsilon^d(q-1)^2}{q^{d+s}}\sum_{T_0\leq k\leq T_N}q^{-f(\varepsilon)k}\\
    &\geq \left(\frac{q-1}{q}-\varepsilon^{d-1}\right)\frac{\varepsilon^d(q-1)^2}{q^{2d}}\frac{q^{-\lceil T_0\rceil f(\varepsilon)}(1-q^{-(\lfloor T_N\rfloor-\lceil T_0\rceil)f(\varepsilon)})}{1-q^{-f(\varepsilon)}}\\
    &\geq \left(\frac{q-1}{q}-\varepsilon^{d-1}\right)\frac{\varepsilon^d(q-1)^2}{q^{2d}}\left(\frac{\varepsilon}{q}\right)^{\frac{d^2}{d-1}f(\varepsilon)}\frac{1-\delta}{q^{f(\varepsilon)}-1}.
    \end{split}\]
    We note that since $f(\varepsilon)\leq 1$,  $s=\frac{d^2}{d+1}+\frac{d}{d+1}f(\varepsilon)\leq d$, hence, the third line of the above equation follows.
    Now, we claim that $\left(\frac{\varepsilon}{q}\right)^{\frac{d^2}{d-1}f(\varepsilon)}\geq q^{-\frac{d^2}{d-1}}$. Indeed, it is enough to show that $f(\varepsilon)\log_q\frac{q}{\varepsilon}\leq 1$ and since $\log(1+y)\leq y$ and $\log y \leq y$ for any $y>0$, we have
    \[
    f(\varepsilon)\log_q\frac{q}{\varepsilon} \leq \frac{(1-\delta)(q-1)^2}{(\log q)^2 q^{2d-1+\frac{d^2}{d-1}}}\left(\frac{q-1}{q}-\varepsilon^{d-1}\right)\varepsilon^{d-1}\leq
    \frac{(1-\delta)(q-1)^2}{(\log q)^2 q^{2d-1+\frac{d^2}{d-1}}}\left(\frac{q-1}{2q}\right)^2\leq 1.
    \]
    Therefore, we have 
    \[
    \sum_{\mathbf{v}\in \sigma_{\varepsilon,N}(\mathbf{u})}\frac{\operatorname{diam}(B(\mathbf{v}))^s}{\operatorname{diam}(B(\mathbf{u}))^s}
    \geq \left(\frac{q-1}{q}-\varepsilon^{d-1}\right)\frac{\varepsilon^d(q-1)^2(1-\delta)}{q^{2d+\frac{d^2}{d-1}}}\frac{1}{q^{f(\varepsilon)}-1}=1.
    \]
    Since the condition \eqref{item_lower_2} of Theorem \ref{dim_HlowBnd} with $c=0$ holds, we have 
    $\dim_H\mathbf{DI}_d(\varepsilon)\geq \frac{d^2}{d+1}+\frac{d}{d+1}f(\varepsilon)$. 
    
\end{proof}
\begin{corollary}\label{Cor_singlowerbound}
    For $d\geq 2$, $\dim_H\mathbf{Sing}_d \geq \frac{d^2}{d+1}$.
\end{corollary}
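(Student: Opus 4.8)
Since the upper bound $\dim_H\mathbf{Sing}_d\le\frac{d^2}{d+1}$ is already contained in Corollary \ref{lem:DIUpp}, it suffices to produce a subset of $\mathbf{Sing}_d$ of Hausdorff dimension at least $\frac{d^2}{d+1}$. The plan is to run the construction of \cref{sec:LowBnd} with a parameter $\varepsilon$ that is no longer fixed but is driven to $0$ along the tree. Concretely, I would build a strictly nested self-similar structure $(J,\sigma,B)$ as in Definition \ref{Def_lowerstructure}, except that at a vertex $\mathbf u$ of tree-depth $j$ one applies the branching rule $\sigma(\mathbf u)=F_{N(j)}(\mathbf u,\varepsilon(j))$, where $\varepsilon(j)\downarrow 0$ and $N(j)\uparrow\infty$ are chosen so that \textbf{(a)} $\varepsilon(j+1)/\varepsilon(j)\in[1/q,1]$ and $N(j+1)-N(j)\in\{0,1\}$ for every $j$ (parameters change by only a bounded amount per generation), and \textbf{(b)} $N(j)\ge C_0\,\varepsilon(j)^{-d}$ for every $j$, where $C_0=C_0(q,d)$ is fixed below. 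Such sequences plainly exist: keep $\varepsilon$ constant for a long stretch, then ramp $N$ up by $1$ per generation to the next target, then ramp $\varepsilon$ down by a factor $q$ per generation, and repeat. The hypotheses of Lemmas \ref{v<w}, \ref{BallCont}, \ref{lem:B(v),B(w)Dist} and Proposition \ref{prop:F_NSum}, used at $\mathbf u$ with parameters $(\varepsilon(j+1),N(j+1))$, are all met even though $\mathbf u$ need not lie in $Q_{\varepsilon(j+1),N(j+1)}$: indeed $\mathbf u\in F_{N(j-1)}(\,\cdot\,,\varepsilon(j))$ gives $\hat\lambda_1(\mathbf u)\ge\varepsilon(j)/q\ge\varepsilon(j+1)/q$ by Lemma \ref{v<w}, hence $\hat\lambda_d(\mathbf u)\le\hat\lambda_1(\mathbf u)^{-(d-1)}\le q^{d-1}\varepsilon(j+1)^{-(d-1)}$ by Theorem \ref{Mink2nd}, which is exactly what Remark \ref{rem_lambda_bound} gives for an element of $Q_{\varepsilon(j+1),N(j+1)}$ — and one checks that the proofs of those statements use the hypothesis $\mathbf u\in Q_{\varepsilon,N}$ only through these bounds.

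That this structure is strictly nested and covers a subset of $\mathbf{Sing}_d$ is verified as in Proposition \ref{F(u,u)FracStruc}: nestedness, compactness, $\operatorname{diam}B(\mathbf u_k)\to 0$ and strict decrease of diameters follow from Lemmas \ref{v<w} and \ref{BallCont} together with $\lambda_1(\mathbf u)\le|\mathbf u|^{-1/d}$ (Theorem \ref{Mink2nd}); so every $\sigma$-admissible sequence $\{\mathbf u_k\}$ has $\bigcap_k B(\mathbf u_k)=\{\boldsymbol\theta\}$ for a unique $\boldsymbol\theta$, each $\mathbf u_k$ is a best approximation of $\boldsymbol\theta$ (Lemmas \ref{BestApproxIncl}, \ref{Lemma_requalLam}), and for $T\in[|\mathbf u_k|,|\mathbf u_{k+1}|)$ one gets $A(\boldsymbol\theta,\mathbf u_k)=\tfrac{|\mathbf u_k\wedge\mathbf u_{k+1}|}{|\mathbf u_{k+1}|}\le\varepsilon_k'\,|\mathbf u_{k+1}|^{-1/d}<\varepsilon_k'\,T^{-1/d}$, where $\varepsilon_k'$ is the value of $\varepsilon(\cdot)$ governing the step from $\mathbf u_k$ to $\mathbf u_{k+1}$. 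Since $\varepsilon_k'\to 0$, this shows $\boldsymbol\theta\in\mathbf{DI}_d(\varepsilon)$ for every $\varepsilon>0$, i.e.\ $\boldsymbol\theta\in\mathbf{Sing}_d$ (equivalently, $|\mathbf u_n|^{1/d}r(\mathbf u_n)=\hat\lambda_1(\mathbf u_n)\to 0$, the characterization of Corollary \ref{Cor_SingChar}).

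Finally I would apply the weighted criterion Theorem \ref{thm:CCLow3.6} with $s=\frac{d^2}{d+1}$, $c=0$, $J_0=J$ (any tail works), and with the weight $\rho(\mathbf u)$ at a depth-$j$ vertex set to be $\asymp\bigl(\varepsilon(j{+}1)^{d}q^{-N(j+1)}\bigr)^{2d/(d-1)}$, the separation scale dictated — via Lemma \ref{lem:B(v),B(w)Dist} and $\hat\lambda_d(\mathbf u)\le q^{d-1}\varepsilon(j{+}1)^{-(d-1)}$ — by the rule $\sigma_{\varepsilon(j+1),N(j+1)}$ generating its children; this gives condition \eqref{item_wlower_1} with $c=0$, while condition \eqref{item_wlower_3} follows from $\operatorname{diam}B(\mathbf v)<\operatorname{diam}B(\mathbf u)$ (Lemma \ref{BallCont}) together with $\rho(\mathbf v)\le\rho(\mathbf u)$ (as $\varepsilon$ is non-increasing and $N$ non-decreasing along the tree). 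For condition \eqref{item_wlower_2}, property \textbf{(a)} bounds the single-edge ratio $\rho(\mathbf v)/\rho(\mathbf u)$ below by a constant $\rho_0=\rho_0(q,d)>0$, and the choice $s=\frac{d^2}{d+1}$ makes $\frac{d+1}{d}s-d=0$, so that the key sum in Proposition \ref{prop:F_NSum} degenerates to $\#\{k_0\in\mathbb Z:T_0\le k_0\le T_N\}\ge\frac{d}{d-1}N-1$; hence at a depth-$j$ vertex $\mathbf u$ (writing $\varepsilon=\varepsilon(j{+}1),N=N(j{+}1)$),
\[
\sum_{\mathbf v\in\sigma(\mathbf u)}\Bigl(\tfrac{\rho(\mathbf v)\operatorname{diam}B(\mathbf v)}{\rho(\mathbf u)\operatorname{diam}B(\mathbf u)}\Bigr)^{s}\ \ge\ \rho_0^{\,s}\sum_{\mathbf v\in\sigma(\mathbf u)}\Bigl(\tfrac{\lambda_1(\mathbf v)/|\mathbf v|}{\lambda_1(\mathbf u)/|\mathbf u|}\Bigr)^{s}\ \ge\ \rho_0^{\,s}\Bigl(\tfrac{q-1}{q}-\varepsilon^{d-1}\Bigr)\tfrac{\varepsilon^{d}(q-1)^2}{q^{d+s}}\Bigl(\tfrac{d}{d-1}N-1\Bigr),
\]
and $C_0=C_0(q,d)$ in \textbf{(b)} is chosen precisely so that, for every admissible $(\varepsilon,N)$ with $N\ge C_0\varepsilon^{-d}$, the right-hand side (which is $\gtrsim_{q,d}\rho_0^{\,s}\varepsilon^{d}N\ge\rho_0^{\,s}C_0$) is $\ge 1$. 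Then Theorem \ref{thm:CCLow3.6} yields a subset of $\mathbf{Sing}_d$ of positive $\frac{d^2}{d+1}$-dimensional Hausdorff measure, so $\dim_H\mathbf{Sing}_d\ge\frac{d^2}{d+1}$. The only genuinely delicate point is the bookkeeping behind $(\varepsilon(j),N(j))$: one must force $\varepsilon(j)\to 0$ — which by \textbf{(b)} forces $N(j)\to\infty$, hence forces the well-separation scale $\rho$ to shrink — while changing the parameters slowly enough (property \textbf{(a)}) that the per-generation thickness from Proposition \ref{prop:F_NSum} still compensates for the shrinking $\rho$; keeping the invariant $N(j)\ge C_0\varepsilon(j)^{-d}$ at every step is what makes this balance close.
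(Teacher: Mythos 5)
Your proposal is correct and follows essentially the same route as the paper: both run the \S 6 construction with level-dependent parameters $(\varepsilon_i,N_i)$, $\varepsilon_i\downarrow 0$, $N_i\uparrow\infty$, and apply the weighted criterion Theorem \ref{thm:CCLow3.6} with $s=\tfrac{d^2}{d+1}$, $c=0$, using Lemma \ref{lem:B(v),B(w)Dist} for separation and Proposition \ref{prop:F_NSum} (whose sum at this $s$ reduces to counting $\asymp N$ values of $k$) for the branching estimate. The only differences are bookkeeping: the paper takes $\varepsilon_i=1/\log(i+1)$, $N_i=i+1$ and lets the condition-(2) sum diverge so it holds on a tail $J_0$, whereas you enforce the invariant $N\geq C_0\varepsilon^{-d}$ with slowly varying parameters so it holds at every vertex (and your weight $\rho$ carries the extra $\varepsilon^{d-1}$ coming from the bound on $\hat\lambda_d(\mathbf u)$, a harmless and in fact slightly more careful normalization).
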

\begin{proof}
    Consider two real sequences $(\varepsilon_i)_{i\geq 0}$ and $(N_i)_{i\geq 0}$ given by
    \[
    \varepsilon_i = \frac{1}{\log (i+1)} \quad\text{and}\quad N_i = i+1.\]
    Let $\mathbf{u}_{-1}\in Q$ and let $\mathbf{u}_0\in F_{N_0}(\mathbf{u}_{-1},\varepsilon_0)$. Define $Q_0=\{\mathbf{u}_0\}$ and $Q_{i+1}=\bigcup_{\mathbf{u}\in Q_i}F_{N_i}(\mathbf{u},\varepsilon_i)$ for each $i\geq 0$. For each $\mathbf{u}\in Q_i$, define $\sigma'(\mathbf{u})=F_{N_i}(\mathbf{u},\varepsilon_i)$ and $B(\mathbf{u})=B\left(\hat{\mathbf{u}},\frac{\lambda_1(\mathbf{u})}{q\vert \mathbf{u}\vert}\right)$. Finally, let $Q'=\bigcup_{i\geq 0}Q_i$.
    By the same arguments of the proof of Proposition \ref{F(u,u)FracStruc}, the triple $(Q',\sigma',B)$ is a strictly nested self-similar structure covering a subset of $\mathbf{Sing}_d$.
   
    It follows from Remark \ref{rem_lambda_bound} that $\hat{\lambda}_d(\mathbf{u})\leq \frac{q^{d-1}}{\varepsilon_{i-1}^{d-1}}<\frac{q^{d-1}}{\varepsilon_{i}^{d-1}}$ for any $\mathbf{u}\in Q_i$. By Lemma \ref{lem:B(v),B(w)Dist}, for $\mathbf{u}\in Q_i$ and any distinct $\mathbf{v},\mathbf{w}\in F_{N_i}(\mathbf{u},\varepsilon_i)$ we have $$d\left(B(\mathbf{v}),B(\mathbf{w})\right)\geq \left(\frac{\varepsilon_i}{q^{N_i+1}\hat{\lambda}_d(\mathbf{u})}\right)^{\frac{2d}{d-1}}\frac{\lambda_1(\mathbf{u})}{\vert \mathbf{u}\vert}> q\left(\frac{\varepsilon_i^{d}}{q^{N_i+d}}\right)^{\frac{2d}{d-1}}\frac{\lambda_1(\mathbf{u})}{q\vert \mathbf{u}\vert}.$$
    Hence, we define the map $\rho:Q' \to (0,1)$ by
    $$\rho(\mathbf{u})=q\left(\frac{\varepsilon_i^{d}}{q^{N_i+d}}\right)^{\frac{2d}{d-1}}\quad\text{for each }\mathbf{u}\in Q_i.$$
    Then the condition \eqref{item_wlower_1} of Theorem \ref{thm:CCLow3.6} holds with $c=0$ and the above $\rho$.
    Moreover, since the map $\rho$ decreases with $i$, the condition \eqref{item_wlower_3} of Theorem \ref{thm:CCLow3.6} follows from \eqref{eqn:lambda1(w)/wBND}.

    By applying Proposition \ref{prop:F_NSum} with $s=\frac{d^2}{d+1}$, we obtain that for each $\mathbf{u}\in Q_i$
    \[\begin{split}
    \sum_{\mathbf{v}\in \sigma'(\mathbf{u})}\frac{(\rho(\mathbf{v})\operatorname{diam}B(\mathbf{v}))^s}{(\rho(\mathbf{u})\operatorname{diam}B(\mathbf{u}))^s}
    &=\left(\frac{\varepsilon_{i+1}^{d}}{\varepsilon_i^{d} q^{N_{i+1}-N_i}}\right)^{\frac{2d}{d-1}s} \sum_{\mathbf{v}\in F_{N_i}(\mathbf{u},\varepsilon_i)} \left(\frac{\lambda_1(\mathbf{v})}{\vert \mathbf{v}\vert}\right)^s \left( \frac{\vert \mathbf{u}\vert}{\lambda_1(\mathbf{u})}\right)^s\\
    &\geq \left(\frac{\varepsilon_{i+1}^d}{\varepsilon_i^d q}\right)^{\frac{2d}{d-1}s}\left(\frac{q-1}{q}-\varepsilon_i^{d-1}\right)\frac{\varepsilon_i^d(q-1)^2}{q^{d+s}}\left(\frac{d}{d-1}N_i -2\right).
    \end{split}\]
    Since $\frac{\varepsilon_{i+1}}{\varepsilon_i}\to 1$ and $\varepsilon_i^dN_i\rightarrow \infty$ as $i\to \infty$, the last line of the above equation diverges to infinity as $i\to \infty$. Hence condition \eqref{item_wlower_2} of Theorem \ref{thm:CCLow3.6} holds for all large enough $i$. Therefore, by Theorem \ref{thm:CCLow3.6}, we have $\dim_H\mathbf{Sing}_d\geq \frac{d^2}{d+1}$.
\end{proof}
\subsection{Proof of Proposition \ref{prop:F_NSum}}\label{Subsec_proof}
By Lemma \ref{v<w}, for every $\mathbf{v}\in F_N(\mathbf{u},\varepsilon)$, $\frac{1}{q}\varepsilon\leq \hat{\lambda}_1(\mathbf{v})\leq \varepsilon$, that is, $\frac{1}{q}\varepsilon |\mathbf{v}|^{-\frac{1}{d}}\leq\lambda_1(\mathbf{v})\leq \varepsilon\vert \mathbf{v}\vert^{-\frac{1}{d}}$. Similarly, since $\mathbf{u}\in Q_{\varepsilon,N}$, we have $\frac{1}{q}\varepsilon |\mathbf{u}|^{-\frac{1}{d}}\leq\lambda_1(\mathbf{u})\leq \varepsilon\vert \mathbf{u}\vert^{-\frac{1}{d}}$. Hence, for any $s>0$, we have
\begin{equation}
\label{eqn:radSum}
    \sum_{\mathbf{v}\in F_N(\mathbf{u},\varepsilon)} \left(\frac{\lambda_1(\mathbf{v})}{\vert \mathbf{v}\vert}\right)^s \left( \frac{\vert \mathbf{u}\vert}{\lambda_1(\mathbf{u})}\right)^s
    \geq \frac{1}{q^s} \sum_{\mathbf{v}\in F_N(\mathbf{u},\varepsilon)}\left(\frac{\vert \mathbf{u}\vert}{\vert \mathbf{v}\vert}\right)^{\frac{d+1}{d}s}.
\end{equation}
As in the proofs of Propositions \ref{D(u)Cnt} and \ref{E(u,v,epsilon)Cnt}, we will divide $F_{N}(\mathbf{u},\varepsilon)$ into the following subsets $$F_{N,k}(\mathbf{u},\varepsilon)=\{\mathbf{v}\in F_{N}(\mathbf{u},\varepsilon):|\mathbf{v}|=q^k |\mathbf{u}|\}$$ for each $k\geq 1$. It is possible since $|\mathbf{v}|>|\mathbf{u}|$ for any $\mathbf{v}\in F_N(\mathbf{u},\varepsilon)$ by Lemma \ref{v<w}.  
By definition, it follows that 
\[
F_{N,k}(\mathbf{u},\varepsilon)=\bigcup_{\alpha\in \Lambda_{\mathbf{u}}(\varepsilon)\cap C_N(\mathbf{u})}\zeta_k(\mathbf{u},\alpha,\varepsilon),
\]
where
$$\zeta_k(\mathbf{u},\alpha,\varepsilon)=\Bigg\{\mathbf{v}\in Q:
\vert \mathbf{v}\vert=q^k\vert \mathbf{u}\vert,\
\pi_{\mathbf{u}}(\mathbf{v})=\alpha,\ \left(\frac{\vert \mathbf{u}\wedge \mathbf{v}\vert}{\varepsilon}\right)^{\frac{d}{d-1}}\leq \vert \mathbf{v}\vert\leq \left(\frac{q\vert \mathbf{u}\wedge \mathbf{v}\vert}{\varepsilon}\right)^{\frac{d}{d-1}}\Bigg\}.$$
If $\mathbf{v}\in \zeta_k(\mathbf{u},\alpha,\varepsilon)$, then 
\begin{equation*}
\label{eqn:qalphaIneq}
    \left(\frac{\Vert \alpha\Vert \vert \mathbf{u}\vert}{\varepsilon}\right)^{\frac{d}{d-1}}\leq \vert\mathbf{v}\vert=q^k\vert\mathbf{u}\vert\leq \left(\frac{q\Vert \alpha\Vert \vert \mathbf{u}\vert}{\varepsilon}\right)^{\frac{d}{d-1}},
\end{equation*}
which is equivalent to
\begin{equation*}
    \varepsilon q^{k\frac{d-1}{d}-1}\vert \mathbf{u}\vert^{-\frac{1}{d}}\leq \Vert \alpha\Vert\leq \varepsilon q^{k\frac{d-1}{d}}\vert \mathbf{u}\vert^{-\frac{1}{d}}.
\end{equation*}
Hence, we have 
\begin{equation}\label{Eq_F_divide}
F_{N,k}(\mathbf{u},\varepsilon)=\bigcup_{\substack{\alpha\in \Lambda_{\mathbf{u}}(\varepsilon)\cap C_N(\mathbf{u}),\\ \varepsilon q^{k\frac{d-1}{d}-1}\vert \mathbf{u}\vert^{-\frac{1}{d}}\leq \Vert \alpha\Vert\leq \varepsilon q^{k\frac{d-1}{d}}\vert \mathbf{u}\vert^{-\frac{1}{d}}}}\left\{\mathbf{v}\in \pi_{\mathbf{u}}^{-1}(\alpha)\cap Q:\vert \mathbf{v}\vert=q^k\vert \mathbf{u}\vert\right\}.
\end{equation} 
Since $\Vert \alpha\Vert\leq q^N\lambda_d(\mathbf{u})$, if $k>T_N=\frac{d}{d-1}\left(N+1+\log_q \frac{\hat{\lambda}_d(\mathbf{u})}{\varepsilon}\right)$, then there is no such $\alpha$ as in \eqref{Eq_F_divide}.
Since $\alpha\in \Lambda_\mathbf{u}(\varepsilon)$ is primitive, by Lemma \ref{lem:pi^-1(Val)}, $\#\{\mathbf{v}\in \pi_{\mathbf{u}}^{-1}(\alpha)\cap Q:\vert \mathbf{v}\vert=q^k\vert \mathbf{u}\vert\}=(q-1)q^k$. Therefore, we have
\begin{equation}\label{eqn:zeta_kSum}
\sum_{\mathbf{v}\in F_N(\mathbf{u},\varepsilon)}\left(\frac{\vert \mathbf{u}\vert}{\vert \mathbf{v}\vert}\right)^{\frac{d+1}{d}s}
=\sum_{1\leq k\leq T_N}\frac{q-1}{q^{k\left(\frac{d+1}{d}s-1\right)}}
\#\left\{\alpha\in \Lambda_{\mathbf{u}}(\varepsilon)\cap C_N(\mathbf{u}):
\frac{\varepsilon q^{k\frac{d-1}{d}-1}}{\vert \mathbf{u}\vert^{\frac{1}{d}}}\leq \Vert \alpha\Vert\leq \frac{\varepsilon q^{k\frac{d-1}{d}}}{\vert \mathbf{u}\vert^{\frac{1}{d}}}\right\}
\end{equation}
We will prove Proposition \ref{prop:F_NSum} assuming the following proposition.
\begin{proposition}
\label{Lambda_v(epsilon)capC_n'(v)Bnd}
    For $k\geq T_0 =\frac{d}{d-1}\left(1+\log_q\frac{\hat{\lambda}_d(\mathbf{u})}{\varepsilon}\right)$, we have
    \begin{equation*}
        \#\Bigg\{\alpha\in \Lambda_{\mathbf{u}}(\varepsilon)\cap C_N(\mathbf{u}):\frac{\varepsilon q^{k\frac{d-1}{d}-1}}{\vert \mathbf{u}\vert^{\frac{1}{d}}}\leq \Vert \alpha\Vert\leq \frac{\varepsilon q^{\frac{d-1}{d}k}}{\vert \mathbf{u}\vert^{\frac{1}{d}}}\Bigg\}\geq \left(\frac{q-1}{q}-\varepsilon^{d-1}\right)\frac{\varepsilon^d(q-1)}{q^d}q^{(d-1)k}.
    \end{equation*}
\end{proposition}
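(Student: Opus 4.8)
The plan is to realize the count as a lattice‑point count in the Farey lattice $\Lambda_{\mathbf{u}}$, strip off the ``high‑degree part'' of a lattice vector so that the remaining information lives in $(\mathcal{R}/n\mathcal{R})^{d-1}$, estimate the number of admissible residue vectors via Lemma~\ref{lem:phiEval}, and then subtract a small correction for the constraint $\hat{\lambda}_1(\alpha)>\varepsilon$. I would first unwind the hypotheses: writing $\alpha=\sum_{i<d}m_i\xi_i+n\xi_d$ with $m_i,n\in\mathcal{R}$, membership $\alpha\in C_N(\mathbf{u})$ says precisely that $0<|n|\le q^N$ and $|m_i|\lambda_i(\mathbf{u})<|n|\lambda_d(\mathbf{u})$ for all $i<d$, the latter being equivalent to $\|\alpha\|=|n|\lambda_d(\mathbf{u})$. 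Together with $\varepsilon q^{k\frac{d-1}{d}-1}|\mathbf{u}|^{-1/d}\le\|\alpha\|\le\varepsilon q^{k\frac{d-1}{d}}|\mathbf{u}|^{-1/d}$ this pins the magnitude $|n|$ to a window of multiplicative width $q$; since $k\ge T_0$ this window contains a power of $q$, and since for $k>T_N$ there are no such $\alpha$ at all we may assume $T_0\le k\le T_N$. So I fix $|n|=q^j$ with $q^j\le q^N$, put $M:=q^j\lambda_d(\mathbf{u})$, and record that $q^j\hat{\lambda}_d(\mathbf{u})=M|\mathbf{u}|^{1/d}\in[\varepsilon q^{k\frac{d-1}{d}-1},\varepsilon q^{k\frac{d-1}{d}}]$. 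It then suffices to bound below the number of primitive $\alpha$ of this shape with $|n|=q^j$ and $\hat{\lambda}_1(\alpha)>\varepsilon$.

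Next I would peel off the high‑degree part: write $m_i=m_i^{\mathrm{hi}}n+m_i^{\mathrm{lo}}$ with $\deg m_i^{\mathrm{lo}}<\deg n$. The number of admissible $m_i^{\mathrm{hi}}$ is $\lambda_d(\mathbf{u})/\lambda_i(\mathbf{u})$, so by Theorem~\ref{Mink2nd} the $m^{\mathrm{hi}}$'s contribute the factor $\prod_{i<d}\lambda_d(\mathbf{u})/\lambda_i(\mathbf{u})=\hat{\lambda}_d(\mathbf{u})^d$; moreover primitivity of $\alpha$ is exactly $\gcd(m_1^{\mathrm{lo}},\dots,m_{d-1}^{\mathrm{lo}},n)=1$, while $\Lambda_\alpha=\Lambda_{\mathbf{u}}'+\mathcal{R}\beta$ with $\beta=\sum_{i<d}\tfrac{m_i^{\mathrm{lo}}}{n}\xi_i$ — and hence $\hat{\lambda}_1(\alpha)$ — depends only on the residues $m_i^{\mathrm{lo}}\bmod n$. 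So the quantity to estimate equals $\hat{\lambda}_d(\mathbf{u})^d\sum_{\deg n=j}\#\{\bar m\in(\mathcal{R}/n\mathcal{R})^{d-1}:\gcd(\bar m,n)=1,\ \hat{\lambda}_1(\alpha)>\varepsilon\}$. Dropping the last condition, the number of coprime residue vectors is the Jordan totient $J_{d-1}(n)=|n|^{d-1}\prod_{P\mid n}(1-|P|^{-(d-1)})\ge|n|^{d-2}\phi(n)$, so $\sum_{\deg n=j}J_{d-1}(n)\ge q^{j(d-2)}\sum_{\deg n=j}\phi(n)=\tfrac{(q-1)^2}{q}q^{jd}$ by Lemma~\ref{lem:phiEval}; multiplying by $\hat{\lambda}_d(\mathbf{u})^d$ and using $q^j\hat{\lambda}_d(\mathbf{u})\ge\varepsilon q^{k\frac{d-1}{d}-1}$ shows that, even before imposing the $\hat{\lambda}_1$‑condition, we have at least $\tfrac{(q-1)^2}{q}(M|\mathbf{u}|^{1/d})^d\ge\tfrac{(q-1)^2\varepsilon^d}{q^{d+1}}q^{(d-1)k}$ primitive vectors.

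The substantive part is to show that the ``bad'' residue vectors — the coprime $\bar m$ with $\hat{\lambda}_1(\alpha)\le\varepsilon$, equivalently $\lambda_1(\Lambda_\alpha)\le\delta:=\varepsilon(|\mathbf{u}|M)^{-1/(d-1)}$ — number at most $(q-1)\varepsilon^{d-1}q^{jd}$ in total, i.e.\ form at most an $\varepsilon^{d-1}$‑proportion of the main term (in particular there are none when $d=2$, where $\hat{\lambda}_1(\alpha)\equiv1$). For a bad $\alpha$, $\Lambda_\alpha$ contains a nonzero $v\in\tfrac1n\Lambda_{\mathbf{u}}'$ with $\|v\|\le\delta$, say $v=\sum_{i<d}\tfrac{t_i}{n}\xi_i$ with $|t_i|\le\delta|n|/\lambda_i(\mathbf{u})$; the key numerology is the identity $\prod_{i<d}\delta|n|/\lambda_i(\mathbf{u})=\varepsilon^{d-1}|n|^{d-2}$, obtained from $\prod_{i<d}\lambda_i(\mathbf{u})=(|\mathbf{u}|\lambda_d(\mathbf{u}))^{-1}$ and $M=|n|\lambda_d(\mathbf{u})$. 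I would count pairs (bad $\bar m$, short vector $v$): the number of candidate $v$ is governed by that product, organised according to $g:=\gcd(t_1,\dots,t_{d-1},n)$; and for fixed $v$, since $\Lambda_\alpha/\Lambda_{\mathbf{u}}'$ is cyclic of order $|n|$ and the image of $v$ in it has order $|n|/|g|$, the vector $\bar m$ is determined up to at most $\phi(n/g)\,|g|^{d-1}$ choices. The factor $|g|^{d-1}$ cancels the loss from confining the $t_i$ to $g\mathcal{R}$, after which $\sum_{g\mid n}\phi(n/g)=|n|$ and $\#\{n:\deg n=j\}=(q-1)q^j$ collapse the estimate to $(q-1)\varepsilon^{d-1}q^{jd}$. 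Subtracting this from the main term of the previous paragraph and multiplying back by $\hat{\lambda}_d(\mathbf{u})^d$, and using $q^j\hat{\lambda}_d(\mathbf{u})\in[\varepsilon q^{k\frac{d-1}{d}-1},\varepsilon q^{k\frac{d-1}{d}}]$, gives exactly $\bigl(\tfrac{q-1}{q}-\varepsilon^{d-1}\bigr)\tfrac{(q-1)\varepsilon^d}{q^d}q^{(d-1)k}$.

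The main obstacle is the control of the bad vectors in the third step. Two points require care: first, one must know that $\Lambda_{\mathbf{u}}'$ itself contains no vector of norm $\le\delta$, so that the short vector $v$ above genuinely does not lie in $\Lambda_{\mathbf{u}}'$; this should follow from $\hat{\lambda}_1(\mathbf{u})\ge\varepsilon/q$ (valid because $\mathbf{u}\in Q_{\varepsilon,N}$, by Lemma~\ref{v<w}) together with $k\ge T_0$. Second, the ceiling/floor bookkeeping in the counts $\#\{t\in\mathcal{R}:|t|\le R\}$ and in the subsequent divisor sums must be carried out carefully enough to land precisely on the constant $\varepsilon^{d-1}$, since it is exactly this constant that is responsible for the nontriviality range $\varepsilon<((q-1)/q)^{1/(d-1)}$ appearing in Corollary~\ref{Cor_lowerbound}.
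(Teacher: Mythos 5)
Your plan follows the paper's proof quite closely in outline: you localize $|n|$ to the window (using $k\ge T_0$ to get an admissible degree), quotient out the translations $\mathbf{m}\mapsto\mathbf{m}+n\mathbf{k}$ to pick up the factor $\prod_{i<d}\lambda_d(\mathbf{u})/\lambda_i(\mathbf{u})=\hat{\lambda}_d(\mathbf{u})^d$ (this is exactly \eqref{Eq_counting_trans}), get the main term from Lemma \ref{lem:phiEval}, and show the bad residues form at most an $\varepsilon^{d-1}$-proportion. The one genuinely different ingredient is your treatment of the bad set: you double count pairs (bad residue, short witness $\mathbf{t}$), stratified by $g=\gcd(\mathbf{t},n)$, with at most $\phi(n/g)|g|^{d-1}$ compatible residues per witness, and close with $\sum_{g\mid n}\phi(n/g)=|n|$ and $\#\{n:\deg n=j\}=(q-1)q^j$. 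The paper instead fixes $\gcd(m_1,n)=1$, uses the unit action to reduce to $Z_n(1)$, counts solutions of $\mathbf{b}=a\mathbf{r}$ (each contributing $|\gcd(a,n)|^{d-2}$), and arrives at $\varepsilon^{d-1}\phi(n)D_1(n)|n|^{d-3}$ per $n$, which is then summed via Lemma \ref{lem:D1Eval}. Both routes land on the same total $(q-1)\varepsilon^{d-1}q^{jd}$, so your constants reproduce Lemma \ref{lem_XnSum} and the stated bound exactly; your bookkeeping avoids the divisor function $D_1$ and Lemma \ref{lem:D1Eval} altogether, which is a mild simplification. Your main-term count via the Jordan totient bounded below by $|n|^{d-2}\phi(n)$ is equivalent in effect to the paper's choice of $Y_n$.

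The step you defer, however, is not discharged by the facts you cite. You need $\lambda_1(\Lambda_{\mathbf{u}}')>\delta$, where $\delta=\varepsilon\left(|\mathbf{u}||n|\lambda_d(\mathbf{u})\right)^{-\frac{1}{d-1}}$, so that no short witness lies in $\Lambda_{\mathbf{u}}'$ (otherwise every coprime residue is bad and the $\varepsilon^{d-1}$-proportion bound collapses). Unwinding, $\delta<\lambda_1(\mathbf{u})$ is equivalent to
\[
q^{\deg n}\;>\;\varepsilon^{d-1}\,\frac{\lambda_2(\mathbf{u})\cdots\lambda_{d-1}(\mathbf{u})}{\lambda_1(\mathbf{u})^{d-2}}\;=\;\varepsilon^{d-1}\prod_{i=2}^{d-1}\frac{\hat{\lambda}_i(\mathbf{u})}{\hat{\lambda}_1(\mathbf{u})},
\]
and the inputs $\hat{\lambda}_1(\mathbf{u})\ge\varepsilon/q$, $\hat{\lambda}_d(\mathbf{u})\ge1$ and $k\ge T_0$ only give $q^{\deg n}\ge 1$ for the top degree in the window; they say nothing about the intermediate minima on the right-hand side, which for $d\ge3$ can be as large as roughly $q^{\frac{d(d-2)}{d-1}}\varepsilon^{\frac{1}{d-1}}$ when $\Lambda_{\mathbf{u}}$ is skew. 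So for $d\ge3$, $k$ near $T_0$, and $\varepsilon$ not small, this is a genuine claim that your sketch does not prove (it is vacuous for $d=2$, and it does hold once $q^{\deg n}$ exceeds about $q^{d-1}$, or for $\varepsilon$ small). To be fair, the paper's own proof of Proposition \ref{prop_XnCount} makes the same tacit assumption when it regards the box $B$ as a subset of $(\mathcal{R}/n\mathcal{R})^{d-1}$ — that identification also requires the box radii $\eta/\lambda_i(\mathbf{u})$ to be smaller than $|n|$, i.e.\ $\delta<\lambda_1(\mathbf{u})$ — so you have correctly isolated the delicate point rather than missed it; but as written your justification for it is insufficient, and this (together with the ceiling/floor bookkeeping you already flag, which is shared with the paper) is what separates your proposal from a complete proof.
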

\begin{proof}[Proof of Proposition \ref{prop:F_NSum} assuming Proposition \ref{Lambda_v(epsilon)capC_n'(v)Bnd}]
    Combining with \eqref{eqn:radSum}, \eqref{eqn:zeta_kSum}, and Proposition \ref{Lambda_v(epsilon)capC_n'(v)Bnd}, we have
    \[
        \sum_{\mathbf{v}\in F_N(\mathbf{u},\varepsilon)} \left(\frac{\lambda_1(\mathbf{v})}{\vert \mathbf{v}\vert}\right)^s \left( \frac{\vert \mathbf{u}\vert}{\lambda_1(\mathbf{u})}\right)^s
        \geq \left(\frac{q-1}{q}-\varepsilon^{d-1}\right)\frac{\varepsilon^d(q-1)^2}{q^{d+s}}\sum_{T_0\leq k\leq T_N}q^{-(\frac{d+1}{d}s-d)k}.
    \] 
    
\end{proof}


In order to prove Proposition \ref{Lambda_v(epsilon)capC_n'(v)Bnd}, we follow the strategy of \cite[Section 8]{CC16}. 
Let $\mathbf{u}\in Q$ and $\varepsilon>0$ be given.
For any nonzero $n\in\mathcal{R}$, we will first give a lower bound of $\#(\Lambda_\mathbf{u}(\varepsilon)\cap C_n'(\mathbf{u}))$. 
By definition, we have
\[
\Lambda_\mathbf{u}(\varepsilon)\cap C_n'(\mathbf{u})
=\left\{\alpha=\sum_{i<d}m_i\xi_i+n\xi_d: 
\begin{matrix} \forall i,\ m_i\in\mathcal{R}\text{ and } |m_i|\leq \frac{|n|\lambda_d(\mathbf{u})}{\lambda_i(\mathbf{u})},\\ \alpha \text{ is primitive},\ \text{and }\hat{\lambda}_1(\alpha)>\varepsilon \end{matrix}\right\}.
\]
Since $\det(\Lambda_\alpha)=\frac{1}{|\mathbf{u}||n|\lambda_d(\mathbf{u})}$ by Lemma \ref{Lem_covol_al}, observe that 
$$\hat{\lambda}_1(\alpha)>\varepsilon \iff\lambda_1(\alpha)>\varepsilon (|\mathbf{u}||n|\lambda_d(\mathbf{u}))^{-\frac{1}{d-1}} 
\iff \lambda_1(n\Lambda_\alpha)>\varepsilon |n|(|\mathbf{u}||n|\lambda_d(\mathbf{u}))^{-\frac{1}{d-1}}.$$ 
Hence we consider the following lattice: for given $\mathbf{m}=(m_1,\dots,m_{d-1})\in \mathcal{R}^{d-1}$ and $n\in\mathcal{R}\setminus\{0\}$, we denote 
\[
L(\mathbf{m},n)=\mathcal{R}n \xi_1+\cdots+\mathcal{R}n\xi_{d-1} +\mathcal{R}\sum_{i<d}m_i \xi_i.
\]
Since $\alpha=\sum_{i<d}m_i\xi_i+n\xi_d$ is primitive if and only if $\gcd(\mathbf{m},n)=1$, we have 
\begin{equation}\label{Eq_setdescription}
\Lambda_\mathbf{u}(\varepsilon)\cap C_n'(\mathbf{u})
=\left\{\sum_{i<d}m_i\xi_i+n\xi_d: 
\begin{matrix} \forall i,\ m_i\in\mathcal{R}\text{ and } |m_i|\leq \frac{|n|\lambda_d(\mathbf{u})}{\lambda_i(\mathbf{u})},\ \gcd(\mathbf{m},n)=1,\\ \lambda_1(L(\mathbf{m},n))>\varepsilon |n|(|\mathbf{u}||n|\lambda_d(\mathbf{u}))^{-\frac{1}{d-1}} \end{matrix}\right\}.
\end{equation}
Therefore, we can estimate $\#(\Lambda_\mathbf{u}(\varepsilon)\cap C_n'(\mathbf{u}))$ by counting $\mathbf{m}\in\mathcal{R}^{d-1}$ satisfying the above conditions in \eqref{Eq_setdescription}.
Consider the set $X_n$ given by
\begin{equation*}
    X_n=\left\{\mathbf{m}\in \mathcal{R}^{d-1}:\begin{matrix} \|\mathbf{m}\|<|n|,\ \gcd(\mathbf{m},n)=1, \\
    \lambda_1(L(\mathbf{m},n))>\varepsilon |n|(|\mathbf{u}||n|\lambda_d(\mathbf{u}))^{-\frac{1}{d-1}}
    \end{matrix}\right\}.
\end{equation*}
For any $\mathbf{m}\in X_n$ and $\mathbf{k}\in\mathcal{R}^{d-1}$, observe that
$\gcd(\mathbf{m}+n\mathbf{k},n)=1$, $L(\mathbf{m}+n\mathbf{k},n)=L(\mathbf{m},n)$, and $|m_i+nk_i|\leq \frac{|n|\lambda_d(\mathbf{u})}{\lambda_i(\mathbf{u})}$ if and only if $|k_i|\leq \frac{\lambda_d(\mathbf{u})}{\lambda_i(\mathbf{u})}$ for any $i$.
Since all translates of the form $X_n +n\mathbf{k}$ with $\mathbf{k}\in \mathcal{R}^{d-1}$ are disjoint, it follows from \eqref{Eq_setdescription} that 
\begin{equation}\label{Eq_counting_trans}
\begin{split}
\#(\Lambda_\mathbf{u}(\varepsilon)\cap C_n'(\mathbf{u}))&\geq 
\# X_n \times \#\Bigg\{\mathbf{k}\in\mathcal{R}^{d-1}: \forall i,\ |k_i|\leq \frac{\lambda_d(\mathbf{u})}{\lambda_i(\mathbf{u})}\Bigg\}\\
&=\# X_n \times \frac{\lambda_d^{d-1}(\mathbf{u})}{\lambda_1(\mathbf{u})\cdots \lambda_{d-1}(\mathbf{u})}.
\end{split}
\end{equation}
Hence, it suffices to compute $\#X_n$. For this, we will follow the proof of \cite[Lemma 8.4]{CC16}.
Recall the functions $\phi$ and $D_1$ defined in \cref{subsec:BasicNT}.
\begin{proposition}\label{prop_XnCount}
    For any nonzero $n\in\mathcal{R}$, we have
    \[
    \#X_n \geq \phi(n)\left(|n|^{d-2}-\varepsilon^{d-1}D_1(n)|n|^{d-3}\right).
    \]
\end{proposition}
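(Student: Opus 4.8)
The plan is to estimate $\#X_n$ by writing it as the number of $\mathbf{m}\in\mathcal{R}^{d-1}$ with $\|\mathbf{m}\|<|n|$ and $\gcd(\mathbf{m},n)=1$, minus the number of such $\mathbf{m}$ for which $\lambda_1(L(\mathbf{m},n))\le c$, where $c=\varepsilon|n|(|\mathbf{u}||n|\lambda_d(\mathbf{u}))^{-1/(d-1)}$. For the main term, a standard inclusion--exclusion over the prime divisors of $n$ gives that the number of residues $\mathbf{m}\bmod n$ with $\gcd(\mathbf{m},n)=1$ equals $|n|^{d-1}\prod_{P\mid n}(1-|P|^{-(d-1)})$, which is at least $|n|^{d-1}\prod_{P\mid n}(1-|P|^{-1})=\phi(n)|n|^{d-2}$ by the multiplicative formula for $\phi$ recorded in the proof of Lemma~\ref{lem:D1Eval}. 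Thus it remains to bound the number of ``bad'' $\mathbf{m}$ (those with $\lambda_1(L(\mathbf{m},n))\le c$) by $\varepsilon^{d-1}\phi(n)D_1(n)|n|^{d-3}$.

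For this I would follow the strategy of \cite[Lemma~8.4]{CC16}. If $\mathbf{m}$ is bad, there is a nonzero $v=\sum_{i<d}v_i\xi_i\in L(\mathbf{m},n)$ (with $v_i\in\mathcal{R}$, since $L(\mathbf{m},n)\subseteq\Lambda_{\mathbf{u}}'=\bigoplus_{i<d}\mathcal{R}\xi_i$) satisfying $|v_i|\,\lambda_i(\mathbf{u})\le c$ for all $i$; since $\gcd(\mathbf{m},n)=1$ there is a unique residue $b\in\mathcal{R}/n\mathcal{R}$ with $v_i\equiv bm_i\pmod n$ for all $i$, and one organizes the bad $\mathbf{m}$ according to the divisor $g=\gcd(b,n)$ of $n$. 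For fixed $g$: there are $\phi(n/g)$ admissible values of $b$; the residue of $v_i$ modulo $n$ lies in $g\mathcal{R}/n\mathcal{R}$, so $\mathbf{m}\bmod(n/g)$ is determined by $v$ and the unit part of $b$, leaving at most $(|n|/|g|)^{d-1}$ lifts to $\mathbf{m}\bmod n$; and the number of admissible $v$ is, by Proposition~\ref{Prop_BK} applied to the $(d-1)$-dimensional lattice $\Lambda_{\mathbf{u}}'$ (whose successive minima are $\lambda_1(\mathbf{u}),\dots,\lambda_{d-1}(\mathbf{u})$), equal to $\prod_{i<d}\lceil qc/(|g|\lambda_i(\mathbf{u}))\rceil$. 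The key point is that Minkowski's second theorem (Theorem~\ref{Mink2nd}) gives $\prod_{i<d}\lambda_i(\mathbf{u})=(|\mathbf{u}|\lambda_d(\mathbf{u}))^{-1}$, so $\prod_{i<d}qc/\lambda_i(\mathbf{u})=q^{d-1}\varepsilon^{d-1}|n|^{d-2}$ is independent of $\mathbf{u}$; multiplying the three factors, summing over $g\mid n$, and using the divisor sum $\sum_{g\mid n}|g|=D_1(n)$ should then collapse to the claimed bound $\varepsilon^{d-1}\phi(n)D_1(n)|n|^{d-3}$.

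The step I expect to be the main obstacle is making this last collapse honest: the ceiling functions in Proposition~\ref{Prop_BK} (in particular the regime where $c/(|g|\lambda_i(\mathbf{u}))<1$, where the naive product $\prod qc/(|g|\lambda_i(\mathbf{u}))$ overshoots the true count), the bookkeeping relating the content $\gcd(v_1,\dots,v_{d-1})$ and the divisor $g=\gcd(b,n)$ to the factorization of $n$, and the discrepancy between $\phi(n)$ and the Jordan totient $|n|^{d-1}\prod_{P\mid n}(1-|P|^{-(d-1)})$ all have to be absorbed so that one arrives precisely at $D_1(n)|n|^{d-3}$ rather than at a cruder bound such as $q^{d-1}\varepsilon^{d-1}|n|^{d-1}$. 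This is exactly where the argument of \cite[Section~8]{CC16} is delicate, and I would import its organization (summing over divisors of $n$ and over short vectors in the associated boxes) rather than reprove it from scratch.
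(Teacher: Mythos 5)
There is a genuine gap here, and you flag it yourself: the proposal never actually establishes the required bound $\#\{\text{bad }\mathbf{m}\}\leq\varepsilon^{d-1}\phi(n)D_1(n)|n|^{d-3}$, and the divisor-sum scheme you set up cannot produce it. Two concrete problems: (i) the number of lifts of $\mathbf{m}$ from $\bmod\,(n/g)$ to $\bmod\,n$ is $|g|^{d-1}$, not $(|n|/|g|)^{d-1}$; (ii) even after fixing this, since $\prod_{i<d}\lambda_i(\mathbf{u})=(|\mathbf{u}|\lambda_d(\mathbf{u}))^{-1}$, the term for a fixed $g\mid n$ is about $\phi(n/g)\cdot|g|^{d-1}\cdot q^{d-1}\varepsilon^{d-1}|n|^{d-2}|g|^{-(d-1)}=\phi(n/g)\,q^{d-1}\varepsilon^{d-1}|n|^{d-2}$, and summing over $g\mid n$ gives only about $q^{d-1}\varepsilon^{d-1}|n|^{d-1}$ --- exactly the crude bound you name, which is strictly weaker than $\varepsilon^{d-1}\phi(n)D_1(n)|n|^{d-3}$ (recall $\phi(n)D_1(n)\leq|n|^2$) and would degrade the explicit constants needed downstream in Lemma \ref{lem_XnSum} and Corollary \ref{Cor_lowerbound}. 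The ceiling/empty-box issue you worry about is also real: for large $|g|$ the box contains no nonzero point of $g\Lambda_{\mathbf{u}}'$, yet $\prod\lceil\cdot\rceil\geq 1$, and after multiplying by the lift factor $|g|^{d-1}$ those spurious terms are not negligible. So the ``collapse'' you defer is not a technicality; it is the content of the proposition.

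The missing idea is a normalization by a unit that removes one dimension from the bad-set count before summing over divisors. The paper takes as its main family $Y_n=\{\|\mathbf{m}\|<|n|,\ \gcd(m_1,n)=1\}$, which has exactly $\phi(n)|n|^{d-2}$ elements (so no Jordan-totient estimate is needed), and observes that the bad set satisfies $Z_n(m)=mZ_n(1)$ for $m\in(\mathcal{R}/n\mathcal{R})^{*}$, whence $\#Z_n=\phi(n)\,\#Z_n(1)$. With $m_1$ normalized to $1$, the multiplier of the cyclic module is the first coordinate $a$ of the short vector itself, so $\#Z_n(1)$ is bounded by summing, over actual lattice points $(a,\mathbf{b})$ in the box with $\mathbf{b}\in(\mathcal{R}a)^{d-2}$, the $|\gcd(a,n)|^{d-2}$ solutions $\mathbf{r}$ of $\mathbf{b}=a\mathbf{r}$ in $(\mathcal{R}/n\mathcal{R})^{d-2}$; the divisor sum then yields $\#Z_n(1)\leq\varepsilon^{d-1}|n|^{d-2}\sum_{g\mid n}|g|^{-1}=\varepsilon^{d-1}D_1(n)|n|^{d-3}$. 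Note the structural differences from your scheme: the exponent is $|g|^{d-2}$ rather than $|g|^{d-1}$, there is no lifting of $\mathbf{m}$ across moduli, empty boxes contribute zero automatically, and the factor $\phi(n)$ comes out in front via the unit-translation trick instead of being diluted into $\sum_{g\mid n}\phi(n/g)\approx|n|$. These are precisely the savings that turn $q^{d-1}\varepsilon^{d-1}|n|^{d-1}$ into the stated $\varepsilon^{d-1}\phi(n)D_1(n)|n|^{d-3}$, so to complete the proof you should restrict the main family (or otherwise normalize one coordinate by a unit) and count as in \cite[Lemma 8.4]{CC16} rather than conditioning on $g=\gcd(b,n)$ and lifting.
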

\begin{proof}
For simplicity, let us denote $\eta=\varepsilon|n|\left(|\mathbf{u}||n|\lambda_d(\mathbf{u})\right)^{-\frac{1}{d-1}}$. 
First, note that
\[
\lambda_1(L(\mathbf{m},n))\leq \eta \iff \widetilde{L}(\mathbf{m},n)\cap \prod_{i=1}^{d-1}B\left(0,\frac{\eta}{\lambda_i(\mathbf{u})}\right) \neq \{0\},
\]
where $\widetilde{L}(\mathbf{m},n)=\mathcal{R}n\mathbf{e}_1+\cdots+\mathcal{R}n\mathbf{e}_{d-1}+\mathcal{R}\sum_{i<d}m_i\mathbf{e}_i$. Consider the following sets:
\[\begin{split}
Y_n&=\{\mathbf{m}\in\mathcal{R}^{d-1}:\|\mathbf{m}\|<|n|\text{ and } \gcd(m_1,n)=1\};\\
Z_n&=\left\{\mathbf{m}\in Y_n : \widetilde{L}(\mathbf{m},n)\cap \prod_{i=1}^{d-1}B\left(0,\frac{\eta}{\lambda_i(\mathbf{u})}\right) \neq \{0\}\right\}.
\end{split}\]
Since $X_n \supset Y_n\setminus Z_n$ and $\#Y_n = \phi(n)|n|^{d-2}$, it is enough to show that $\#Z_n \leq \varepsilon^{d-1}\phi(n)D_1(n)|n|^{d-3}$.

Note that the lattice $\widetilde{L}(\mathbf{m},n)$ depends only on $m_1,\dots,m_{d-1}$ modulo $n$ and $\{r\in\mathcal{R}:|r|<|n|\}$ is a complete set of representatives for $\mathcal{R}/n\mathcal{R}$. Hence, by considering $B=\prod_{i=1}^{d-1}B\left(0,\frac{\eta}{\lambda_i(\mathbf{u})}\right)$ as the box in $(\mathcal{R}/n\mathcal{R})^{d-1}$, we have
\[
Z_n = \{\mathbf{m}\in (\mathcal{R}/n\mathcal{R})^{d-1}: \gcd(m_1,n)=1, \mathcal{R}\mathbf{m}\cap B\neq\{0\}\}.
\]
For $m\in\mathcal{R}/n\mathcal{R}$ with $\gcd(m,n)=1$, let
\[
Z_n(m)=\{\mathbf{r}\in(\mathcal{R}/n\mathcal{R})^{d-2}: \mathcal{R}(m,\mathbf{r})\cap B\neq\{0\}\}.
\] It follows from $\gcd(m,n)=1$, or equivalently, $m\in(\mathcal{R}/n\mathcal{R})^{*}$ that $Z_n(m)=mZ_n(1)$. Therefore, $\#Z_n=\phi(n)\#Z_n(1)$, and we need to show that $\#Z_n(1)\leq \varepsilon^{d-1}D_1(n)|n|^{d-3}$.

Since $\mathbf{r}\in Z_n(1)$ if and only if there exists $(a,\mathbf{b})\in B\setminus\{0\}$ such that $\mathbf{b}=a\mathbf{r}$, we have
\begin{equation}\label{Eq_ZnSum}
\#Z_n(1) \leq \sum_{(a,\mathbf{b})\in B\setminus\{0\}} \# E(a,\mathbf{b}),    
\end{equation}
where $E(a,\mathbf{b})=\{\mathbf{r}\in(\mathcal{R}/n\mathcal{R})^{d-2}:\mathbf{b}=a\mathbf{r}\}$. 
For $(a,\mathbf{b})\in B\setminus\{0\}$, if $a=0$ and $\mathbf{b}\neq 0$, then $E(a,\mathbf{b})$ is empty. So, we may assume that $a\neq 0$. If $\mathbf{b}\in (\mathcal{R}a)^{d-2}$, then there is $\mathbf{r}_0\in \mathcal{R}^{d-2}$ such that $\mathbf{b}=a\mathbf{r}_0$, and the general solutions to the equation $\mathbf{b}=a\mathbf{r}$ is given by 
\[
\mathbf{r}=\frac{n}{\gcd(a,n)}\mathbf{w}+\mathbf{r_0}\quad\text{for } \mathbf{w}\in \mathcal{R}^{d-2}.
\]
Therefore, the number of solutions to the equation $\mathbf{b}=a\mathbf{r}$ in $(\mathcal{R}/n\mathcal{R})^{d-2}$ is $|\gcd(a,n)|^{d-2}$ if $\mathbf{b}\in (\mathcal{R}a)^{d-2}$, and $0$ otherwise. It follows from \eqref{Eq_ZnSum} that
\[
\begin{split}
\#Z_n(1) &\leq \sum_{\substack{(a,\mathbf{b})\in B\setminus\{0\}, \\ \mathbf{b}\in(\mathcal{R}a)^{d-2}}} |\gcd(a,n)|^{d-2} 
\leq\sum_{g|n}|g|^{d-2}\sum_{\substack{a\in B(0,\frac{\eta}{\lambda_1(\mathbf{u})}) \\ g|a}} \# \left(\prod_{i=2}^{d-1}B(0,\frac{\eta}{\lambda_i(\mathbf{u})}) \cap (\mathcal{R}a)^{d-2}\right)\\
&\leq \sum_{g|n}|g|^{d-2}\sum_{\substack{a\in B(0,\frac{\eta}{\lambda_1(\mathbf{u})}) \\ g|a}} \prod_{i=2}^{d-1} \frac{\eta}{|g|\lambda_i(\mathbf{u})} = \sum_{g|n}|g|^{d-2}\prod_{i=1}^{d-1}\frac{\eta}{|g|\lambda_i(\mathbf{u})}\\
    &=\varepsilon^{d-1}|n|^{d-2}\sum_{g|n}\frac{1}{|g|}=\varepsilon^{d-1}|n|^{d-2}\sum_{g|n}|g/n|=\varepsilon^{d-1}D_1(n)|n|^{d-3}.
\end{split}
\]
This completes the proof.
\end{proof}

Combining with Lemmas \ref{lem:phiEval}, \ref{lem:D1Eval}, and Proposition \ref{prop_XnCount}, we have
\begin{lemma}\label{lem_XnSum}
For every $\ell\in \mathbb{N}$, we have
\begin{equation*}
    \sum_{\deg(n)=\ell}\#X_n\geq q^{\ell d}(q-1)\left(\frac{q-1}{q}-\varepsilon^{d-1}\right).
\end{equation*}
\end{lemma}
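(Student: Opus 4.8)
The plan is to deduce this summation bound directly from the pointwise lower bound in Proposition \ref{prop_XnCount} together with the two arithmetic estimates from \cref{subsec:BasicNT}. Since everything with $\deg(n)=\ell$ satisfies $|n|=q^{\ell}$, summing the inequality $\#X_n \geq \phi(n)\bigl(|n|^{d-2}-\varepsilon^{d-1}D_1(n)|n|^{d-3}\bigr)$ over all such $n$ should give
\[
\sum_{\deg(n)=\ell}\#X_n \geq q^{\ell(d-2)}\sum_{\deg(n)=\ell}\phi(n) - \varepsilon^{d-1}q^{\ell(d-3)}\sum_{\deg(n)=\ell}\phi(n)D_1(n).
\]

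Next I would plug in the evaluations. By Lemma \ref{lem:phiEval}, $\sum_{\deg(n)=\ell}\phi(n)=\frac{(q-1)^2}{q}q^{2\ell}$, so the first term becomes $\frac{(q-1)^2}{q}q^{\ell d}$. By Lemma \ref{lem:D1Eval}, $\sum_{\deg(n)=\ell}\phi(n)D_1(n)\leq (q-1)q^{3\ell}$, so the second term is at least $-\varepsilon^{d-1}(q-1)q^{\ell d}$. Combining these two gives
\[
\sum_{\deg(n)=\ell}\#X_n \geq \frac{(q-1)^2}{q}q^{\ell d} - \varepsilon^{d-1}(q-1)q^{\ell d} = q^{\ell d}(q-1)\left(\frac{q-1}{q}-\varepsilon^{d-1}\right),
\]
which is exactly the claimed bound.

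There is essentially no obstacle here: the lemma is a routine bookkeeping step that aggregates Proposition \ref{prop_XnCount} over a degree class. The only point to be mildly careful about is the sign bookkeeping — the $D_1$-term enters with a minus sign, so the \emph{upper} bound for $\sum \phi(n)D_1(n)$ is what yields a \emph{lower} bound for $\sum \#X_n$ — and checking that the exponents of $q$ collapse correctly to $q^{\ell d}$ in both terms. All the genuine content has already been done in Proposition \ref{prop_XnCount} and Lemmas \ref{lem:phiEval}–\ref{lem:D1Eval}.
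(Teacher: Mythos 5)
Your proposal is correct and follows exactly the paper's argument: sum the pointwise bound of Proposition \ref{prop_XnCount} over the degree-$\ell$ class, then apply Lemma \ref{lem:phiEval} to the $\phi$-sum and Lemma \ref{lem:D1Eval} to the $\phi D_1$-sum (with the sign handled as you note). The exponent bookkeeping collapses to $q^{\ell d}$ in both terms just as in the paper, so there is nothing to add.
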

\label{lem:X_fSum}
\begin{proof}
    It follows from Lemmas \ref{lem:phiEval}, \ref{lem:D1Eval}, and Proposition \ref{prop_XnCount} that
    \[\begin{split}
    \sum_{\deg(n)=\ell}\#X_n &\geq \sum_{\deg(n)=\ell} \phi(n)\left(|n|^{d-2}-\varepsilon^{d-1}D_1(n)|n|^{d-3}\right) \\
    &=q^{\ell d}(q-1)\left(\frac{q-1}{q}-\varepsilon^{d-1}\right).
    \end{split}\]
\end{proof}
Now we are ready to prove Proposition \ref{Lambda_v(epsilon)capC_n'(v)Bnd}.
\begin{proof}[Proof of Proposition \ref{Lambda_v(epsilon)capC_n'(v)Bnd}]
    For $k\geq 1$, we have
\begin{alignat*}{2} 
	& \#\left\{\alpha\in \Lambda_{\mathbf{u}}(\varepsilon)\cap C_N(\mathbf{u}):
\frac{\varepsilon q^{k\frac{d-1}{d}-1}}{\vert \mathbf{u}\vert^{\frac{1}{d}}}\leq \Vert \alpha\Vert\leq \frac{\varepsilon q^{k\frac{d-1}{d}}}{\vert \mathbf{u}\vert^{\frac{1}{d}}}\right\}   \quad  \quad && \\
	& =\sum_{\frac{\varepsilon q^{k\frac{d-1}{d}-1}}{\vert \mathbf{u}\vert^{\frac{1}{d}}\lambda_d(\mathbf{u})}\leq |n|\leq \frac{\varepsilon q^{k\frac{d-1}{d}}}{\vert \mathbf{u}\vert^{\frac{1}{d}}\lambda_d(\mathbf{u})}} \#(\Lambda_{\mathbf{u}}(\varepsilon)\cap C_n'(\mathbf{u})) \quad \quad && \text{by Definition \ref{Def_lowerstructure}}  \\
 &\geq \frac{\lambda_d^{d-1}(\mathbf{u})}{\lambda_1(\mathbf{u})\cdots \lambda_{d-1}(\mathbf{u})}(q-1)\left(\frac{q-1}{q}-\varepsilon^{d-1}\right) \sum_{\ell \in I_k\cap \mathbb{Z}_{\geq 0}} q^{\ell d} \quad \quad && \text{by \eqref{Eq_counting_trans} and Lemma \ref{lem_XnSum}},
 \end{alignat*}
 where $I_k$ is the closed interval given by 
 $$I_k = \left[k\frac{d-1}{d}-1+\log_q\frac{\varepsilon}{\hat{\lambda}_d(\mathbf{u})},k\frac{d-1}{d}+\log_q\frac{\varepsilon}{\hat{\lambda}_d(\mathbf{u})}\right].$$
Note that for each $k\geq 1$ the interval $I_k$ is of length $1$. Hence, $I_k$ must contain at least one integer point. If $k\geq\frac{d}{d-1}\left(1+\log_q\frac{\hat{\lambda}_d(\mathbf{u})}{\varepsilon}\right)$, then $I_k\cap\mathbb{Z}_{\geq 0}$ is non-empty. Hence, we have
\[
\sum_{\ell\in I_k\cap\mathbb{Z}_{\geq 0}}q^{\ell d}\geq q^{(d-1)k-d}\left(\frac{\varepsilon}{\hat{\lambda}_d(\mathbf{u})}\right)^d=q^{(d-1)k}\frac{\varepsilon^d}{q^d}\frac{\lambda_1(\mathbf{u})\cdots\lambda_{d-1}(\mathbf{u})}{\lambda_d^{d-1}(\mathbf{u})}. 
\]
This completes the proof.

    
\end{proof}
\bibliography{ref}

\providecommand{\bysame}{\leavevmode\hbox to3em{\hrulefill}\thinspace}
\providecommand{\MR}{\relax\ifhmode\unskip\space\fi MR }
\providecommand{\MRhref}[2]{%
  \href{http://www.ams.org/mathscinet-getitem?mr=#1}{#2}
}
\providecommand{\href}[2]{#2}
\begin{thebibliography}{KKLM17}

\bibitem[AB07]{AB}
Boris Adamczewski and Yann Bugeaud, \emph{On the {L}ittlewood conjecture in
  fields of power series}, Probability and number theory---{K}anazawa 2005,
  Adv. Stud. Pure Math., vol.~49, Math. Soc. Japan, Tokyo, 2007, pp.~1--20.
  \MR{2405595}

\bibitem[Ara24]{A23}
Noy~Soffer Aranov, \emph{On covering radii in function fields}, 2024, to appear
  in Monatshefte fur Mathematik.

\bibitem[Bak77]{B1}
R.~C. Baker, \emph{Singular $n$-tuples and hausdorff dimension i}, Math. Proc.
  Cambridge Philos.Soc. \textbf{81} (1977), 377–385.

\bibitem[Bak92]{B2}
R.C. Baker, \emph{Singular n-tuples and hausdorff dimension, ii}, Math. Proc.
  Cambridge Philos. Soc. \textbf{111} (1992), 577–584.

\bibitem[BK23]{BK}
Christian Bagshaw and Bryce Kerr, \emph{Lattices in {F}unction {F}ields and
  {A}pplications}, 2023.

\bibitem[BKL24]{BKL}
Gukyeong Bang, Taehyeong Kim, and Seonhee Lim, \emph{Singular linear forms over
  global function fields}, 2024.

\bibitem[BZ19]{BZ}
Yann Bugeaud and Zhenliang Zhang, \emph{On homogeneous and inhomogeneous
  diophantine approximation over the fields of formal power series}, Pacific
  Journal of Mathematics \textbf{302(2)} (2019), 453--480.

\bibitem[Cas59]{Cas59}
J.W.S Cassels, \emph{An {I}ntroduction to the {G}eometry of {N}umbers},
  Springer Berlin Heidelberg, Berlin, Heidelberg, 1959.

\bibitem[CC16]{CC16}
Yitwah Cheung and Nicolas Chevallier, \emph{Hausdorff dimension of singular
  vectors}, Duke Mathematical Journal \textbf{165} (2016), 2273--2329.

\bibitem[Che07]{C07}
Yitwah Cheung, \emph{Hausdorff dimension of the set of points on divergent
  trajectories of a homogeneous flow on a product space}, Ergodic Theory and
  Dynamical Systems \textbf{27(1)} (2007), 65--85.

\bibitem[Che11]{C11}
\bysame, \emph{Hausdorff dimension of the set of singular pairs}, Annals of
  Mathematics \textbf{173} (2011), no.~1, 127--167.

\bibitem[Dan85]{D}
S.~G. Dani, \emph{Divergent trajectories of flows on homogeneous spaces and
  diophantine approximations}, J. Reine Angew. Math. \textbf{359} (1985),
  55–89.

\bibitem[DFSU24]{DFSU}
Tushar Das, Lior Fishman, David Simmons, and Mariusz Urba\'{n}ski, \emph{A
  variational principle in the parametric geometry of numbers}, Advances in
  Mathematics \textbf{437} (2024), 109435.

\bibitem[DL63]{DL}
H.~Davenport and D.~J. Lewis, \emph{An analogue of a problem of {L}ittlewood},
  Michigan Math. J. \textbf{10} (1963), 157--160. \MR{154850}

\bibitem[dM70]{dM}
Bernard de~Mathan, \emph{Approximations diophantiennes dans un corps local},
  Mémoires de la Société Mathématique de France \textbf{21} (1970).

\bibitem[DS70]{DS}
Harold Davenport and Wolfgang Schmidt, \emph{Dirichlet’s theorem on
  diophantine approximation}, Symposia Mathematica \textbf{IV} (1970),
  113–132.

\bibitem[GG17]{GaGh17}
Arijit Ganguly and Anish Ghosh, \emph{Dirichlet's {T}heorem in {F}unction
  {F}ields}, Canadian Journal of Mathematics \textbf{69} (2017), no.~3,
  532–547.

\bibitem[Khi26]{K}
Aleksandr Khintchine, \emph{Über eine klasse linearer diophantische
  approximationen}, Rend. Circ.Mat.Palermo \textbf{50} (1926), 170–195.

\bibitem[KKLM17]{KKLM}
Shirali Kadyrov, Dmitry Kleinbock, Elon Lindenstrauss, and Gregory Margulis,
  \emph{Singular systems of linear forms and non-escape of mass in the space of
  lattices}, J. Anal. Math. \textbf{133} (2017), 253–277.

\bibitem[KLP23]{KLP}
Taehyeong Kim, Seonhee Lim, and Fr\'{e}d\'{e}ric Paulin, \emph{On hausdorff
  dimension in inhomogeneous diophantine approximation over global function
  fields}, Journal of Number Theory \textbf{251} (2023), 102--146.

\bibitem[KM23]{KM}
Dmitry Kleinbock and Shahriar Mirzadeh, \emph{On the dimension drop conjecture
  for diagonal flows on the space of lattices}, Advances in Mathematics
  \textbf{425} (2023), 109058.

\bibitem[KST16]{KST}
Dmitry Kleinbock, Ronggang Shi, and Georges Tomanov, \emph{S-adic {V}ersion of
  {M}inkowski's {G}eometry of {N}umbers and {M}ahler's {C}ompactness
  {C}riterion}, Journal of Number Theory \textbf{174} (2016), 150--163.

\bibitem[KSY22]{KSY}
Dmitry Kleinbock, Andreas Strömbergsson, and Shucheng Yu, \emph{A measure
  estimate in geometry of numbers and improvements to dirichlet's theorem},
  Proceedings of the London Mathematical Society \textbf{125} (2022), no.~4,
  778--824.

\bibitem[KW05]{KW}
Dmitry Kleinbock and Barak Weiss, \emph{Friendly measures, homogeneous flows,
  and singular vectors in algebraic and topological dynamics}, Contemp. Math.
  \textbf{385} (2005), 281–292.

\bibitem[Las00]{Las}
A.~Lasjaunias, \emph{A survey of diophantine approximation in fields of power
  series}, Monatshefte fur Mathematik \textbf{130} (2000), no.~3, 211 – 229,
  Cited by: 67.

\bibitem[Mah41]{Mah}
Kurt Mahler, \emph{An {A}nalogue to {M}inkowski's {G}eometry of {N}umbers in a
  {F}ield of {S}eries}, Annals of Mathematics \textbf{42} (1941), no.~2,
  488--522.

\bibitem[Rat78]{Rat}
Michael Ratliff, \emph{The {T}hue-{S}iegel-{R}oth-{S}chmidt theorem for
  algebraic functions}, J. Number Theory \textbf{10} (1978), no.~1, 99--126.
  \MR{472720}

\bibitem[Ros13]{Ros}
Michael Rosen, \emph{Number theroy in function fields}, Springer Verlag, 2013.

\bibitem[RW17]{RW}
Damien Roy and Michel Waldschmidt, \emph{Parametric {G}eometry of {N}umbers in
  {F}unction {F}ields}, arXiv: Number Theory (2017), 1--20.

\bibitem[Ryn90]{R}
B.~P. Rynne, \emph{A lower bound for the hausdorff dimension of sets of
  singular $n$-tuples}, Math. Proc. Cambridge Philos. Soc. \textbf{107} (1990),
  387–394.

\bibitem[ST23]{ST}
Omri~N. Solan and Nattalie Tamam, \emph{On topologically big divergent
  trajectories}, Duke Math. J. \textbf{172(18)} (2023), 3429 -- 3474.

\bibitem[Tam19]{T}
Nattalie Tamam, \emph{Existence of non-obvious divergent trajectories in
  homogeneous spaces}, Israel Journal of Mathematics \textbf{247} (2019),
  459--478.

\bibitem[Uch60]{Uch}
Sabur\^o Uchiyama, \emph{On the {T}hue-{S}iegel-{R}oth theorem. {III}}, Proc.
  Japan Acad. \textbf{36} (1960), 1--2. \MR{117219}

\bibitem[Wei04]{W1}
Barak Weiss, \emph{Divergent trajectories on noncompact parameter spaces},
  Geom. Funct. Anal. \textbf{14} (2004), 94–149.

\bibitem[Wei06]{W2}
\bysame, \emph{Divergent trajectories and q-rank}, Israel J. Math. \textbf{152}
  (2006), 221–227.

\end{thebibliography}
\bibliographystyle{amsalpha}
\end{document}